\definecolor{cite}{rgb}{0.30,0.60,1.00}
\definecolor{url}{rgb}{0.00,0.00,0.80}
\definecolor{link}{rgb}{0.40,0.10,0.20}
\DeclareSymbolFont{cyrletters}{OT2}{wncyr}{m}{n}
\DeclareMathSymbol{\Sha}{\mathalpha}{cyrletters}{"58}
\providecommand*{\Dashv}{%
  \mathrel{%
    \mathpalette\@Dashv\vDash
  }%
}
\newcommand*{\@Dashv}[2]{%
  \reflectbox{$\m@th#1#2$}%
}
\numberwithin{equation}{section}
\theoremstyle{plain}
\newtheorem{proposition}{Proposition}[subsection]
\newtheorem{conjecture}[proposition]{Conjecture}
\newtheorem{corollary}[proposition]{Corollary}
\newtheorem{lem}[proposition]{Lemma}
\newtheorem{theorem}[proposition]{Theorem}
\theoremstyle{definition}
\newtheorem{definition}[proposition]{Definition}
\newtheorem{construction}[proposition]{Construction}
\newtheorem{notation}[proposition]{Notation}
\newtheorem{assumption}[proposition]{Assumption}
\theoremstyle{remark}
\newtheorem{remark}[proposition]{Remark}
\newtheorem{example}[proposition]{Example}
\renewcommand{\b}[1]{\mathbf{#1}}
\renewcommand{\c}[1]{\mathcal{#1}}
\renewcommand{\d}[1]{\mathbb{#1}}
\newcommand{\f}[1]{\mathfrak{#1}}
\renewcommand{\r}[1]{\mathrm{#1}}
\newcommand{\s}[1]{\mathscr{#1}}
\renewcommand{\sf}[1]{\mathsf{#1}}
\renewcommand{\(}{\left(}
\renewcommand{\)}{\right)}
\newcommand{\res}{\mathbin{|}}
\newcommand{\ol}[1]{\overline{#1}{}}
\newcommand{\wt}[1]{\widetilde{#1}{}}
\newcommand{\ul}{\underline}
\renewcommand{\leq}{\leqslant}
\renewcommand{\geq}{\geqslant}
\newcommand{\bG}{\b G}
\newcommand{\cN}{\c N}
\newcommand{\cO}{\c O}
\newcommand{\cU}{\c U}
\newcommand{\dA}{\d A}
\newcommand{\dC}{\d C}
\newcommand{\dF}{\d F}
\newcommand{\dQ}{\d Q}
\newcommand{\dR}{\d R}
\newcommand{\dT}{\d T}
\newcommand{\dZ}{\d Z}
\newcommand{\fP}{\f P}
\newcommand{\fT}{\f T}
\newcommand{\fa}{\f a}
\newcommand{\fc}{\f c}
\newcommand{\fm}{\f m}
\newcommand{\rA}{\r A}
\newcommand{\rC}{\r C}
\newcommand{\rD}{\r D}
\newcommand{\rH}{\r H}
\newcommand{\rI}{\r I}
\newcommand{\rK}{\r K}
\newcommand{\rL}{\r L}
\newcommand{\rM}{\r M}
\newcommand{\rP}{\r P}
\newcommand{\rQ}{\r Q}
\newcommand{\rS}{\r S}
\newcommand{\rT}{\r T}
\newcommand{\rU}{\r U}
\newcommand{\rV}{\r V}
\newcommand{\rW}{\r W}
\newcommand{\rX}{\r X}
\newcommand{\rZ}{\r Z}
\newcommand{\rf}{\r f}
\newcommand{\rh}{\r h}
\newcommand{\rl}{\r l}
\newcommand{\rt}{\r t}
\newcommand{\ru}{\r u}
\newcommand{\rv}{\r v}
\newcommand{\sC}{\s C}
\newcommand{\sD}{\s D}
\newcommand{\sF}{\s F}
\newcommand{\sG}{\s G}
\newcommand{\sI}{\s I}
\newcommand{\sL}{\s L}
\newcommand{\sM}{\s M}
\newcommand{\sO}{\s O}
\newcommand{\sS}{\s S}
\newcommand{\sfR}{\sf R}
\newcommand{\sfT}{\sf T}
\newcommand{\tS}{\mathtt{S}}
\newcommand{\tV}{\mathtt{V}}
\newcommand{\tc}{\mathtt{c}}
\newcommand{\tv}{\mathtt{v}}
\newcommand{\balpha}{\boldsymbol{\alpha}}
\newcommand{\bbA}{\boldsymbol{A}}
\newcommand{\bbB}{\boldsymbol{B}}
\newcommand{\obj}{\text{\Flatsteel}}
\newcommand{\tp}[1]{\prescript{\rt\!}{}{#1}}
\newcommand{\floor}[1]{\lfloor{#1}\rfloor}
\newcommand{\ab}{\r{ab}}
\newcommand{\an}{\r{an}}
\newcommand{\bad}{\r{bad}}
\newcommand{\CF}{\mathbbm{1}}
\newcommand{\et}{{\acute{\r{e}}\r{t}}}
\newcommand{\FL}{\r{FL}}
\newcommand{\flg}{{\rf.\rl.}}
\newcommand{\gr}{\r{gr}}
\newcommand{\HT}{\r{HT}}
\newcommand{\loc}{\r{loc}}
\newcommand{\lr}{\r{lr}}
\newcommand{\MF}{\sM\!\!\sF\!}
\newcommand{\mix}{\r{mix}}
\newcommand{\mnm}{\r{min}}
\newcommand{\Nilp}{\r{Nil}}
\newcommand{\op}{\r{op}}
\newcommand{\ram}{\r{ram}}
\newcommand{\St}{\r{St}}
\newcommand{\univ}{\r{univ}}
\newcommand{\unr}{\r{unr}}
\newcommand{\pr}{\mathrm{pr}}
\newcommand{\Set}{\sf{Set}}
\DeclareMathOperator{\ad}{ad}
\DeclareMathOperator{\BC}{BC}
\DeclareMathOperator{\Def}{Def}
\DeclareMathOperator{\diag}{diag}
\DeclareMathOperator{\End}{End}
\DeclareMathOperator{\Ext}{Ext}
\DeclareMathOperator{\Fil}{Fil}
\DeclareMathOperator{\Gal}{Gal}
\DeclareMathOperator{\GL}{GL}
\DeclareMathOperator{\Hom}{Hom}
\DeclareMathOperator{\Ind}{Ind}
\DeclareMathOperator{\Ker}{ker}
\DeclareMathOperator{\modulo}{mod}
\DeclareMathOperator{\Res}{Res}
\DeclareMathOperator{\Sh}{Sh}
\DeclareMathOperator{\Spec}{Spec}
\DeclareMathOperator{\Spf}{Spf}
\DeclareMathOperator{\Sym}{Sym}
\DeclareMathOperator{\tr}{tr}
\DeclareMathOperator{\Art}{Art}
\begin{document}

\title{Deformation of rigid conjugate self-dual Galois representations}

\author{Yifeng Liu}
\address{Institute for Advanced Study in Mathematics, Zhejiang University, Hangzhou 310058, China}
\email{liuyf0719@zju.edu.cn}

\author{Yichao Tian}
\address{Morningside Center of Mathematics, AMSS, Chinese Academy of Sciences, Beijing 100190, China}
\email{yichaot@math.ac.cn}

\author{Liang Xiao}
\address{Beijing International Center for Mathematical Research, Peking University, Beijing 100871, China}
\email{lxiao@bicmr.pku.edu.cn}

\author{Wei Zhang}
\address{Department of Mathematics, Massachusetts Institute of Technology, Cambridge MA 02139, United States}
\email{weizhang@mit.edu}

\author{Xinwen Zhu}
\address{Division of Physics, Mathematics and Astronomy, California Institute of Technology, Pasadena CA 91125, United States}
\email{xzhu@caltech.edu}

\date{\today}
\subjclass[2010]{11F80, 11F03}

\begin{abstract}
  In this article, we study deformations of conjugate self-dual Galois representations. The study has two folds. First, we prove an R=T type theorem for a conjugate self-dual Galois representation with coefficients in a finite field, satisfying a certain property called rigid. Second, we study the rigidity property for the family of residue Galois representations attached to a symmetric power of an elliptic curve, as well as to a regular algebraic conjugate self-dual cuspidal representation.
\end{abstract}

\maketitle

\tableofcontents

\section{Introduction}
\label{ss:1}

Let $F/F^+$ be a CM extension of number fields with $\tc\in\Gal(\ol{F}/F^+)$ the complex conjugation. In this article, we study deformations of conjugate self-dual Galois representations of $F$. The study has two folds. First, we prove an R=T type theorem (Theorem \ref{th:deformation}) for a conjugate self-dual Galois representation $\bar{r}$ of $F$ with coefficients in a finite field, satisfying a certain property called \emph{rigid} (Definition \ref{de:rigid}). It is worth mentioning that unlike many other references in the field, we neither assume that the characteristic of the coefficient field is relatively split in $F/F^+$ nor assume that $\bar{r}$ only ramifies at places that are split in $F$. Second, we study the rigidity property for the family of residue Galois representations attached to a symmetric power of an elliptic curve, as well as to a regular algebraic conjugate self-dual cuspidal representation $\Pi$. In particular, we show in Theorem \ref{th:rigid} that if $\Pi$ has a supercuspidal component, then the residue Galois representation is rigid whenever the characteristic is large enough.

The main purpose of this article is to make preparation for our work \cite{LTXZZ} in which we prove major cases toward the Beilinson--Bloch--Kato conjecture on the relation between Selmer groups and $L$-functions. To see how Galois deformation is used in the study of Selmer groups, we refer to that article.

The two main theorems are both technical to state. In order to give some flavor of what we can prove in this article, we now state a result (Theorem \ref{th:combine} below) that follows from the combination of the two main theorems and is relatively straightforward to formulate. Also, it seems to us this kind of result is new in literature.

More precisely, we consider the following data:
\begin{itemize}
  \item an RACSDC (that is, regular algebraic conjugate self-dual cuspidal) representation $\Pi$ of $\GL_N(\dA_F)$ for some $N\geq 2$, with the archimedean weights $\xi=(\xi_{\tau,1}\leq\cdots\leq\xi_{\tau,N})_\tau\in(\dZ^N)^{\Hom(F,\dC)}$ (Definition \ref{de:relevant}),

  \item a number field $E\subseteq\dC$ such that one has a family of Galois representations
      \[
      \{\rho_{\Pi,\lambda}\colon\Gal(\ol{F}/F)\to\GL_N(E_\lambda)\}_\lambda
      \]
      indexed by primes of $E$ satisfying $\rho_{\Pi,\lambda}^\tc\simeq\rho_{\Pi,\lambda}^\vee(1-N)$ as in Proposition \ref{pr:galois} and Definition \ref{de:weak_field} (see Remark \ref{re:galois}),

  \item a finite set $\Sigma^+$ of nonarchimedean places of $F^+$ such that for every nonarchimedean place $w$ of $F$ not above $\Sigma^+$, $\Pi_w$ is unramified and the underlying rational prime of $w$ is unramified in $F$.
\end{itemize}
For every prime $\lambda$ of $E$, we write $\ell_\lambda$ its underlying rational prime and $O_\lambda$ the ring of integers of $E_\lambda$. Let $\Lambda_{\Pi,\Sigma^+}$ be the set of primes $\lambda$ of $E$ such that either $\rho_{\Pi,\lambda}$ is not residually absolutely irreducible, or $\ell_\lambda$ underlies $\Sigma^+$, or $\ell_\lambda-N-1$ is strictly smaller than the maximal distance of integers in $\xi$. For every prime $\lambda$ of $E$ not in $\Lambda_{\Pi,\Sigma^+}$, we can define
\begin{itemize}
  \item a commutative $O_\lambda$-algebra $\sfR^\univ_{\sS_\lambda}$ that classifies conjugate self-dual deformations of the residue representation of $\rho_{\Pi,\lambda}$ that are crystalline with regular Fontaine--Laffaille weights at places above $\ell_\lambda$ and unramified outside $\Sigma^+$ and places above $\ell_\lambda$, and

  \item a maximal ideal $\fm_\lambda$ of $\dT_N^{\ell_\lambda}$ with the residue field $O_\lambda/\lambda$ determined by the Satake parameters of $\Pi$, where $\dT_N^{\ell_\lambda}$ is the abstract spherical unitary Hecke algebra of rank $N$ away from $\Sigma^+$ and places above $\ell_\lambda$.
\end{itemize}
Consider pairs $(\rV,\rK)$ in which
\begin{itemize}
  \item $\rV$ is a (nondegenerate) hermitian space over $F$ (with respect to $\tc$) of rank $N$ that is split at every nonarchimedean place of $F^+$ not in $\Sigma^+$, and

  \item $\rK=\prod_v\rK_v$ is a neat open compact subgroup of $\rU(\rV)(\dA_{F^+}^\infty)$ such that $\rK_v$ is the stabilizer of a self-dual lattice for $v\not\in\Sigma^+$,
\end{itemize}
satisfying that there exists a cuspidal automorphic representation $\pi$ of $\rU(\rV)(\dA_{F^+})$ with nonzero $\rK$-invariants whose automorphic base change is $\Pi$. Every pair $(\rV,\rK)$ as above gives a Shimura variety $\Sh(\rV,\rK)$, which is a quasi-projective smooth scheme over $\dC$ of dimension $d(\rV)$. For $\lambda\not\in\Lambda_{\Pi,\Sigma^+}$, the weights $\xi$ give rise to an $O_\lambda$-linear local system $\sL_{\xi,\lambda}$ on $\Sh(\rV,\rK)$; and we let $\sfT_\lambda$ be the image of $\dT_N^{\ell_\lambda}$ in $\End_\sO\(\rH^{d(\rV)}_\et(\Sh(\rV,\rK),\sL_{\xi,\lambda})\)$.

We refer to \S\ref{ss:global_deformation} for more details of the above constructions.

\begin{theorem}\label{th:combine}
Let the setup be as above. Suppose that there exists a nonarchimedean place of $F$ at which $\Pi$ is supercuspidal. Then $\Lambda_{\Pi,\Sigma^+}$ is a finite set. Moreover, there exists a finite set $\Lambda'_{\Pi,\Sigma^+}$ of primes of $E$ containing $\Lambda_{\Pi,\Sigma^+}$, such that for every $\lambda\not\in\Lambda'_{\Pi,\Sigma^+}$ and every pair $(\rV,\rK)$ as above with $d(\rV)\leq 1$, we have
\begin{enumerate}
  \item There is a canonical isomorphism $\sfR^\univ_{\sS_\lambda}\xrightarrow{\sim}\sfT_{\lambda,\fm_\lambda}$ of local complete intersection commutative $O_\lambda$-algebras.

  \item The $\sfT_{\lambda,\fm_\lambda}$-module $\rH^{d(\rV)}_\et(\Sh(\rV,\rK),\sL_{\xi,\lambda})_{\fm_\lambda}$ is finite and free.
\end{enumerate}
\end{theorem}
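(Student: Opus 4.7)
The plan is to derive Theorem \ref{th:combine} as a direct combination of the two main theorems of the paper: the R=T theorem (Theorem \ref{th:deformation}) applied to the residue representation of $\rho_{\Pi,\lambda}$, and the rigidity criterion (Theorem \ref{th:rigid}), which verifies the principal hypothesis of the former for cofinitely many $\lambda$.

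First, I would establish that $\Lambda_{\Pi,\Sigma^+}$ is finite. The conditions that $\ell_\lambda$ underlies $\Sigma^+$ and that $\ell_\lambda-N-1$ is less than the maximal distance of integers in $\xi$ each exclude only finitely many $\lambda$, since both $\Sigma^+$ and $\xi$ are fixed. For the remaining condition on residual absolute irreducibility, the supercuspidal component hypothesis lets me invoke Theorem \ref{th:rigid}: for $\ell_\lambda$ large enough, $\bar\rho_{\Pi,\lambda}$ is rigid, and since rigidity in the sense of Definition \ref{de:rigid} subsumes absolute irreducibility of the residue representation, this too fails for only finitely many $\lambda$.

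Next, I would define $\Lambda'_{\Pi,\Sigma^+}$ by adjoining to $\Lambda_{\Pi,\Sigma^+}$ the finitely many primes at which Theorem \ref{th:rigid} does not apply, together with any additional small primes where a secondary technical hypothesis of Theorem \ref{th:deformation} (such as the existence of suitable Taylor--Wiles primes, or a Fontaine--Laffaille regularity bound) might fail. For $\lambda\notin\Lambda'_{\Pi,\Sigma^+}$, the residue representation $\bar\rho_{\Pi,\lambda}$ is then rigid with regular Fontaine--Laffaille weights at places above $\ell_\lambda$. For a pair $(\rV,\rK)$ with $d(\rV)\leq 1$, the associated Shimura variety is either a finite set or a smooth curve, so the cohomology in degree $d(\rV)$ is the only nonvanishing interior cohomology of the local system $\sL_{\xi,\lambda}$ and is directly amenable to the Taylor--Wiles patching machinery embedded in the proof of Theorem \ref{th:deformation}. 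Applying Theorem \ref{th:deformation} to this data then simultaneously yields the canonical isomorphism $\sfR^\univ_{\sS_\lambda}\xrightarrow{\sim}\sfT_{\lambda,\fm_\lambda}$ of local complete intersection $O_\lambda$-algebras together with the finite freeness of $\rH^{d(\rV)}_\et(\Sh(\rV,\rK),\sL_{\xi,\lambda})_{\fm_\lambda}$ over $\sfT_{\lambda,\fm_\lambda}$.

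The main obstacle is essentially already absorbed into the two theorems being invoked; the bulk of the work lies in their proofs. What remains for Theorem \ref{th:combine} is a careful matching of notation and normalizations: one must check that the global deformation problem $\sS_\lambda$ defined in \S\ref{ss:global_deformation}, the Hecke algebra $\dT_N^{\ell_\lambda}$, the maximal ideal $\fm_\lambda$, and the coefficient system $\sL_{\xi,\lambda}$ are precisely those appearing in the hypotheses of Theorem \ref{th:deformation}, and that the base change hypothesis on $\pi$ guarantees $\fm_\lambda$ is actually in the support of the localized cohomology, so that neither side of the R=T isomorphism is trivially zero.
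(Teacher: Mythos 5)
Your proposal is correct and follows essentially the same route as the paper, which derives Theorem \ref{th:combine} precisely by combining Theorem \ref{th:deformation} with Theorem \ref{th:rigid}, with hypothesis (D4) supplied by the footnote when $d(\rV)\leq 1$ and nonvanishing of $\sfT_{\lambda,\fm_\lambda}$ coming from the assumed existence of $\pi$ with nonzero $\rK$-invariants. One small correction: residual absolute irreducibility is not part of Definition \ref{de:rigid} (it is rather a prerequisite for forming the extension $\bar{r}_{\Pi,\lambda}$ at all), but it is exactly Conjecture \ref{co:rigid}(1), i.e.\ Proposition \ref{pr:rigid}(1), which Theorem \ref{th:rigid} establishes under the supercuspidal hypothesis, so your finiteness conclusion for $\Lambda_{\Pi,\Sigma^+}$ stands.
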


The above theorem is a consequence of the two main results of this article, namely, Theorem \ref{th:deformation} and Theorem \ref{th:rigid}. We have a similar consequence when $d(\rV)$ is general, but under an extra assumption on certain vanishing of localized cohomology off middle degree.

The article is organized as follows. In Section \ref{ss:2}, we make some preparations for conjugate self-dual representations, unitary Hecke algebras, and automorphic representations. In Section \ref{ss:3}, we study both local and global deformations of conjugate self-dual Galois representations. In Section \ref{ss:4}, we study the rigidity property for symmetric powers of elliptic curves and automorphic Galois representations.

\subsubsection*{Notations and conventions}

\begin{itemize}
  \item All rings are commutative and unital; and ring homomorphisms preserve units.

  \item Throughout the article, we fix an integer $N\geq 1$. Denote by $\rM_N$ (resp.\ $\GL_N$) the scheme over $\dZ$ of square matrices (resp.\ invertible square matrices) of rank $N$.

  \item We fix a CM extension $F/F^+$ of number fields and an algebraic closure $\ol{F}$ of $F$, with $\tc\in\Gal(\ol{F}/F^+)$ the complex conjugation. Put $\Gamma_F\coloneqq\Gal(\ol{F}/F)$ and $\Gamma_{F^+}\coloneqq\Gal(\ol{F}/F^+)$. In this article, all hermitian spaces over $F$ are with respect to the convolution $\tc$ and are nondegenerate.

  \item Denote by $\Sigma_\infty$ (resp.\ $\Sigma^+_\infty$) the set of complex embeddings of $F$ (resp.\ $F^+$). For $\tau\in\Sigma_\infty$, we denote by $\tau^\tc\coloneqq\tau\circ\tc\in\Sigma_\infty$ its conjugation and $\ul\tau\coloneqq\tau\res_{F^+}\in\Sigma^+_\infty$ the underlying element.

  \item For every rational prime $p$, denote by $\Sigma^+_p$ the set of all $p$-adic places of $F^+$.

  \item Denote by $\Sigma^+_\bad$ the union of $\Sigma^+_p$ for all $p$ that ramifies in $F$.

  \item Denote by $\eta\coloneqq\eta_{F/F^+}\colon\Gamma_{F^+}\to\{\pm1\}$ the character associated to the extension $F/F^+$.

  \item For every prime $\ell$, denote by $\epsilon_\ell\colon\Gamma_{F^+}\to\dZ_\ell^\times$ the $\ell$-adic cyclotomic character.

  \item For every place $v$ of $F^+$, we
     \begin{itemize}
       \item put $F_v\coloneqq F\otimes_{F^+}F^+_v$; and define $\delta(v)$ to be $1$ (resp.\ $2$) if $v$ splits (resp.\ does not split) in $F$;

       \item fix an algebraic closure $\ol{F}^+_v$ of $F^+_v$ containing $\ol{F}$; and put $\Gamma_{F^+_v}\coloneqq\Gal(\ol{F}^+_v/F^+_v)$ as a subgroup of $\Gamma_{F^+}$;

       \item for a homomorphism $r$ from $\Gamma_{F^+}$ to another group, denote by $r_v$ the restriction of $r$ to the subgroup $\Gamma_{F^+_v}$.
     \end{itemize}

  \item For every nonarchimedean place $w$ of $F$, we
     \begin{itemize}
       \item identify the Galois group $\Gamma_{F_w}$ with $\Gamma_{F^+_v}\cap\Gamma_F$ (resp.\ $\tc(\Gamma_{F^+_v}\cap\Gamma_F)\tc$), where $v$ is the underlying place of $F^+$, if the embedding $F\hookrightarrow\ol{F}^+_v$ induces (resp.\ does not induce) the place $w$;

       \item let $\rI_{F_w}\subseteq\Gamma_{F_w}$ be the inertia subgroup;

       \item denote by $\phi_w\in\Gamma_{F_w}$ an \emph{arithmetic} Frobenius element.
     \end{itemize}
\end{itemize}

\subsubsection*{Acknowledgements}

This article is a side product of the AIM SQuaREs project \emph{Geometry of Shimura varieties and arithmetic application to L-functions} conducted by the five authors from 2017 to 2019. We would like to express our sincere gratitude and appreciation to the American Institute of Mathematics for their constant and generous support of the project, and to the staff members at the AIM facility in San Jose, California for their excellent coordination and hospitality.

We thank Toby Gee for providing us with an improvement on Proposition \ref{pr:rigid}(2), hence Theorem \ref{th:rigid}. The research of Y.~L. is partially supported by the NSF grant DMS--1702019 and a Sloan Research Fellowship. The research of L.~X. is partially supported by the NSF grant DMS--1502147 and DMS--1752703, the Chinese NSF grant under agreement No. NSFC--12071004, and a grant from the Chinese Ministry of Education. The research of W.~Z. is partially supported by the NSF grant DMS--1838118 and DMS--1901642. The research of X.~Z. is partially supported by the NSF grant DMS--1902239 and a Simons Fellowship.

\section{Preparation}
\label{ss:2}

\subsection{Extension of essentially conjugate self-dual representations}
\label{ss:conjugate_selfdual}

In this subsection, we collect some notion and facts on the extension of essentially conjugate self-dual representations.

\begin{notation}\label{no:sg}
We recall the group scheme $\sG_N$ from \cite{CHT08}*{\S1}.\footnote{In fact, a better notion seems to be the $C$-group introduced in \cite{BG14}.} Put
\[
\sG_N\coloneqq(\GL_N\times\GL_1)\rtimes\{1,\fc\}
\]
with $\fc^2=1$ and
\[
\fc(g,\mu)\fc=(\mu\tp{g}^{-1},\mu)
\]
for $(g,\mu)\in\GL_N\times\GL_1$. In what follows, we will often regard $\GL_N$ as a subgroup of $\sG_N$ via the embedding $g\mapsto(g,1)1$. Denote by $\nu\colon\sG_N\to\GL_1$ the homomorphism such that $\nu\res_{\GL_N\times\GL_1}$ is the projection to the factor $\GL_1$ and that $\nu(\fc)=-1$. We have the adjoint action $\ad$ of $\sG_N$ on $\rM_N$, given by
\[
\ad(g,\mu)(x)=gxg^{-1},\quad\ad(\fc)(x)=-\tp{x}
\]
for $x\in\rM_N$ and $(g,\mu)\in\GL_N\times\GL_1$.
\end{notation}

Let $\tilde\Gamma$ be a topological group, and $\Gamma\subseteq\tilde\Gamma$ an open subgroup of index at most two.

\begin{notation}\label{no:sg_extension}
Let $R$ be a (topological) ring.
\begin{itemize}
  \item For a (continuous) homomorphism $\rho\colon\Gamma\to\GL_N(R)$, we denote by $\rho^\vee\colon\Gamma\to\GL_N(R)$ the contragredient homomorphism, which is defined by the formula $\rho^\vee(x)=\tp\rho(x)^{-1}$ for every $x\in\Gamma$.

  \item For a (continuous) homomorphism $\rho\colon\Gamma\to\GL_N(R)$ and an element $\gamma\in\tilde\Gamma$ that normalizes $\Gamma$, we let $\rho^\gamma\colon\Gamma\to\GL_N(R)$ be the (continuous) homomorphism defined by $\rho^\gamma(x)=\rho(\gamma x\gamma^{-1})$ for every $x\in\Gamma$.

  \item For a (continuous) homomorphism
      \[
      r\colon\tilde\Gamma\to\sG_N(R)
      \]
      such that the image of $r\res_\Gamma$ lies in $\GL_N(R)\times R^\times$, we denote
      \[
      r^\natural\colon\Gamma\to\GL_N(R)\times R^\times\to\GL_N(R)
      \]
      the composition of $r\res_\Gamma$ with the projection to $\GL_N(R)$.
\end{itemize}
\end{notation}

\begin{lem}\label{le:representation_selfdual}
Suppose that $[\tilde\Gamma:\Gamma]=2$. Let $R$ be a (topological) ring and $\chi\colon\tilde\Gamma\to R^\times$ a (continuous) character. We have
\begin{enumerate}
  \item If $r\colon\tilde\Gamma\to\sG_N(R)$ is a (continuous) homomorphism satisfying $r^{-1}(\GL_N(R)\times R^\times)=\Gamma$ and $\nu\circ r=\chi$, then for every $\gamma\in\tilde\Gamma\setminus\Gamma$, we have
      \[
      r^{\natural,\gamma}=B\circ \chi r^{\natural,\vee}\circ B^{-1},
      \]
      where $B$ is obtained from $r(\gamma)=(B,-\chi(\gamma))\fc$.

  \item Let $\rho\colon\Gamma\to\GL_N(R)$ be a (continuous) homomorphism, $\gamma$ an element in $\tilde\Gamma\setminus\Gamma$, and $B\in\GL_N(R)$ such that $\rho^\gamma=B\circ\chi\rho^\vee\circ B^{-1}$ and $B\tp{B}^{-1}=\mu_B\chi(\gamma)^{-1}\rho(\gamma^2)$ for some $\mu_B\in\{\pm1\}$. Then there exists a unique (continuous) homomorphism
      \[
      r\colon\tilde\Gamma\to\sG_N(R)
      \]
      satisfying $r\res_\Gamma=(\rho,\chi\res_\Gamma)1$ and $r(\gamma)=(B,\mu_B\chi(\gamma))\fc$.

  \item Suppose in (2) that $R$ is a field and $\rho$ is absolutely irreducible. If $\rho^\gamma$ and $\chi\rho^\vee$ are conjugate, then $\rho$ induces a (continuous) homomorphism $r\colon\tilde\Gamma\to\sG_N(R)$ satisfying $r\res_\Gamma=(\rho,\chi)$, unique up to changing the $\GL_N(R)$-component of $r(\gamma)$ by a scalar in $R^\times$.
\end{enumerate}
\end{lem}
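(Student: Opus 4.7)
The plan is to treat the three parts in order, with most of the content in (2) and (3); part (1) is a direct bookkeeping computation in $\sG_N(R)$ that then feeds into (3).

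For (1), I would expand $r(\gamma x\gamma^{-1})=r(\gamma)r(x)r(\gamma)^{-1}$ using the presentation of $\sG_N$ from Notation \ref{no:sg}. Write $r(x)=(\rho(x),\chi(x))$ for $x\in\Gamma$, where $\rho\coloneqq r^\natural$, and $r(\gamma)=(B,-\chi(\gamma))\fc$. Using $\fc(g,\mu)=(\mu\tp{g}^{-1},\mu)\fc$ and $\fc^{-1}=\fc$, one computes $r(\gamma)^{-1}=(-\chi(\gamma)^{-1}\tp{B},-\chi(\gamma)^{-1})\fc$ and then multiplies out; the $\fc$-factors collapse to $1$ and the $\GL_N$-component becomes $\chi(x)\,B\tp{\rho(x)}^{-1}B^{-1}=B(\chi\rho^\vee)(x)B^{-1}$. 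This is exactly the claimed identity.

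For (2), I define $r$ on $\Gamma$ by $r(x)=(\rho(x),\chi(x))\cdot 1$ and on $\gamma\Gamma$ by $r(\gamma x)\coloneqq(B,\mu_B\chi(\gamma))\fc\cdot(\rho(x),\chi(x))$, then verify this is a homomorphism. The only nontrivial check is compatibility on products crossing the coset boundary, which reduces to two relations: (i) $r(\gamma)r(x)r(\gamma)^{-1}=r(\gamma x\gamma^{-1})$ for $x\in\Gamma$, which follows from the hypothesis $\rho^\gamma=B(\chi\rho^\vee)B^{-1}$ exactly as in (1); and (ii) $r(\gamma)^2=r(\gamma^2)$. For the latter, the group law gives $r(\gamma)^2=(\mu_B\chi(\gamma)\,B\tp{B}^{-1},\chi(\gamma)^2)$, and substituting the assumption $B\tp{B}^{-1}=\mu_B\chi(\gamma)^{-1}\rho(\gamma^2)$ with $\mu_B^2=1$ collapses this to $(\rho(\gamma^2),\chi(\gamma^2))=r(\gamma^2)$. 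Uniqueness is immediate since $r$ is prescribed on a set of coset representatives.

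Part (3) is where the real work lies. Given any $B$ conjugating $\rho^\gamma$ to $\chi\rho^\vee$, I want to show that automatically $B\tp{B}^{-1}=\mu_B\chi(\gamma)^{-1}\rho(\gamma^2)$ for some sign $\mu_B\in\{\pm 1\}$, so that (2) applies. Apply the defining relation twice: from $\rho(\gamma x\gamma^{-1})=B\chi(x)\tp{\rho(x)}^{-1}B^{-1}$, take transpose inverse and substitute back to get, for all $x\in\Gamma$,
\[
\rho(\gamma^2 x\gamma^{-2})=(B\tp{B}^{-1})\rho(x)(B\tp{B}^{-1})^{-1}.
\]
Comparing with $\rho(\gamma^2 x\gamma^{-2})=\rho(\gamma^2)\rho(x)\rho(\gamma^2)^{-1}$ and using that $\rho$ is absolutely irreducible, Schur's lemma gives $B\tp{B}^{-1}=\lambda\,\rho(\gamma^2)$ for a unique scalar $\lambda\in R^\times$. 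To pin down $\lambda$, I specialize the original conjugation relation to $x=\gamma^2\in\Gamma$, obtaining $B\tp{\rho(\gamma^2)}B^{-1}=\chi(\gamma^2)\rho(\gamma^2)^{-1}$; on the other hand, the identity $B\tp{C}B^{-1}=C^{-1}$ for $C=B\tp{B}^{-1}$ (a direct algebraic manipulation) combined with $C=\lambda\rho(\gamma^2)$ yields $B\tp{\rho(\gamma^2)}B^{-1}=\lambda^{-2}\rho(\gamma^2)^{-1}$. Comparing gives $\lambda^2=\chi(\gamma)^{-2}$, so $\mu_B\coloneqq\lambda\chi(\gamma)\in\{\pm1\}$, and (2) applies. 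For uniqueness up to a scalar in the $\GL_N(R)$-component of $r(\gamma)$: by Schur's lemma any other choice of $B$ is $cB$ with $c\in R^\times$, which leaves $B\tp{B}^{-1}$ (hence $\mu_B$) unchanged, and changes $r(\gamma)$ only in its $\GL_N(R)$-component by the scalar $c$.

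The main obstacle is the Schur-type argument in (3): producing the scalar $\lambda$ and then computing $\lambda^2$ by invoking the defining relation at $\gamma^2$ requires the careful double application of the twist-and-contragredient together with the somewhat delicate identity $B\tp{C}B^{-1}=C^{-1}$ for $C=B\tp{B}^{-1}$. Once this computation is set up, the rest of the lemma is essentially formal.
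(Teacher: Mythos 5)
Your proof is correct and takes essentially the same route as the paper: part (1) is the computation the paper delegates to CHT08, Lemma~2.1.1, part (2) is the same check that $r(\gamma)^2=r(\gamma^2)$ (plus the conjugation compatibility the paper leaves implicit), and part (3) is exactly the Schur's-lemma sign argument the paper compresses into one sentence, with the scalar $\lambda$ and the identity $B\tp{C}B^{-1}=C^{-1}$ for $C=B\tp{B}^{-1}$ correctly pinning down $\mu_B\in\{\pm1\}$.
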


\begin{proof}
Part (1) is a special case of \cite{CHT08}*{Lemma~2.1.1}.

For (2), we check that
\[
r(\gamma^2)=(B,\mu_B\chi(\gamma))\fc\cdot(B,\mu_B\chi(\gamma))\fc
=(\mu_B\chi(\gamma)B\tp{B}^{-1},\chi(\gamma^2))1=(\rho(\gamma^2),\chi(\gamma^2))1.
\]
Since $\tilde\Gamma$ is generated by $\Gamma$ and $\gamma$, we obtain a unique (continuous) homomorphism $r\colon\tilde\Gamma\to\sG_N(R)$ as in (2).

For (3), by Schur's lemma, the element $B$ is unique up to scalar in $R^\times$, which implies the existence and also the uniqueness of $\mu_B$. Thus, (3) follows immediately.
\end{proof}

\subsection{Unitary Satake parameters and unitary Hecke algebras}
\label{ss:hermitian_space}

In this subsection, we introduce the notion of unitary Satake parameters and unitary Hecke algebras.

\begin{definition}[Abstract Satake parameter]\label{de:satake_parameter}
Let $L$ be a ring. For a multi-subset $\balpha\coloneqq\{\alpha_1,\dots,\alpha_N\}\subseteq L$, we put
\[
P_{\balpha}(T)\coloneqq\prod_{i=1}^N(T-\alpha_i)\in L[T].
\]
Consider a nonarchimedean place $v$ of $F^+$ not in $\Sigma^+_\bad$.
\begin{enumerate}
  \item Suppose that $v$ is inert in $F$. We define an \emph{(abstract) Satake parameter} in $L$ at $v$ of rank $N$ to be a multi-subset $\balpha\subseteq L$ of cardinality $N$. We say that $\balpha$ is \emph{unitary} if $P_{\balpha}(T)=(-T)^N\cdot P_{\balpha}(T^{-1})$.

  \item Suppose that $v$ splits in $F$. We define an \emph{(abstract) Satake parameter} in $L$ at $v$ of rank $N$ to be a pair $\balpha\coloneqq(\balpha_1;\balpha_2)$ of multi-subsets $\balpha_1,\balpha_2\subseteq L$ of cardinality $N$, indexed by the two places $w_1,w_2$ of $F$ above $v$. We say that $\balpha$ is \emph{unitary} if $P_{\balpha_1}(T)=c\cdot T^N\cdot P_{\balpha_2}(T^{-1})$ for some constant $c\in L^\times$.
\end{enumerate}
\end{definition}

Let $v$ be a nonarchimedean place of $F^+$ not in $\Sigma^+_\bad$. Let $\Lambda_{N,v}$ be the unique up to isomorphism hermitian space over $O_{F_v}=O_F\otimes_{O_{F^+}}O_{F^+_v}$ of rank $N$, and $\rU_{N,v}$ its unitary group over $O_{F^+_v}$. Under a suitable basis, the associated  hermitian form of $\Lambda_{N,v}$ is given by the matrix
\[
\begin{pmatrix}
0 &\cdots & 0  & 1\\
0 & \cdots & 1 & 0\\
\vdots & \iddots & \vdots &\vdots\\
1 & \cdots & 0 &0
\end{pmatrix}.
\]
Consider the \emph{local spherical Hecke algebra}
\[
\dT_{N,v}\coloneqq\dZ[\rU_{N,v}(O_{F^+_v})\backslash\rU_{N,v}(F^+_v)/\rU_{N,v}(O_{F^+_v})].
\]
According to our convention, the unit element of $\dT_{N,v}$ is $\CF_{\rU_{N,v}(O_{F^+_v})}$. Let $\rA_{N,v}$ be the maximal split diagonal subtorus of $\rU_{N,v}$, and $\rX_*(\rA_{N,v})$ be its cocharacter group. Then there is a well-known Satake transform
\begin{align}\label{eq:satake_transform}
\dT_{N,v}\to\dZ[\|v\|^{\pm\delta(v)/2}][\rA_{N,v}(F^+_v)/\rA_{N,v}(O_{F^+_v})]\simeq \dZ[\|v\|^{\pm\delta(v)/2}][\rX_*(\rA_{N,v})]
\end{align}
as a homomorphism of algebras. Choose a uniformizer $\varpi_v$ of $F^+_v$.

\begin{construction}\label{cs:satake_hecke_pre}
Let $L$ be a ring over $\dZ[\|v\|^{\pm\delta(v)/2}]$. Let $\balpha$ be a unitary Satake parameter in $L$ at $v$ of rank $N$. There are two cases.
\begin{enumerate}
  \item Suppose that $v$ is inert in $F$. Then a set of representatives of $\rA_{N,v}(F^+_v)/\rA_{N,v}(O_{F^+_v})$ can be taken as
      \[
      \{(\varpi_v^{t_1},\dots,\varpi_v^{t_N})\res t_1,\dots,t_N\in\dZ\text{ satisfying }t_i+t_{N+1-i}=0\text{ for all }1\leq i\leq N\}.
      \]
      Choose an ordering of $\balpha$ as $(\alpha_1,\dots,\alpha_N)$ satisfying $\alpha_i\alpha_{N+1-i}=1$; we have a unique homomorphism
      \[
      \dZ[\|v\|^{\pm\delta(v)/2}][\rA_{N,v}(F^+_v)/\rA_{N,v}(O_{F^+_v})]\to L
      \]
      of rings over $\dZ[\|v\|^{\pm\delta(v)/2}]$ sending the class of $(\varpi_v^{t_1},\dots,\varpi_v^{t_N})$ to
      $\prod_{i=1}^{\floor{\tfrac{N}{2}}}\alpha_i^{t_i}$. Composing with the Satake transform \eqref{eq:satake_transform}, we obtain a ring homomorphism
      \[
      \phi_{\balpha}\colon\dT_{N,v}\to L.
      \]
      It is independent of the choices of the uniformizer $\varpi_v$ and the ordering of $\balpha$.

  \item Suppose that $v$ splits in $F$ into two places $w_1$ and $w_2$. Then a set of representatives of $\rA_{N,v}(F^+_v)/\rA_{N,v}(O_{F^+_v})$ can be taken as
      \[
      \left\{
      \left.
      \(
      \diag(\varpi_v^{t_1},\dots,\varpi_v^{t_N}),
      \diag(\varpi_v^{-t_N},\dots,\varpi_v^{-t_1})
      \)
      \right| t_1,\dots,t_N\in\dZ
      \right\},
      \]
      where the first diagonal matrix (resp. the second diagonal matrix) is regarded as an element in $\rA_{N,v}(F_{w_1})$ (resp.\ $\rA_{N,v}(F_{w_2})$). Choose orders in $\balpha_1$ and $\balpha_2$ as $(\alpha_{1,1},\dots,\alpha_{1,N})$ and $(\alpha_{2,1},\dots,\alpha_{2,N})$ satisfying $\alpha_{1,i}\alpha_{2,N+1-i}=1$; we have a unique homomorphism
      \[
      \dZ[\|v\|^{\pm\delta(v)/2}][\rA_{N,v}(F^+_v)/\rA_{N,v}(O_{F^+_v})]\to L
      \]
      of $\dZ[\|v\|^{\pm\delta(v)/2}]$-rings sending the class of $\(\diag(\varpi_v^{t_1},\dots,\varpi_v^{t_N}),\diag(\varpi_v^{-t_N},\dots,\varpi_v^{-t_1})\)$ to $\prod_{i=1}^{N}\alpha_{1,i}^{t_i}$. Composing with the Satake transform \eqref{eq:satake_transform}, we obtain a ring homomorphism
      \[
      \phi_{\balpha}\colon\dT_{N,v}\to L.
      \]
      It is independent of the choices of the uniformizer $\varpi_v$, the order of the two places of $F$ above $v$, and the orders in $\balpha_1$ and $\balpha_2$.
\end{enumerate}
\end{construction}

\begin{definition}[Abstract unitary Hecke algebra]\label{de:abstract_hecke}
For a finite set $\Sigma^+$ of nonarchimedean places of $F^+$ containing $\Sigma^+_\bad$, we define the \emph{abstract unitary Hecke algebra away from $\Sigma^+$} to be the restricted tensor product
\[
\dT_N^{\Sigma^+}\coloneqq{\bigotimes_v}'\dT_{N,v}
\]
over all $v\not\in\Sigma^+_\infty\cup\Sigma^+$ with respect to unit elements. It is a ring.
\end{definition}

\subsection{Automorphic representations}

In this subsection, we collect some facts concerning automorphic representations.

\begin{notation}\label{no:weight}
We denote by $\dZ^N_\leq$ and the subset of $\dZ^N$ consisting of nondecreasing sequences. For a finite set $T$ and an element $\xi=(\xi_\tau)_{\tau\in T}\in(\dZ^N_\leq)^T$, put
\begin{align*}
a_\xi\coloneqq \min_{\tau\in T}\{\xi_{\tau,1}\},\qquad
b_\xi\coloneqq \max_{\tau\in T}\{\xi_{\tau,N}\}+N-1.
\end{align*}
\end{notation}

Let $w$ be a nonarchimedean place of $F$. For every irreducible admissible (complex) representation $\Pi$ of $\GL_N(F_w)$, every rational prime $\ell$, and every isomorphism $\iota_\ell\colon\dC\xrightarrow{\sim}\ol\dQ_\ell$, we denote by $\r{WD}(\iota_\ell\Pi)$ the (Frobenius semisimple) Weil--Deligne representation associated to $\iota_\ell\Pi$ via the local Langlands correspondence \cite{HT01}.

\if false

\begin{lem}\label{le:inertia}
Let $w$ be a nonarchimedean place of $F$.
\begin{enumerate}
  \item For every irreducible admissible representation $\Pi$ of $\GL_N(F_w)$, there exists an open subgroup $\rI_\Pi$ of $\rI_{F_w}$ such that for every rational prime $\ell$ and every isomorphism $\iota_\ell\colon\dC\xrightarrow{\sim}\ol\dQ_\ell$, the Weil--Deligne representation $\r{WD}(\iota_\ell\Pi)$ is trivial on $\rI_\Pi$.

  \item For every open subgroup $\rI$ of $\rI_{F_w}$, there exists an open compact subgroup $U_\rI$ of $\GL_N(F_w)$ such that for every irreducible admissible representation $\Pi$ of $\GL_N(F_w)$ satisfying that $\r{WD}(\iota_\ell\Pi)$ is trivial on $\rI$ for some rational prime $\ell$ and some isomorphism $\iota_\ell\colon\dC\xrightarrow{\sim}\ol\dQ_\ell$, we have $\Pi^{U_\rI}\neq\{0\}$.
\end{enumerate}
\end{lem}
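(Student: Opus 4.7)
The plan is to derive both parts from the local Langlands correspondence (LLC) of Harris--Taylor \cite{HT01}, using that the map $\Pi\mapsto\r{rec}(\Pi)$ is a canonical bijection defined already over $\CC$ which intertwines coefficient extensions: $\WD(\iota_\ell\Pi)=\iota_\ell\r{rec}(\Pi)$ for any isomorphism $\iota_\ell\colon\CC\xrightarrow{\sim}\ol\dQ_\ell$. Part (1) will then drop out from the smoothness of Weil group representations, while part (2) will require a uniform conductor bound via the conductor-matching theorem (itself a component of the LLC).

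First I would unpack (1). The LLC assigns to $\Pi$ a Frobenius-semisimple Weil--Deligne representation $\r{rec}(\Pi)=(r,N_{\r{mon}})$ over $\CC$, whose Weil group part $r\colon W_{F_w}\to\GL_N(\CC)$ is smooth; that is, $r\res_{\rI_{F_w}}$ factors through a finite quotient of $\rI_{F_w}$. I would then set $\rI_\Pi\coloneqq\Ker(r)\cap\rI_{F_w}$, which is thereby an open subgroup of $\rI_{F_w}$. For any $\iota_\ell$, the Weil--Deligne representation $\WD(\iota_\ell\Pi)=(\iota_\ell r,\iota_\ell N_{\r{mon}})$ has Weil group part $\iota_\ell r$, which is tautologically trivial on $\rI_\Pi$. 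This yields (1), and the subgroup $\rI_\Pi$ is visibly chosen independently of $\ell$ and $\iota_\ell$.

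For (2), the argument would use conductor matching. Given $\rI$ open in $\rI_{F_w}$, there exists $a\geq 0$ with $\rI\supseteq\rI_{F_w}^a$ in the upper-numbering filtration. Any irreducible admissible $\Pi$ whose $\WD(\iota_\ell\Pi)$ is trivial on $\rI$ (a condition independent of $\iota_\ell$) has $r$ trivial on $\rI_{F_w}^a$; the standard formula for the Artin conductor, together with the bound $\leq N$ on the monodromy contribution, then produces a uniform bound $a(\r{rec}(\Pi))\leq C$ with $C$ depending only on $\rI$ and $N$. The conductor-matching theorem, developed by Jacquet--Piatetski-Shapiro--Shalika and Henniart as part of the LLC, identifies $a(\r{rec}(\Pi))$ with the conductor exponent of $\Pi$, namely the smallest $c$ such that $\Pi$ has a nonzero vector fixed by
\[
K_1(c)\coloneqq\{g\in\GL_N(O_{F_w})\mid\text{the last row of }g\text{ is }(0,\ldots,0,1)\pmod{\varpi_w^c}\}.
\]
Setting $U_\rI\coloneqq K_1(C)$ then gives the required subgroup.

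The main obstacle I anticipate is invoking the conductor-matching theorem, which is a deep ingredient of the LLC and would be cited rather than reproved. All remaining steps are routine manipulations of the Artin conductor formula and the upper-numbering ramification filtration.
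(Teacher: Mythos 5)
Your part (1) is fine, and is essentially the same in substance as the paper's (the paper observes that $\r{WD}(\iota_\ell\Pi)\res_{\rI_{F_w}}$ is determined by the cuspidal support, hence independent of $\iota_\ell$, and takes the kernel); the only caveat is that the identity $\r{WD}(\iota_\ell\Pi)=\iota_\ell\,\r{rec}(\Pi)$ holds only after the usual half-integral twist normalization, but since an unramified twist does not change the restriction to inertia, this is harmless for the statement at hand.

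Part (2), however, has a genuine gap: the conductor--newvector theorem of Jacquet--Piatetski-Shapiro--Shalika applies only to \emph{generic} irreducible representations, whereas the lemma quantifies over all irreducible admissible representations. For non-generic $\Pi$ the conclusion $\Pi^{K_1(c)}\neq\{0\}$ for $c$ at least the conductor is simply false. Concretely, take $\Pi=\chi\circ\det$ on $\GL_N(F_w)$ with $\chi$ a ramified character whose conductor is small enough that its parameter $\chi|\cdot|^{(N-1)/2}\oplus\cdots\oplus\chi|\cdot|^{(1-N)/2}$ is trivial on the given $\rI$; then $\Pi$ satisfies the hypothesis of the lemma, yet $\diag(u,1,\dots,1)\in K_1(c)$ for every $c$ and every unit $u$, so $\Pi^{K_1(c)}=\{0\}$ for \emph{all} $c$. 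Hence $U_\rI=K_1(C)$ cannot work, and bounding the Artin conductor of $\r{rec}(\Pi)$ does not by itself bound the level of $\Pi$ via newvectors. The paper avoids this by working with Bernstein components rather than conductors: the restriction of $\r{WD}(\iota_\ell\Pi)$ to $\rI_{F_w}$ determines the inertial class of the cuspidal support of $\Pi$, so triviality on $\rI$ confines $\Pi$ to a finite set of Bernstein components (there are only finitely many homomorphisms $\rI_{F_w}\to\GL_N(\dC)$ trivial on $\rI$ up to conjugacy), and all irreducible representations in a fixed Bernstein component have a common level (via types, or a Jacquet-module/Iwahori-factorization argument) --- a statement valid for non-generic representations as well. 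If you want to keep a ``size of the parameter'' argument instead, you would need depth preservation for the local Langlands correspondence of $\GL_N$ together with Moy--Prasad theory to pass from bounded depth to a fixed principal congruence subgroup, which is a different (and deeper) input than conductor matching.
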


\begin{proof}
We will use the following well-known fact (see, for example, \cite{Yao}*{Lemma~3.2} for a proof): for two irreducible admissible representations $\Pi_1$ and $\Pi_2$ of $\GL_N(F_w)$, they are in the same Bernstein component if and only if $\r{WD}(\iota_\ell\Pi_1)\res_{\rI_{F_w}}\simeq\r{WD}(\iota_\ell\Pi_2)\res_{\rI_{F_w}}$.

For (1), let $(M,\sigma)$ be a cuspidal support of $\Pi$. Then the supercuspidal representation $\sigma$ determines a representation $\rho\colon\rI_{F_w}\to\GL_N(\dC)$. By the above fact, for rational prime $\ell$ and every isomorphism $\iota_\ell\colon\dC\xrightarrow{\sim}\ol\dQ_\ell$, $\r{WD}(\iota_\ell\Pi)\res_{\rI_{F_w}}$ is isomorphic to $\iota_\ell\rho$. Thus, (1) follows by taking $\rI_\Pi$ to be the kernel of $\rho$.

For (2), note that there are only finitely many homomorphisms $\rho\colon\rI_{F_w}\to\GL_N(\dC)$ that are trivial on $\rI$, up to conjugation. Thus, there is a finite set $C_\rI$ of Bernstein components, independent of $\ell$ and $\iota_\ell$, such that if an irreducible admissible representation $\Pi$ of $\GL_N(F_w)$ satisfies that the Weil--Deligne representation associated to $\iota_\ell\Pi$ via the local Langlands correspondence is trivial on $\rI$ from some $\ell$ and some $\iota_\ell$, then $\Pi$ must lie on a Bernstein component in $C_\rI$. Since irreducible admissible representations lying on a given Bernstein component have a common level, (2) follows.
\end{proof}

\fi

\begin{definition}\label{de:relevant}
We say that a (complex) representation $\Pi$ of $\GL_N(\dA_F)$ is \emph{RACSDC} (that is, regular algebraic conjugate self-dual cuspidal) if
\begin{enumerate}
  \item $\Pi$ is an irreducible cuspidal automorphic representation;

  \item $\Pi\circ\tc\simeq\Pi^\vee$;

  \item for every archimedean place $w$ of $F$, $\Pi_w$ is regular algebraic (in the sense of \cite{Clo90}*{Definition~3.12}).
\end{enumerate}
If $\Pi$ is RACSDC, then there exists a unique element $\xi_\Pi=(\xi_{\tau,1},\dots,\xi_{\tau,N})_\tau\in(\dZ^N_\leq)^{\Sigma_\infty}$, which we call the \emph{archimedean weights of $\Pi$}, satisfying
$\xi_{\tau,i}=-\xi_{\tau^\tc,N+1-i}$ for every $\tau$ and $i$, such that $\Pi_\tau$ (as a representation of $\GL_N(\dC)$) is isomorphic to the (irreducible) principal series representation induced by the characters
\[
(\arg^{1-N+2\xi_{\tau,1}},\arg^{3-N+2\xi_{\tau,2}},\dots,\arg^{N-3+2\xi_{\tau,N-1}},\arg^{N-1+2\xi_{\tau,N}}),
\]
where $\arg\colon\dC^\times\to\dC^\times$ is the \emph{argument character} defined by the formula $\arg(z)\coloneqq z/\sqrt{z\ol{z}}$.
\end{definition}

\begin{proposition}\label{pr:galois}
Let $\Pi$ be an RACSDC representation of $\GL_N(\dA_F)$ with the archimedean weights $\xi=\xi_\Pi$.
\begin{enumerate}
  \item For every place $w$ of $F$, $\Pi_w$ is tempered.

  \item For every rational prime $\ell$ and every isomorphism $\iota_\ell\colon\dC\xrightarrow{\sim}\ol\dQ_\ell$, there is a semisimple continuous homomorphism
      \[
      \rho_{\Pi,\iota_\ell}\colon\Gamma_F\to\GL_N(\ol\dQ_\ell),
      \]
      unique up to conjugation, satisfying that
      \begin{enumerate}
        \item for every nonarchimedean place $w$ of $F$, the Frobenius semisimplification of the associated Weil--Deligne representation of $\rho_{\Pi,\iota_\ell}\res_{\Gamma_{F_w}}$ is isomorphic to $\r{WD}(\iota_\ell\Pi_w|\det|_w^{\frac{1-N}{2}})$;

        \item for every place $w$ of $F$ above $\ell$, the representation $\rho_{\Pi,\iota_\ell}\res_{\Gamma_{F_w}}$ is de Rham (crystalline if $\Pi_w$ is unramified) with regular Hodge--Tate weights contained in the range $[a_\xi,b_\xi]$;

        \item $\rho_{\Pi,\iota_\ell}^\tc$ and $\rho_{\Pi,\iota_\ell}^\vee(1-N)$ are conjugate.
      \end{enumerate}
\end{enumerate}
\end{proposition}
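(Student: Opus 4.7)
The proposition is a compilation of deep results established over the last two decades on automorphic Galois representations for $\GL_N$ over a CM field; the plan is essentially to cite the appropriate literature and verify that each clause of (a)--(c) is covered.

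For (1), at archimedean places temperedness is immediate from the explicit principal-series description of $\Pi_\tau$ recorded in Definition~\ref{de:relevant}. At nonarchimedean places temperedness follows from the work of Caraiani (building on Shin, Clozel-Harris-Labesse, and Harris-Taylor), which proves the Ramanujan-type bound for any RACSDC representation of $\GL_N(\dA_F)$ without requiring any self-duality discriminant hypothesis.

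For (2), the construction of $\rho_{\Pi,\iota_\ell}$ proceeds through descent from $\GL_N/F$ to a suitable unitary group over $F^+$ and realization of the descended representation in the $\ell$-adic cohomology of a unitary Shimura variety. This goes back to Harris-Taylor for many cases, extended by Taylor-Yoshida, Shin, and Chenevier-Harris to cover every RACSDC representation, giving existence together with the local-global compatibility of (a) up to Frobenius semisimplification and the crystalline part of (b) for places where $\Pi$ is unramified. The sharper form of (a) at arbitrary nonarchimedean places -- that the associated Weil-Deligne representation, including the monodromy operator, matches $\r{WD}(\iota_\ell\Pi_w|\det|_w^{(1-N)/2})$ on the nose after Frobenius semisimplification -- is exactly the content of Caraiani's local-global compatibility theorem. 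The de~Rham property and Hodge-Tate weight bound at $\ell$-adic places in (b) (in both the unramified and ramified cases, for $F/F^+$ possibly ramified at $\ell$) is also due to Caraiani, combined with Barnet-Lamb-Gee-Geraghty-Taylor; the weight bound $[a_\xi,b_\xi]$ falls out of the explicit recipe in terms of $\xi$. Finally (c), the essential conjugate self-duality $\rho_{\Pi,\iota_\ell}^\tc\simeq\rho_{\Pi,\iota_\ell}^\vee(1-N)$, is forced by applying (a) at the (density one) unramified split places of $F$, at which both sides have matching Satake parameters thanks to $\Pi\circ\tc\simeq\Pi^\vee$, and then invoking Chebotarev and Brauer-Nesbitt; this argument also yields the uniqueness of $\rho_{\Pi,\iota_\ell}$ up to conjugation.

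The genuinely substantive step -- the one I would flag as the main obstacle were this not already in the literature -- is the sharp local-global compatibility at nonarchimedean places of arbitrary ramification, including places above $\ell$ and places where $F/F^+$ ramifies. This is what requires the full strength of Caraiani's work and is the reason one cannot simply quote Harris-Taylor. Everything else (temperedness at infinity, Hodge-Tate weight computation, conjugate self-duality, uniqueness) is then either formal or a routine Chebotarev argument. Thus the proof reduces to assembling these citations in the correct order and checking that the normalizations (the twist by $|\det|^{(1-N)/2}$ on the automorphic side versus the Tate twist $(1-N)$ on the Galois side) are consistent with the conventions fixed in \S\ref{ss:1}.
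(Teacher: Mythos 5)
Your proposal is correct and follows essentially the same route as the paper: the statement is proved by assembling the known results, with the construction due to Chenevier--Harris, temperedness and local-global compatibility (including at $\ell$-adic places) due to Caraiani, and (2c) together with uniqueness deduced from (2a) via Chebotarev. The only cosmetic difference is attribution of the de Rham/Hodge--Tate weight statement (the paper quotes \cite{CH13}*{Theorem~3.2.3} for (2b)), which does not affect the argument.
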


\begin{proof}
Part (1) is \cite{Car12}*{Theorem~1.2}. For (2), the Galois representation $\rho_{\Pi,\iota_\ell}$ is constructed in \cite{CH13}*{Theorem~3.2.3}; the local-global compatibility (2a) is obtained in \cite{Car12}*{Theorem~1.1} and \cite{Car14}*{Theorem~1.1}; (2b) is obtained in \cite{CH13}*{Theorem~3.2.3}; and (2c) follows from (2a) and the Chebotarev density theorem.
\end{proof}

\begin{definition}\label{de:weak_field}
Let $\Pi$ be an RACSDC representation of $\GL_N(\dA_F)$. We say that a subfield $E\subseteq\dC$ is a \emph{strong coefficient field} of $\Pi$ if $E$ is a number field; and for every prime $\lambda$ of $E$, there exists a continuous homomorphism
\[
\rho_{\Pi,\lambda}\colon\Gamma_F\to\GL_N(E_\lambda),
\]
necessarily unique up to conjugation, such that for every isomorphism $\iota_\ell\colon\dC\xrightarrow{\sim}\ol\dQ_\ell$ inducing the prime $\lambda$, $\rho_{\Pi,\lambda}\otimes_{E_\lambda}\ol\dQ_\ell$ and $\rho_{\Pi,\iota_\ell}$ are conjugate, where $\rho_{\Pi,\iota_\ell}$ is the homomorphism from Proposition \ref{pr:galois}(2).
\end{definition}

\begin{remark}\label{re:galois}
By \cite{CH13}*{Proposition~3.2.5}, a strong coefficient field of $\Pi$ exists when $\Pi$ is RACSDC.
\end{remark}

Let $\rV$ be a hermitian space over $F$ of rank $N$, and $\pi$ an irreducible admissible representation of $\rU(\rV)(\dA_{F^+})$. An \emph{automorphic base change} of $\pi$ is defined to be an automorphic representation $\BC(\pi)$ of $\GL_N(\dA_F)$ that is a finite isobaric sum of discrete automorphic representations such that $\BC(\pi)_v\simeq\BC(\pi_v)$ holds for all but finitely many nonarchimedean places $v$ of $F^+$ such that $\pi_v$ is unramified. By the strong multiplicity one property for $\GL_N$ \cite{PS79}, if $\BC(\pi)$ exists, then it is unique up to isomorphism. Moreover, for every nonarchimedean place $v$ of $F^+$ that is nonsplit in $F$, we have a notion of local base change, which is defined by \cite{Rog90} when $N\leq 3$ and by \cites{Mok15,KMSW} for general $N$.

\begin{proposition}\label{pr:arthur}
Take an RACSDC representation $\Pi$ of $\GL_N(\dA_F)$ with $\xi_\Pi=(\xi_\tau)_\tau$ the archimedean weights. Let $\rV$ be a hermitian space over $F$ of rank $N$ and $\pi=\otimes_v\pi_v$ a cuspidal automorphic representation of $\rU(\rV)(\dA_{F^+})$ such that $\Pi\simeq\BC(\pi)$. Then
\begin{enumerate}
  \item For every nonarchimedean place $v$ of $F^+$, $\BC(\pi_v)\simeq\Pi_v$.

  \item For every $\tau\in\Sigma_\infty$, $\pi_{\ul\tau}$ is a discrete series representation of Harish-Chandra parameter
      \[
      \{\tfrac{1-N}{2}+\xi_{\tau,1},\tfrac{3-N}{2}+\xi_{\tau,2},\dots,\tfrac{N-3}{2}+\xi_{\tau,N-1},\tfrac{N-1}{2}+\xi_{\tau,N}\}
      \]
      after we identity $\rU(\rV)(F_{\ul\tau})$ as a subgroup of $\GL_N(\dC)$ via $\tau\colon F\otimes_{F^+,\ul\tau}\dR\xrightarrow\sim\dC$.
\end{enumerate}
\end{proposition}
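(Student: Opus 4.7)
The plan is to invoke the endoscopic classification of automorphic representations of unitary groups---due to Rogawski \cite{Rog90} when $N\leq 3$ and to Mok \cite{Mok15} and Kaletha--Minguez--Shin--White \cite{KMSW} for general $N$---and to extract both assertions from the local-global compatibility built into it. By Proposition \ref{pr:galois}(1), the RACSDC representation $\Pi$ is tempered at every place; viewed as a discrete global $A$-parameter $\psi_\Pi$ for $\rU(\rV)$ through the endoscopic datum corresponding to $\Pi$, it is therefore simple and generic. The hypothesis $\Pi\simeq\BC(\pi)$ then places $\pi$ inside the global packet of $\psi_\Pi$, which, because the parameter is tempered, coincides with a global tempered $L$-packet.

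For (1), the endoscopic classification asserts that each local component $\pi_v$ lies in the local tempered $L$-packet attached to the local Langlands parameter $\Pi_v$. At a place $v$ that splits in $F$ into $w_1,w_2$ the group $\rU(\rV)(F^+_v)$ is identified with $\GL_N(F_{w_1})$, under which $\pi_v$ corresponds to $\Pi_{w_1}$, so that $\BC(\pi_v)\simeq\Pi_v$ is tautological from the definition of split-place base change. At a nonsplit nonarchimedean $v$, the local $L$-packet attached by \cite{Mok15} and \cite{KMSW} to a tempered conjugate self-dual parameter is characterized by the property that every member has local base change isomorphic to that parameter; applied to $\pi_v$ this yields $\BC(\pi_v)\simeq\Pi_v$.

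For (2), the same endoscopic classification (using the archimedean Langlands classification together with Shelstad's endoscopic character relations) places $\pi_{\ul\tau}$ in the local $L$-packet of $\Pi_\tau$. By Definition \ref{de:relevant}, $\Pi_\tau$ is the principal series of $\GL_N(\dC)$ induced from characters whose $i$-th exponent of $\arg$ equals $(2i-N-1)+2\xi_{\tau,i}$; since $\xi_{\tau,i}$ is nondecreasing in $i$ these exponents are strictly increasing, so $\Pi_\tau$ is a regular tempered parameter. The local $L$-packet therefore consists of discrete series, and reading the infinitesimal character off the exponents of $\arg$ (with the standard half-shift conversion) identifies the Harish-Chandra parameter as $\{\tfrac{1-N}{2}+\xi_{\tau,1},\ldots,\tfrac{N-1}{2}+\xi_{\tau,N}\}$ under the fixed embedding $\tau\colon F\otimes_{F^+,\ul\tau}\dR\xrightarrow{\sim}\dC$.

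The main obstacle is bibliographical rather than conceptual: one must verify that the non-quasi-split group $\rU(\rV)$ is covered (which requires passing to pure inner forms as in \cite{KMSW}) and that the local $L$-packets at every place---including ramified nonarchimedean and archimedean ones---are indeed characterized by the base change property used above, so that membership of $\pi_v$ in the packet forces $\BC(\pi_v)\simeq\Pi_v$. For $N\leq 3$ this is delivered cleanly by \cite{Rog90}; for general $N$ it follows by combining the quasi-split statements of \cite{Mok15} with the transfer to pure inner forms supplied by \cite{KMSW}.
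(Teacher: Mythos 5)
Your argument is correct and is essentially the paper's proof in expanded form: the paper simply cites \cite{KMSW}*{Theorem~1.7.1} for generic packets, which is exactly the endoscopic classification plus the temperedness of $\Pi$ (Proposition \ref{pr:galois}(1)) that you use to place $\pi_v$ in the tempered local packet of $\Pi_v$ at every place. Your additional unwinding of the split-place identification, the base-change characterization at nonsplit places, and the archimedean Harish-Chandra parameter is a faithful elaboration rather than a different route.
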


\begin{proof}
This follows from \cite{KMSW}*{Theorem~1.7.1} for generic packets.
\end{proof}

\begin{corollary}\label{co:ram}
Take an RACSDC representation $\Pi$ of $\GL_N(\dA_F)$. Let $\rV$ be a hermitian space over $F$ of rank $N$ that is even, and $\pi=\otimes_v\pi_v$ a cuspidal automorphic representation of $\rU(\rV)(\dA_{F^+})$ such that $\Pi\simeq\BC(\pi)$. If $v$ is a nonarchimedean place of $F^+$ that is inert in $F$ (with $w$ the unique place of $F$ above it) such that $\rV_v$ is not split and that $\pi_v$ has nonzero invariants under a special maximal open compact subgroup of $\rU(\rV)(F^+_v)$, then the monodromy operator of $\r{WD}(\iota_\ell\Pi_w)$ is conjugate to $(\begin{smallmatrix}1&1\\ 0 &1\end{smallmatrix})\oplus 1_{N-2}$ for every rational prime $\ell$ and every isomorphism $\iota_\ell\colon\dC\xrightarrow{\sim}\ol\dQ_\ell$.
\end{corollary}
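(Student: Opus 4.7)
My plan combines Proposition \ref{pr:arthur}(1) with a local analysis of base change for representations of the non-quasi-split even unitary group having a nonzero vector under a special maximal compact subgroup. By Proposition \ref{pr:arthur}(1), $\BC(\pi_v)\simeq\Pi_v$, and since $v$ is inert in $F$ with unique place $w$ above, the latter coincides with $\Pi_w$ as an admissible $\GL_N(F_w)$-representation. The task thus reduces to computing the monodromy of the Weil--Deligne representation attached to $\BC(\pi_v)$.

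Since $v$ is inert and $\rV_v$ is non-split with $N$ even, $\rU(\rV)(F^+_v)$ is the unique non-quasi-split inner form of the even unitary group. Its Bruhat--Tits building has no hyperspecial vertex, and a special maximal open compact subgroup $K$ is the stabilizer of a non-self-dual vertex. I would then invoke the compatibility of base change with the local Langlands correspondence for unitary groups, as provided by the endoscopic classification of \cite{Mok15} and \cite{KMSW}: the base change of an irreducible admissible representation $\pi_v$ with $\pi_v^K\neq 0$ is a fully induced isobaric sum of the form
\[
\chi_1\boxplus\cdots\boxplus\chi_{N-2}\boxplus(\St_2\otimes\chi)
\]
on $\GL_N(F_w)$, with $\chi_1,\ldots,\chi_{N-2},\chi$ unramified characters of $F_w^\times$; conjugate self-duality of $\Pi_w$ pins down the specific characters, but does not affect the monodromy.

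The Weil--Deligne representation of such an isobaric sum has inertia acting trivially, and its nilpotent monodromy operator is a single rank-$1$ Jordan block on the two-dimensional Steinberg summand together with zero on the $N-2$ character summands. The corresponding unipotent element (i.e., the identity plus the monodromy) is therefore conjugate to $\bigl(\begin{smallmatrix}1&1\\0&1\end{smallmatrix}\bigr)\oplus 1_{N-2}$, giving the claim. The main technical point is the base change identification in the middle step: it rests on the endoscopic classification and a careful matching of Satake-type data across the inner twist from the non-quasi-split form to $\GL_N$. Once that identification is in hand, the matrix form of the monodromy reads off directly from the Steinberg factor.
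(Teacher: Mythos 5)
Your overall route---reduce via Proposition \ref{pr:arthur}(1) to computing the monodromy of $\BC(\pi_v)\simeq\Pi_w$ and then read off a single $2\times2$ Jordan block from a Steinberg factor---is the paper's route, but the local claim you hang everything on has a genuine gap: it is false as stated. You assert that \emph{any} irreducible admissible representation $\pi_v$ of the non-quasi-split group $\rU(\rV)(F^+_v)$ with nonzero invariants under a special maximal open compact subgroup $K$ base-changes to $\chi_1\boxplus\cdots\boxplus\chi_{N-2}\boxplus(\St_2\otimes\chi)$ with unramified characters. The trivial representation of $\rU(\rV)(F^+_v)$ already contradicts this: it has nonzero $K$-invariants for every $K$, and its base change is an isobaric sum of unramified characters (the ``trivial'' parameter $|\cdot|^{\frac{N-1}{2}}\boxplus\cdots\boxplus|\cdot|^{\frac{1-N}{2}}$), whose Weil--Deligne representation has trivial monodromy. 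More generally, the existence of a $K$-fixed vector only pins down the Bernstein component, i.e.\ the cuspidal support $\((F_w^\times)^{r-1}\times\rU(\rV_2)(F^+_v),\chi\boxtimes\b{1}\)$ with $N=2r$ and $\rV_2$ the anisotropic rank-$2$ hermitian space; distinct constituents of the corresponding unramified principal series have parameters with different monodromy, so this inertial/support data alone cannot determine the shape of $\r{WD}(\iota_\ell\Pi_w)$.

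The missing ingredient is temperedness, and this is exactly where the paper's proof differs from yours. The paper first invokes Proposition \ref{pr:galois}(1) (Caraiani) to conclude that $\Pi_w$ is tempered, hence $\pi_v$ is tempered; only then does the $K$-fixed vector force $\pi_v$ to lie in the tempered induction from $\chi\boxtimes\b{1}$ with $\chi$ unitary unramified, so that the inducing data of $\Pi_w=\BC(\pi_v)$ is $\chi\boxtimes\St_2\boxtimes\chi^{-1}$. For a tempered representation of $\GL_N(F_w)$, which is an irreducible full induction of unitary discrete series, this inducing data determines the monodromy to be conjugate to $(\begin{smallmatrix}1&1\\ 0&1\end{smallmatrix})\oplus 1_{N-2}$. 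If you add the temperedness step (and correspondingly restrict your local base-change claim to tempered $\pi_v$), your argument closes up and becomes essentially the proof in the paper; without it, the step from ``$\pi_v^K\neq\{0\}$'' to ``the base change contains a Steinberg factor'' fails.
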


\begin{proof}
Write $N=2r$ for a positive integer $r$. By Proposition \ref{pr:arthur}, we know that $\Pi_w$ is isomorphic to $\BC(\pi_v)$. Since $\Pi_w$ is tempered by Proposition \ref{pr:galois}(1), $\pi_v$ is also tempered. Since $\pi_v$ has nonzero invariants under a special maximal open compact subgroup of $\rU(\rV)(F^+_v)$, the cuspidal support of $\pi_v$ is of the form $((F^\times)^{r-1}\times\rU(\rV_2)(F^+_v),\chi\boxtimes\b{1})$, where $\rV_2$ is the anisotropic hermitian space of rank $2$ over $F_w$ and $\chi$ is a unitary unramified character of $(F^\times)^{r-1}$. In particular, the cuspidal support of $\Pi_w$ is of the form $((F^\times)^{r-1}\times\GL_2(F_w)\times(F^\times)^{r-1},\chi\boxtimes\St_2\boxtimes\chi^{-1})$, where $\St_2$ denotes the Steinberg representation of $\GL_2(F_w)$. The corollary follows immediately.
\end{proof}

\section{Deformation}
\label{ss:3}

In this section, we fix an \emph{odd} rational prime $\ell$ and a subfield $E\subseteq\ol\dQ_\ell$ finite over $\dQ_\ell$. We denote by $\sO$ the ring of integers of $E$, by $\lambda$ its maximal ideal, and by $\kappa\coloneqq\sO/\lambda$ the residue field. Following \cite{CHT08}, we denote by $\sC_\sO^f$ the category of Artinian local rings over $\sO$ with residue field $k$, and by $\sC_\sO$ the category of Noetherian complete local rings over $\sO$ that are inverse limits of objects of $\sC_\sO^f$. For an object $R$ of $\sC_\sO$, we shall denote by $\fm_R$ its maximal ideal. For an $\sO$-valued character, we will use the same notation for its induced $R$-valued character for every object $R$ of $\sC_\sO$. Recall the character $\eta\colon\Gamma_{F^+}\to\{\pm1\}$ associated to the extension $F/F^+$ and the $\ell$-adic cyclotomic character $\epsilon_\ell\colon\Gamma_{F^+}\to\dZ_\ell^\times(\subseteq\sO^\times)$.

\subsection{Deformation problems}
\label{ss:deformation_problems}

In this subsection, we introduce the notion of deformation problems. Let $\tilde\Gamma$ be a topological group, and $\Gamma\subseteq\tilde\Gamma$ an open subgroup of index at most two.

\begin{notation}\label{no:deformation_pair}
We consider a pair $(\bar{r},\chi)$, where
\begin{itemize}
  \item $\bar{r}\colon\tilde\Gamma\to\sG_N(k)$ is a homomorphism,

  \item $\chi\colon\tilde\Gamma\to\sO^\times$ a continuous homomorphism, known as the \emph{similitude character},
\end{itemize}
subject to the relation $\bar{r}^{-1}(\GL_N(k)\times k^\times)=\Gamma$ and $\nu\circ\bar{r}=\chi$.
\end{notation}

The following definition slightly generalizes \cite{CHT08}*{Definition~2.2.1}.

\begin{definition}\label{de:lifting_deformation}
A \emph{lifting} of $\bar{r}$ to an object $R$ of $\sC_\sO$ is a continuous homomorphism $r\colon\tilde\Gamma\to\sG_N(R)$ satisfying $r\modulo\fm_R=\bar{r}$ and $\nu\circ r=\chi$. We say that two liftings are equivalent if they are conjugate by an element in $1+\rM_N(\fm_R)\subset\GL_N(R)\subset\sG_N(R)$. By a \emph{deformation} of $\bar{r}$, we mean an equivalence class of liftings of $\bar{r}$.\footnote{Strictly speaking, a lifting or a deformation of $\bar{r}$ depends on the similitude character $\chi$. But we choose to follow the terminology in \cite{CHT08} by not spelling the characters out, as the relevance on the similitude character is always clear from the context.}
\end{definition}

Now suppose that $\Gamma$ is topologically finitely generated. Then there exists a universal lifting
\[
r^\univ\colon\tilde\Gamma\to\sG_N(\sfR^\loc_{\bar{r}})
\]
of $\bar{r}$ to an object $\sfR^\loc_{\bar{r}}$ of $\sC_\sO$ such that, for every object $R$ of $\sC_\sO$, the set of liftings of $\bar{r}$ to $R$ is in natural bijection with $\Hom_{\sC_\sO}(\sfR^\loc_{\bar{r}},R)$. Since $\Gamma$ is topologically finitely generated, it is well-known that $\sfR^\loc_{\bar{r}}$ is Noetherian; and there exist natural isomorphisms
\[
\Hom_k\(\fm_{\sfR^\loc_{\bar{r}}}/(\fm^2_{\sfR^\loc_{\bar{r}}},\lambda),k\)
\simeq\Hom_{\sC_\sO}\(\sfR^\loc_{\bar{r}}, k[\varepsilon]/(\varepsilon^2)\)
\simeq\rZ^1(\tilde\Gamma,\ad\bar{r}),
\]
where $\rZ^1(\tilde\Gamma,\ad\bar{r})$ denotes the group of $1$-cocycles of $\tilde\Gamma$ with values in the adjoint representation $(\ad\bar{r},\rM_N(k))$. Let $\widehat\GL_{N,\sO}$ be the completion of the group scheme $\GL_{N,\sO}$ along its unit section, which acts naturally on $\Spf\sfR^\loc_{\bar{r}}$ by conjugation.

\if false

Explicitly, for $\phi\in\rZ^1(\tilde\Gamma,\ad\bar{r})$, the corresponding lifting of $\bar{r}$ to $k[\varepsilon]/(\varepsilon^2)$ is given by
\[
r_\phi(g)=(1+\varepsilon\phi(g))\bar{r}(g)
\]
for every $g\in\tilde\Gamma$. For two cocycles $\phi_1,\phi_2\in\rZ^1(\tilde\Gamma,\ad\bar{r})$, the corresponding liftings $r_{\phi_1}$ and $r_{\phi_2}$ are equivalent if and only if there exists an element $x\in\rM_N(k)$ such that
\[
\phi_1(g)-\phi_2(g)=(1-\ad\bar{r}(g))(x)
\]
for every $g\in\tilde\Gamma$. Thus, the equivalence classes of liftings of $\bar{r}$ to $k[\varepsilon]/(\varepsilon^2)$ is in natural bijection with $\rH^1(\tilde\Gamma,\ad\bar{r})$.

\fi

\begin{definition}\label{de:local_deformation_problem}
A \emph{local deformation problem} of $\bar{r}$ is a closed formal subscheme $\sD$ of $\Spf\sfR^\loc_{\bar{r}}$ that is stable under the action of $\widehat\GL_{N,\sO}$.
\end{definition}


\begin{definition}\label{de:tangent_space_deformation}
For a local deformation problem $\sD$ of $\bar{r}$, we define the \emph{tangent space of $\sD$}, denoted by $\rL(\sD)$, to be the image of the subspace
\[
\rL^1(\sD)\coloneqq
\Hom_k\(\fm_{\sfR^\loc_{\bar{r}}}/(\fm^2_{\sfR^\loc_{\bar{r}}},\sI,\lambda),k\)\subseteq\rZ^1(\tilde\Gamma,\ad\bar{r})
\]
under the natural map $\rZ^1(\tilde\Gamma,\ad\bar{r})\to\rH^1(\tilde\Gamma,\ad\bar{r})$, where $\sI\subseteq\sfR^\loc_{\bar{r}}$ is the closed ideal defining $\sD$.
\end{definition}

Note that we have the identity
\begin{equation}\label{eq:local_deformation_dimension}
\dim_k\rL^1(\sD)=N^2+\dim_k\rL(\sD)-\dim_k\rH^0(\tilde\Gamma,\ad\bar{r}).
\end{equation}

\begin{remark}\label{re:local_deformation}
Later, when we consider a nonarchimedean place $v$ of $F^+$ and take $\tilde\Gamma=\Gamma_{F^+_v}$, the subgroup $\Gamma$ we implicitly take is always $\Gamma_{F^+_v}\cap\Gamma_F$.
\end{remark}

Now we apply Notation \ref{no:deformation_pair} and Definition \ref{de:lifting_deformation} to the case where $\tilde\Gamma=\Gamma_{F^+}$ and $\Gamma=\Gamma_F$.

\begin{definition}\label{de:global_deformation_problem}
A \emph{global deformation problem} is a tuple $(\bar{r},\chi,\rS,\{\sD_v\}_{v\in\rS})$, where
\begin{itemize}
  \item $(\bar{r},\chi)$ is a pair as in Notation \ref{no:deformation_pair};

  \item $\rS$ is a finite set of nonarchimedean places of $F^+$ containing all $\ell$-adic places and those places $v$ such that $\bar{r}_v$ is ramified;

  \item $\sD_v$ is a local deformation problem of $\bar{r}_v$ (Remark \ref{re:local_deformation}) for each $v\in\rS$.
 \end{itemize}
\end{definition}

We take a global deformation problem $\sS\coloneqq(\bar{r},\chi,\rS,\{\sD_v\}_{v\in\rS})$. For $v\in\rS$, we denote by $\sI_v$ the closed ideal of $\sfR^\loc_{\bar{r}_v}$ defining $\sD_v$. For a subset $\rT\subseteq\rS$, put
\begin{align}\label{eq:global_deformation_problem}
\sfR^\loc_{\sS,\rT}\coloneqq\widehat{\bigotimes}_{v\in\rT}\sfR^\loc_{\bar{r}_v}/\sI_v,
\end{align}
where the completed tensor product is taken over $\sO$. Recall from \cite{CHT08}*{Definition~2.2.1} that a \emph{$\rT$-framed lifting} of $\bar{r}$ to an object $R$ of $\sC_\sO$ is a tuple $(r;\{\beta_v\}_{v\in\rT})$, where $r$ is a lifting of $\bar{r}$ to $R$ (Definition \ref{de:lifting_deformation}), and $\beta_v\in1+\rM_N(\fm_{R})$ for $v\in\rT$. Two $\rT$-framed liftings $(r;\{\beta_v\}_{v\in\rT})$ and $(r';\{\beta'_v\}_{v\in\rT})$ of $\bar{r}$ to $R$ are said to be equivalent, if there exists $x\in1+\rM_N(\fm_R)$ such that $r'=x^{-1}\circ r\circ x$ and $\beta_v'=x^{-1}\beta_v$ for every $v\in\rT$. A \emph{$\rT$-framed deformation} of $\bar{r}$ is an equivalence class of $\rT$-framed liftings of $\bar{r}$. We say that a $\rT$-framed lifting $(r;\{\beta_v\}_{v\in\rT})$ is \emph{of type $\sS$} if $r_v$ belongs to $\sD_v$ for every $v\in\rS$, and is unramified for every $v\notin\rS$. Note that being of type $\sS$ is a property invariant under the conjugate action by $1+\rM_N(\fm_R)$. Thus it makes sense to speak of $\rT$-framed deformation of type $\sS$. Let $\Def_\sS^{\Box_\rT}\colon\sC_\sO\to\Set$ be the functor that sends an object $R$ to the set of $\rT$-framed deformations of $\bar{r}$ to $R$ of type $\sS$.

Let $\Gamma_{F^+,\rS}$ be the Galois group of the maximal subextension of $\ol{F}/F^+$ that is unramified outside $\rS$. Recall the cohomology group $\rH^i_{\sS,\rT}(\Gamma_{F^+,\rS},\ad\bar{r})$ for $i\geq 0$ introduced after \cite{CHT08}*{Definition~2.2.7}. By \cite{CHT08}*{Lemma~2.3.4}, these are finite dimensional $k$-vector spaces, and satisfy $\rH^i_{\sS,\rT}(\Gamma_{F^+,\rS},\ad\bar{r})=0$ for $i>3$.

\begin{proposition}\label{pr:deformation_global}
Assume that $\bar{r}\res_{\Gamma_F}$ is absolutely irreducible. Then for every subset $\rT\subseteq\rS$, the functor $\Def_\sS^{\Box_\rT}$ is represented by a Noetherian ring $\sfR^{\Box_\rT}_\sS$ in $\sC_\sO$. Put $\sfR^\univ_\sS\coloneqq\sfR^{\Box_\emptyset}_\sS$.
\begin{enumerate}
  \item There is a canonical isomorphism
      \[
      \Hom_k\(\fm_{\sfR^{\Box_\rT}_\sS}/(\fm^2_{\sfR^{\Box_\rT}_\sS},\lambda,\fm_{\sfR^\loc_{\sS,\rT}}),k\)
      \simeq\rH^1_{\sS,\rT}(\Gamma_{F^+,\rS},\ad\bar{r}),
      \]
      where we regard $\fm_{\sfR^\loc_{\sS,\rT}}$ as its image under the tautological homomorphism $\sfR^\loc_{\sS,\rT}\to\sfR^{\Box_\rT}_\sS$. Moreover, if $\rH^2_{\sS,\rT}(\Gamma_{F^+,\rS},\ad\bar{r})=0$ and for $v\in\rS\setminus\rT$, $\sD_v$ is formally smooth over $\sO$, then $\sfR^{\Box_\rT}_\sS$ is a power series ring over $\sfR^\loc_{\sS,\rT}$ in $\dim_k\rH^1_{\sS,\rT}(\Gamma_{F^+,\rS},\ad\bar{r})$ variables.

  \item The choice of a lifting $r^\univ_\sS\colon\Gamma_{F^+}\to\sG_N(\sfR^\univ_\sS)$ in the universal deformation determines an extension of the tautological homomorphism $\sfR^\univ_\sS\to\sfR^{\Box_\rT}_\sS$ to an isomorphism
      \[
      \sfR^\univ_\sS[[X_{v;i,j}]]_{v\in\rT;1\leq i,j\leq N}\xrightarrow{\sim}\sfR^{\Box_\rT}_\sS
      \]
      such that, for every $v\in\rT$, the universal frame at $v$ is given by $\beta_v=1+(X_{v;i,j})_{1\leq i,j\leq N}$.
\end{enumerate}
\end{proposition}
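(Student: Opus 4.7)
The plan is to follow the Schlessinger--Mazur--CHT paradigm. For representability of $\Def^{\Box_\rT}_\sS$, since $\bar{r}\res_{\Gamma_F}$ is absolutely irreducible and $\Gamma_{F^+,\rS}$ is topologically finitely generated, Schlessinger's criteria apply and yield a Noetherian representing ring $\sfR^{\Box_\rT}_\sS$ in $\sC_\sO$. The tangent-space identification in (1) is a direct cocycle computation: elements of $\Hom_k(\fm_{\sfR^{\Box_\rT}_\sS}/(\fm^2_{\sfR^{\Box_\rT}_\sS},\lambda,\fm_{\sfR^\loc_{\sS,\rT}}),k)$ correspond to $\rT$-framed deformations of type $\sS$ to $k[\varepsilon]/(\varepsilon^2)$, quotiented by those whose tangent data at places in $\rT$ are already accounted for by their local lifting rings, and unwinding the definitions recovers $\rH^1_{\sS,\rT}(\Gamma_{F^+,\rS},\ad\bar{r})$ exactly as introduced after \cite{CHT08}*{Definition~2.2.7}.

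The formal smoothness claim follows from standard obstruction theory: obstructions to lifting a $\rT$-framed deformation of type $\sS$ along a small extension $A'\twoheadrightarrow A$ live in $\rH^2_{\sS,\rT}(\Gamma_{F^+,\rS},\ad\bar{r})$, where the assumed formal smoothness of each $\sD_v$ for $v\in\rS\setminus\rT$ eliminates the local obstruction contributions. When this group vanishes the morphism $\sfR^\loc_{\sS,\rT}\to\sfR^{\Box_\rT}_\sS$ is formally smooth in $\sC_\sO$, and the tangent-space computation of (1) forces it to be a power series in $\dim_k\rH^1_{\sS,\rT}(\Gamma_{F^+,\rS},\ad\bar{r})$ variables.

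For part (2), a fixed universal lifting $r^\univ_\sS$ extends the tautological map to a well-defined homomorphism $\sfR^\univ_\sS[[X_{v;i,j}]]\to\sfR^{\Box_\rT}_\sS$ that sends a pair consisting of a point $f\colon\sfR^\univ_\sS\to A$ and framings $\beta_v=1+(X_{v;i,j})$ to the class of $(f_*r^\univ_\sS,\{\beta_v\})$. Surjectivity on $A$-valued points for $A\in\sC_\sO^f$ is immediate: any $\rT$-framed deformation $(r,\{\beta_v\})$ to $A$ can be conjugated by some $x\in 1+\rM_N(\fm_A)$ so that $x^{-1}rx=f_*r^\univ_\sS$ for a unique $f$. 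For injectivity, the key and most subtle step is to show that the centralizer of $f_*r^\univ_\sS$ inside $1+\rM_N(\fm_A)\subset\sG_N(A)$ is trivial; I would argue by induction on the $\fm_A$-adic filtration, noting that each graded quotient of a centralizing element lies in $\rH^0(\Gamma_{F^+},\ad\bar{r})$, which vanishes because $\Gamma_F$-invariance and absolute irreducibility of $\bar{r}\res_{\Gamma_F}$ force scalar form, while $\fc$ acts on the scalar subspace of $\ad\bar{r}$ by $\lambda\mapsto-\lambda$, so only $\lambda=0$ survives in odd characteristic. This centralizer vanishing is the main obstacle, and it is exactly where the odd-characteristic hypothesis and the $\sG_N$-formalism (rather than plain $\GL_N$) combine to rule out the scalar ambiguity that would otherwise impose a relation among the framing variables.
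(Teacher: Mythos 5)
Your proposal is correct and takes essentially the same route as the paper, which simply invokes \cite{CHT08}*{Proposition~2.2.9 \& Corollary~2.2.13} and notes that those arguments apply verbatim when nonsplit places are allowed: your Schlessinger-type representability, the cocycle/obstruction computation of the relative tangent space, and the framed-versus-unframed comparison via triviality of the centralizer (using that $\fc$ acts by $-1$ on scalars, so $\rH^0(\Gamma_{F^+},\ad\bar{r})=0$ for odd $\ell$, which is exactly what gives the clean power series in $|\rT|N^2$ variables in (2)) are precisely the content of those proofs. One small correction: $\Gamma_{F^+,\rS}$ is not known to be topologically finitely generated; Noetherianness of $\sfR^{\Box_\rT}_\sS$ instead follows from Mazur's finiteness condition (finiteness of $\rH^1(\Delta,\dF_\ell)$ for open subgroups $\Delta$), which holds by Hermite--Minkowski, so the conclusion is unaffected.
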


\begin{proof}
These are exactly \cite{CHT08}*{Proposition~2.2.9 \& Corollary~2.2.13} except that they consider only local deformation problems at split places (that is, they assume that all places in $\rS$ are split in $F$). However, the same argument can be applied to the general case without change.
\end{proof}

\subsection{Fontaine--Laffaille deformations}
\label{ss:fontaine_laffaille}

In this subsection, we study Fontaine--Laffaille deformations of at $\ell$-adic places. We take a place $v$ of $F^+$ above $\ell$; and let $w$ be the place of $F$ above $v$ induced by the inclusion $F\subseteq\ol{F}^+_v$. We assume that $\ell$ is unramified in $F$, and denote by $\sigma\in\Gal(F_w/\dQ_\ell)$ the absolute Frobenius element.

\begin{assumption}\label{as:fl_large}
The field $E$ contains the image of every embedding of $F_w$ into $\ol\dQ_\ell$.
\end{assumption}

We first assume that $E$ satisfies Assumption \ref{as:fl_large}. Put $\Sigma_w\coloneqq\Hom_{\dZ_\ell}(O_{F_w},\sO)$. Following \cite{CHT08}, we use a covariant version of the Fontaine--Laffaille theory \cite{FL82}. Let $\MF_{\sO,w}$ be the category of $O_{F_w}\otimes_{\dZ_\ell}\sO$-modules $M$ of finite length equipped with
\begin{itemize}
  \item a decreasing filtration $\{\Fil^iM\}_{i\in\dZ}$ by $O_{F_w}\otimes_{\dZ_\ell}\sO$-submodules that are $O_{F_w}$-direct summands, satisfying $\Fil^0M=M$ and $\Fil^{\ell-1}M=0$, and

  \item a Frobenius structure, that is, $\sigma\otimes 1$-linear maps $\Phi^i\colon\Fil^iM\to M$ for $i\in\dZ$, satisfying the relations $\Phi^i\res_{\Fil^{i+1}M}=\ell\Phi^{i+1}$ and $\sum_{i\in\dZ}\Phi^i\Fil^iM=M$.
\end{itemize}
Let $\MF_{k,w}$ be the full subcategory of $\MF_{\sO,w}$ of objects that are annihilated by $\lambda$. For every integer $b$ satisfying $0\leq b\leq \ell-2$, let $\MF_{\sO,w}^{[0,b]}$ be the full subcategory of $\MF_{\sO,w}$ consisting of objects $M$ satisfying $\Fil^{b+1}M=0$. In particular, we have $\MF_{\sO,w}^{[0,\ell-2]}=\MF_{\sO,w}$ by definition.

For an object $M$ of $\MF_{\sO,w}$, there is canonical decomposition
\[
M=\bigoplus_{\tau\in\Sigma_w}M_\tau,
\]
where $M_\tau\coloneqq M\otimes_{O_{F_w}\otimes_{\dZ_\ell}\sO,\tau\otimes 1}\sO$. Then we have $\Fil^iM=\bigoplus_{\tau\in\Sigma_w}\Fil^iM_\tau$ with $\Fil^iM_\tau=M_\tau\cap\Fil^iM$, and that $\Phi^i$ induces $\sO$-linear maps
\[
\Phi^i_\tau\colon\Fil^iM_\tau\to M_{\tau\circ\sigma^{-1}}.
\]
We put
\[
\gr^iM_\tau\coloneqq\Fil^iM_\tau/\Fil^{i+1}M_{\tau},\quad
\gr^\bullet M_\tau\coloneqq\bigoplus_i\gr^iM_\tau,\quad
\gr^\bullet M\coloneqq\bigoplus_{\tau\in\Sigma_w} \gr^\bullet M_\tau.
\]
We define the set of \emph{$\tau$-Fontaine--Laffaille weights} of $M$ to be
\[
\HT_\tau(M)\coloneqq\{i\in \dZ \res \gr^iM_\tau\neq 0\}.
\]
We say that $M$ has \emph{regular Fontaine--Laffaille weights} if $\gr^iM_\tau$ is generated over $\sO$ by at most one element for every $\tau\in\Sigma_w$ and every $i\in\dZ$.

Let $\sO[\Gamma_{F_w}]^\flg$ be the category of $\sO$-modules of finite length equipped with a continuous action of $\Gamma_{F_w}$. In \cite{CHT08}*{2.4.1}, the authors defined an exact fully faithful, covariant $\sO$-linear functor
\[
\bG_w\colon\MF_{\sO,w}\to\sO[\Gamma_{F_w}]^\flg
\]
whose essential image is closed under taking sub-objects and quotient objects. The length of an object $M$ in $\MF_{\sO,w}$ as an $\sO$-module equals $[F_w:\dQ_\ell]$ times the length of $\bG_w(M)$ as an $\sO$-module. For two objects $M_1,M_2$ of $\MF_{\sO,w}$, we have a canonical isomorphism
\[
\Hom_{\MF_{\sO,w}}(M_1,M_2)\xrightarrow{\sim}\rH^0(F_w,\Hom_\sO(\bG_w(M_1),\bG_w(M_2)))
\]
and a canonical injective map
\[
\Ext^1_{\MF_{\sO,w}}(M_1,M_2)\hookrightarrow\Ext^1_{\sO[\Gamma_{F_w}]^\flg}(\bG_w(M_1),\bG_w(M_2)),
\]
where the target is canonically isomorphic to $\rH^1(F_w,\Hom_k(\bG_w(M_1),\bG_w(M_2)))$ if $M_1$ and $M_2$ are both objects of $\MF_{k,w}$.

\begin{example}\label{ex:fl_modules}
For an integer $b$ satisfying $0\leq b\leq \ell-2$ and an object $R$ of $\sC_\sO^f$, we have an object $R\{b\}$ of $\MF_{\sO,w}$ defined as follows: the underlying $O_{F_w}\otimes_{\dZ_\ell}\sO$-module is simply $(O_{F_w}\otimes_{\dZ_\ell}R)e_b$, with the filtration given by
\[
\Fil^iR\{b\}=
\begin{dcases}
(O_{F_w}\otimes_{\dZ_\ell}R)e_b & \text{if $i\leq b$;}\\
0 & \text{if $i>b$.}
\end{dcases}
\]
Finally, the Frobenius structure is determined by $\Phi^b(e_b)=e_b$. Then we have
\[
\bG_w(R\{b\})\simeq R(-b)\res_{\Gamma_{F_w}}
\]
as $\sO[\Gamma_{F_w}]$-modules.
\end{example}

\begin{construction}\label{cs:fl_modules}
We construct a functor $\obj^\sigma\colon\MF_{\sO,w}\to\MF_{\sO,w}$ as follows: for an object $M$ of $\MF_{\sO,w}$, the underlying $O_{F_w}\otimes_{\dZ_\ell}\sO$-module of $M^\sigma$ is $O_{F_w}\otimes_{O_{F_w},\sigma}M$ with the induced filtration and Frobenius structure. Then we have $M^\sigma_\tau=M_{\tau\circ\sigma^{-1}}$ for every $\tau\in\Sigma_w$, and that $\bG_w(M^\sigma)$ is isomorphic to $\bG_w(M)$ but with the action of $\Gamma_{F_w}$ twisted by the absolute Frobenius of $F_w$: if we denote by $\rho$ and $\rho_\sigma$ the actions of $\Gamma_{F_w}$ on $\bG_w(M)$ and $\bG_w(M^\sigma)$, respectively, then they satisfy
\[
\rho_\sigma(g)=\rho(\tilde\sigma^{-1}g\tilde\sigma),
\]
where $\tilde\sigma\in\Gal(\ol{F}^+_v/\dQ_\ell)$ is a lift of the absolute Frobenius.

We now let $\obj^\tc\colon\MF_{\sO,w}\to\MF_{\sO,w}$ be the $[F^+_v:\dQ_\ell]$-th iteration of the functor $\obj^\sigma$ constructed above.
\end{construction}

For an object $R$ of $\sC_\sO^f$ and an integer $b$ with $0\leq b\leq\ell-2$, let $\MF_{\sO,w}^{[0,b]}(R)$ be the full subcategory of  $\MF_{\sO,w}^{[0,b]}$ consisting of objects $M$ such that $M$ is finite free over $O_{F_w}\otimes_{\dZ_{\ell}}R$ and that the $O_{F_w}\otimes_{\dZ_{\ell}}R$-submodule $\Fil^iM$ is a direct summand for every $i$. Let $M$ be an object of $\MF_{\sO,w}^{[0,b]}(R)$. Then $\bG_w(M)$ is finite free over $R$. Thus $\bG_w$ induces a functor $\bG_w\colon\MF_{\sO,w}^{[0,b]}(R)\to R[\Gamma_{F_w}]^{\r{f.r.}}$, where $R[\Gamma_{F_w}]^{\r{f.r.}}$ denotes the category of finite free $R$-modules equipped with a continuous action by $\Gamma_{F_w}$. We have a functor
\[
\obj^\vee\{b\}\colon\MF_{\sO,w}^{[0,b]}(R)^\op\to\MF_{\sO,w}^{[0,b]}(R)
\]
defined as follows: for an object $M$ of $\MF_{\sO, w}^{[0,b]}(R)$, $M^\vee\{b\}$ is the object of $\MF_{\sO,w}^{[0,b]}(R)$ such that:
\begin{itemize}
  \item its underlying $O_{F_w}\otimes_{\dZ_{\ell}}R$-module is $\Hom_{O_{F_w}\otimes_{\dZ_{\ell}}R}(M,O_{F_w}\otimes_{\dZ_{\ell}}R)$,

  \item $\Fil^iM^\vee\{b\}=\Hom_{O_{F_w}\otimes_{\dZ_{\ell}}R}(M/\Fil^{b+1-i}M,O_{F_w}\otimes_{\dZ_{\ell}}R)$,

  \item for every $f\in \Fil^iM^{\vee}\{b\}$ and every $m\in\Fil^jM$, we have
	  \[
      \Phi^i(f)(\Phi^j(m))=
      \begin{dcases}
	  \ell^{b-i-j} f(m)^\sigma &\text{if $i+j\leq b$};\\
	  0 &\text{if $i+j>b$}.
	  \end{dcases}
      \]
\end{itemize}
It is clear that $M^\vee\{b\}$ is a well-defined object of $\MF_{\sO,w}^{[0,b]}(R)$ (see \cite{CHT08}*{Page~34}), and that  $\bG_w(M^\vee\{b\})=\bG_w(M)^\vee(-b)$.
		
Now suppose that we have an isomorphism of $R[\Gamma_{F_w}]$-modules $\bG_w(M^\tc)\simeq\bG_w(M)^\vee(-b)$. Since the functor $\bG_w$ is fully faithful, giving such an isomorphism is equivalent to giving an isomorphism $M^\tc\simeq M^\vee\{b\}$ in $\MF_{\sO,w}^{[0,b]}(R)$, or equivalently an $O_{F_w}\otimes_{\dZ_\ell}R$-bilinear perfect pairing
\begin{align}\label{eq:pairing}
\langle\;, \;\rangle \colon M^\tc\times M  \to  O_{F_w}\otimes_{\dZ_\ell}R,
\end{align}
such that the induced $R$-bilinear perfect pairings $\langle\;,\;\rangle_\tau\colon M_{\tau^\tc}\times M_\tau\to R$ for $\tau\in\Sigma_w$ satisfy the following conditions:
\begin{enumerate}
  \item For every $i,j\in\dZ$, every $x\in\Fil^iM_\tau$, and every $y\in\Fil^jM_\tau$, $\langle\Phi^i_{\tau^\tc}x,\Phi^j_\tau y\rangle_{\tau\circ\sigma^{-1}}$ equals $\ell^{b-i-j}\langle x, y\rangle_\tau$ (resp.\ $0$) if $i+j\leq b$ (resp.\ $i+j> b$).
			
  \item For every $i\in\dZ$, the annihilator of $\Fil^iM_\tau$ under $\langle\;,\;\rangle_\tau$ is $\Fil^{b+1-i}M_{\tau^\tc}$; in particular, $\langle\;,\;\rangle_\tau$ induces an $R$-linear isomorphism $\gr^iM_\tau\simeq\Hom_R(\gr^{b-i}M_{\tau^\tc},R)$.
\end{enumerate}

\if false

Take an object $M$ of $\MF_{\sO,w}$. Suppose that $M$ is finite free over $O_{F_w}\otimes R$ for some object $R$ of $\sC_\sO^f$. Then giving an isomorphism $M\simeq\rD^{[0,b]}(M^\tc)$ is equivalent to giving a perfect pairing
\[
\langle\;, \;\rangle \colon M^\tc\times M  \to  R\{b\}
\]
in the category $\MF_{\sO,w}$, where $R\{b\}$ is the object in Example \ref{ex:fl_modules}. The latter is equivalent to giving, for each $\tau\in\Sigma_w$, an $R$-bilinear perfect pairing $\langle\;,\;\rangle_\tau\colon M_{\tau^\tc}\times M_\tau\to(\dQ_\ell/\dZ_\ell)\otimes R$ satisfying that
\begin{enumerate}
  \item for every $i,j\in\dZ$ and every $x\in\Fil^iM_\tau$ and $y\in\Fil^jM_\tau$, $\langle\Phi^i_{\tau^\tc}x,\Phi^j_\tau y\rangle_\tau$ equals $\ell^{b-i-j}\langle x, y\rangle_\tau$ (resp.\ $0$) if $i+j\leq b$ (resp.\ $i+j> b$); and

  \item for every $i\in\dZ$, the annihilator of $\Fil^iM_\tau$ under $\langle\;,\;\rangle_\tau$ is $\Fil^{b+1-i}M_{\tau^\tc}$; in particular, $\langle\;,\;\rangle_\tau$ induces an $R$-linear isomorphism $\gr^iM_\tau\simeq\Hom_R(\gr^{b-i}M_{\tau^\tc},(\dQ_\ell/\dZ_\ell)\otimes R)$.
\end{enumerate}

\fi

From now on, $E$ will not necessarily be subject to Assumption \ref{as:fl_large}.

\begin{definition}\label{de:crystalline_fl}
Let $R$ be an object of $\sC_\sO$, and $\rho\colon\Gamma_{F_w}\to\GL_N(R)$ a continuous representation.
\begin{enumerate}
  \item Let $a,b$ be integers satisfying $0\leq b-a\leq \ell-2$. For $E$ satisfying Assumption \ref{as:fl_large}, we say that $\rho$ is \emph{crystalline with (regular) Fontaine--Laffaille weights in $[a,b]$} if, for every quotient $R'$ of $R$ in $\sC_\sO^f$, $\rho(a)\otimes_RR'$ lies in the essential image of the functor $\bG_w\colon\MF_{\sO,w}^{[0,b-a]}(R')\to\cO[\Gamma_{F_w}]^\flg$ (and that $\bG_w^{-1}(\rho(a)\otimes_RR')$ has regular Fontaine--Laffaille weights).

  \item For $E$ in general, we say that $\rho$ is \emph{regular Fontaine--Laffaille crystalline} if there exists a finite unramified extension $E_\dag$ of $E$ contained in $\ol\dQ_\ell$ that satisfies Assumption \ref{as:fl_large} with the ring of integer $\sO_\dag$, such that $\rho\otimes_\sO\sO_\dag$ is crystalline with regular Fontaine--Laffaille weights in $[a,b]$ in the sense of (1) for some integers $a,b$ satisfying $0\leq b-a\leq \ell-2$.
\end{enumerate}
\end{definition}

Now we consider a pair $(\bar{r},\chi)$ from Notation \ref{no:deformation_pair} with $\tilde\Gamma=\Gamma_{F^+_v}$ and $\Gamma=\Gamma_{F^+_v}\cap\Gamma_F=\Gamma_{F_w}$.

\begin{definition}\label{de:deformation_fl}
Suppose that $\bar{r}^\natural$ is regular Fontaine--Laffaille crystalline. We define $\sD^\FL$ to be the local deformation problem of $\bar{r}$ that classifies the liftings $r\colon\Gamma_{F^+_v}\to\sG_N(R)$ of $\bar{r}$ to objects $R$ of $\sC_\sO$ such that $r^\natural$ is regular Fontaine--Laffaille crystalline.
\end{definition}

\begin{lem}\label{le:tangent_space_fl}
Suppose that $\bar{r}^\natural$ is regular Fontaine--Laffaille crystalline and that $\chi=\eta_v^\mu\epsilon_{\ell,v}^c$ for some $c\in\dZ$ and $\mu\in\dZ/2\dZ$. Then
\[
\dim_k\rL(\sD^\FL)-\dim_k\rH^0(F_v^+,\ad\bar{r})=[F_v^+:\dQ_\ell]\cdot\frac{N(N-1)}{2}.
\]
\end{lem}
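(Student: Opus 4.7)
The plan is to reformulate the asserted identity as a tangent-space computation inside the category $\MF_{\sO,w}$, and then verify it by an explicit count at each $\tau\in\Sigma_w$. First I would reduce to the case that $E$ satisfies Assumption \ref{as:fl_large}: both sides of the identity scale the same way under a finite unramified base change of $\sO$, so this reduction is harmless. By \eqref{eq:local_deformation_dimension} the lemma is then equivalent to the cleaner identity
\[
\dim_k\rL^1(\sD^\FL)=N^2+[F^+_v:\dQ_\ell]\cdot\frac{N(N-1)}{2},
\]
so it is enough to compute the tangent space to the universal framed Fontaine--Laffaille deformation ring.

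Next, via the fully faithful functor $\bG_w$, the regular FL-crystalline condition on $r^\natural$ becomes a first-order deformation, inside $\MF_{\sO,w}^{[0,b-a]}$, of the FL module $\bar M$ corresponding to $\bar r^\natural(a)$, where $[a,b]$ is the range of FL weights of $\bar r^\natural$. The hypothesis $\chi=\eta_v^\mu\epsilon_{\ell,v}^c$ ensures that $\chi$ is itself crystalline (with $\eta_v$ unramified, since $\ell$ is unramified in $F$), and the conjugate self-dual structure on $\bar r$ therefore endows $\bar M$ with a perfect pairing of the shape \eqref{eq:pairing}. In this way $\rL^1(\sD^\FL)$ is identified with the space of first-order deformations of the pair $(\bar M,\langle\,,\,\rangle)$ in the paired FL category.

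The count now proceeds by decomposing according to $\tau\in\Sigma_w$: since $\bar M$ has regular FL weights, each $\gr^i\bar M_\tau$ is at most one-dimensional, so the filtration on a lift of $\bar M_\tau$ varies in an affine space of dimension $N(N-1)/2$, while the lifted Frobenius is then uniquely determined up to the overall framing, which contributes an unhalved $N^2$. When $v$ splits in $F$ the self-duality constraint is vacuous at $v$, and the standard regular FL tangent space computation for $\bar r^\natural$ yields the asserted formula with $[F_w:\dQ_\ell]=[F^+_v:\dQ_\ell]$. When $v$ is inert, the pairing $\langle\,,\,\rangle_\tau$ identifies the deformation data at $\tau^\tc$ with those at $\tau$, so only $[F^+_v:\dQ_\ell]=[F_w:\dQ_\ell]/2$ of the $\tau$-contributions are independent, each still contributing $N(N-1)/2$, which again yields the formula after adding $N^2$. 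The main obstacle is to verify in the inert case that the pairing imposes exactly the $\tau\leftrightarrow\tau^\tc$ identification on first-order deformations and nothing more---in particular that no symmetric versus anti-symmetric dichotomy appears for ``self-paired'' embeddings; this cannot happen since $F_w/F^+_v$ is a nontrivial quadratic extension and hence the involution $\tau\mapsto\tau^\tc$ on $\Sigma_w$ is fixed-point-free.
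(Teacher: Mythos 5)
Your outline is essentially the paper's own argument: reduce to Assumption \ref{as:fl_large} (after a twist putting the weights in $[0,b]$), dispose of the split case by the standard regular Fontaine--Laffaille computation of \cite{CHT08}*{Corollary~2.4.3}, and in the inert case use the pairing \eqref{eq:pairing} to show that the $\tau^\tc$-components of the deformation data are determined by the $\tau$-components, so that each orbit of the (indeed fixed-point-free) involution $\tau\mapsto\tau^\tc$ contributes $N(N-1)/2$ once and the Galois-side framing contributes an unhalved $N^2$; this is exactly how the paper replaces $[F_w:\dQ_\ell]$ by $[F^+_v:\dQ_\ell]$. The one step you state too loosely is the per-$\tau$ count: given a first-order lift of the filtration, the Frobenius lift is \emph{not} uniquely determined up to framing --- its lifts form a torsor under $\Hom_{O_{F_w}\otimes_{\dZ_\ell}\sO,\sigma\otimes 1}(\gr^\bullet M,M)$, of dimension $N^2$ per embedding --- and the cancellation of this ambiguity against the filtration-preserving endomorphisms (dimension $N(N+1)/2$ per embedding, by regularity) is precisely what the four-term exact sequence \eqref{eq:extension_fl} of \cite{CHT08}*{Lemma~2.4.2} organizes; the paper's inert-case proof is a ``$+$''-decorated version of that sequence in which both middle terms halve, and to execute your plan you would in effect have to reproduce it. Relatedly, your claim that the pairing imposes ``nothing more'' than the $\tau\leftrightarrow\tau^\tc$ identification needs, besides fixed-point-freeness, the observation that the two adjointness conditions attached to an orbit $\{\tau,\tau^\tc\}$ are equivalent rather than independent; this follows from the symmetry of \eqref{eq:pairing} inherited from the $\sG_N$-structure of $\bar{r}$ (the relation $B\tp{B}^{-1}=\mu_B\chi(\gamma)^{-1}\rho(\gamma^2)$ in Lemma \ref{le:representation_selfdual}(2)), and is worth making explicit.
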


\begin{proof}
For this lemma, we may assume that $E$ satisfies Assumption \ref{as:fl_large}. After replacing $\bar{r}$ by $\bar{r}(a)$ for some integer $a$, we may assume that $\bar{r}^\natural$ is crystalline with regular Fontaine--Laffaille weights in $[0,b]$ with $0\leq b\leq\ell-2$. In this case, $\sD^\FL$ simply classifies liftings $r$ such that $r^\natural$ is crystalline with Fontaine--Laffaille weights in $[0,b]$.

Suppose first that $v$ is split in $F$. Then we have $F_w=F^+_v$, and that a lifting $r$ in $\sD^\FL(R)$ of $\bar{r}$ is of the form $r=(\rho,\epsilon_{\ell,v}^c)\colon\Gamma_{F_w}\to\GL_N(R)\times R^\times$ such that for every Artinian quotient $R'$ of $R$,  $\rho\otimes_R R'$ lies in the essential image of the functor $\bG_w$. Then the lemma is exactly \cite{CHT08}*{Corollary~2.4.3}.

Suppose now that $v$ is inert in $F$. Then we must have $c=-b$. Denote by $\Gamma_{w/v}$ the Galois group of the quadratic extension $F_w/F^+_v$. Then the restriction map induces an isomorphism
\[
\rH^1(F_v^+,\ad\bar{r})\xrightarrow{\sim}\rH^1(F_w,\ad\bar{r})^{\Gamma_{w/v}}.
\]
Put $M\coloneqq\bG_w^{-1}(\bar{r}^\natural)$. Then the deformations of $\bar{r}$ to $k[\varepsilon]/(\varepsilon^2)$ that lie in the essential image of $\bG_w$ are classified by $\Ext^1_{\MF_{k,w}}(M,M)$, which is canonically a $\Gamma_{w/v}$-stable subspace of $\rH^1(F_w,\ad\bar{r})$. Therefore, we have
\[
\rL(\sD^\FL)=\Ext^1_{\MF_{k,w}}(M,M)\cap\rH^1(F_w,\ad\bar{r})^{\Gamma_{w/v}}=\Ext^1_{\MF_{k,w}}(M, M)^{\Gamma_{w/v}}.
\]
In fact, the induced action of $\Gamma_{w/v}$ on $\Ext^1_{\MF_{k,w}}(M,M)$ can be described as follows. Recall the functor $\obj^\tc$ in Construction \ref{cs:fl_modules}. Then $\bG_w(M^\tc)$ is isomorphic to $\bar{r}^{\natural,\tc}|_{\Gamma_{F_w}}$. Since $\bar{r}^{\natural,\tc}$ and $\bar{r}^{\natural,\vee}(-b)$ are conjugate, we have $M^\tc\simeq M^\vee\{b\}$. We fix such an isomorphism, hence obtain a pairing $\langle\;,\;\rangle$ \eqref{eq:pairing} with $R=k$. Then for an element $[P]\in\Ext^1_{\MF_{k,w}}(M,M)$ represented by an extension $0\to M\to P\to M\to 0$, the image of $[P]$ under the action of the (unique) non-trivial element in $\Gamma_{w/v}$ is obtained by applying the functor $(\obj^\tc)^\vee\{b\}$ to $0\to M\to P\to M\to 0$.

To compute $\Ext^1_{\MF_{k,w}}(M,M)^{\Gamma_{w/v}}$, we recall first the following long exact sequence in \cite{CHT08}*{Lemma~2.4.2}:
\begin{align}\label{eq:extension_fl}
\resizebox{\hsize}{!}{
\xymatrix{
0 \ar[r] &  \End_{\MF_{k,w}}(M) \ar[r] & \Fil^0\Hom_{O_{F_w}\otimes_{\dZ_\ell}\sO}(M,M) \ar[r]^-{\alpha}
& \Hom_{O_{F_w}\otimes_{\dZ_\ell}\sO,\sigma\otimes 1}(\gr^\bullet M,M) \ar[r]^-{\beta} & \Ext^1_{\MF_{k,w}}(M,M) \ar[r] & 0,
}
}
\end{align}
where
\begin{itemize}
  \item $\Fil^0\Hom_{O_{F_w}\otimes_{\dZ_\ell}\sO}(M,M)$ denotes the $O_{F_w}\otimes_{\dZ_\ell}\sO$-submodule of $\Hom_{O_{F_w}\otimes_{\dZ_\ell}\sO}(M,M)$ of endomorphisms that preserve the filtration;

  \item the map $\alpha$ takes an element $f\in\Fil^0\Hom_{O_{F_w}\otimes_{\dZ_\ell}\sO}(M,M)$ to $(f\Phi^i-\Phi^if)_{i\in\dZ}$; and

  \item the map $\beta$ is defined as follows: if $\varphi=(\varphi^i)_{i\in\dZ}$ is a $\sigma\otimes 1$-linear map from $\gr^\bullet M$ to $M$, then $\beta(\varphi)$ is given by the extension class of $E=M\oplus M$ with the filtration $\Fil^iE=\Fil^iM\oplus\Fil^iM$ and the Frobenius structure
      \[
      \Phi^i_E\coloneqq
      \begin{pmatrix}
      \Phi^i & \varphi^i\\
      0 & \Phi^i
      \end{pmatrix}.
      \]
\end{itemize}

To prove the lemma, we need to derive an analogous long exact sequence similar to \eqref{eq:extension_fl} but with the last term $\Ext^1_{\MF_{k,w}}(M,M)^{\Gamma_{w/v}}$. For the first term, note that we have a canonical isomorphism $\End_{\MF_{k,w}}(M)\simeq\rH^0(F_w,\ad\bar{r})$, which contains $\rH^0(F^+_v,\ad\bar{r})$ as a submodule. For the second term, let $\Fil^0\Hom_{O_{F_w}\otimes_{\dZ_\ell}\sO}(M,M)^+$ be the submodule of $\Fil^0\Hom_{O_{F_w}\otimes_{\dZ_\ell}\sO}(M,M)$ consisting of elements $f=(f_\tau)_{\tau\in\Sigma_w}$ such that $-f_{\tau^\tc}$ is the adjoint of $f_\tau$ under the pairing $\langle\;,\;\rangle_\tau$ for every $\tau\in\Sigma_w$. For the third term, let $\Hom_{O_{F_w}\otimes_{\dZ_\ell}\sO,\sigma\otimes 1}(\gr^\bullet M, M)^+$ denote by the submodule of $\Hom_{O_{F_w}\otimes_{\dZ_\ell}\sO,\sigma\otimes 1}(\gr^\bullet M,M)$ consisting of $\varphi=(\varphi^i)_{i\in \dZ}$ such that
\begin{equation}\label{eq:relation_fl}
\langle\Phi^i_{\tau^\tc}(x),\varphi^{b-i}_\tau(y)\rangle_\tau + \langle\varphi^i_{\tau^\tc}(x),\Phi^{b-i}_\tau(y)\rangle_\tau =0
\end{equation}
is satisfies for every $x\in\gr^iM_{\tau^\tc}$ and $y\in\gr^{b-i}M_{\tau}$.

Then \eqref{eq:extension_fl} induces an exact sequence
\[
\resizebox{\hsize}{!}{
\xymatrix{
0 \ar[r] & \rH^0(F^+_v,\ad\bar{r}) \ar[r] & \Fil^0\Hom_{O_{F_w}\otimes_{\dZ_\ell}\sO}(M,M)^+ \ar[r]^-{\alpha}
& \Hom_{O_{F_w}\otimes_{\dZ_\ell}\sO,\sigma\otimes 1}(\gr^\bullet M,M)^+ \ar[r]^-{\beta} & \Ext^1_{\MF_{k,w}}(M,M)^{\Gamma_{w/v}} \ar[r] & 0
}
}
\]
of $k$-vector spaces. We now compute the dimension of the middle two terms. From the description of $\Fil^0\Hom_{O_{F_w}\otimes_{\dZ_\ell}\sO}(M,M)^+$, it is clear that $f_{\tau^\tc}$ is determined by $f_\tau$ for every $\tau\in\Sigma_w$.
On the other hand, for each fixed $\tau$, all the possible choices of $f_\tau$ form a $k$-vector space of dimension $\tfrac{N(N+1)}{2}$. Thus, we have
\[
\dim_k\Fil^0\Hom_{O_{F_w}\otimes_{\dZ_\ell}\sO}(M,M)^+=[F_v^+:\dQ_\ell]\cdot\frac{N(N+1)}{2}.
\]
For $\Hom_{O_{F_w}\otimes_{\dZ_\ell}\sO,\sigma\otimes 1}(\gr^\bullet M,M)^+$, we first note that the map
\[
\bigoplus_i \Phi^i_\tau\colon\gr^\bullet M_\tau\to M_{\tau\circ \sigma^{-1}}
\]
is an isomorphism for every $\tau\in\Sigma_w$. It follows from \eqref{eq:relation_fl} that $\varphi_{\tau^\tc}\coloneqq\bigoplus_i\varphi^i_{\tau^\tc}$ is determined by $\varphi_\tau\coloneqq\bigoplus_i\varphi^i_\tau$. On the other hand, for each fixed $\tau$, all the possible choices of $\varphi_\tau\colon\gr^\bullet M_\tau\to M_{\tau\circ\sigma^{-1}}$ form a $k$-vector space of dimension $N^2$. Thus, we have
\[
\dim_k\Hom_{O_{F_w}\otimes_{\dZ_\ell}\sO,\sigma\otimes 1}(\gr^\bullet M,M)^+=[F_v^+:\dQ_\ell]\cdot N^2.
\]
The Lemma follows immediately.
\end{proof}

\begin{proposition}\label{pr:deformation_fl}
Suppose that $\bar{r}^\natural$ is regular Fontaine--Laffaille crystalline and that $\chi=\eta_v^\mu\epsilon_{\ell,v}^c$ for some $c\in\dZ$ and $\mu\in\dZ/2\dZ$. Then the local deformation problem $\sD^\FL$ is formally smooth over $\Spf\sO$ of pure relative dimension $N^2+[F_v^+:\dQ_\ell]\cdot\frac{N(N-1)}{2}$.
\end{proposition}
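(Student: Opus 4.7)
The plan is to establish formal smoothness of $\sD^\FL$ over $\Spf\sO$; the dimension formula then follows at once from \eqref{eq:local_deformation_dimension} combined with Lemma \ref{le:tangent_space_fl}, which together yield
\[
\dim_k\rL^1(\sD^\FL)=N^2+\dim_k\rL(\sD^\FL)-\dim_k\rH^0(\Gamma_{F_v^+},\ad\bar{r})=N^2+[F_v^+:\dQ_\ell]\cdot\frac{N(N-1)}{2}.
\]
I would first reduce to Assumption \ref{as:fl_large} by base-changing to a finite unramified extension $E_\dag/E$ whose ring of integers $\sO_\dag$ satisfies it, proving formal smoothness over $\sO_\dag$, and then descending via faithful flatness of $\sO_\dag/\sO$. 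A further twist of $\bar{r}$ by an integer reduces to the case that $\bar{r}^\natural$ has Fontaine--Laffaille weights in $[0,b]$ for some $0\le b\le\ell-2$.

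\textbf{Split case.} When $v$ splits in $F$, a deformation in $\sD^\FL(R)$ corresponds via the Fontaine--Laffaille functor $\bG_w$ to a lift of $M\coloneqq\bG_w^{-1}(\bar{r}^\natural)$ to an object of $\MF^{[0,b]}_{\sO,w}(R)$. For a small surjection $R'\twoheadrightarrow R$ in $\sC_\sO^f$, such a lift extends to $R'$ by first lifting the underlying free $O_{F_w}\otimes_{\dZ_\ell}R$-module, then lifting the filtration (whose steps are direct summands), and finally lifting the $\sigma$-semilinear maps $\Phi^i$; every step is unobstructed. This reproduces \cite{CHT08}*{Corollary~2.4.3} and proves formal smoothness in the split case.

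\textbf{Inert case.} When $v$ is inert in $F$, a deformation in $\sD^\FL(R)$ is equivalent via $\bG_w$ and Lemma \ref{le:representation_selfdual}(2) to a pair $(\tilde M,\langle\,,\,\rangle_R)$ with $\tilde M$ a lift of $M$ in $\MF^{[0,b]}_{\sO,w}(R)$ and $\langle\,,\,\rangle_R\colon\tilde M^\tc\times\tilde M\to R\{b\}$ a perfect pairing satisfying the compatibilities listed after \eqref{eq:pairing}. Given a small surjection $R'\twoheadrightarrow R$ with kernel $I$, the split-case argument supplies a lift $\tilde M'$ of $\tilde M$; what remains is to lift the pairing. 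Consider the contravariant involution $F\coloneqq\obj^\vee\{b\}\circ\obj^\tc$ on $\MF^{[0,b]}_{\sO,w}(R')$. Both $\tilde M'$ and $F(\tilde M')$ are lifts of $M$ (the latter via the canonical self-dual isomorphism $M\simeq F(M)$), so they differ by a class $\delta\in\Ext^1_{\MF_{k,w}}(M,M)\otimes_k I$, and the contravariance of $F$ together with its action on the torsor structure forces $F_*\delta=-\delta$, where $F_*$ is the induced involution on $\Ext^1$ (which, by the analysis in the proof of Lemma \ref{le:tangent_space_fl}, coincides with the nontrivial element of $\Gamma_{w/v}$). Since $\ell$ is odd, $\Ext^1_{\MF_{k,w}}(M,M)\otimes_k I$ decomposes into $\pm$-eigenspaces under $F_*$, and modifying $\tilde M'$ by the class $\tfrac12\delta$ in the $(-)$-eigenspace produces a new lift isomorphic to its own image under $F$, equivalently a lift of $M$ equipped with a perfect pairing reducing to $\langle\,,\,\rangle_R$.

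\textbf{Main obstacle.} The principal technical point is the averaging argument in the inert case: verifying that the difference class $\delta$ is antisymmetric under $F_*$ at the level of Fontaine--Laffaille modules, and that the resulting modification of $\tilde M'$ genuinely extends the given pairing $\langle\,,\,\rangle_R$ rather than some other pairing congruent to it modulo $I$. These verifications are routine once one imports the involutive framework introduced in the proof of Lemma \ref{le:tangent_space_fl}, but they require careful bookkeeping of signs and compatibilities.
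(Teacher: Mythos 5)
Your proof takes a genuinely different route from the paper's in the crucial inert case. The paper argues by explicit construction: it fixes a half $\Sigma_w^+\subset\Sigma_w$ of the embeddings, chooses a basis of $M=\bG_w^{-1}(r^\natural)$ adapted to the filtration and normalized so that the pairing is antidiagonal on the $\Sigma_w^+$-components, and then lifts the module, the pairing, the filtration, and the Frobenius maps on the $\Sigma_w^+$-half (recursively in the weights), defining the Frobenius on the conjugate half uniquely by the duality condition (1) after \eqref{eq:pairing}. You instead run an abstract averaging argument on the torsor of lifts of $\tilde M$ in $\MF^{[0,b]}_{\sO,w}(R')$ under $\Ext^1_{\MF_{k,w}}(M,M)\otimes_kI$, using the contravariant involution $F=\obj^\vee\{b\}\circ\obj^\tc$ and the oddness of $\ell$: since $\iota\colon N\mapsto F(N)$ is an affine involution over $F_*$ (which is indeed the $\Gamma_{w/v}$-action appearing in the proof of Lemma \ref{le:tangent_space_fl}), $\iota^2=\id$ forces $F_*\delta=-\delta$, and twisting by $\tfrac12\delta$ produces an $F$-fixed class. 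This skeleton is sound, as are the reduction to Assumption \ref{as:fl_large} and the split-case unobstructedness; what the abstract argument buys is independence from any choice of basis or of $\Sigma_w^+$, and what it loses is the explicit control over the shape of the lifted pairing that the paper's construction provides.

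There is, however, one substantive step you have not addressed, and it is more than routine sign bookkeeping. An $F$-fixed isomorphism class yields an isomorphism $\psi\colon N\xrightarrow{\sim}F(N)$ lifting $\theta$, i.e.\ a matrix $\Psi'$ lifting $\Psi$ with $\rho'^{\gamma}=\Psi'\circ\chi\rho'^{\vee}\circ\Psi'^{-1}$, where $\rho'=\bG_w(N)$. But an intertwiner alone does not define a homomorphism to $\sG_N(R')$: by Lemma \ref{le:representation_selfdual}(2), to obtain a lifting of $\bar{r}$ with similitude character $\chi$ (Definition \ref{de:lifting_deformation}) one needs the exact identity $\Psi'\tp{\Psi'}^{-1}=-\chi(\gamma)^{-1}\rho'(\gamma^2)$, and the fixed-point property only gives this modulo $I$: the defect $\epsilon$, defined by $\Psi'\tp{\Psi'}^{-1}=-\chi(\gamma)^{-1}\rho'(\gamma^2)(1+\epsilon)$, lies in $\rH^0(F_w,\ad\bar{r})\otimes_kI$ and need not vanish. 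The remedy is a second averaging of the same flavor as your first: replacing $\Psi'$ by $\Psi'(1+h)$ with $\tp{h}=u\in\rH^0(F_w,\ad\bar{r})\otimes_kI$ preserves the intertwining relation and changes $\epsilon$ by $-(1+\sigma)(u)$, where $\sigma$ is the $\Gamma_{w/v}$-action on $\rH^0(F_w,\ad\bar{r})\otimes_kI$; transposing the defining relation of $\epsilon$ and using the intertwining identity at $\gamma^2$ shows $\sigma(\epsilon)=\epsilon$, so taking $u=\tfrac12\epsilon$ (again using $\ell$ odd) kills the defect. With this addendum, together with the standard remark that it suffices to lift the point of $\sD^\FL$ up to conjugation by $1+\rM_N(\fm_{R'})$ because $\sD^\FL$ is stable under $\widehat\GL_{N,\sO}$, your argument does prove the proposition.
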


\begin{proof}
By Lemma \ref{le:tangent_space_fl}, it suffices to show that $\sD^\FL$ is formally smooth over $\sO$. For this, we may again assume that $E$ satisfies Assumption \ref{as:fl_large}. Moreover, we may assume that $\bar{r}^\natural$ is crystalline with regular Fontaine--Laffaille weights in $[0,b]$ with $0\leq b\leq\ell-2$ as in the proof of Lemma \ref{le:tangent_space_fl}. In this case, $\sD^\FL$ simply classifies liftings $r$ such that $r^\natural$ is crystalline with Fontaine--Laffaille weights in $[0,b]$.

When $v$ is split in $F/F^+$, the proposition has been proved in \cite{CHT08}*{Lemma~2.4.1}.

Now we suppose that $v$ is inert in $F$. Then we must have $c=-b$. Fix a subset $\Sigma_w^+\subset\Sigma_w$ such that $\Sigma_w=\Sigma_w^+\coprod\Sigma_w^{+,\tc}$. Let $R$ be an object of $\sC_\sO^f$ and $I\subset R$ an ideal satisfying $\fm_RI=(0)$. Let $r$ be a lifting of $\bar{r}$ to $R/I$, and put $M\coloneqq\bG^{-1}_w(r^\natural)$, which is an object of $\MF_{\sO,w}^{[0,b]}(R/I)$.

Recall the functor $\obj^\tc$ in Construction \ref{cs:fl_modules}. Then $\bG_w(M^\tc)$ is isomorphic to $r^{\natural,\tc}|_{\Gamma_{F_w}}$. Since $r^{\natural,\tc}$ and $r^{\natural,\vee}(-b)$ are conjugate, we have $M^\tc\simeq M^\vee\{b\}$. We fix such an isomorphism, hence obtain a pairing $\langle\;,\;\rangle$ \eqref{eq:pairing} with $R=R/I$. Let $m_{\tau,1}<\cdots<m_{\tau,N}$ be the (regular) $\tau$-Hodge--Tate weights of $M$ for every $\tau\in\Sigma_w$. Then there exists a basis $e_{\tau,1},\dots,e_{\tau,N}$ of $M_\tau$ over $R/I$ satisfying $\Fil^{m_{\tau,N+1-i}}M_\tau=\bigoplus_{j=1}^i(R/I)e_{\tau,j}$ for every $1\leq i\leq N$. By duality, we have $m_{\tau^\tc,i}+m_{\tau,N+1-i}=b$. Then we may choose the basis $(e_{\tau,i})$ such that $\langle e_{\tau^\tc,i},e_{\tau,j}\rangle_\tau=\delta_{i,N+1-j}$ for every $\tau\in\Sigma_w^+$ and every $1\leq i,j\leq N$.

We now define an object $\wt{M}=\bigoplus_{\tau\in\Sigma_w}\wt{M}_\tau$ of $\MF_{\sO,w}^{[0,b]}(R)$ that reduces to $M$, together with a perfect pairing $\wt{M}^\tc\times\wt{M}\to O_{F_w}\otimes_{\dZ_\ell}R$ as in \eqref{eq:pairing} that reduces to the pairing $\langle\;,\;\rangle$, as follows. As an $R$-module, we take $\wt{M}_\tau=R^{\oplus N}$ with the basis $(\wt{e}_{\tau,i})$ that lifts the basis $(\wt{e}_{\tau,i})$ of $M_\tau$. We lift $\langle\;,\;\rangle_\tau$ to an $R$-bilinear perfect pairing   $\wt{M}_{\tau^\tc}\times\wt{M}_\tau\to R$ such that $\langle\wt{e}_{\tau^\tc,i},\wt{e}_{\tau,j}\rangle_\tau=\delta_{i,N+1-j}$ still holds for every $\tau\in\Sigma_w^+$ and every $1\leq i,j\leq N$. For the filtration, we put $\Fil^m\wt{M}_\tau\coloneqq\bigoplus_{j=1}^i R\wt{e}_{\tau,j}$ for $m$ satisfying $m_{\tau,N-i}<m\leq m_{\tau,N+1-i}$. Then $\wt{M}\otimes_RR/I$ is isomorphic to $M$ as filtered $O_{F_w}\otimes R/I$-modules; and the condition (2) in Construction \ref{cs:fl_modules} holds for $\wt{M}$ as well. For the Frobenius structure on $\wt{M}$, we first define maps $\wt\Phi^{m_{\tau,i}}_\tau\colon\Fil^{m_{\tau,i}}\wt{M}_\tau\to\wt{M}_{\tau\circ\sigma^{-1}}$ for $\tau\in\Sigma_w^+$ by the recursive induction on $i$. For $i=N$, we take $\wt\Phi^{m_{\tau,N}}_\tau$ to be an arbitrary lift of $\Phi^{m_{\tau,N}}_\tau\colon\Fil^{m_{\tau,N}}M_\tau\to M_{\tau\circ\sigma^{-1}}$ for $\tau\in \Sigma_w^+$. For $i\leq N-1$, we take $\wt\Phi^{m_{\tau,i}}_\tau$ to be a lift of $\Phi^{m_{\tau,i}}_\tau\colon\Fil^{m_{\tau,i}}M_\tau\to M_{\tau\circ\sigma^{-1}}$ that restricts to $\ell^{m_{\tau,i}-m_{\tau,i+1}}\wt\Phi^{m_{\tau,i+1}}_\tau$ on $\Fil^{m_{\tau,i+1}}\wt{M}_\tau$. By Nakayama's lemma, we have
\[
\wt{M}_{\tau\circ\sigma^{-1}}=\sum_i\wt\Phi^{m_{\tau,i}}_\tau(\Fil^{m_{\tau,i}}\wt{M}_\tau)
\]
for every $\tau\in\Sigma_w^+$. Finally, we define $\wt\Phi^i_{\tau^\tc}\colon\wt{M}_{\tau^\tc}\to\wt{M}_{\tau^\tc\circ\sigma^{-1}}$ for $\tau\in\Sigma_w^+$ to be the unique $R$-linear map satisfying the condition (1) in Construction \ref{cs:fl_modules} for $\wt{M}$. This finishes the construction of $\wt{M}$ together with an isomorphism $\wt{M}^\tc\simeq\wt{M}^\vee\{b\}$, which give rise to a lifting $\wt{r}$ of $\bar{r}$ to $R$ that reduces to $r$ by Lemma \ref{le:representation_selfdual}. Thus, $\sD^\FL$ is formally smooth over $\sO$.

The proposition is proved.
\end{proof}

At the end, we remark that in the self-dual (not conjugate self-dual) case, the Fontaine--Laffaille deformations have been studied in \cite{Boo18}.

\subsection{Representations of the tame group}
\label{ss:tame_group}

In this subsection, we will study conjugate self-dual representations of the tame group, and define the notion of minimally ramified deformations of such representations.

\begin{definition}\label{de:tame_group}
Let $q\geq 1$ be a positive integer coprime to $\ell$. We define the \emph{$q$-tame group}, denoted by $\rT_q$, to be the semidirect product topological group $t^{\dZ_\ell}\rtimes\phi_q^{\widehat\dZ}$ where $\phi_q$ maps $t$ to $t^q$, that is, $\phi_qt\phi_q^{-1}=t^q$. For every integer $b\geq 1$, We identify $\rT_{q^b}$ as a subgroup of $\rT_q$ topologically generated by $t$ and $\phi_{q^b}=\phi_q^b$.
\end{definition}

We consider a reductive group $G$ over $\sO$, together with a surjective homomorphism $\nu\colon G\to H$ over $\sO$, where $H$ is an algebraic group over $\sO$ of multiplicative type. Consider a pair $(\bar\varrho,\mu)$ in which $\bar\varrho\colon\rT_q\to G(k)$ and $\mu\colon\rT_q\to H(\sO)$ are continuous homomorphisms satisfying $\nu\circ\bar\varrho=\bar\mu$ and $\mu(t)=1$. Similar to the case in \S\ref{ss:deformation_problems}, let $\sfR^\loc_{\bar\varrho}$ be the object in $\sC_\sO$ that parameterizes liftings $\varrho$ of $\bar\varrho$ satisfying $\nu\circ\varrho=\mu$.\footnote{Here, once again we omit the similitude character $\mu$ in the ring $\sfR^\loc_{\varrho}$, in order to be consistent with the previous convention.} The following proposition generalizes the tame case of \cite{Sho18}*{Theorem~2.5}.

\begin{proposition}\label{pr:tame_general}
The ring $\sfR^\loc_{\bar\varrho}$ is a local complete intersection, flat and of pure relative dimension $d$ over $\sO$, where $d$ is the relative dimension of the kernel of $\nu$ over $\sO$.
\end{proposition}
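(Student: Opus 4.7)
The plan is to present $\sfR^\loc_{\bar\varrho}$ as a quotient of a formal power series ring over $\sO$ in $2d$ variables by $d$ explicit equations, and then to conclude the complete intersection property, flatness over $\sO$, and pure relative dimension $d$ simultaneously by computing the Krull dimension of the quotient.

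First I would parameterize the deformation functor concretely. Since $t$ topologically generates the pro-$\ell$ factor of $\rT_q$, continuity forces $\bar\varrho(t)$ to be unipotent in the reductive group $G_k$; any lift $X=\varrho(t)$ then has $\ell$-power order in $G(R)$ for $R\in\sC_\sO^f$ (because the kernel of $G(R)\to G(k)$ is pro-$\ell$), and conversely any such pair $(X,Y)$ satisfying the relation $YXY^{-1}=X^q$ extends continuously. Consequently $\varrho$ is uniquely determined by $(X,Y):=(\varrho(t),\varrho(\phi_q))$ with $X\in\ker(\nu)(R)$, $Y\in\nu^{-1}(\mu(\phi_q))(R)$ each lifting its residue, subject to the single relation $YXY^{-1}=X^q$. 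Thus $\Spf\sfR^\loc_{\bar\varrho}$ identifies with the formal completion of the closed subscheme
\[
Z:=\{(X,Y)\mid YXY^{-1}=X^q\}\subseteq\ker(\nu)\times_\sO\nu^{-1}(\mu(\phi_q))
\]
at the $k$-point $(\bar\varrho(t),\bar\varrho(\phi_q))$. Since $G$ is reductive and $\nu$ is a smooth surjection onto a group of multiplicative type, both $\ker(\nu)$ and the $\ker(\nu)$-torsor $\nu^{-1}(\mu(\phi_q))$ are smooth over $\sO$ of relative dimension $d$, so the completed local ring $A$ of their product at $(\bar\varrho(t),\bar\varrho(\phi_q))$ is a formal power series ring over $\sO$ in $2d$ variables, in particular regular of Krull dimension $2d+1$. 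Since $H$ is commutative, the function $(X,Y)\mapsto YXY^{-1}X^{-q}$ takes values in $\ker(\nu)$, and a choice of $d$ local coordinates on $\ker(\nu)$ at the identity writes it as $d$ elements $f_1,\dots,f_d\in A$, presenting $\sfR^\loc_{\bar\varrho}\cong A/(f_1,\dots,f_d)$.

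By Krull's Hauptidealsatz, $\dim\sfR^\loc_{\bar\varrho}\geq 2d+1-d=d+1$. If the reverse inequality holds, then $f_1,\dots,f_d$ is forced to be a regular sequence in the Cohen--Macaulay ring $A$, so $\sfR^\loc_{\bar\varrho}$ is a complete intersection. The same Krull-dimension argument applied to the sequence $(\lambda,f_1,\dots,f_d)$ in $A$ then shows that $\lambda$ is a non-zero-divisor in $\sfR^\loc_{\bar\varrho}$, yielding simultaneously flatness over $\sO$ and the pure relative dimension $d$.

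The main obstacle is therefore the upper bound $\dim\sfR^\loc_{\bar\varrho}/\lambda\leq d$ on the special fiber, equivalently the assertion that $Z_k\subseteq(\ker\nu)_k\times\nu^{-1}(\mu(\phi_q))_k$ has local dimension at most $d$ at $(\bar\varrho(t),\bar\varrho(\phi_q))$. The natural tool is the simultaneous conjugation action of $\ker(\nu)_k$ on $Z_k$: every orbit has dimension $d-\dim Z_{\ker\nu}(X,Y)$, so the problem reduces to bounding the local dimension of the orbit space by $\dim Z_{\ker\nu}(\bar\varrho(t),\bar\varrho(\phi_q))$. Using the Jordan decomposition in $G_k$ (with $\bar\varrho(t)$ necessarily unipotent), together with the constraint that $\bar\varrho(\phi_q)$ normalizes the centralizer of $\bar\varrho(t)$ up to twisting by the $q$-power map, this reduces to a structural computation about centralizers of unipotent elements in $G$ that are stable under a Frobenius-like automorphism. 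For $G=\GL_N$ this is the content of the tame case of \cite{Sho18}*{Theorem~2.5}; its extension to arbitrary reductive $G$ as in the statement is the substantive technical content that I expect to be the main difficulty, and should proceed by the same strategy using the structure theory of centralizers in the reductive group $G$ (together with an Euler-characteristic-style bookkeeping to match the orbit and slice dimensions).
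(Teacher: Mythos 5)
Your reduction of the proposition to the single statement $\dim_k \sfR^\loc_{\bar\varrho}/(\lambda)\leq d$ is sound and matches the paper's skeleton: the identification of $\Spf\sfR^\loc_{\bar\varrho}$ with the completion of $\{(\bbA,\bbB): \bbB\bbA\bbB^{-1}=\bbA^q\}\subseteq G_0\times_\sO G_1$ at $(\bar\varrho(t),\bar\varrho(\phi_q))$, the presentation as $d$ equations in a power series ring in $2d$ variables, and the Hauptidealsatz/regular-sequence/nonzerodivisor endgame are all correct and are exactly how the paper concludes. But the proposal stops precisely where the mathematical content of the proposition lies: the special-fiber dimension bound is declared to be ``the substantive technical content'' and only a strategy is sketched, so the proof is incomplete at its essential step.

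Moreover, the sketched strategy does not work as stated. Bounding ``the local dimension of the orbit space'' by $\dim Z_{\ker\nu}(\bar\varrho(t),\bar\varrho(\phi_q))$, the stabilizer dimension at the chosen point, is an inequality in the wrong direction: stabilizer dimension is only upper semicontinuous, so orbits near the given point can be larger while the family of orbits is not correspondingly smaller, and this scheme would at best give a bound of the form $d$ plus the jump in stabilizer dimension. The paper's argument avoids orbit spaces entirely. It projects $\sM(G,q)_K\to G_{0,K}$, $(\bbA,\bbB)\mapsto\bbA$; over each conjugacy class $C(\bbA_0)$ in the image, the preimage is irreducible of dimension $\dim C(\bbA_0)+\dim Z(\bbA_0)=d$, because each fiber is a torsor under the centralizer $Z(\bbA)$ in $G_{0,K}$; and — this is the key finiteness input missing from your proposal — the image of the projection consists of only \emph{finitely many} conjugacy classes. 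That finiteness is proved by choosing an embedding $G_K\hookrightarrow\GL_{m,K}$ (the relation $\bbB\bbA\bbB^{-1}=\bbA^q$ forces the generalized eigenvalues of $\bbA$ to be roots of unity dividing $q^{m!}-1$, hence finitely many classes in $\GL_m(K)$), then using finiteness of the induced map on GIT quotients together with Spaltenstein's finiteness of unipotent conjugacy classes in a reductive group \cite{Spa82}; no Jordan-decomposition or Frobenius-stable-centralizer analysis in the style of \cite{Sho18} for general $G$ is needed. With pure fiber dimension $d$ established this way, your commutative-algebra endgame does go through, so the missing ingredient is exactly this finiteness-of-classes argument.
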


\begin{proof}
We follow the same line as in the proof of \cite{Sho18}*{Theorem~2.5}. Let $G_0$ and $G_1$ be the fibers at $1$ and $\mu(\phi_q)$ of the homomorphism $\nu$, respectively. Define the subscheme $\sM(G,q)$ of $G_0\times_{\Spec\sO}G_1$ such that for every object $R$ of $\sC_\sO$, $\sM(G,q)(R)$ consists of pairs $(\bbA,\bbB)\in G_0(R)\times G_1(R)$ satisfying
\begin{align}\label{eq:tame_general}
\bbB\bbA\bbB^{-1}=\bbA^q.
\end{align}
It suffices to show that $\sM(G,q)$ is a local complete intersection, flat and of pure relative dimension $d$ over $\sO$, since $\sfR^\loc_{\bar\varrho}$ is the completion of $\sM(G,q)$ at the $k$-point $(\bar\varrho(t),\bar\varrho(\phi_q))$.

First, we show that every geometric fiber of $\sM(G,q)\to\Spec\sO$ is of pure dimension $d$. Consider the natural projection
\[
p\colon\sM(G,q)\to G_0
\]
to the first factor. Take a geometric point $\Spec K\to\Spec\sO$. For a point $\bbA_0\in G_0(K)$ in the image of $p(K)$, let $Z(\bbA_0)$ be the centralizer of $\bbA_0$ in $G_{0,K}$ as a closed subscheme of $G_{0,K}$, and $C(\bbA_0)$ the conjugacy class of $\bbA_0$, which is a locally closed subscheme of $G_{0,K}$ isomorphic to $G_{0,K}/Z(\bbA_0)$. Then $C(\bbA_0)$ lies in the image of $p_K$. For every point $\bbA\in C(\bbA_0)(K)$, the fiber $p_K^{-1}(\bbA)$ is a torsor under the group $Z(\bbA)$, which is conjugate to $Z(\bbA_0)$. Thus, $p_K^{-1}(C(\bbA_0))$ is irreducible of dimension
\[
\dim p_K^{-1}(C(\bbA_0))=\dim C(\bbA_0)+\dim Z(\bbA_0)=\dim G_{0,K}=d.
\]
To continue, we choose an embedding $e\colon G_K\to\GL_{m,K}$ of algebraic groups over $K$ for some integer $m\geq 1$. By \eqref{eq:tame_general}, the image of $e(K)\circ p(K)$ consists only of matrices whose generalized eigenvalues are $(q^{m!}-1)$-th roots of unity, hence finitely many conjugacy classes in $\GL_m(K)$. We claim that the image of $p(K)$ consists of finitely many conjugacy classes in $G_0(K)$ as well, which implies that $\sM(G,q)_K$ is of pure dimension $d$. In fact, we have the following commutative diagram
\[
\xymatrix{
G_0(K)/\!\!/G_0(K)  \ar[r]\ar[d] & \GL_m(K)/\!\!/\GL_m(K) \ar[d] \\
(G_{0,K}/\!\!/G_{0,K})(K) \ar[r] & (\GL_{m,K}/\!\!/\GL_{m,K})(K)
}
\]
of sets, in which the bottom map is finite since the morphism $G_{0,K}/\!\!/G_{0,K}\to\GL_{m,K}/\!\!/\GL_{m,K}$ is; and the left vertical map is also finite due to the finiteness of unipotent conjugacy classes of a reductive group \cite{Spa82}*{Th\'{e}or\`{e}me~4.1}; it follows that the upper horizontal map is finite as well.

The above discussion shows that the morphism $\sM(G,q)\to\Spec\sO$ is of pure relative dimension $d$. Now we take a closed point $(\bar\bbA,\bar\bbB)$ of $\sM(G,q)$, which induces a homomorphism
\[
\sO_{\sM(G,q),(\bar\bbA,\bar\bbB)}\to \sO_{G_0,\bar\bbA}\widehat\otimes_\sO\sO_{G_1,\bar\bbB}
\]
of corresponding complete local rings. As both $G_0$ and $G_1$ are smooth over $\sO$ of pure relative dimension $d$, both $\sO_{G_0,\bar\bbA}$ and $\sO_{G_1,\bar\bbB}$ are power series rings over $\sO$ in $d$ variables. The relation \eqref{eq:tame_general}, or equivalently, the relation $\bbA=\bbB^{-1}\bbA^q\bbB$, is defined by $d$ equations in $\sO_{G_0,\bar\bbA}\widehat\otimes_\sO\sO_{G_1,\bar\bbB}$. In other words, $\sM(G,q)$ is a local complete intersection, hence Cohen-Macaulay. Therefore, $\sM(G,q)$ is flat over $\sO$. The proposition is proved.
\end{proof}

Take an integer $n\geq 1$. Now we apply the above discussion to the homomorphism $\nu\colon\sG_n\to\GL_1$ in Notation \ref{no:sg}. Consider a pair $(\bar\varrho,\mu)$ from Notation \ref{no:deformation_pair} with $\tilde\Gamma=\rT_q$ and $\Gamma=\rT_{q^2}$, such that $\mu(t)=1$. In particular, $\bar\varrho\colon\rT_q\to\sG_n(k)$ is a homomorphism and $\mu\colon\rT_q\to\sO^\times$ is a (continuous) similitude character. Write
\begin{align}\label{eq:conjugate_tame_0}
\bar\varrho(t)=\bar\bbA=(\bar{A},1)1,\quad
\bar\varrho(\phi_q)=\bar\bbB=(\bar{B},-\mu(\phi_q))\fc
\end{align}
for $\bar{A},\bar{B}\in\GL_n(k)$. For a lifting $\varrho$ of $\bar\varrho$ to an object $R$ of $\sC_\sO$, we write $\varrho(t)=\bbA=(A,1)1$ and $\varrho(\phi_q)=\bbB=(B,-\mu(\phi_q))\fc$. Then the pair $(A,B)$ reduce to $(\bar{A},\bar{B})$, and satisfy the relation
\begin{equation}\label{eq:conjugate_tame}
B \tp{A}^{-1} B^{-1}=A^q.
\end{equation}

\begin{corollary}\label{pr:deformation_tame}
The ring $\sfR^\loc_{\bar\varrho}$ is a local complete intersection, flat and of pure relative dimension $n^2$ over $\sO$.
\end{corollary}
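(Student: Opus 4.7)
The plan is to deduce this directly from Proposition \ref{pr:tame_general} applied to the reductive group scheme $G=\sG_n$ over $\sO$ with the surjective similitude character $\nu\colon\sG_n\to\GL_1$ from Notation \ref{no:sg} playing the role of $\nu\colon G\to H$. Even though $\sG_n$ is not connected, its identity component $\GL_n\times\GL_1$ is reductive, and a careful reading of the proof of Proposition \ref{pr:tame_general} shows that the hypothesis is used only through the smoothness of the fibers $G_0$ and $G_1$ of $\nu$ over $\sO$ and the finiteness of unipotent conjugacy classes on the identity fiber $G_0$. In our setting, $G_0=\GL_n\times\{1\}\cong\GL_n$ and $G_1=(\GL_n\times\{-\mu(\phi_q)\})\fc$ is a $G_0$-bitorsor, so both fibers are smooth of pure relative dimension $n^2$ over $\sO$; and finiteness of unipotent conjugacy classes for $\GL_n$ is the classical parametrization by partitions of $n$.

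Next I would verify that the data $(\bar\varrho,\mu)$ of the corollary fit the framework of Proposition \ref{pr:tame_general}: the hypothesis $\nu\circ\bar\varrho=\bar\mu$ is built into the definition of a deformation pair (Notation \ref{no:deformation_pair}), and the requirement $\mu(t)=1$ is exactly the assumption placed on $\mu$ in the corollary. Under these conditions, writing $\varrho(t)=\bbA$ and $\varrho(\phi_q)=\bbB$ as in \eqref{eq:conjugate_tame_0}, a lifting of $\bar\varrho$ with similitude $\mu$ is tautologically the same datum as a point $(\bbA,\bbB)\in G_0(R)\times G_1(R)$ satisfying the relation $\bbB\bbA\bbB^{-1}=\bbA^q$, which in terms of the $\GL_n$-components is precisely \eqref{eq:conjugate_tame}. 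Thus $\sfR^\loc_{\bar\varrho}$ is the completion of the scheme $\sM(\sG_n,q)$ of Proposition \ref{pr:tame_general} at the $k$-point $(\bar\bbA,\bar\bbB)$.

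Finally I would compute the dimension count: the relative dimension $d$ of $\Ker(\nu)$ over $\sO$ is $n^2$, since the kernel is the disjoint union of $\GL_n\times\{1\}$ (inside the identity component) and $(\GL_n\times\{-1\})\fc$, each smooth of pure relative dimension $n^2$. Plugging $d=n^2$ into the conclusion of Proposition \ref{pr:tame_general} gives that $\sfR^\loc_{\bar\varrho}$ is a local complete intersection, flat and of pure relative dimension $n^2$ over $\sO$. The only potential obstacle is the non-connectedness of $\sG_n$, but since the proof of Proposition \ref{pr:tame_general} is essentially performed fiber-by-fiber over $\nu$ and uses reductiveness only via the finiteness of unipotent classes on the identity fiber, no modification is required.
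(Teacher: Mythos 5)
Your proof is correct and is essentially the paper's: the paper likewise deduces the corollary in one line from Proposition \ref{pr:tame_general} applied to $\nu\colon\sG_n\to\GL_1$, whose kernel has relative dimension $n^2$. Your additional care about non-connectedness is reasonable (the paper does not comment on it); just note that the fiber of $\nu$ at $1$ is the full kernel $(\GL_n\times\{1\})1\sqcup(\GL_n\times\{-1\})\fc$ rather than $\GL_n\times\{1\}$ alone, which is harmless since both components are smooth of relative dimension $n^2$ and the completion at $(\bar\bbA,\bar\bbB)$ only sees the components through that point.
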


\begin{proof}
This follows immediately from Proposition \ref{pr:tame_general} since the kernel of $\nu\colon\sG_n\to\GL_1$ is of dimension $n^2$.
\end{proof}

From now till the end of this subsection, we assume $\ell\geq n$. For every integer $m\geq 1$, we denote by $J_m$ the standard upper triangular nilpotent Jordan block
\[
\begin{pmatrix}
0 & 1 & 0 & \cdots & 0 \\
& 0 & 1 & \cdots & 0 \\
&  & \ddots & \ddots & \vdots \\
&&& 0 & 1\\
&&&& 0
\end{pmatrix}
\]
or size $m$.

Denote by $\cN_n$ (resp.\ $\cU_n$) the closed subscheme of $\rM_n$ (resp.\ $\GL_n$) defined by the equation $X^n=0$ (resp.\ $(A-1)^n=0$). For every object $R$ of $\sC_\sO$, we have the \emph{truncated exponential map} $\exp\colon\cN_n(R)\to\cU_n(R)$ defined by the formula
\[
\exp X=1+X+\cdots+\frac{X^{n-1}}{(n-1)!},
\]
which is an bijection. Its inverse is given by the \emph{truncated logarithm map} $\log\colon\cU_n(R)\to\cN_n(R)$ defined by the formula
\[
\log A=\sum_{i=1}^{n-1}(-1)^{i-1}\frac{(A-1)^i}{i}.
\]
Let $\fP_n$ be the set of partitions of $n$. By the classification of nilpotent orbits in $\GL_n$, for $K=k,E$, we have canonical surjective maps $\pi\colon\cN_n(K)\to\fP_n$ such that the fibers of $\pi$ are exactly the orbits in $\cN_n(K)$ under the conjugate action of $\GL_n(K)$.

By the continuity of $\bar\varrho$, we know that $\bar{A}$ in \eqref{eq:conjugate_tame_0} is unipotent, which implies $\bar{A}\in\cU_n(k)$. Put $\bar{X}\coloneqq\log\bar{A}\in\cN_n(k)$. Following \cite{Boo19}*{Definition~3.9}, we define the functor $\Nilp_{\bar{X}}\colon\sC_\sO\to\Set$ that sends an object $R$ of $\sC_\sO$ to the set of elements $X\in\cN_n(R)$ that reduce to $\bar{X}$ and are of the form $CX_0C^{-1}$, where $X_0$ is an element in $\cN_n(\sO)$ satisfying $\pi(X_0)=\pi(\bar{X})$ and $C\in\GL_n(R)$, where we regard $X_0$ as an element in $\cN_n(E)$ in the notation $\pi(X_0)$.

\begin{definition}\label{de:minimal_tame}
We say that a lifting $\varrho$ of $\bar\varrho$ to an object $R$ of $\sC_\sO$ is \emph{minimally ramified} if there exists an element $X\in\Nilp_{\bar{X}}(R)$ such that $\varrho^\natural(t)=\exp X$.
\end{definition}

Let $\sD^\mnm_{\bar\varrho}$ be the local deformation problem of $\bar\varrho$ (Definition \ref{de:local_deformation_problem}) that classifies minimally ramified liftings of $\bar\varrho$.

\begin{proposition}\label{pr:minimal_tame}
The local deformation problem $\sD^\mnm_{\bar\varrho}$ is formally smooth over $\Spf\sO$ of pure relative dimension $n^2$.
\end{proposition}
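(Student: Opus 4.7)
The strategy is to recognize $\Spf \sfR^\mnm_{\bar\varrho}$ as the formal completion at $(\bar X, \bar B)$ of an explicit smooth $\sO$-scheme, by linearizing the tame relation via the truncated exponential. Since $\ell \geq n$, the maps $\exp$ and $\log$ are mutually inverse bijections between $\cN_n$ and $\cU_n$, commuting with both conjugation and transposition. Writing $A = \exp X$ with $X$ nilpotent, we have $\tp A^{-1} = \exp(-\tp X)$, and \eqref{eq:conjugate_tame} together with the injectivity of $\exp$ converts into the linear conjugation identity
\[
B\,\tp X\,B^{-1} = -qX.
\]
Hence $R$-points of $\Spf \sfR^\mnm_{\bar\varrho}$ are in natural bijection with pairs $(X, B)$, where $X \in \Nilp_{\bar X}(R)$ and $B \in \GL_n(R)$ lifts $\bar B$ and satisfies this identity.

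I next factor the problem via the projection $\pi \colon (X, B) \mapsto X$. Form the $\sO$-scheme $\cZ$ parametrizing pairs $(X, B)$ with $X$ in the $\GL_n$-orbit $\cO_{\bar X}$ of a standard nilpotent $X_0 \in \cN_n(\sO)$ of partition $\pi(\bar X)$ and $B \in \GL_n$ satisfying the displayed identity; then $\Spf \sfR^\mnm_{\bar\varrho}$ is the formal completion of $\cZ$ at $(\bar X, \bar B)$. The projection onto $\cO_{\bar X} \cong \GL_n/Z(X_0)$ is a torsor under the centralizer group scheme $Z_{\GL_n}(\tp X) \cong Z_{\GL_n}(X)$, acting by right multiplication on $B$. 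Two smoothness inputs then apply: \emph{(a)} the orbit $\cO_{\bar X}$ is smooth over $\sO$ of relative dimension $n^2 - \dim Z(\bar X)$, because the centralizer of any matrix in $\GL_n$ is open inside its linear Lie algebra centralizer in $\rM_n$ and is therefore smooth; and \emph{(b)} the torsor is non-empty, because $X_0$ is $\sO$-conjugate to $\tp{X_0}$ (every nilpotent matrix is conjugate to its transpose) and to $-qX_0$ (via $\diag(1, -q, (-q)^2, \dots)$ on each Jordan block, using $-q \in \sO^\times$). Combined, the composition of the two conjugators furnishes an explicit $B_0 \in \GL_n(\sO)$ with $B_0\,\tp{X_0}\,B_0^{-1} = -qX_0$.

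Putting the two pieces together, $\cZ$ is smooth over $\sO$ of pure relative dimension $(n^2 - \dim Z(\bar X)) + \dim Z(\bar X) = n^2$ in a neighborhood of $(\bar X, \bar B)$, so $\sfR^\mnm_{\bar\varrho}$ is a power series ring over $\sO$ in $n^2$ variables. The main obstacle is upgrading the pointwise existence of conjugators to functorial formal smoothness of the torsor fibration $\pi$: given a small surjection $R' \twoheadrightarrow R$, a pair $(X_R, B_R)$ over $R$, and a lift $X_{R'}$ of $X_R$, one must produce a lift $B_{R'}$ of $B_R$ satisfying the displayed identity. This is a deformation problem for a torsor under the smooth group scheme $Z_{\GL_n}(\tp{X_{R'}})$; by the global $\GL_n$-equivariance $(g, X, B) \mapsto (gXg^{-1}, gB\,\tp g)$ (which preserves the relation), it reduces to the same problem at $X_0$, where it is solved by combining the existence of $B_0$ with the smoothness of $Z(X_0)$.
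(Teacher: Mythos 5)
Your proof is correct and takes essentially the same route as the paper: after linearizing via the truncated exponential, you fiber the minimally ramified locus over $\Nilp_{\bar{X}}$, identify each fiber as a torsor under the smooth centralizer $Z_n(X_0)$ using the equivariance $(X,B)\mapsto(gXg^{-1},gB\,\tp{g})$, and produce an explicit solution for $X_0$ (your conjugate-to-transpose-then-scale construction of $B_0$ is exactly the paper's anti-diagonal blocks $A_{n_i}$). The only cosmetic difference is that you re-derive the smoothness and dimension of $\Nilp_{\bar{X}}$ as the completed orbit $\GL_n/Z_n(X_0)$, where the paper simply cites \cite{Boo19}*{Lemma~3.11 and Remark~4.18}.
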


\begin{proof}
We follow the approach of \cite{Boo19}*{Proposition~5.6}, where a similar result for symplectic or orthogonal representations was proved.

Consider the morphism $\alpha\colon\sD^\mnm_{\bar\varrho}\to\Nilp_{\bar{X}}$ that sends a lifting $\varrho$ to $\log A$ if $\varrho^\natural(t)=A$. In the definition of $\Nilp_{\bar{X}}$, we may fix the nilpotent element $X_0\in\cN_n(\sO)$. Moreover, up to conjugation in $\GL_n(\sO)$, we may assume
\[
X_0=
\begin{pmatrix}
J_{n_1}&& \\
&\ddots&\\
&& J_{n_r}
\end{pmatrix},
\]
where $n=n_1+\cdots+n_r$. Let $Z_n(X_0)$ be the centralizer of $X_0$ in $\GL_{n,\sO}$, which is a closed subscheme of $\GL_{n,\sO}$. By \cite{Boo19}*{Remark~4.18}, $Z_n(X_0)$ is smooth over $\sO$. By \cite{Boo19}*{Lemma~3.11}, $\Nilp_{\bar{X}}$ is represented by a formal power series ring over $\sO$ in $n^2-\dim_\sO Z_n(X_0)$ variables, where $\dim_\sO Z_n(X_0)$ denotes the relative dimension of $Z_n(X_0)$ over $\sO$. Thus, it suffices to show that $\alpha$ is represented by a formal scheme formally smooth of pure relative dimension $\dim_\sO Z_n(X_0)$ over $\Nilp_{\bar{X}}$.

Take a lifting $\varrho$ of $\bar\varrho$ to an object $R$ of $\sC_\sO$. Then $\varrho(\phi_q)$ has the form $(B,-\mu(\phi_q))\fc$ with $B\in\GL_n(R)$ that reduces to $\bar{B}$ and satisfies
\begin{equation}\label{eq:conjugate_tame_1}
B \tp{X} B^{-1}=-qX
\end{equation}
by \eqref{eq:conjugate_tame}. For each given $X\in\Nilp_{\bar{X}}$, if there exists $B\in\GL_n(R)$ that reduces to $\bar{B}$ and satisfies \eqref{eq:conjugate_tame_1}, then the set of all elements $B$ form a torsor under the group
\[
\widehat{Z}_n(X)(R)\coloneqq\{g\in 1+\rM_n(\fm_R)\res g X g^{-1}=X\},
\]
which is isomorphic to the group of $R$-valued points of the formal completion of the group scheme $Z_n(X_0)$ along the unit section. Thus, to finish the proof, it suffices to show that the equation \eqref{eq:conjugate_tame_1} admits at least one solution for $B$ that reduces to $\bar{B}$.

Assume first $X=X_0$ in $\cN_n(R)$. Then
\[
B_0\coloneqq
\begin{pmatrix}
A_{n_1}&&
\\&\ddots&
\\&& A_{n_r}
\end{pmatrix}
\text{, where }
A_{n_i}\coloneqq
\begin{pmatrix}
&&& (-q)^{n_i-1}\\
&&\iddots &\\
& -q &&\\
1&&&
\end{pmatrix},
\]
is a solution to \eqref{eq:conjugate_tame_1}. In the general case, we write $X=C X_0C^{-1}$ for some $C\in\GL_n(R)$. Then $B\coloneqq CB_0\tp{C}$ satisfies the equation \eqref{eq:conjugate_tame_1}. Up to multiplying $C$ by an element in $Z_n(X_0)(R)$ from the right, we can make $B\in\GL_n(R)$ to reduce to $\bar{B}$. This finishes the proof of the proposition.
\end{proof}

Recall from Definition \ref{de:tangent_space_deformation} that $\rL(\sD^\mnm_{\bar\varrho})\subseteq\rH^1(\rT_q,\ad\bar\varrho)$ is tangent space of the local deformation problem $\sD^\mnm_{\bar\varrho}$.

\begin{corollary}\label{co:minimal_tame}
We have $\dim_k\rL(\sD^\mnm_{\bar\varrho})=\dim_k\rH^0(\rT_q,\ad\bar\varrho)$. 	
\end{corollary}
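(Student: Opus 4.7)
The plan is to deduce this corollary directly from Proposition \ref{pr:minimal_tame} combined with the general dimension-counting identity \eqref{eq:local_deformation_dimension} recorded in \S\ref{ss:deformation_problems}. That identity, applied with $\tilde\Gamma = \rT_q$ and with $N$ replaced by the rank $n$ of the group $\sG_n$ under consideration here, reads
\[
\dim_k \rL^1(\sD^\mnm_{\bar\varrho}) = n^2 + \dim_k \rL(\sD^\mnm_{\bar\varrho}) - \dim_k \rH^0(\rT_q, \ad\bar\varrho).
\]
So it suffices to compute $\dim_k \rL^1(\sD^\mnm_{\bar\varrho})$.

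By Proposition \ref{pr:minimal_tame}, the formal scheme $\sD^\mnm_{\bar\varrho}$ is formally smooth over $\Spf\sO$ of pure relative dimension $n^2$. Writing $\sI \subseteq \sfR^\loc_{\bar\varrho}$ for the defining ideal, this means the quotient $\sfR^\loc_{\bar\varrho}/\sI$ is isomorphic to a power series ring over $\sO$ in $n^2$ variables. Hence
\[
\dim_k \rL^1(\sD^\mnm_{\bar\varrho}) = \dim_k \Hom_k\bigl(\fm_{\sfR^\loc_{\bar\varrho}}/(\fm^2_{\sfR^\loc_{\bar\varrho}}, \sI, \lambda), k\bigr) = n^2.
\]
Substituting this into the identity above immediately gives $\dim_k \rL(\sD^\mnm_{\bar\varrho}) = \dim_k \rH^0(\rT_q, \ad\bar\varrho)$, as desired.

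In short, this is a bookkeeping corollary: all the real work has already been done in Proposition \ref{pr:minimal_tame} (where the formal smoothness and the relative dimension $n^2$ were established by explicitly lifting the Jordan-type data and exploiting the smoothness of the centralizer $Z_n(X_0)$). There is no serious obstacle; the only thing to watch is to use the correct rank $n$ (rather than $N$) in the formula \eqref{eq:local_deformation_dimension} and to note that $\rL^1$ measures the mod-$\lambda$ tangent space of the closed subfunctor defined by $\sI$, which for a formally smooth quotient of pure relative dimension $d$ over $\sO$ has dimension exactly $d$.
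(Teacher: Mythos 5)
Your proof is correct and is essentially the paper's own argument: the paper likewise applies the identity \eqref{eq:local_deformation_dimension} (with rank $n$) to $\sD^\mnm_{\bar\varrho}$ and then invokes Proposition \ref{pr:minimal_tame} to identify the mod-$\lambda$ tangent space dimension $\dim_k\fm_{\sfR^\mnm_{\bar\varrho}}/(\lambda,\fm^2_{\sfR^\mnm_{\bar\varrho}})$ with $n^2$. No differences worth noting.
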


\begin{proof}
Suppose that $\sD^\mnm_{\bar\varrho}=\Spf\sfR^\mnm_{\bar\varrho}$. By \eqref{eq:local_deformation_dimension}, we have
\[
\dim_k\fm_{\sfR^\mnm_{\bar\varrho}}/(\lambda,\fm^2_{\sfR^\mnm_{\bar\varrho}})
=\dim_k\rL(\sD^\mnm_{\bar\varrho})+n^2-\dim_k\rH^0(\rT_q,\ad\bar\varrho).
\]
From this, the corollary follows immediately from Proposition \ref{pr:minimal_tame}.
\end{proof}

To end this subsection, we record the following lemma concerning decomposition of representations of the $q$-tame group, in which part (1) will be used later and part (2) is only for complement.

\begin{lem}\label{le:tame_decomposition}
Let $(\bar\rho,\bar{M})$ be an unramified representation of $\rT_q=t^{\dZ_\ell}\rtimes\phi_q^{\widehat\dZ}$ over $k$ of dimension $N$. Suppose that $\bar{M}$ admits a decomposition
\[
\bar{M}=\bar{M}_1\oplus\cdots\oplus\bar{M}_s
\]
stable under the action of $\bar\rho(\phi_q)$ such that the characteristic polynomials of $\bar\rho(\phi_q)$ on $M_i$ are mutually coprime for $1\leq i\leq s$. Let $(\rho,M)$ be a lifting of $(\bar\rho,\bar{M})$ to an object $R$ of $\sC_\sO$. Then
\begin{enumerate}
  \item There is a unique decomposition
 	  \[
      M=M_1\oplus\cdots\oplus M_s
      \]
      of free $R$-modules, such that $M_i$ is stable under the action of $\rho(\phi_q)$ and it is a lifting of $\bar{M}_i$ as a $\phi_q$-module.

  \item Write $\rho(t)=(\rho(t)_{i,j})$ with $\rho(t)_{i,j}\in\Hom_R(M_j,M_i)$. Suppose that $q$ is not an eigenvalue for the canonical action of $\phi_q$ on $\Hom_k(\bar{M}_j,\bar{M}_i)$ for all $i\neq j$. Then we have $\rho(t)_{i,j}=0$ for all $i\neq j$; in other words, the decomposition in (1) is stable under the whole group $\rT_q$.
\end{enumerate}
\end{lem}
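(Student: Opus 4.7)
The plan is as follows. For part (1), the starting point is the characteristic polynomial $P(T)\in R[T]$ of $\rho(\phi_q)$, which reduces modulo $\fm_R$ to $\prod_{i=1}^{s}\bar{P}_i(T)$, where $\bar{P}_i$ denotes the characteristic polynomial of $\bar\rho(\phi_q)$ on $\bar{M}_i$. Because the $\bar{P}_i$ are pairwise coprime in $k[T]$, Hensel's lemma (applied to factorization of monic polynomials over the complete local ring $R$) lifts this uniquely to $P=\prod_i P_i$ with each $P_i$ monic lifting $\bar{P}_i$ and the $P_i$ pairwise coprime in $R[T]$. By Cayley--Hamilton $P(\rho(\phi_q))=0$, so $M$ becomes a module over $R[T]/(P)\simeq\prod_i R[T]/(P_i)$, and the corresponding idempotents cut out $M=\bigoplus_i M_i$ with $M_i=\Ker P_i(\rho(\phi_q))$. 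Each $M_i$ is a direct summand of the free $R$-module $M$ and is therefore free of rank $\deg P_i=\dim_k\bar{M}_i$, and by construction $M_i\otimes_R k=\bar{M}_i$. For uniqueness, any other such decomposition $M=\bigoplus_i M'_i$ would force the characteristic polynomial of $\rho(\phi_q)$ on $M'_i$ to equal $P_i$ by Hensel's uniqueness, hence $M'_i\subseteq\Ker P_i(\rho(\phi_q))=M_i$ with equality by rank.

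For part (2), write $\rho(t)=1+X$ with $X\in\fm_R\cdot\End_R(M)$ (using $\bar\rho(t)=1$), and note that under the decomposition from (1), $\rho(\phi_q)$ is block-diagonal with blocks $\rho(\phi_q)_{ii}$ acting on $M_i$. The defining relation $\rho(\phi_q)\rho(t)\rho(\phi_q)^{-1}=\rho(t)^q$ expands (as a finite binomial sum) to
\[
\rho(\phi_q)X\rho(\phi_q)^{-1}=qX+\sum_{n=2}^{q}\binom{q}{n}X^n;
\]
passing to the $(i,j)$-block for $i\neq j$ yields
\[
(\ad_{i,j}-q)(X_{i,j})=\sum_{n=2}^{q}\binom{q}{n}(X^n)_{i,j},
\]
where $\ad_{i,j}\colon f\mapsto\rho(\phi_q)_{ii}\,f\,\rho(\phi_q)_{jj}^{-1}$ acts on $\Hom_R(M_j,M_i)$. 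By hypothesis, $\ad_{i,j}-q$ is invertible modulo $\fm_R$, hence invertible on the finite free $R$-module $\Hom_R(M_j,M_i)$ by Nakayama. I will then prove by induction on $a\geq 1$ that $X_{i,j}\in\fm_R^{a}\Hom_R(M_j,M_i)$ for every $i\neq j$: the base $a=1$ is given, and for the step each monomial $X_{i,k_1}X_{k_1,k_2}\cdots X_{k_{n-1},j}$ appearing in $(X^n)_{i,j}$ (with $n\geq 2$) has all $n$ factors in $\fm_R$ and, since $i\neq j$, at least one consecutive pair of indices differs so that factor belongs to $\fm_R^a$ by the inductive hypothesis, forcing the monomial into $\fm_R^{a+n-1}\subseteq\fm_R^{a+1}$; inverting $\ad_{i,j}-q$ then gives $X_{i,j}\in\fm_R^{a+1}$. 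Since $R$ is $\fm_R$-adically separated, $X_{i,j}=0$ for every $i\neq j$.

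The main technical point is in part (2): one must set up the correct telescoping of $\fm_R$-powers among the off-diagonal monomials of $X^n$ and check that the hypothesis on $q$ not being an eigenvalue of $\ad_{i,j}$ on $\Hom_k(\bar M_j,\bar M_i)$ suffices (via Nakayama) to invert $\ad_{i,j}-q$ over $R$. The remaining ingredients (Hensel's lemma, Cayley--Hamilton, the Chinese remainder theorem, and Nakayama) are entirely standard.
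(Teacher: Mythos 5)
Your proof is correct. For part (1) you write out the Hensel--Cayley--Hamilton--idempotent argument that the paper simply leaves as an exercise, and for part (2) your overall route is the paper's: expand $(1+X)^q$ binomially, use that $\rho(\phi_q)$ is block diagonal for the decomposition from (1), and use the eigenvalue hypothesis to force the off-diagonal blocks to vanish. The packaging of the two middle steps differs. Where you invert the operator $\ad_{i,j}-q$ on $\Hom_R(M_j,M_i)$ (which uses the hypothesis verbatim, since its reduction is exactly the action of $\phi_q$ on $\Hom_k(\bar{M}_j,\bar{M}_i)$ minus $q$), the paper instead iterates the relation $\rho(\phi_q)_{i,i}\rho(t)_{i,j}\equiv q\,\rho(t)_{i,j}\rho(\phi_q)_{j,j}\ (\mathrm{mod}\ \fm_RJ)$ to get $P_i(\rho(\phi_q)_{i,i})\rho(t)_{i,j}\equiv\rho(t)_{i,j}P_i(q\rho(\phi_q)_{j,j})$ and inverts $P_i(q\rho(\phi_q)_{j,j})$; and where you run an induction on powers of $\fm_R$ and conclude by $\fm_R$-adic separatedness, the paper applies Nakayama to the ideal $J$ generated by the off-diagonal coefficients after showing $J=\fm_RJ$. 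These are interchangeable and equally valid; your version is arguably closer to the literal hypothesis, the paper's avoids introducing the adjoint operator. One small imprecision: in your uniqueness argument for (1), ``equality by rank'' alone does not upgrade the inclusion $M'_i\subseteq M_i$ to equality (a free submodule of full rank can be proper); but equality is immediate here because $\bigoplus_iM'_i=M=\bigoplus_iM_i$ with $M'_i\subseteq M_i$, so projecting onto $M_i$ gives $M_i=M'_i$ (or apply Nakayama, since $M'_i$ surjects onto $M_i/\fm_RM_i$).
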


\begin{proof}
Part (1) is elementary, which we leave to the readers as an exercise.

\if false

For (1), let $P(T)\in R[T]$ be the characteristic polynomial of $\rho(\phi_q)$ on $M$. By Hensel's lemma, $P(T)$ admits a unique decomposition
\[
P(T)=\prod_{i=1}^sP_i(T)
\]
such that $P_i(T)\modulo\fm_R$ is the characteristic polynomial of $\bar\rho(\phi_q)$ on $\bar{M}_i$ for $1\leq i\leq s$. We put $Q_i(T)=\prod_{j\neq i}P_j(T)$ for $1\leq i\leq s$. We put $M_i\coloneqq Q_i(\phi_q)M$ and $N_i\coloneqq P_i(\phi_q)M$. Then both $M_i$ and $N_i$ are both stable under $\phi_q$; $M_i$ is annihilated by $P_i(\phi_q)$; $N_i$ is annihilated by $Q_i(\phi_q)$; and $M_i\cap N_i=\{0\}$. Using Nakayama's lemma, it is easy to see that $(P_i(T),Q_i(T))=R[T]$. Thus, there exist polynomials $F_i,G_i\in R[T]$ such that $F_i P_i+G_iQ_i=1$ in $R[T]$. We then obtain
\[
F_i(\phi_q)N_i+G_i(\phi_q)M_i=M,
\]
hence $M=M_i\oplus N_i$. To complete the proof of (1), it suffices to show that $N_i=\bigoplus_{j\neq i}M_j$. By definition, it is clear that $M_j\subseteq N_i$ for every $j\neq i$, hence $\bigoplus_{j\neq i}M_j\subseteq N_i$. The inverse inclusion follows from the fact that the ideal of $R[T]$ generated by $Q_j$ for $j\neq i$ is same as the ideal generated by $P_i$.

\fi

For (2), we choose a basis of $M$ over $R$ adapted to the decomposition of $M$ in (1). We identify $\rho(t)$ and $\rho(\phi_q)$ with their matrices under this basis. We have $\rho(\phi_q)_{i,j}=0$ for $i\neq j$ since each $M_i$ is stable under $\rho(\phi_q)$. Let $J\subset R$ be the ideal generated by the coefficients of $\rho(t)_{i,j}$ for $i\neq j$. We have to show that $J=0$. By Nakayama's lemma, it suffices to show that $J=\fm_RJ$. As
\begin{align*}
\rho(t)^q=(1+(\rho(t)-1))^q=1+q(\rho(t)-1)+\sum_{a\geq 2}\binom{q}{a}(\rho(t)-1)^a,
\end{align*}
and $\rho(t)\equiv 1\modulo\fm_R$, we have
\[
(\rho(t)^q)_{i,j}\equiv q\rho(t)_{i,j} \mod\fm_RJ
\]
for $i\neq j$. The relation $\phi_qt=t^q\phi_q$ implies that
\[
\rho(\phi_q)_{i,i}\rho(t)_{i,j}=(\rho(t)^q)_{i,j}\rho(\phi_q)_{j,j}\equiv q\rho(t)_{i,j}\rho(\phi_q)_{j,j} \mod\fm_RJ.
\]
It follows that
\[
\rho(t)_{i,j}P_i(q\rho(\phi_q)_{j,j})\equiv P_i(\rho(\phi_q)_{i,i})\rho(t)_{i,j} \equiv 0 \mod\fm_RJ
\]
for $i\neq j$. By assumption, if $\bar\alpha$ is an eigenvalue of $\bar\rho(\phi_q)_{i,i}$, then $q^{-1}\bar\alpha$ is not an eigenvalue of $\bar\rho(\phi_q)_{j,j}$. It follows that $P_i(q\rho(\phi_q)_{j,j})$ is invertible, hence $\rho(t)_{i,j}\equiv 0\modulo\fm_RJ$.

The lemma is proved.
\end{proof}

\subsection{Minimally ramified deformations}
\label{ss:minimally_ramified}

In this subsection, we define and study the minimally ramified deformations at places coprime to the odd prime $\ell$. Thus, we take a nonarchimedean place $v$ of $F^+$ that is not above $\ell$.

Let $\rI_{F^+_v}\subseteq\Gamma_{F^+_v}$ be the inertia subgroup, and $\rP_{F^+_v}$ the maximal closed subgroup of $\rI_{F^+_v}$ of pro-order coprime to $\ell$. Put $\rT_v\coloneqq\Gamma_{F^+_v}/\rP_{F^+_v}$. Similarly, for every place $w$ of $F$ above $v$, we have $\Gamma_{F_w}$, $\rI_{F_w}$, $\rP_{F_w}$, and $\rT_w$. Finally, put $\rT_w^+\coloneqq\Gamma_{F^+_v}/\rP_{F_w}$.

\begin{remark}
The group $\rT_v$ is a $\|v\|$-tame group (Definition \ref{de:tame_group}).
\begin{itemize}
  \item When $w$ is split over $v$, we have $\rP_{F_w}=\rP_{F^+_v}$, $\rT_w^+=\rT_v$, and $\rT_w=\rT_w^+=\rT_v$.

  \item When $w$ is unramified over $v$, we have $\rP_{F_w}=\rP_{F^+_v}$, $\rT_w^+=\rT_v$, and that the subgroup $\rT_w$ of $\rT_v$ is a $\|v\|^2$-tame group.

  \item When $w$ is ramified over $v$, we have $\rP_{F^+_v}/\rP_{F_w}\simeq\dZ/2\dZ$, that the natural map $\rT_w\to\rT_v$ is an isomorphism, and a canonically split short exact sequence
      \[
      1\to \rP_{F^+_v}/\rP_{F_w} \to \rT_w^+ \to \rT_v \to 1.
      \]
\end{itemize}
\end{remark}

We first recall some facts about extensions of representations of $\rP_{F_w}$ from \cite{CHT08}. For an irreducible representation $\tau$ of $\rP_{F_w}$ with coefficients in $k$, we put
\[
\Gamma_\tau\coloneqq\{\sigma\in\Gamma_{F_w}\res\tau^\sigma\simeq\tau\},
\]
where we recall that $\tau^\sigma$ denotes the representation given by $\tau^\sigma(g)=\tau(\sigma g\sigma^{-1})$ for $g\in\rP_{F_w}$. Let $\rT_\tau$ be the image of $\Gamma_\tau$ in $\rT_w=\Gamma_{F_w}/\rP_{F_w}$. As $\rP_{F_w}$ is normal in $\Gamma_{F^+_v}$, we may similar define
\[
\Gamma_\tau^+\coloneqq\{\sigma\in\Gamma_{F^+_v}\res\tau^\sigma\simeq\tau\},
\]
and denote by $\rT_\tau^+$ its image in $\rT_w^+$.

\begin{lem}\label{le:wild_type}
We have the following properties for $\tau$:
\begin{enumerate}
  \item the dimension of $\tau$ is coprime to $\ell$; and $\tau$ has a unique deformation to a representation $\tilde\tau$ of $\rP_{F_w}$ over $\sO$;

  \item $\tilde\tau$ in (1) admits a unique extension to a representation of $\Gamma_\tau\cap\rI_{F_w}$ over $\sO$ whose determinant has order coprime to $\ell$;

  \item there exists an extension of $\tilde\tau$ in (2) to a representation of $\Gamma_\tau$ over $\sO$.
\end{enumerate}
\end{lem}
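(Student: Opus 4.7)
The overall plan is to exploit systematically that $\rP_{F_w}$ is profinite of pro-order coprime to $\ell$, together with obstruction-theoretic arguments along the two pro-cyclic quotients appearing in the filtration $\rP_{F_w}\subseteq\Gamma_\tau\cap\rI_{F_w}\subseteq\Gamma_\tau$.

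For (1), continuity forces $\tau$ (valued in the finite field $k$) to factor through a finite quotient $G$ of $\rP_{F_w}$ whose order $m$ is coprime to $\ell$. That $\dim_k\tau$ is coprime to $\ell$ is then standard Brauer theory: upon base change to $\bar k$, the pieces of $\tau\otimes_k\bar k$ are absolutely irreducible with dimension dividing $m$, while the field of definition of each piece sits inside $k(\mu_m)$, whose degree over $k$ divides $\phi(m)$---again coprime to $\ell$. For the unique lift, $m$ is a unit in $\sO$, so $\rH^i(G,-)$ vanishes on all $\sO$-modules of finite length for $i\geq 1$; standard deformation theory then yields the existence and uniqueness of $\tilde\tau$.

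For (2), functoriality of the unique lift in (1) promotes the defining relation $\tau^\sigma\simeq\tau$ to $\tilde\tau^\sigma\simeq\tilde\tau$ for every $\sigma\in\Gamma_\tau$. For each $\sigma\in\Gamma_\tau\cap\rI_{F_w}$, I would choose an intertwiner $A_\sigma\in\GL_N(\sO)$, unique up to $\sO^\times$ by Schur's lemma (since $\tilde\tau$ is absolutely irreducible); the associated continuous $2$-cocycle with values in $\sO^\times$ on $H\coloneqq(\Gamma_\tau\cap\rI_{F_w})/\rP_{F_w}$ measures the obstruction to assembling the $A_\sigma$'s into an actual representation. Since $H$ is a closed subgroup of $\rI_{F_w}/\rP_{F_w}\cong\dZ_\ell$ and hence has cohomological dimension at most one, this obstruction vanishes; the $A_\sigma$'s can be rescaled to give a genuine extension, and the set of extensions becomes a torsor under continuous characters $H\to\sO^\times$. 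Because $H$ is pro-$\ell$, such characters land in the pro-$\ell$ subgroup $1+\lambda$. Using the Teichm\"uller decomposition $\sO^\times=k^\times\times(1+\lambda)$, the pro-$\ell$ component $\alpha$ of the determinant of any chosen extension factors through $H$; since $n\coloneqq\dim\tau$ is coprime to $\ell$ by (1) and $1+\lambda$ is uniquely $n$-divisible modulo its $\ell$-power torsion, twisting by the unique $n$-th root of $\alpha^{-1}$ cancels $\alpha$ and yields an extension with determinant of order coprime to $\ell$. Uniqueness follows because any discrepancy $\chi\colon H\to 1+\lambda$ between two such extensions satisfies $\chi^n=1$ in a group whose torsion is entirely of $\ell$-power order, forcing $\chi=1$.

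For (3), the same template applies one level up: the obstruction to extending $\tilde\tau$ from $\Gamma_\tau\cap\rI_{F_w}$ to $\Gamma_\tau$ is a class in $\rH^2_{\r{cts}}$ of the quotient $\Gamma_\tau/(\Gamma_\tau\cap\rI_{F_w})$, which embeds into $\Gamma_{F_w}/\rI_{F_w}\cong\widehat\dZ$ and hence has cohomological dimension at most one; the obstruction vanishes, and no determinant normalization is needed. The principal obstacle I anticipate is the bookkeeping in (2), especially the simultaneous handling of the projective obstruction and the determinant normalization; once the cohomological-dimension input is in hand, the other parts are routine.
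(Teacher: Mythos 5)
Your overall route -- unique prime-to-$\ell$ lifting over $\rP_{F_w}$, then Clifford-theoretic extension with the obstruction living in degree-two continuous cohomology of the pro-cyclic quotients, then a determinant normalization by twisting -- is essentially the standard argument underlying \cite{CHT08}*{Lemma~2.4.11}, which is all the paper itself offers (its proof is just that citation). Granting that $\tau$ is absolutely irreducible, parts (2) and (3) are workable along your lines, modulo the routine continuity bookkeeping you acknowledge; one step you should not skip in (3) is why the normalized extension $\tilde\tau'$ from (2) is $\Gamma_\tau$-stable: conjugation by $g\in\Gamma_\tau$ carries $\tilde\tau'$ to an extension of $\tilde\tau$ whose determinant still has order prime to $\ell$, so the \emph{uniqueness} in (2) gives $(\tilde\tau')^g\simeq\tilde\tau'$. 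Without this, the obstruction class on $\Gamma_\tau/(\Gamma_\tau\cap\rI_{F_w})$ is not even defined, and it is precisely here that the determinant normalization in (2) earns its keep.

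The genuine gap is in (1). Your claim that $[k(\mu_m):k]$ is prime to $\ell$ because it divides $\phi(m)$ is false: $m$ prime to $\ell$ does not force $\phi(m)$ prime to $\ell$ (take $\ell=3$, $m=13$, $\phi(m)=12$). Moreover the problem is not only with the argument: for $\tau$ merely irreducible over $k$ the conclusion itself can fail. With $\ell=3$, $k=\dF_3$ and residue characteristic $p\neq 3,13$, the group $\rP_{F_w}$ surjects onto $\dZ/13\dZ$, and since $3$ has order $3$ in $(\dZ/13\dZ)^\times$, the algebra $\dF_3[\dZ/13\dZ]$ has irreducible modules of dimension $3=\ell$; inflating gives an irreducible continuous $k$-representation of $\rP_{F_w}$ of dimension divisible by $\ell$. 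So the dimension claim -- and with it Schur's lemma, which you invoke in (2) and (3), and the unique $n$-th-root twist -- really requires $\tau$ to be absolutely irreducible, which is how the lemma is meant to be read and what holds where it is applied (cf.\ the conventions of \cite{CHT08} and Assumption \ref{as:min_large}). Under absolute irreducibility the correct argument is short: $\tau$ factors through a finite quotient $G$ of order prime to $\ell$, its unique lift $\tilde\tau$ is an absolutely irreducible representation in characteristic zero, and the dimension of such a representation divides $|G|$, hence is prime to $\ell$; your $\phi(m)$ step should be deleted rather than repaired.
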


\begin{proof}
This is \cite{CHT08}*{Lemma~2.4.11}.
\end{proof}

Now we consider a pair $(\bar{r},\chi)$ from Notation \ref{no:deformation_pair} with $\tilde\Gamma=\Gamma_{F^+_v}$ and $\Gamma=\Gamma_{F^+_v}\cap\Gamma_F=\Gamma_{F_w}$. Our first goal is to define the notion of \emph{minimally ramified liftings} of $\bar{r}$ (Definition \ref{de:minimal_deformation}). Recall from Notation \ref{no:sg_extension} that we have the induced homomorphism $\bar{r}^\natural\colon\Gamma_{F_w}\to\GL_N(k)$, which satisfies that $\bar{r}^\natural\res_{\rP_{F_w}}$ is semisimple.

When $v$ is split in $F$, minimally ramified deformations have already been defined and studied in \cite{CHT08}*{2.4.4}. Thus, we now assume that $v$ is nonsplit in $F$, hence $w$ is the unique prime of $F$ above $v$.

\begin{assumption}\label{as:min_large}
The residue field $k$ of $E$ contains a subfield $k^\flat$ of degree two such that every irreducible summand of $\bar{r}^\natural\res_{\rP_{F_w}}\otimes_k\ol{k}$ is defined over $k^\flat$.
\end{assumption}

We first assume that $E$ satisfies Assumption \ref{as:min_large}. For an irreducible representation $\tau$ of $\rP_{F_w}$ with coefficients in $k$, we put
\[
M_\tau(\bar{r})\coloneqq\Hom_{k[\rP_{F_w}]}(\tau,\bar{r}^\natural).
\]
Then $\tau\otimes_k M_\tau(\bar{r})$ is canonically the $\tau$-isotypic component of $\bar{r}^\natural$. As $\tau$ extends to a representation of $\Gamma_\tau$, the $k$-vector space $M_\tau(\bar{r})$ is equipped with a natural action by $\rT_\tau$; and $\tau\otimes_k M_\tau(\bar{r})$ is equipped with a natural action by $\Gamma_\tau$.

We denote by $\fT=\fT(\bar{r})$ the set of isomorphism classes of irreducible representations $\tau$ of $\rP_{F_w}$ such that $M_\tau(\bar{r})\neq0$. Then $\Gamma_{F_w}$ acts on $\fT$ by conjugation, whose orbits we denote by $\fT/\Gamma_{F_w}$. For $\tau\in\fT$, we write $[\tau]$ for its orbit in $\fT/\Gamma_{F_w}$.

Choose an element $\gamma\in\Gamma_{F^+_v}\setminus\Gamma_{F_w}$. By Lemma \ref{le:representation_selfdual}, the homomorphism $\bar{r}$ is determined by an element $\bar\Psi\in\GL_N(k)$ satisfying
\[
\bar{r}^{\natural,\gamma}=\bar\Psi\circ\chi\bar{r}^{\natural,\vee}\circ\bar\Psi^{-1},\quad
\bar\Psi\tp{\bar\Psi}^{-1}=-\chi(\gamma)^{-1}\bar{r}^\natural(\gamma^2).
\]

In what follows, we will adopt the following simplified notation: for a representation $\tau$ of a subgroup of $\Gamma_{F^+_v}$, we write $\tau^*$ for $\chi\tau^\vee$. Due to the existence of $\bar\Psi$, we know that if $\tau\in\fT$, then $\tau^{\gamma,*}\in\fT$ as well. As $\gamma^2\in\Gamma_{F_w}$, the assignment $\tau\mapsto\tau^{\gamma,*}$ induces an involution on the set $\fT/\Gamma_{F_w}$, which does not depend on the choice of $\gamma$.

\begin{construction}\label{cs:minimal_extension}
We now would like to construct a $\Gamma_{F_w}$-stable partition $\fT=\fT_1\sqcup\fT_2\sqcup\fT_3$. For each subset $\fT_i$, we will specify, for every $\tau\in\fT_i$, an extension of $\tilde\tau$ in Lemma \ref{le:wild_type}(2) to a representation of $\Gamma_\tau$ with coefficients in $\sO$ in a compatible way, specified below.

We start from the following observation. Suppose that $[\tau]=[\tau^{\gamma,*}]$ in $\fT/\Gamma_{F_w}$. Then there exists an element $h\in\Gamma_{F_w}$, unique up to left multiplication by an element in $\Gamma_\tau$, such that $\tau^{\gamma,*}\simeq\tau^{h^{-1}}$, or equivalently, $\tau^{h\gamma}\simeq\tau^*$. Then we have $(h\gamma)^2\in\Gamma_\tau$ but $h\gamma\not\in\Gamma_\tau$. Denote by $\tilde\Gamma_\tau$ the subgroup of $\Gamma_{F^+_v}$ generated by $\Gamma_\tau$ and $h\gamma$, which contains $\Gamma_\tau$ as a subgroup of index two. Let $\tilde\rT_\tau$ be the image of $\tilde\Gamma_\tau$ in $\rT^+_w$, which contains $\rT_\tau$ as a subgroup of index two.

\begin{enumerate}
  \item We define $\fT_1$ to be the subset of $\fT$ consisting of $\tau$ such that $[\tau]\neq[\tau^{\gamma,*}]$. We choose a subset $\fT_1^\heartsuit\subseteq\fT_1$ such that $\{\tau,\tau^{\gamma,*}\res\tau\in\fT_1^\heartsuit\}$ is a set of representatives for the $\Gamma_\tau$-action on $\fT_1$. For each element $\tau\in\fT_1^\heartsuit$, we choose an extension of $\tilde\tau$ in Lemma \ref{le:wild_type}(2) to a representation of $\Gamma_\tau$ with coefficients in $\sO$, which we still denote by $\tilde\tau$. For a general element $\tau\in\fT_1$, there are two cases. If $\tau\simeq\tau_1^h$ for (unique) $\tau_1\in\fT_1^\heartsuit$ and some $h\in\Gamma_{F_w}$, then we choose $\tilde\tau$ to be $\tilde\tau_1^h$, as the extension to $\Gamma_\tau=h^{-1}\Gamma_{\tau_1}h$. If $\tau\simeq(\tau_1^h)^{\gamma,*}$ for (unique) $\tau_1\in\fT_1^\heartsuit$ and some $h\in\Gamma_{F_w}$, then we choose $\tilde\tau$ to be $(\tilde\tau_1^h)^{\gamma,*}$, as the extension to $\Gamma_\tau=\gamma^{-1}h^{-1}\Gamma_{\tau_1}h\gamma$.

  \item We define $\fT_2$ to be the subset of $\fT$ consisting of $\tau$ such that $[\tau]=[\tau^{\gamma,*}]$, and that the images of $\Gamma_\tau$ and $\tilde\Gamma_\tau$ in $\Gamma_{F^+_v}/\rI_{F^+_v}$ are different. We choose a subset $\fT_2^\heartsuit\subseteq\fT_2$ of representatives for the $\Gamma_\tau$-action on $\fT_2$. For each element $\tau\in\fT_2^\heartsuit$, we choose an extension $\tilde\tau$ from Lemma \ref{le:minimal_extension}(1) below to a representation of $\Gamma_\tau$ with coefficients in $\sO$. For $\tau\in\fT_2$ in general, we have $\tau\simeq\tau_2^h$ for (unique) $\tau_2\in\fT_2^\heartsuit$ and some $h\in\Gamma_{F_w}$; and we choose $\tilde\tau$ to be $\tilde\tau_2^h$, as the extension to $\Gamma_\tau=h^{-1}\Gamma_{\tau_2}h$.

  \item We define $\fT_3$ to be the subset of $\fT$ consisting of $\tau$ such that $[\tau]=[\tau^{\gamma,*}]$, and that the images of $\Gamma_\tau$ and $\tilde\Gamma_\tau$ in $\Gamma_{F^+_v}/\rI_{F^+_v}$ are the same. We choose a subset $\fT_3^\heartsuit\subseteq\fT_3$ of representatives for the $\Gamma_\tau$-action on $\fT_3$. For each element $\tau\in\fT_3^\heartsuit$, we choose an extension $\tilde\tau$ from Lemma \ref{le:minimal_extension}(2) below to a representation of $\Gamma_\tau$ with coefficients in $\sO$. For $\tau\in\fT_3$ in general, we have $\tau\simeq\tau_3^h$ for (unique) $\tau_3\in\fT_3^\heartsuit$ and some $h\in\Gamma_{F_w}$; and we choose $\tilde\tau$ to be $\tilde\tau_3^h$, as the extension to $\Gamma_\tau=h^{-1}\Gamma_{\tau_3}h$.
\end{enumerate}

In addition, we put $\fT^\heartsuit\coloneqq\fT^\heartsuit_1\sqcup\fT^\heartsuit_2\sqcup\fT^\heartsuit_3$.
\end{construction}

\begin{remark}
The partition $\fT=\fT_1\sqcup\fT_2\sqcup\fT_3$ does not depend on the choice of $\gamma$. Moreover, if $\fT_3$ is nonempty, then $w$ is ramified over $v$.
\end{remark}

\begin{lem}\label{le:minimal_extension}
Let $\tau\in\fT$ be an element of dimension $d$.
\begin{enumerate}
  \item If $\tau\in\fT_2$, then the representation $\tilde\tau$ in Lemma \ref{le:wild_type}(2) extends to a representation of $\Gamma_\tau$ with coefficients in $\sO$ such that $\tilde\tau^{\gamma'}\simeq\tilde\tau^*$ still holds for every $\gamma'\in\tilde\Gamma_\tau\setminus\Gamma_\tau$.

  \item If $\tau\in\fT_3$, then the representation $\tilde\tau$ in Lemma \ref{le:wild_type}(2) extends to a representation of $\Gamma_\tau$ with coefficients in $\sO$ such that $\tilde\tau^{\gamma'}\simeq\tilde\tau^*$ still holds for every $\gamma'\in\tilde\Gamma_\tau\setminus\Gamma_\tau$.
\end{enumerate}
\end{lem}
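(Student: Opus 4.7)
The plan is to extend $\tilde\tau_0$ (the unique extension of $\tau$ to $\Gamma_\tau\cap\rI_{F_w}$ from Lemma~\ref{le:wild_type}(2)) to $\Gamma_\tau$ by first picking any extension via Lemma~\ref{le:wild_type}(3), then twisting by a continuous character $\psi\colon\Gamma_\tau/(\Gamma_\tau\cap\rI_{F_w})\to\sO^\times$ to enforce $\tilde\tau^{h\gamma}\simeq\tilde\tau^*$. The first observation is that for the similitude characters $\chi$ of interest (those of the form $\eta_v^\mu\epsilon_{\ell,v}^c$, with $v\nmid\ell$ and $\ell$ odd), the restriction $\chi|_{\rI_{F^+_v}}$ has order dividing $2$ and hence coprime to $\ell$. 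This forces $\det(\tilde\tau_0^*|_{\Gamma_\tau\cap\rI_{F_w}})$ to have order coprime to $\ell$ as well, so by the uniqueness in Lemma~\ref{le:wild_type}(2) there exists an isomorphism $\alpha\colon\tilde\tau_0^{h\gamma}\xrightarrow{\sim}\tilde\tau_0^*$ over $\sO$ lifting the mod-$\lambda$ isomorphism coming from $\tau^{h\gamma}\simeq\tau^*$.

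Given $\alpha$ and any extension $\tilde\tau$ of $\tilde\tau_0$ to $\Gamma_\tau$, the two representations $\tilde\tau^{h\gamma}$ and $\tilde\tau^*$ of $\Gamma_\tau$ both extend $\tilde\tau_0^*$ (via $\alpha$). By absolute irreducibility of $\tilde\tau_0|_{\rP_{F_w}}$ and the procyclic structure of the quotient $\Gamma_\tau/(\Gamma_\tau\cap\rI_{F_w})\hookrightarrow\Gamma_{F_w}/\rI_{F_w}\simeq\widehat{\dZ}$, they differ by a continuous character $\psi_0$, which takes values in $1+\lambda\sO$ since both sides agree mod $\lambda$. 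Modifying $\tilde\tau\mapsto\psi\otimes\tilde\tau$ replaces $\psi_0$ by $\psi\cdot\psi^{h\gamma}\cdot\psi_0$, so the goal becomes solving $\psi\cdot\psi^{h\gamma}=\psi_0^{-1}$. Since $\Gamma_{F^+_v}/\rI_{F^+_v}$ is procyclic and abelian, the $h\gamma$-conjugation action on the quotient $\Gamma_\tau/(\Gamma_\tau\cap\rI_{F_w})$ is trivial, so the equation reduces to $\psi^2=\psi_0^{-1}$. This admits a unique solution valued in $1+\lambda\sO$ because $\ell$ is odd and squaring is a bijection on $1+\lambda\sO$ (via the $\ell$-adic logarithm). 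For general $\gamma'=\sigma h\gamma\in\tilde\Gamma_\tau\setminus\Gamma_\tau$ with $\sigma\in\Gamma_\tau$, one has $\tilde\tau^{\gamma'}=(\tilde\tau^\sigma)^{h\gamma}\simeq\tilde\tau^{h\gamma}\simeq\tilde\tau^*$ using the inner-automorphism isomorphism $\tilde\tau^\sigma\simeq\tilde\tau$ implemented by $\tilde\tau(\sigma)$.

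This argument is essentially uniform for parts (1) and (2); their distinction merely reflects how $h\gamma$ is interpreted structurally---having nontrivial Frobenius image in the $\fT_2$ case, versus lying in $\rI_{F^+_v}\setminus\rI_{F_w}$ (with $w$ ramified over $v$, using $\gamma\in\rI_{F^+_v}$) in the $\fT_3$ case---but in both situations the procyclic abelianness of $\Gamma_{F^+_v}/\rI_{F^+_v}$ trivializes the relevant conjugation action and allows the character adjustment. The main technical hurdle is verifying the coprime-to-$\ell$ determinant condition for $\tilde\tau_0^*$ that unlocks Lemma~\ref{le:wild_type}(2), which is what demands the explicit constraint on $\chi|_{\rI_{F^+_v}}$ and the oddness of $\ell$; a secondary point is noting that the isomorphism $\alpha$ is canonical up to a scalar by Schur's lemma applied to the absolutely irreducible $\tilde\tau_0|_{\rP_{F_w}}$.
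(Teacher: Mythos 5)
Your reduction of the problem to a character-twisting equation is legitimate as far as it goes: after fixing an intertwiner $\alpha$ on $\Gamma_\tau\cap\rI_{F_w}$ (which does exist, by the uniqueness in Lemma \ref{le:wild_type}(2), since $\chi\res_{\rI_{F^+_v}}$ has order prime to $\ell$ in the intended setting), any extension $\tilde\tau$ of $\tilde\tau_0$ to $\Gamma_\tau$ gives a character $\psi_0$ of the procyclic quotient $\Gamma_\tau/(\Gamma_\tau\cap\rI_{F_w})$ measuring the failure of $\tilde\tau^{h\gamma}\simeq\tilde\tau^*$, the quotient embeds in the abelian group $\Gamma_{F^+_v}/\rI_{F^+_v}$ so $\psi^{h\gamma}=\psi$, and the problem becomes solving $\psi^2=\psi_0^{-1}$ in $\sO^\times$-valued characters. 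The gap is the assertion that $\psi_0$ takes values in $1+\lambda\sO$ ``since both sides agree mod $\lambda$.'' They do not: what is known is only that $\tilde\tau^{h\gamma}$ and $\tilde\tau^*$ agree after restriction to $\Gamma_\tau\cap\rI_{F_w}$; the value $\psi_0(\phi_\tau)$, which compares the two Frobenius matrices, is a priori an arbitrary unit, and indeed changing the initial extension by a twist $\psi'$ changes $\psi_0$ by $\psi'^{\pm2}$, whose reduction is certainly not $1$ in general. So the $\ell$-adic logarithm argument does not apply; what you actually need is that the residual (Teichm\"{u}ller) part of $\psi_0(\phi_\tau)$ is a square in $k^\times$, and this is exactly the nontrivial content of the lemma.

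This is also where your ``uniform'' treatment of (1) and (2) breaks down in a way the paper's proof makes visible. For $\tau\in\fT_2$ the paper avoids any square-root extraction: there one can choose $\gamma'\in\tilde\Gamma_\tau\setminus\Gamma_\tau$ whose square equals $\phi_\tau$ up to an element of $\Gamma_\tau\cap\rI_{F_w}$, and then $\tilde\tau(\phi_\tau)$ is written down explicitly (essentially as $-\chi(\phi_q^{b/2})\tilde\tau(\tilde{t}_\tau^{-q^{b/2}-1})B\tp{B}^{-1}$), so the Frobenius value is forced by the inertial data. For $\tau\in\fT_3$ the chosen $\gamma'$ lies in $\rI_{F^+_v}\setminus\rI_{F_w}$ and gives no handle on Frobenius; the paper must solve a quadratic condition and takes $\alpha\in\sO^\times$ with $\alpha^2=\beta\chi(\phi_\tau)$, which is possible only because of Assumption \ref{as:min_large} (the residue field is large enough that the relevant elements, defined over the index-two subfield $k^\flat$, become squares in $k$). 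Your argument, as written, would prove part (2) with no hypothesis on the residue field at all, which is a sign that the step asserting $\psi_0\equiv1\pmod{\lambda}$ cannot be right. To repair it you would have to show that $\bar\psi_0(\phi_\tau)$ is a square in $k^\times$, either by the paper's explicit computation with $\gamma'$ (case $\fT_2$) or by invoking Assumption \ref{as:min_large} (case $\fT_3$); only the pro-$\ell$ part of the square root comes for free from oddness of $\ell$.
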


\begin{proof}
We fix a splitting $\Gamma_{F^+_v}\simeq\rP_{F^+_v}\rtimes\rT_v$ and an isomorphism $\rT_v\simeq\rT_q=t^{\dZ_\ell}\rtimes\phi_q^{\widehat\dZ}$ with the $q$-tame group (Definition \ref{de:tame_group}) where $q=\|v\|$. Then we have the induced splitting $\Gamma_\tau\simeq\rP_{F_w}\rtimes\rT_\tau$, where $\rT_\tau=t_\tau^{\dZ_\ell}\rtimes\phi_\tau^{\widehat\dZ}$ is a subgroup of $\rT_q$, with $t_\tau=t^{\ell^a}$ and $\phi_\tau=\phi_q^b$ for unique integers $a\geq 0$ and $b>0$. To extend $\tilde\tau$ in Lemma \ref{le:wild_type}(2) to a representation of $\Gamma_\tau$, it suffices to specify $\tilde\tau(\phi_\tau)$.

For (1), there are two cases.

First, we suppose that $w$ is unramified over $v$. Then $b$ is even; and $\tilde\rT_\tau$ is the image of $\tilde\Gamma_\tau$ in $\rT_v$. Then $\tilde\rT_\tau$ is generated by $\rT_\tau$ and an element $\gamma'\in\rT_v$ of the form $(\tilde{t}_\tau,\phi_q^{b/2})$ such that ${\gamma'}^2=(\tilde{t}_\tau^{q^{b/2}+1},\phi_q^b)$ lies in $\Gamma_\tau$. As $[\tau]=[\tau^{\gamma,*}]$, we have $\tau^{\gamma'}\simeq\tau^*$. We choose a basis of $\tau$, hence regard $\tau$ as a homomorphism $\tau\colon\rP_{F_w}\to\GL_d(k)$. By Lemma \ref{le:wild_type}(1,2), we have a continuous homomorphism $\tilde\tau\colon\Gamma_\tau\cap\rI_{F_w}\to\GL_d(\sO)$ such that $\tilde\tau\res_{\rP_{F_w}}$ is a lifting of $\tau$, unique up to conjugation in $1+\rM_d(\lambda)$. In particular, there is an element $Bp\in\GL_d(\sO)$, unique up to scalar in $\sO^\times$, such that $\tilde\tau^{\gamma'}(g)=B\tilde\tau^*(g)B^{-1}$ for every $g\in\Gamma_\tau\cap\rI_{F_w}$. Since $\phi_\tau=\tilde{t}_\tau^{-q^{b/2}-1}{\gamma'}^2$, we have
\begin{align*}
\tilde\tau(\phi_\tau g\phi_\tau^{-1})
&=\tilde\tau(\tilde{t}_\tau^{-q^{b/2}-1}{\gamma'}^2g{\gamma'}^{-2}\tilde{t}_\tau^{q^{b/2}+1})
=\tilde\tau(\tilde{t}_\tau^{-q^{b/2}-1})\tilde\tau({\gamma'}^2g{\gamma'}^{-2})\tilde\tau(\tilde{t}_\tau^{q^{b/2}+1}) \\
&=\tilde\tau(\tilde{t}_\tau^{-q^{b/2}-1})B\tilde\tau^*({\gamma'}^{-1}g\gamma')B^{-1}\tilde\tau(\tilde{t}_\tau^{q^{b/2}+1}) \\
&=\tilde\tau(\tilde{t}_\tau^{-q^{b/2}-1})(B\tp{B}^{-1})\tilde\tau(g)(\tp{B}B^{-1})\tilde\tau(\tilde{t}_\tau^{q^{b/2}+1})
\end{align*}
for every $g\in\Gamma_\tau\cap\rI_{F_w}$. We put $\tilde\tau(\phi_\tau)\coloneqq-\chi(\phi_q^{b/2})\tilde\tau(\tilde{t}_\tau^{-q^{b/2}-1})(B\tp{B}^{-1})$. Then we obtain the desired extension as in (1).

Second, we suppose that $w$ is ramified over $v$. By the definition of $\fT_2$, the image of $\tilde\Gamma_\tau$ in $\Gamma_{F^+_v}/\rI_{F^+_v}$ contains $\phi_\tau^{\widehat\dZ}$ as a subgroup of index two. Thus, there exists an element $\gamma'\in\tilde\Gamma_\tau\setminus\Gamma_\tau$ such that $\gamma^2=h\phi_\tau$ for some $h\in\Gamma_\tau\cap\rI_{F_w}$. The remaining argument is same to the above case.

For (2), by the definition of $\fT_3$, the image of $\tilde\Gamma_\tau$ in $\Gamma_{F^+_v}/\rI_{F^+_v}$ coincides with $\phi_\tau^{\widehat\dZ}$. In particular, we can find an element $\gamma'\in\tilde\Gamma_\tau\setminus\Gamma_\tau$ contained in $\rI_{F^+_v}\setminus\rI_{F_w}$. By Lemma \ref{le:wild_type}(1,2), we have a continuous homomorphism $\tilde\tau\colon\Gamma_\tau\cap\rI_{F_w}\to\GL_d(\sO)$ such that $\tilde\tau\res_{\rP_{F_w}}$ is a lifting of $\tau$, unique up to conjugation in $1+\rM_d(\lambda)$. As we have $\tau^{\gamma'}\simeq\tau^*$ and $\tau^{\phi_\tau}\simeq\tau$, there are elements $A,B\in\GL_d(\sO)$ such that
\begin{align}\label{eq:minimal_extension1}
\tilde\tau^{\gamma'}(g)=A\tilde\tau^*(g)A^{-1},
\end{align}
\begin{align}\label{eq:minimal_extension2}
\tilde\tau^{\phi_\tau}(g)=B\tilde\tau(g)B^{-1},
\end{align}
for every $g\in\Gamma_\tau\cap\rI_{F_w}$. It follows from \eqref{eq:minimal_extension1} that the desired element $\tilde\tau(\phi_\tau)\in\GL_d(\sO)$ has to satisfy the equation
\begin{align}\label{eq:minimal_extension3}
\chi(\phi_\tau)A\tp{\tilde\tau}(\phi_\tau)^{-1}A^{-1}=\tilde\tau(\gamma'\phi_\tau{\gamma'}^{-1})
=\tilde\tau(\gamma'\phi_\tau{\gamma'}^{-1}\phi_\tau^{-1})\tilde\tau(\phi_\tau),
\end{align}
where we note that $\gamma'\phi_\tau{\gamma'}^{-1}\phi_\tau^{-1}\in\Gamma_\tau\cap\rI_{F_w}$. However, by \eqref{eq:minimal_extension2}, we have
\[
\tilde\tau^{\gamma'\phi_\tau{\gamma'}^{-1}}(g)
=(\tilde\tau(\gamma'\phi_\tau{\gamma'}^{-1}\phi_\tau^{-1})B)\tilde\tau(g)(\tilde\tau(\gamma'\phi_\tau{\gamma'}^{-1}\phi_\tau^{-1})B)^{-1}
\]
for every $g\in\Gamma_\tau\cap\rI_{F_w}$. On the other hand, by \eqref{eq:minimal_extension1} and \eqref{eq:minimal_extension2}, we have
\[
\tilde\tau^{{\gamma'}^{-1}\phi_\tau\gamma'}(g)
=(A\tp{B}^{-1}A^{-1})\tilde\tau(g)(A\tp{B}^{-1}A^{-1})^{-1}
\]
for every $g\in\Gamma_\tau\cap\rI_{F_w}$. Since $\tau$ is absolutely irreducible, it follows that there exists $\beta\in\sO^\times$ such that
\[
A\tp{B}^{-1}A^{-1}=\beta\cdot\tilde\tau(\gamma'\phi_\tau{\gamma'}^{-1}\phi_\tau^{-1})B.
\]
Take an element $\alpha\in\sO^\times$ such that $\alpha^2=\beta\chi(\phi_\tau)$, which is possible by Assumption \ref{as:min_large}. Then it is clear that $\tilde\tau(\phi_\tau)=\alpha B\in\GL_d(\sO)$ is a solution to \eqref{eq:minimal_extension3}.

The lemma is proved.
\end{proof}

Using Construction \ref{cs:minimal_extension}, we now discuss the structure of liftings of $\bar{r}$. Let $r\colon\Gamma_{F^+_v}\to\sG_N(R)$ be a lifting of $\bar{r}$ to an object $R$ of $\sC_{\sO}$. By Lemma \ref{le:representation_selfdual}, to give such a lifting $r$ is equivalent to giving an element $\Psi\in\GL_N(R)$ that reduces to $\bar\Psi$ and satisfies
\[
r^{\natural,\gamma}=\Psi\circ\chi r^{\natural,\vee}\circ\Psi^{-1},\quad
\Psi\tp{\Psi}^{-1}=-\chi(\gamma)^{-1}r^\natural(\gamma^2).
\]
For every $\tau\in\fT$, put
\[
M_\tau(r)\coloneqq\Hom_{R[\rP_{F_w}]}(\tilde\tau\otimes_{\sO}R,r^\natural),
\]
which is a finite free $R$-module equipped with the induced continuous action by $\rT_\tau$. Denote by $m_\tau\geq 1$ the rank of $M_\tau(r)$. Let $\underline\tau\in\fT$ be the unique element such that $\tau^\gamma\simeq\underline\tau^*$. Choose an isomorphism $\iota_\tau\colon\tau^\gamma\xrightarrow{\sim}\underline\tau^*$, which, by construction \ref{cs:minimal_extension}, lifts to an isomorphism $\iota_{\tilde\tau}\colon\tilde\tau^\gamma\xrightarrow{\sim}\tilde{\underline\tau}^*$ of representations of $\Gamma_{\underline\tau}$. Then we have isomorphisms
\[
M_\tau(r)^\gamma\xrightarrow{\sim}\Hom_{R[\rP_{F_w}]}(\tilde\tau^\gamma\otimes_{\sO}R,r^{\natural,\gamma})
\xrightarrow{\sim}\Hom_{R[\rP_{F_w}]}(\tilde{\underline\tau}^*\otimes_{\sO}R,r^{\natural,*})
\xrightarrow{\sim}\Hom_{R[\rP_{F_w}]}(r^\natural,\tilde{\underline\tau}\otimes_{\sO}R),
\]
where the second isomorphism is induced by $\iota_{\tilde\tau}$ and $\Psi$. As $\underline\tau$ is absolutely irreducible, we obtain a perfect $R$-bilinear pairing
\[
M_\tau(r)^\gamma\times M_{\underline\tau}(r)\to\End_{R[\rP_{F_w}]}(\tilde{\underline\tau}\otimes_{\sO}R)=R,
\]
which induces an isomorphism
\[
\theta_{\tilde\tau,r}\colon M_\tau(r)^\gamma\xrightarrow{\sim}M_{\underline\tau}(r)^\vee\coloneqq\Hom_R(M_{\underline\tau}(r),R)
\]
of $R[\rT_{\underline\tau}]$-modules. In particular, we have
\begin{align}\label{eq:minimal_prestructure}
\resizebox{\hsize}{!}{
\xymatrix{
r^\natural\simeq\(\bigoplus_{\tau\in\fT^\heartsuit_1}\(\Ind^{\Gamma_{F_w}}_{\Gamma_\tau}(\tilde\tau\otimes_{\sO}M_\tau(r))\oplus
\Ind^{\Gamma_{F_w}}_{\Gamma_{\tau^\gamma}}(\tilde\tau^{\gamma,*}\otimes_{\sO}M_\tau(r)^{\gamma,\vee})\)\)\bigoplus
\(\bigoplus_{\tau\in\fT^\heartsuit_2\sqcup\fT^\heartsuit_3}\Ind^{\Gamma_{F_w}}_{\Gamma_\tau}(\tilde\tau\otimes_{\sO}M_\tau(r))\)
}
}
\end{align}
as representations of $\Gamma_{F_w}$.

Now for every $\tau$, we fix an isomorphism $b_\tau\colon M_\tau(\bar{r})\xrightarrow{\sim}k^{\oplus m_\tau}$ of $k$-vector spaces, and let $\rT_\tau\to\GL_{m_\tau}(k)$ be the induced homomorphism. There are two cases.
\begin{enumerate}[label=(\alph*)]
  \item Suppose that $\tau\in\fT_1$. Then $M_{\underline\tau}(r)$ is determined by $M_\tau(r)$. If we choose an isomorphism $M_\tau(r)\simeq R^{\oplus m_\tau}$ of $R$-modules that reduces to $b_\tau$, then we obtain a continuous homomorphism
      \[
      \varrho_\tau\colon\rT_\tau\to\GL_{m_\tau}(R)
      \]
      that reduces to $\rT_\tau\to\GL_{m_\tau}(k)$.

  \item Suppose that $\tau\in\fT_2\sqcup\fT_3$. Let $h$ be element from Construction \ref{cs:minimal_extension}. Then $\theta_{\tilde\tau,r}$ induces an isomorphism $M_\tau(r)^{h\gamma}\xrightarrow{\sim}M_\tau(\tau)^\vee$ of $R[\rT_\tau]$-modules. Applying Lemma \ref{le:representation_selfdual}(3) to $\rT_\tau\to\GL_{m_\tau}(k)$, we obtain a homomorphism
      \[
      \bar\varrho_\tau\colon\tilde\rT_\tau\to\sG_{m_\tau}(k)
      \]
      satisfying $\bar\varrho_\tau^{-1}(\GL_{m_\tau}(k)\times(k)^\times)=\rT_\tau$ and $\nu\circ\bar\varrho_\tau=\eta_v^{\mu_\tau}$ for some $\mu_\tau\in\dZ/2\dZ$ determined by $\tilde\tau$.\footnote{In fact, when $\tau\in\fT_2$, one can always modify $\tilde\tau$ to make $\mu_\tau=0$; but when $\tau\in\fT_3$, $\mu_\tau$ is determined by $\tau$.} In general, if we choose an isomorphism $M_\tau(r)\simeq R^{\oplus m_\tau}$ of $R$-modules that reduces $b_\tau$, then we obtain a continuous homomorphism
      \[
      \varrho_\tau\colon\tilde\rT_\tau\to\sG_{m_\tau}(R)
      \]
      that reduces to $\bar\varrho_\tau$ and satisfies $\nu\circ\rho_\tau=\eta_v^{\mu_\tau}$.
\end{enumerate}

The following proposition is the counterpart of \cite{CHT08}*{Corollary~2.4.13} when $v$ is nonsplit in $F$.

\begin{proposition}\label{pr:minimal_structure}
Suppose that $E$ satisfies Assumption \ref{as:min_large}. We keep the choices of $\gamma\in\Gamma_{F^+_v}\setminus\Gamma_{F_w}$, those in Construction \ref{cs:minimal_extension}, $\iota_\tau$, and $b_\tau$. For every object $R$ of $\sC_{\sO}$, the assignment
\[
r\mapsto (\varrho_\tau)_{\tau\in\fT^\heartsuit}
\]
establishes a bijection between deformations of $\bar{r}$ to $R$ and equivalence classes of tuples $(\varrho_\tau)_{\tau\in\fT^\heartsuit}$ where
\begin{enumerate}[label=(\alph*)]
  \item for $\tau\in\fT^\heartsuit_1$, $\varrho_\tau\colon\rT_\tau\to\GL_{m_\tau}(R)$ is a continuous homomorphism that reduces to $\bar\varrho_\tau$;

  \item for $\tau\in\fT^\heartsuit_2\sqcup\fT^\heartsuit_3$, $\varrho_\tau\colon\tilde\rT_\tau\to\sG_{m_\tau}(R)$ is a continuous homomorphism that reduces to $\bar\varrho_\tau$ and satisfies $\nu\circ\rho_\tau=\eta_v^{\mu_\tau}$.
\end{enumerate}
Here, two tuples $(\varrho_\tau)_{\tau\in\fT^\heartsuit}$ and $(\varrho'_\tau)_{\tau\in\fT^\heartsuit}$ are said to be equivalent if $\varrho_\tau$ and $\varrho'_\tau$ are conjugate by elements in $1+\rM_{m_\tau}(\fm_R)$ for every $\tau\in\fT^\heartsuit$.
\end{proposition}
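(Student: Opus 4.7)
The forward direction of the claimed bijection is essentially already set up in the discussion preceding the proposition: given a lifting $r$, the data $(\varrho_\tau)_{\tau \in \fT^\heartsuit}$ is read off from the $\rP_{F_w}$-isotypic decomposition of $r^\natural$ together with the self-duality pairings $\theta_{\tilde\tau, r}$ furnished by Construction \ref{cs:minimal_extension}. My plan is to construct an explicit inverse map, and then verify that the two maps are mutually inverse modulo the two equivalence relations.

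For the inverse direction, suppose we are given a tuple $(\varrho_\tau)_{\tau \in \fT^\heartsuit}$. First, for each $\tau \in \fT^\heartsuit_1$ I put $M_\tau(r) \coloneqq R^{\oplus m_\tau}$ with the $\rT_\tau$-action via $\varrho_\tau$, and I set $M_{\underline\tau}(r) \coloneqq (M_\tau(r)^\gamma)^\vee$ with the evident $\rT_{\underline\tau}$-action; for $\tau \in \fT^\heartsuit_2 \sqcup \fT^\heartsuit_3$, I put $M_\tau(r) \coloneqq R^{\oplus m_\tau}$ with the $\rT_\tau$-action via the $\GL_{m_\tau}$-component of $\varrho_\tau\res_{\rT_\tau}$. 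Using the right-hand side of \eqref{eq:minimal_prestructure}, I then define a free $R$-module equipped with a continuous $\Gamma_{F_w}$-action lifting $\bar r^\natural$; after fixing an $R$-basis reducing to the chosen one of $\bar r^\natural$ via the $(b_\tau)$, this produces a continuous homomorphism $\rho \colon \Gamma_{F_w} \to \GL_N(R)$ lifting $\bar r^\natural$.

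The key step is then to produce the element $\Psi \in \GL_N(R)$ needed by Lemma \ref{le:representation_selfdual}(2) in order to extend $\rho$ to a lifting $r \colon \Gamma_{F^+_v} \to \sG_N(R)$ of $\bar r$. On each pair of $\fT^\heartsuit_1$-summands corresponding to $\tau$ and $\tau^{\gamma,*}$, the canonical duality between $\tilde\tau \otimes_\sO M_\tau(r)$ and $\tilde\tau^{\gamma,*} \otimes_\sO M_\tau(r)^{\gamma,\vee}$ forces a unique block of $\Psi$ compatible with the reduction $\bar\Psi$. On each $\fT^\heartsuit_2 \sqcup \fT^\heartsuit_3$-summand, the value $\varrho_\tau(h\gamma) \in \sG_{m_\tau}(R) \setminus (\GL_{m_\tau}(R) \times R^\times)$ together with the extension $\iota_{\tilde\tau}$ of Construction \ref{cs:minimal_extension} supplies the required block of $\Psi$. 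One then checks that the two conditions of Lemma \ref{le:representation_selfdual}(2), namely $\rho^\gamma = \Psi \circ \chi \rho^\vee \circ \Psi^{-1}$ and $\Psi \tp{\Psi}^{-1} = \mu_B \chi(\gamma)^{-1} \rho(\gamma^2)$, follow block by block from the defining relations of $\varrho_\tau$ (in particular $\nu \circ \varrho_\tau = \eta_v^{\mu_\tau}$ on the $\fT_2 \sqcup \fT_3$ blocks).

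The main obstacle will be the final book-keeping: verifying that the assignment is independent of the auxiliary choices (the element $h \in \Gamma_{F_w}$, the isomorphisms $\iota_\tau$, the basis identifications) made in Construction \ref{cs:minimal_extension}, and matching up the two equivalence relations. The equivalence-relation part is essentially a Schur-type argument over $R$: any element of $1 + \rM_N(\fm_R)$ that conjugates one lifting to another must preserve the $\rP_{F_w}$-isotypic decomposition of $r^\natural$ by Nakayama applied to the absolute irreducibility of each $\tau$, hence act on each $M_\tau(r)$ by an element of $1 + \rM_{m_\tau}(\fm_R)$; compatibility with $\theta_{\tilde\tau,r}$ then translates, in the $\fT_2 \sqcup \fT_3$ case, into conjugation inside $1 + \rM_{m_\tau}(\fm_R) \subseteq \sG_{m_\tau}(R)$, while in the $\fT_1$ case it forces the action on $M_{\underline\tau}(r)$ to be the transpose-inverse of that on $M_\tau(r)$. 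Granted these, both directions of the bijection are immediate by unwinding definitions, in parallel with the split case treated in \cite{CHT08}*{Corollary~2.4.13}.
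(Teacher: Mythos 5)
Your proposal is correct and follows essentially the same route as the paper's own proof: one assembles the $\Gamma_{F_w}$-module from the right-hand side of \eqref{eq:minimal_prestructure}, chooses a basis to get $\rho$, and then builds $\Psi$ blockwise — an antidiagonal block pairing the two summands for each $\tau\in\fT^\heartsuit_1$ (via duality and the action of $\gamma^{-2}$), and for $\tau\in\fT^\heartsuit_2\sqcup\fT^\heartsuit_3$ the block furnished by $\varrho_\tau(h\gamma)$ through Lemma \ref{le:representation_selfdual}(1) together with $\tilde\tau^{\gamma,*}\simeq\tilde\tau^{h^{-1}}$ from Lemma \ref{le:minimal_extension} — before invoking Lemma \ref{le:representation_selfdual}(2). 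The paper likewise treats the final verification that the two assignments are mutually inverse (your Schur-type bookkeeping) as routine, so your plan matches its argument.
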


\begin{proof}
We now attach to every tuple $(\varrho_\tau)_{\tau\in\fT^\heartsuit}$ as in the statement a lifting $r$ explicitly. Denote by $M_\tau$ the $R[\rT_\tau]$-module corresponding to $\varrho_\tau$. Consider
\[
M\coloneqq\(\bigoplus_{\tau\in\fT^\heartsuit_1}\(\Ind^{\Gamma_{F_w}}_{\Gamma_\tau}(\tilde\tau\otimes_{\sO}M_\tau)\oplus
\Ind^{\Gamma_{F_w}}_{\Gamma_{\tau^\gamma}}(\tilde\tau^{\gamma,*}\otimes_{\sO}M_\tau^{\gamma,\vee})\)\)\bigoplus
\(\bigoplus_{\tau\in\fT^\heartsuit_2\sqcup\fT^\heartsuit_3}\Ind^{\Gamma_{F_w}}_{\Gamma_\tau}(\tilde\tau\otimes_{\sO}M_\tau)\),
\]
which is a free $R$-module of rank $N$, equipped with a continuous action by $\Gamma_{F_w}$. Moreover, we have $M\otimes_RR/\fm_R\simeq\bar{r}^\natural$ as representations of $\Gamma_{F_w}$ by \eqref{eq:minimal_prestructure}. Thus, we may fix an isomorphism $M\simeq R^{\otimes N}$ such that the induced continuous homomorphism $\rho=\rho_M\colon\Gamma_{F_w}\to\GL_N(R)$ reduces to $\bar{r}^\natural$. Thus, by Lemma \ref{le:representation_selfdual}, to construct the desired lifting $r$ from $\rho$, it amounts to finding an element $\Psi\in\GL_N(R)$ satisfying
\begin{align}\label{eq:minimal_structure}
\rho^\gamma=\Psi\circ\chi\rho^\vee\circ\Psi^{-1},\quad
\Psi\tp\Psi^{-1}=-\chi(\gamma)^{-1}\rho(\gamma^2).
\end{align}
We will construct $\Psi$ as a direct sum of $\Psi_\tau$ for $\tau\in\fT^\heartsuit$.

For $\tau\in\fT^\heartsuit_1$, we note that $\tilde\tau(\gamma^{-2})\otimes\varrho_\tau(\gamma^{-2})$ induces an isomorphism
\[
\Ind^{\gamma^{-2}}\colon\Ind^{\Gamma_{F_w}}_{\Gamma_{\tau^{\gamma^2}}}(\tilde\tau^{\gamma^2}\otimes_{\sO}M_\tau^{\gamma^2})
\simeq\(\Ind^{\Gamma_{F_w}}_{\Gamma_{\tau^\gamma}}(\tilde\tau^{\gamma,*}\otimes_{\sO}M_\tau^{\gamma,\vee})\)^{\gamma,*}
\xrightarrow{\sim}\Ind^{\Gamma_{F_w}}_{\Gamma_\tau}(\tilde\tau\otimes_{\sO}M_\tau).
\]
Thus, we obtain an isomorphism
\begin{align}\label{eq:minimal_structure_1}
\(\Ind^{\Gamma_{F_w}}_{\Gamma_\tau}(\tilde\tau\otimes_{\sO}M_\tau)\oplus
\Ind^{\Gamma_{F_w}}_{\Gamma_{\tau^\gamma}}(\tilde\tau^{\gamma,*}\otimes_{\sO}M_\tau^{\gamma,\vee})\)^{\gamma,*}
\xrightarrow{\sim}\Ind^{\Gamma_{F_w}}_{\Gamma_\tau}(\tilde\tau\otimes_{\sO}M_\tau)\oplus
\Ind^{\Gamma_{F_w}}_{\Gamma_{\tau^\gamma}}(\tilde\tau^{\gamma,*}\otimes_{\sO}M_\tau^{\gamma,\vee})
\end{align}
as the composition of the canonical isomorphism
\[
\resizebox{\hsize}{!}{
\xymatrix{
\(\Ind^{\Gamma_{F_w}}_{\Gamma_\tau}(\tilde\tau\otimes_{\sO}M_\tau)\oplus
\Ind^{\Gamma_{F_w}}_{\Gamma_{\tau^\gamma}}(\tilde\tau^{\gamma,*}\otimes_{\sO}M_\tau^{\gamma,\vee})\)^{\gamma,*}
\xrightarrow{\sim}\Ind^{\Gamma_{F_w}}_{\Gamma_{\tau^\gamma}}(\tilde\tau^{\gamma,*}\otimes_{\sO}M_\tau^{\gamma,\vee})
\oplus\(\Ind^{\Gamma_{F_w}}_{\Gamma_{\tau^\gamma}}(\tilde\tau^{\gamma,*}\otimes_{\sO}M_\tau^{\gamma,\vee})\)^{\gamma,*},
}
}
\]
and the isomorphism
\[
\resizebox{\hsize}{!}{
\xymatrix{
\Ind^{\Gamma_{F_w}}_{\Gamma_{\tau^\gamma}}(\tilde\tau^{\gamma,*}\otimes_{\sO}M_\tau^{\gamma,\vee})
\oplus\(\Ind^{\Gamma_{F_w}}_{\Gamma_{\tau^\gamma}}(\tilde\tau^{\gamma,*}\otimes_{\sO}M_\tau^{\gamma,\vee})\)^{\gamma,*}
\xrightarrow{\sim}
\Ind^{\Gamma_{F_w}}_{\Gamma_\tau}(\tilde\tau\otimes_{\sO}M_\tau)\oplus
\Ind^{\Gamma_{F_w}}_{\Gamma_{\tau^\gamma}}(\tilde\tau^{\gamma,*}\otimes_{\sO}M_\tau^{\gamma,\vee})
}
}
\]
given by the matrix
\[
\begin{pmatrix} 0 & -\chi(\gamma)\Ind^{\gamma^{-2}} \\ 1 & 0 \end{pmatrix}.
\]
We now let $\Psi_\tau$ be the matrix representing the isomorphism
\[
\(\Ind^{\Gamma_{F_w}}_{\Gamma_\tau}(\tilde\tau\otimes_{\sO}M_\tau)\oplus
\Ind^{\Gamma_{F_w}}_{\Gamma_{\tau^\gamma}}(\tilde\tau^{\gamma,*}\otimes_{\sO}M_\tau^{\gamma,\vee})\)^*
\xrightarrow{\sim}\(\Ind^{\Gamma_{F_w}}_{\Gamma_\tau}(\tilde\tau\otimes_{\sO}M_\tau)\oplus
\Ind^{\Gamma_{F_w}}_{\Gamma_{\tau^\gamma}}(\tilde\tau^{\gamma,*}\otimes_{\sO}M_\tau^{\gamma,\vee})\)^\gamma
\]
induced from \eqref{eq:minimal_structure_1} by duality.

For $\tau\in\fT^\heartsuit_2\sqcup\fT^\heartsuit_3$, let $h$ be the element in Construction \ref{cs:minimal_extension}. Put $\gamma'\coloneqq h\gamma$, which is an element in $\tilde\Gamma_\tau\setminus\Gamma_\tau$. The homomorphism $\varrho_\tau\colon\tilde\rT_\tau\to\sG_{m_\tau}(R)$ induces an isomorphism $M_\tau^{\gamma'}\xrightarrow{\sim}M_\tau^\vee$ by Lemma \ref{le:representation_selfdual}(1), which induces an isomorphism $M_\tau^{\gamma,\vee}\xrightarrow{\sim}M_\tau^{h^{-1}}$. On the other hand, by Lemma \ref{le:minimal_extension}, we have an isomorphism $\tilde\tau^{\gamma,*}\simeq\tilde\tau^{h^{-1}}$. Thus, we obtain an isomorphism
\begin{align}\label{eq:minimal_structure_2}
\(\Ind^{\Gamma_{F_w}}_{\Gamma_\tau}(\tilde\tau\otimes_{\sO}M_\tau)\)^{\gamma,*}
\xrightarrow{\sim}\Ind^{\Gamma_{F_w}}_{\Gamma_\tau}(\tilde\tau\otimes_{\sO}M_\tau)
\end{align}
as the composition of the canonical isomorphism
\[
\(\Ind^{\Gamma_{F_w}}_{\Gamma_\tau}(\tilde\tau\otimes_{\sO}M_\tau)\)^{\gamma,*}\xrightarrow{\sim}
\Ind^{\Gamma_{F_w}}_{\Gamma_{\tau^\gamma}}(\tilde\tau^{\gamma,*}\otimes_{\sO}M_\tau^{\gamma,\vee}),
\]
the isomorphism
\[
\Ind^{\Gamma_{F_w}}_{\Gamma_{\tau^\gamma}}(\tilde\tau^{\gamma,*}\otimes_{\sO}M_\tau^{\gamma,\vee})\xrightarrow{\sim}
\Ind^{\Gamma_{F_w}}_{\Gamma_{\tau^{h^{-1}}}}(\tilde\tau^{h^{-1}}\otimes_{\sO}M_\tau^{h^{-1}})
\]
specified above, and the isomorphism
\[
\Ind^{\Gamma_{F_w}}_{\Gamma_{\tau^{h^{-1}}}}(\tilde\tau^{h^{-1}}\otimes_{\sO}M_\tau^{h^{-1}})\xrightarrow{\sim}
\Ind^{\Gamma_{F_w}}_{\Gamma_\tau}(\tilde\tau\otimes_{\sO}M_\tau)
\]
given by the action of $h^{-1}$. We now let $\Psi_\tau$ be the matrix representing the isomorphism
\[
\(\Ind^{\Gamma_{F_w}}_{\Gamma_\tau}(\tilde\tau\otimes_{\sO}M_\tau)\)^*\xrightarrow{\sim}
\(\Ind^{\Gamma_{F_w}}_{\Gamma_\tau}(\tilde\tau\otimes_{\sO}M_\tau)\)^\gamma
\]
induced from \eqref{eq:minimal_structure_2} by duality.

Finally, we put $\Psi\coloneqq\bigoplus_{\tau\in\fT^\heartsuit}\Psi_\tau$. Then \eqref{eq:minimal_structure} follows by construction. In other words, we have assigned a lifting $r$ from the tuple $(\varrho_\tau)_{\tau\in\fT^\heartsuit}$. It is straightforward to check that such assignment is inverse to the assignment in the proposition. The proposition follows.
\end{proof}

From now on, $E$ will not necessarily be subject to Assumption \ref{as:min_large}. Using Proposition \ref{pr:minimal_structure}, we can define minimally ramified liftings of $\bar{r}$ for $E$ in general.

\begin{definition}\label{de:minimal_deformation}
Suppose that $l\geq N$. Choose a finite unramified extension $E_\dag$ of $E$ contained in $\ol\dQ_\ell$ that satisfies Assumption \ref{as:min_large}, with the ring of integers $\sO_\dag$ and the residue field $k_\dag$. We also keep the choices of $\gamma\in\Gamma_{F^+_v}\setminus\Gamma_{F_w}$, those in Construction \ref{cs:minimal_extension}, $\iota_\tau$, and $b_\tau$, as in Proposition \ref{pr:minimal_structure} (with respect to $E$).

We say that a lifting $r$ of $\bar{r}$ to some object $R$ of $\sC_\sO$ is \emph{minimally ramified} if in the tuple $(\varrho_\tau)_{\tau\in\fT^\heartsuit}$ corresponding to the lifting $r\otimes_\sO\sO_\dag$ from Proposition \ref{pr:minimal_structure}, every homomorphism $\varrho_\tau$ is a minimally ramified lifting of $\bar\varrho_\tau$ in the following sense.
\begin{enumerate}
  \item For $\tau\in\fT^\heartsuit_1$, minimally ramified liftings of $\bar\varrho_\tau$ is defined in Definition \ref{de:minimal_tame} (which is equivalent to \cite{CHT08}*{Definition~2.4.14}).

  \item For $\tau\in\fT^\heartsuit_2$, note that $\tilde\rT_\tau$ is isomorphic to the $q_\tau$-tame group for some power $q_\tau$ of $\|v\|$ under which the subgroup $\rT_\tau$ is the $q_\tau^2$-tame group. Thus, we may define minimally ramified liftings of $\bar\varrho_\tau$ using Definition \ref{de:minimal_tame} (with respect to the similitude character $\eta_v^{\mu_\tau}$, which is trivial on $\rT_\tau$);

  \item For $\tau\in\fT^\heartsuit_3$, note that $\tilde\rT_\tau\simeq\rT_\tau\times\dZ/2\dZ$. Then, by Lemma \ref{le:representation_selfdual}, we may regard the homomorphism $\varrho_\tau$ as a continuous homomorphism $\varrho_\tau\colon\rT_\tau\to G(R)$, where $G$ is a symplectic (resp.\ orthogonal) group of rank $m_\tau$ if $\mu_\tau$ is $0$ (resp.\ $1$). Thus, we may define minimally ramified liftings of $\bar\varrho_\tau$ using \cite{Boo19}*{Definition~5.4}.
\end{enumerate}
\end{definition}

\begin{remark}
It is straightforward to check that Definition \ref{de:minimal_deformation} do not depend on the choices of $E_\dag$, $\gamma\in\Gamma_{F^+_v}\setminus\Gamma_{F_w}$, those in Construction \ref{cs:minimal_extension}, $\iota_\tau$, and $b_\tau$.
\end{remark}

Now we allow $v$ to be a nonarchimedean place of $F^+$ that is not above $\ell$, but not necessarily nonsplit in $F$. Again, we consider a pair $(\bar{r},\chi)$ from Notation \ref{no:deformation_pair} with $\tilde\Gamma=\Gamma_{F^+_v}$ and $\Gamma=\Gamma_{F^+_v}\cap\Gamma_F$.

\begin{corollary}\label{co:bernstein}
Let $w$ be a place of $F$ above $v$ and suppose that $\ell\nmid\prod_{i=1}^N(q_w^i-1)$. Let $\iota_\ell\colon\dC\xrightarrow{\sim}\ol\dQ_\ell$ be an isomorphism, and $R$ an object of $\sC_\sO$ contained in $\ol\dQ_\ell$. Let $r_1$ and $r_2$ be two $R$-valued liftings of $\bar{r}$. If $\Pi_1$ and $\Pi_2$ are two irreducible admissible representations of $\GL_N(F_w)$ such that $r_i^\natural\otimes_R\ol\dQ_\ell$ is the induced representation of $\r{WD}(\iota_\ell\Pi_i)$ for $i=1,2$, then $\Pi_1$ and $\Pi_2$ are in the same Bernstein component.
\end{corollary}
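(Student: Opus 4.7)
The plan is to show that the underlying Weil-group representations of the Weil--Deligne parameters attached to $\rho_i \coloneqq r_i^\natural \otimes_R \ol\dQ_\ell$ have isomorphic restrictions to $\rI_{F_w}$; this suffices by the standard characterization of Bernstein components of $\GL_N(F_w)$, namely that $\Pi_1$ and $\Pi_2$ lie in the same block if and only if their supercuspidal supports coincide modulo simultaneous unramified twists, which via local Langlands amounts to asking that the underlying Weil-group representations of their Weil--Deligne parameters agree on inertia.

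For the restriction to wild inertia $\rP_{F_w}$, note that $\rP_{F_w}$ has pro-order coprime to $\ell$ since $v$ does not lie above $\ell$. By a Maschke-type argument (cf.\ Lemma \ref{le:wild_type}(1)), continuous finite-dimensional representations of $\rP_{F_w}$ over $\ol\dZ_\ell$ are semisimple and determined up to isomorphism by their reductions modulo the maximal ideal. Since $\rho_1$ and $\rho_2$ both reduce to $\bar{r}^\natural \otimes_k \ol\dF_\ell$ on $\rP_{F_w}$, one obtains $\rho_1\res_{\rP_{F_w}} \simeq \rho_2\res_{\rP_{F_w}}$.

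The core step is to show that $\rho_i(t)$ is unipotent, where $t$ is a topological generator of $\rI_{F_w}/\rP_{F_w} \simeq \dZ_\ell(1)$. Let $\alpha \in \ol\dQ_\ell^\times$ be an eigenvalue of $\rho_i(t)$. Since $t$ generates a pro-$\ell$ group topologically, $\bar{r}^\natural(t)$ has $\ell$-power order in the finite group $\GL_N(k)$ and is therefore unipotent in characteristic $\ell$; hence $\alpha$ reduces to $1$ modulo the maximal ideal of $\ol\dZ_\ell$. On the other hand, the relation $\phi_w t \phi_w^{-1} = t^{q_w}$ shows that $\phi_w$ permutes the eigenvalues of $\rho_i(t)$ via $\alpha \mapsto \alpha^{q_w}$, and since the orbit has size at most $N$, one has $\alpha^{q_w^s - 1} = 1$ for some $1 \leq s \leq N$. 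The banal hypothesis $\ell \nmid \prod_{i=1}^N (q_w^i - 1)$ forces $q_w^s - 1$ to be coprime to $\ell$, so $\alpha$ is a root of unity of prime-to-$\ell$ order; combined with the fact that it reduces to $1$, this yields $\alpha = 1$, proving the claim. Consequently, the monodromy operator $N_i \coloneqq \log \rho_i(t)$ is nilpotent, and the underlying Weil-group representation $r_i'$ of the associated Weil--Deligne parameter satisfies $r_i'(t) = \rho_i(t) \exp(-N_i) = 1$; hence $r_i'\res_{\rI_{F_w}}$ factors through $\rP_{F_w}$ and agrees with $\rho_i\res_{\rP_{F_w}}$ there. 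Combining with the previous paragraph, $r_1'\res_{\rI_{F_w}} \simeq r_2'\res_{\rI_{F_w}}$ as $\rI_{F_w}$-representations, yielding the conclusion.

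The principal technical point is the unipotency of $\rho_i(t)$, where the banal hypothesis is used essentially; the remaining ingredients, namely semisimplicity of continuous representations of pro-prime-to-$\ell$ groups and Grothendieck's monodromy theorem, are standard.
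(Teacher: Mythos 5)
Your route is genuinely different from the paper's: the paper deduces that $\r{WD}(\iota_\ell\Pi_1)\res_{\rI_{F_w}}$ and $\r{WD}(\iota_\ell\Pi_2)\res_{\rI_{F_w}}$ are conjugate from the structure theory of liftings (Proposition \ref{pr:minimal_structure} in the nonsplit case, Lemma \ref{le:wild_type} in the split case), whereas you argue directly that (a) the restrictions of $\rho_i\coloneqq r_i^\natural\otimes_R\ol\dQ_\ell$ to the prime-to-$\ell$ inertia subgroup $\rP_{F_w}$ agree (note that in this paper $\rP_{F_w}$ is the maximal closed subgroup of $\rI_{F_w}$ of pro-order coprime to $\ell$, not just wild inertia; your argument only uses the coprimality, so this is harmless), and (b) under banality the image of a generator $t$ of the $\ell$-part of tame inertia is unipotent, so the Weil part of the parameter is trivial on $t$ and its inertial restriction is determined by the $\rP_{F_w}$-restriction. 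Step (a), the passage from unipotence of $\rho_i(t)$ to $r_i'(t)=1$, and the final identification of Bernstein components are fine.

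There is, however, a gap at the central step (b) as written. The relation $\phi_w t\phi_w^{-1}=t^{q_w}$ holds in the quotient $\rT_w=\Gamma_{F_w}/\rP_{F_w}$, but $\rho_i$ does not factor through $\rT_w$ in general: $\bar{r}$ may be deeply (even wildly) ramified at $v$, which is exactly the situation the corollary must cover. For an actual lift $t\in\rI_{F_w}$ of a generator of $\rI_{F_w}/\rP_{F_w}$ (and note that applying $\rho_i$ to $t$ already presupposes such a choice, the same choice that makes $\bar{r}^\natural(t)$ unipotent) one only has $\phi_w t\phi_w^{-1}=t^{q_w}u$ with $u\in\rP_{F_w}$, and the eigenvalues of $\rho_i(t)^{q_w}\rho_i(u)$ need not be the $q_w$-th powers of those of $\rho_i(t)$; so the assertion that Frobenius permutes the eigenvalues by $\alpha\mapsto\alpha^{q_w}$ is unjustified. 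The gap is repairable: either fix a splitting $\Gamma_{F_w}\simeq\rP_{F_w}\rtimes\rT_w$ (as is done in the proof of Lemma \ref{le:minimal_extension}) and take $t,\phi_w$ in the image of the section, so the relation holds on the nose; or, since $\rho_1(\rP_{F_w})$ and $\rho_2(\rP_{F_w})$ are finite of order prime to $\ell$, pass to $\Gamma_{F_w}/\rP_0$ with $\rP_0\coloneqq\rP_{F_w}\cap\Ker\rho_1\cap\Ker\rho_2$ and use profinite Schur--Zassenhaus together with conjugacy of complements to produce a pro-$\ell$ element $t$ and a Frobenius lift satisfying the exact relation; or, closer to the paper, run your eigenvalue argument on the multiplicity spaces $\Hom_{\rP_{F_w}}(\tilde\tau,\rho_i)$, on which the honest $q$-tame group acts. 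With any of these repairs your argument (orbit size at most $N$, banality, eigenvalues reducing to $1$) does give the unipotence of $\rho_i(t)$, and the conclusion follows since $\rI_{F_w}$ is topologically generated by $\rP_{F_w}$ and $t$; what it buys over the paper's proof is that it avoids the conjugate self-dual structure theory altogether, which is legitimate here since only $r_i^\natural\res_{\Gamma_{F_w}}$ intervenes.
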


\begin{proof}
Choose a finite unramified extension $E_\dag$ of $E$ contained in $\ol\dQ_\ell$ satisfying Assumption \ref{as:min_large}, with the ring of integers $\sO_\dag$ and the residue field $k_\dag$. Let $R_\dag$ be the subring of $\ol\dQ_\ell$ generated by $R$ and $\sO_\dag$, which is an object of $\sC_{\sO_\dag}$. Then both $r_1\otimes_RR_\dag$ and $r_2\otimes_RR_\dag$ are liftings of $\bar{r}\otimes_kk_\dag$. By Proposition \ref{pr:minimal_structure} (resp.\ Lemma \ref{le:wild_type}) when $v$ is nonsplit (resp.\ split) in $F$ and the condition that $\ell\nmid\prod_{i=1}^N(q_w^i-1)$, $\r{WD}(\iota_\ell\Pi_1)\res_{\rI_{F_w}}$ and $\r{WD}(\iota_\ell\Pi_2)\res_{\rI_{F_w}}$ are conjugate. Then it is well-known that $\Pi_1$ and $\Pi_2$ are in the same Bernstein component (see, for example, \cite{Yao}*{Lemma~3.2} for a proof).
\end{proof}

\begin{definition}\label{de:deformation_min}
When $\ell\geq N$, we define $\sD^\mnm$ to be the local deformation problem of $\bar{r}$ that classifies minimally ramified liftings in the sense of Definition \ref{de:minimal_deformation} (resp.\ \cite{CHT08}*{Definition~2.4.14}) when $v$ is nonsplit (resp.\ split) in $F$.
\end{definition}

\begin{proposition}\label{pr:deformation_min}
We have
\begin{enumerate}
  \item The ring $\sfR^\loc_{\bar{r}}$ is a reduced local complete intersection, flat and of pure relative dimension $N^2$ over $\sO$.

  \item Every irreducible component of $\Spf\sfR^\loc_{\bar{r}}$ is a local deformation problem (Definition \ref{de:local_deformation_problem}).

  \item When $\ell\geq N$, $\sD^\mnm$ is an irreducible component of $\Spf\sfR^\loc_{\bar{r}}$ and is formally smooth over $\Spf\sO$ of pure relative dimension $N^2$.
\end{enumerate}
\end{proposition}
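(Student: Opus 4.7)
The case where $v$ splits in $F$ follows from \cite{CHT08}*{Corollary~2.4.13} and the surrounding discussion, so I concentrate on the nonsplit case. Since the properties to establish descend from any finite unramified extension of $\sO$, I may enlarge $E$ and assume Assumption \ref{as:min_large}. Proposition \ref{pr:minimal_structure} provides a bijection between deformations of $\bar r$ and equivalence classes of tuples $(\varrho_\tau)_{\tau\in\fT^\heartsuit}$; promoting this to the level of liftings by adding the appropriate framing variables, I expect to realize $\Spf\sfR^\loc_{\bar r}$ as a formally smooth $\sO$-extension of the completed tensor product $\widehat{\bigotimes}_{\tau\in\fT^\heartsuit}\Spf\sfR^\loc_{\bar\varrho_\tau}$, where the block factor is an unrestricted $\GL_{m_\tau}$-valued lifting ring for $\tau\in\fT_1^\heartsuit$ and a $\sG_{m_\tau}$-valued lifting ring with similitude character $\eta_v^{\mu_\tau}$ for $\tau\in\fT_2^\heartsuit\sqcup\fT_3^\heartsuit$.

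For (1), each block factor is analyzed by Proposition \ref{pr:tame_general}: for $\tau\in\fT_1^\heartsuit$ one takes $G=\GL_{m_\tau,\sO}$ with trivial similitude quotient, and for $\tau\in\fT_2^\heartsuit\sqcup\fT_3^\heartsuit$ one applies Corollary \ref{pr:deformation_tame}. In all cases the block is a local complete intersection, flat over $\sO$, of pure relative dimension $m_\tau^2$. Adding the framing dimensions from the formally smooth extension in the opening paragraph, the bookkeeping shows the total relative dimension is $N^2$. For reducedness, the generic fiber of the moduli space $\sM(G,q)$ appearing in the proof of Proposition \ref{pr:tame_general} is smooth, since over the finite set of unipotent conjugacy classes the fibers of the projection $(\bbA,\bbB)\mapsto\bbA$ are torsors under connected centralizers; combined with the flat Cohen--Macaulay property, this forces reducedness of each block, a property preserved under completed tensor product and formally smooth extension.

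For (2), the formal group $\widehat\GL_{N,\sO}$ is the completion of a connected group scheme along its identity, hence topologically connected; its action therefore cannot permute the finite set of irreducible components of $\Spf\sfR^\loc_{\bar r}$ nontrivially. Consequently every irreducible component is automatically $\widehat\GL_{N,\sO}$-stable and qualifies as a local deformation problem in the sense of Definition \ref{de:local_deformation_problem}.

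For (3), Definition \ref{de:minimal_deformation} cuts out within each $\sfR^\loc_{\bar\varrho_\tau}$ the locus of minimally ramified liftings: for $\tau\in\fT_1^\heartsuit\sqcup\fT_2^\heartsuit$ this locus is formally smooth of pure relative dimension $m_\tau^2$ over $\sO$ by Proposition \ref{pr:minimal_tame} (used in the $\GL_{m_\tau}$ and $\sG_{m_\tau}$ settings respectively), while for $\tau\in\fT_3^\heartsuit$ the analogous statement is \cite{Boo19}*{Proposition~5.6} in the symplectic or orthogonal setting dictated by $\mu_\tau$. Combining these via the identification in the opening paragraph, $\sD^\mnm$ is formally smooth over $\Spf\sO$ of pure relative dimension $N^2$, hence irreducible; together with (1) this forces $\sD^\mnm$ to coincide with an irreducible component of $\Spf\sfR^\loc_{\bar r}$. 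The main obstacle I anticipate lies in the opening paragraph: the precise framing bookkeeping needed to exhibit $\Spf\sfR^\loc_{\bar r}$ as a formally smooth extension of the block tensor product requires tracking centralizers and the pairings $\theta_{\tilde\tau,r}$ carefully, so that the framing contributions assemble to exactly $N^2$ rather than a modified number.
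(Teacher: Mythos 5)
Your skeleton (reduce to the nonsplit case, use Proposition \ref{pr:minimal_structure} to express $\sfR^\loc_{\bar r}$ in terms of the block rings $\sfR^\loc_{\bar\varrho_\tau}$, then analyze the blocks by the tame-group results) matches the paper, but the two points you are most explicit about are exactly where the argument breaks. First, your dimension count is never carried out: you assert each block has pure relative dimension $m_\tau^2$ and that "the bookkeeping shows" the total is $N^2$, and then concede in your last sentence that this bookkeeping is the main obstacle. In fact Corollary \ref{pr:deformation_tame} does not apply to the $\fT_3$ blocks at all, since there $\tilde\rT_\tau\simeq\rT_\tau\times\dZ/2\dZ$ rather than a $q$-tame group containing $\rT_\tau$ as its $q^2$-tame subgroup; those blocks must be converted (via Lemma \ref{le:representation_selfdual}) into symplectic or orthogonal lifting problems and handled by Proposition \ref{pr:tame_general} with $G=\Sp$ or $\rO$, whose relative dimension is $\dim G\neq m_\tau^2$, so even the block dimensions are not what you claim in the accounting you set up. The paper deliberately avoids this bookkeeping: it only extracts ``l.c.i., flat, equidimensional'' from the blocks, and obtains both the pure relative dimension $N^2$ and generic reducedness of $\sfR^\loc_{\bar r}$ from the external input that $\sfR^\loc_{\bar r}[1/\ell]$ is reduced of pure dimension $N^2$ (\cite{BG19}*{Theorem~3.3.2} or \cite{BP19}*{Theorem~1}), and likewise in part (3) it gets the dimension of $\sD^\mnm$ not from framing bookkeeping but from the tangent-space identity $\dim_k\rL(\sD^\mnm)=\dim_k\rH^0(F^+_v,\ad\bar r)$, assembled from \cite{CHT08}*{Corollary~2.4.20}, Corollary \ref{co:minimal_tame}, and \cite{Boo19}*{Theorem~1.1}, together with \eqref{eq:local_deformation_dimension}. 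Without either the external theorem or a genuine tangent-space computation, your proof of the dimension statements in (1) and (3) is incomplete. (Also, the split case of (1) is \cite{Sho18}*{Theorem~2.5}, not \cite{CHT08}*{Corollary~2.4.13}.)

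Second, your reducedness argument is a non sequitur. Knowing that $\sM(G,q)_E$ is set-theoretically stratified by pieces $p^{-1}(C(\bbA_0))$ that are torsors under smooth connected centralizers tells you only that the underlying reduced strata are smooth of dimension $d$; it says nothing about the scheme structure, which could a priori be nonreduced along a whole stratum (compare $x^2=0$ in $\dA^2$: l.c.i., pure dimension one, underlying set a smooth line, yet nonreduced). To conclude generic reducedness one must produce, on each irreducible component, a point at which the scheme is actually smooth, e.g.\ an unobstructed lifting with $\rH^0(\ad\varrho(1))=0$, and this is precisely the nontrivial content of \cite{Sho18}, \cite{BG19}, \cite{BP19} that the paper imports; once generic reducedness is known, Cohen--Macaulayness (from l.c.i.\ plus flatness) upgrades it to reducedness, as in the paper. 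Your part (2) is fine and is essentially the paper's argument: the conjugation map $\sL_N\times_{\Spf\sO}\sD\to\Spf\sfR^\loc_{\bar r}$ has irreducible image containing $\sD$, hence equal to $\sD$.
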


\begin{proof}
For this proposition, we may assume that $E$ satisfies Assumption \ref{as:min_large}.

For (1), when $v$ splits in $F$, this is \cite{Sho18}*{Theorem~2.5}. Thus, we may assume that $v$ is nonsplit in $F$. By Proposition \ref{pr:minimal_structure}, $\sfR^\loc_{\bar{r}}$ is a power series ring over
\[
\widehat\bigotimes_{\tau\in\fT^\heartsuit}\sfR^\loc_{\bar\varrho_\tau}.
\]
We now claim that for every $\tau\in\fT^\heartsuit$, $\sfR^\loc_{\bar\varrho_\tau}$ a local complete intersection, flat and equidimensional. Indeed, for $\tau\in\fT_1^\heartsuit$, this is \cite{Sho18}*{Theorem~2.5}; for $\tau\in\fT_2^\heartsuit$, this is Corollary \ref{pr:deformation_tame}; for $\tau\in\fT_3^\heartsuit$, this is Proposition \ref{pr:tame_general} for $G$ a symplectic or orthogonal group with the trivial similitude character. On the other hand, by \cite{BG19}*{Theorem~3.3.2} or \cite{BP19}*{Theorem~1}, we know that $\sfR^\loc_{\bar{r}}[1/\ell]$ is reduced and of pure dimension $\dim\sG_N=N^2$. Thus, $\sfR^\loc_{\bar{r}}$ is a local complete intersection, flat and of pure relative dimension $N^2$ over $\sO$. Since $\sfR^\loc_{\bar{r}}$ is generically reduced and Cohen--Macaulay, it is reduced. (1) is proved.

For (2), take an irreducible component $\sD$ of $\Spf\sfR^\loc_{\bar{r}}$, and let $\sL_N$ be the formal completion of $\GL_{N,\sO}$ along the unit section. Then the conjugation action induces a homomorphism $\sL_N\times_{\Spf\sO}\sD\to\Spf\sfR^\loc_{\bar{r}}$ whose image contains $\sD$. Since $\sL_N$ is irreducible, the image is irreducible, hence has to be $\sD$. In other words, $\sD$ is a local deformation problem.

For (3), since $\sD^\mnm$ is Zariski closed in $\Spf\sfR^\loc_{\bar{r}}$ from its definition, it suffices to show that $\sD^\mnm$ is formally smooth over $\Spf\sO$ of pure relative dimension $N^2$. When $v$ splits in $F$, this is \cite{CHT08}*{Corollary~2.4.21}. Thus, we may assume that $v$ is nonsplit in $F$. For $\tau\in\fT^\heartsuit$, let $\sD_{\bar\varrho_\tau}^\mnm$ be the local deformation problem of $\bar\varrho_\tau$ classifying minimally ramified liftings of $\bar\varrho_\tau$ in various cases in Definition \ref{de:minimal_deformation}. By Proposition \ref{pr:minimal_structure} and Definition \ref{de:minimal_deformation}, $\sD^\mnm$ is formally smooth over
\[
\prod_{\tau\in\fT^\heartsuit}\sD_{\bar\varrho_\tau}^\mnm.
\]
We claim that for every $\tau\in\fT^\heartsuit$, $\sD_{\bar\varrho_\tau}^\mnm$ is formally smooth over $\Spf\sO$. Indeed, for $\tau\in\fT^\heartsuit_1$, this is \cite{CHT08}*{Lemma~2.4.19}; for $\tau\in\fT^\heartsuit_2$, this is Proposition \ref{pr:minimal_tame}; for $\tau\in\fT^\heartsuit_3$, this is a part of \cite{Boo19}*{Theorem~1.1}. Thus, $\sD^\mnm$ is formally smooth over $\Spf\sO$.

It remains to compute the dimension. By \eqref{eq:local_deformation_dimension}, it suffices to show that
\begin{align}\label{eq:deformation_min}
\dim_k\rL(\sD^\mnm)=\dim_k\rH^0(F^+_v,\ad\bar{r}).
\end{align}
For every $\tau\in\fT^\heartsuit$, let $\rL(\sD_{\bar\varrho_\tau}^\mnm)$ be the tangent space of the deformation problem $\sD_{\bar\varrho_\tau}^\mnm$, which is a subspace of $\rH^1(\rT_\tau,\ad\bar\varrho)$ (resp.\ $\rH^1(\tilde\rT_\tau,\ad\bar\varrho)$) if $\tau\in\fT^\heartsuit_1$ (resp.\ $\tau\in\fT^\heartsuit_2\sqcup\fT^\heartsuit_3$). By Proposition \ref{pr:minimal_structure}, we have
\begin{align}\label{eq:deformation_min_1}
\dim_k\rL(\sD^\mnm)=\sum_{\tau\in\fT^\heartsuit}\dim_k\rL(\sD_{\bar\varrho_\tau}^\mnm).
\end{align}
We claim that
\begin{align}\label{eq:deformation_min_2}
\dim_k\rL(\sD_{\bar\varrho_\tau}^\mnm)=
\begin{dcases}
\dim_k\rH^0(\rT_\tau,\ad\bar\varrho_\tau)  & \text{if $\tau\in\fT^\heartsuit_1$;} \\
\dim_k\rH^0(\tilde\rT_\tau,\ad\bar\varrho_\tau)  & \text{if $\tau\in\fT^\heartsuit_2\sqcup\fT^\heartsuit_3$.}
\end{dcases}
\end{align}
Indeed, for $\tau\in\fT^\heartsuit_1$, this is \cite{CHT08}*{Corollary~2.4.20}; for $\tau\in\fT^\heartsuit_2$, this is Corollary \ref{co:minimal_tame}; for $\tau\in\fT^\heartsuit_3$, this is a part of \cite{Boo19}*{Theorem~1.1} as $\dim_k\rH^0(\tilde\rT_\tau,\ad\bar\varrho_\tau)=\dim_k\rH^0(\rT_\tau,\ad^0\bar\varrho_\tau)$. From \eqref{eq:minimal_prestructure} for $\bar{r}$, we have
\[
\rH^0(F^+_v,\ad\bar{r})\simeq\(\bigoplus_{\tau\in\fT^\heartsuit_1}\rH^0(\rT_\tau,\ad\bar\varrho_\tau)\)\bigoplus
\(\bigoplus_{\tau\in\fT^\heartsuit_2\sqcup\fT^\heartsuit_3}\rH^0(\tilde\rT_\tau,\ad\bar\varrho_\tau)\).
\]
Together with \eqref{eq:deformation_min_1} and \eqref{eq:deformation_min_2}, we obtain \eqref{eq:deformation_min}.

The proposition is proved.
\end{proof}

\subsection{Level-raising deformations}
\label{ss:level_raising}

In this subsection, we discuss level-raising deformations. Assume $\ell\geq N\geq 2$. We take a nonarchimedean place $v$ of $F^+$ that is inert in $F$ and not above $\ell$. Let $w$ be the unique place of $F$ above $v$. Recall that we have $\rT_v=\Gamma_{F^+_v}/\rP_{F^+_v}$ and $\rT_w=\Gamma_{F_w}/\rP_{F_w}$. Then $\rT_v$ is isomorphic to the $q$-tame group and the subgroup $\rT_w$ is the $q^2$-tame group (Definition \ref{de:tame_group}), where $q=\|v\|$.

We consider a pair $(\bar{r},\chi)$ from Notation \ref{no:deformation_pair} with $\tilde\Gamma=\Gamma_{F^+_v}$ and $\Gamma=\Gamma_{F^+_v}\cap\Gamma_F=\Gamma_{F_w}$, such that $\bar{r}$ is unramified and $\chi=\eta_v^\mu\epsilon_{\ell,v}^{1-N}$ for some $\mu\in\dZ/2\dZ$. Then by Lemma \ref{le:wild_type}(1), every lifting $r$ of $\bar{r}$ to an object $R$ of $\sC_\sO$ factors through $\rT_v$. In particular, we may apply the discussion in \S\ref{ss:tame_group} to the pair $(\bar{r},\chi)$.

Now assume $\ell\nmid(q^2-1)$ and that the generalized eigenvalues of $\bar{r}^\natural(\phi_w)$ in $\ol\dF_\ell$ contain the pair $\{q^{-N},q^{-N+2}\}$ exactly once. By Lemma \ref{le:tame_decomposition}(1), for every lifting $r$ of $\bar{r}$ to an object $R$ of $\sC_\sO$, we have a canonical decomposition
\begin{align}\label{eq:tame_decomposition}
R^{\oplus N}=M_0\oplus M_1
\end{align}
of free $R$-modules stable under the action of $r^\natural(\phi_w)$, such that if we write $P_0(T)$ for the characteristic polynomial of $r^\natural(\phi_w)$ on $M_0$, then $P_0(T)\equiv(T-q^{-N})(T-q^{-N+2})\modulo\fm_R$.

\begin{definition}\label{de:deformation_level_raising}
Let $(\bar{r},\chi)$ be as above. We define $\sD^\mix$ to be the local deformation problem of $\bar{r}$ (Definition \ref{de:local_deformation_problem}) that classifies liftings $r$ to an object $R$ of $\sC_\sO$ such that in the decomposition \eqref{eq:tame_decomposition}, $r^\natural(\rI_{F_w})$ preserves $M_0$ and acts trivially on $M_1$.\footnote{Note that since $\ell\nmid(q^2-1)$, the characteristic polynomial of $r^\natural(t)$ on $M_0$ is automatically $(T-1)^2$ for every $t\in\rI_{F_w}$.}

We define
\begin{enumerate}
  \item $\sD^\unr$ to be the local deformation problem contained in $\sD^\mix$ such that the action of $r^\natural(\rI_{F_w})$ on $M_0$ is also trivial;

  \item $\sD^\ram$ to be the local deformation problem contained in $\sD^\mix$ such that $P_0(T)=(T-q^{-N})(T-q^{-N+2})$ in $R[T]$.
\end{enumerate}
\end{definition}

It is clear that $\sD^\unr$ coincides with $\sD^\mnm$ from Definition \ref{de:deformation_min}.

\begin{proposition}\label{pr:deformation_level_raising}
Suppose that $\ell\nmid(q^2-1)$ and that the generalized eigenvalues of $\bar{r}^\natural(\phi_w)$ in $\ol\dF_\ell$ contain the pair $\{q^{-N},q^{-N+2}\}$ exactly once. Then the formal scheme $\sD^\mix$ is formally smooth over $\Spf\sO[[x_0,x_1]]/(x_0x_1)$ of pure relative dimension $N^2-1$ such that the irreducible components defined by $x_0=0$ and $x_1=0$ are $\sD^\unr$ and $\sD^\ram$, respectively. In particular, $\sD^\ram$ is formally smooth over $\Spf\sO$ of pure relative dimension $N^2$.
\end{proposition}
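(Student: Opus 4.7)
My plan is to reduce the problem to a two-dimensional ``resonant'' block via Lemma \ref{le:tame_decomposition}, and then analyze the resulting $2\times2$ block directly to extract the conifold singularity $x_0x_1=0$.

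For any lifting $r\in\sD^\mix(R)$, Lemma \ref{le:tame_decomposition}(1) provides a canonical decomposition $R^{\oplus N}=M_0\oplus M_1$ preserved by $r^\natural(\phi_w)$, where $M_0$ has rank $2$ with characteristic polynomial of $\phi_w$ reducing to $(T-q^{-N})(T-q^{-N+2})$. Since $\chi(\phi_w)=q^{2-2N}$, the pair $\{q^{-N},q^{-N+2}\}$ is stable under the duality involution $\alpha\mapsto\chi(\phi_w)/\alpha$; the uniqueness in the lemma then forces the decomposition to be preserved by the conjugate self-dual structure as well, and combined with the $\sD^\mix$-hypothesis that $r^\natural(\rI_{F_w})$ preserves $M_0$ and acts trivially on $M_1$, the splitting is $\Gamma_{F^+_v}$-stable. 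This factors $\sD^\mix$ (up to a formally smooth framing contribution of relative dimension $4(N-2)$ from lifting the spectral projector onto $\bar{M}_0$) into the $M_1$- and $M_0$-blocks. The rank-$(N-2)$ block $M_1$, being unramified with $\phi_w$-spectrum avoiding the dangerous pair, is formally smooth over $\Spf\sO$ of relative dimension $(N-2)^2$ by the minimally ramified analysis of Proposition \ref{pr:deformation_min}(3) applied in rank $N-2$.

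In the $M_0$-block, set $B_0\coloneqq r^\natural(\phi_w)$ and $A_0=I+X_0\coloneqq r^\natural(t)$; the tame relation $\phi_wt\phi_w^{-1}=t^{q^2}$ yields $B_0A_0B_0^{-1}=A_0^{q^2}$, and comparing eigenvalues with $\ell\nmid q^2-1$ (hence $\ell\nmid q\pm 1$) forces $A_0$ unipotent and $X_0^2=0$, so the relation simplifies to the linear equation $B_0X_0=q^2X_0B_0$. In a basis diagonalizing $\bar{B}_0$, the $(1,1)$, $(1,2)$, $(2,2)$ entries of this commutator equation together with $\tr X_0=0$ determine $(X_0)_{11},(X_0)_{12},(X_0)_{22}$ as polynomial expressions in $(X_0)_{21}$ and in the off-diagonals of $B_0$ (the nilpotent condition $\det X_0=0$ cancels the potentially troublesome factor $1-q^4$ in the case $\ell\mid q^2+1$). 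Substituting into the $(2,1)$ entry produces the single surviving relation $(X_0)_{21}\cdot x_0=0$, where $x_0\coloneqq(B_0)_{22}-q^2(B_0)_{11}-L\cdot(B_0)_{12}(B_0)_{21}$ for an explicit unit $L$; setting $x_1\coloneqq(X_0)_{21}$ gives $x_0x_1=0$, and the remaining parameters are free, exhibiting the $M_0$-block deformation ring as $\sO[[x_0,x_1,u_1,u_2,u_3]]/(x_0x_1)$. The locus $x_1=0$ forces $X_0=0$ via the solved entries and so is identified with $\sD^\unr$; the locus $x_0=0$ is equivalent, after a formal change of coordinates in $(B_0)_{11},(B_0)_{22}$, to $\tr B_0=q^{-N}+q^{-N+2}$, and so is identified with $\sD^\ram$, which is thereby formally smooth over $\Spf\sO$ of pure relative dimension $N^2$. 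Assembling the three factors, $\sD^\mix$ is formally smooth over $\Spf\sO[[x_0,x_1]]/(x_0x_1)$ of relative dimension $3+(N-2)^2+4(N-2)=N^2-1$, as claimed.

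The main technical obstacle is the explicit $2\times2$ computation producing the clean relation $x_0x_1=0$: one must carefully combine all four entries of the commutator equation with $\tr X_0=0$ and $\det X_0=0$ to eliminate the three auxiliary entries of $X_0$ and arrive at a single relation in the two intrinsic coordinates, handling delicately the case $\ell\mid q^2+1$ where the nilpotency of $X_0$ must be invoked to cancel the factor $1-q^4$.
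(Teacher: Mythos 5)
Your reduction to a rank-$2$ block (stability of the splitting under all of $\Gamma_{F^+_v}$, an unramified rank-$(N-2)$ factor, and a formally smooth framing factor of dimension $4(N-2)$) mirrors the paper's argument and is fine. The genuine gap is in your analysis of the $M_0$-block: a point of $\sD^\mix$ is a $\sG_N$-valued lifting of the whole group $\Gamma_{F^+_v}$ (here $v$ is inert), so on the block it carries the extra datum $r(\phi_q)=(B,-\chi(\phi_q))\fc$ with $\phi_q$ a Frobenius lift of $F^+_v$, subject to $r^\natural(\phi_w)=r^\natural(\phi_q^2)=-\chi(\phi_q)B\tp{B}^{-1}$ and $B\tp{X}B^{-1}=-qX$. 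You only parametrize the restriction to $\Gamma_{F_w}$, i.e.\ the pair $(B_0,X_0)=(r^\natural(\phi_w),r^\natural(t)-1)$ with the single relation $B_0X_0=q^2X_0B_0$, and you invoke the conjugate self-duality only to see that the splitting is stable. But the duality is not just a symmetry: it both adds parameters (the choice of $B$) and adds constraints. For instance, on the rank-$2$ block it forces $\det r^\natural(\phi_w)=\chi(\phi_w)=q^{2-2N}$ \emph{exactly} in $R^\times$, whereas in your $(B_0,X_0)$-space $\det B_0$ is an arbitrary lift of $q^{2-2N}$; so your space strictly contains liftings of the restriction that do not extend to $\Gamma_{F^+_v}$ with the prescribed similitude character, and it is not the functor $\sD^\mix$ restricted to the block. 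That the total dimension comes out as $N^2-1$ is a numerical coincidence (the extra parameters in $B$ offset the extra constraints), not a consequence of your computation.

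This gap produces a concrete false step: in your model the component you call $\sD^\ram$ is cut out by the eigenvalue-ratio (equivalently, after your coordinate change, trace) condition on $B_0$, which is one equation; but $\sD^\ram$ is by Definition~\ref{de:deformation_level_raising} the locus where $P_0(T)=(T-q^{-N})(T-q^{-N+2})$, i.e.\ trace \emph{and} determinant conditions. Without the duality-imposed determinant constraint these are not equivalent, so your identification of the $x_0=0$ component with $\sD^\ram$, and hence the conclusion that $\sD^\ram$ is formally smooth of pure relative dimension $N^2$, does not follow from your argument (within your model that locus would have dimension $N^2-1$). The paper avoids this by parametrizing the block by $(B,X)$ with $r(\phi_q)=(B,(-1)^{\mu+1}q^{1-N})\fc$ and $r^\natural(t)=1+X$, working with the relation $B\tp{X}B^{-1}=-qX$, and reading off $r^\natural(\phi_w)=(-1)^{\mu+1}q^{1-N}B\tp{B}^{-1}$; there the determinant is automatic and the relation $(x-y)x_0=0$ gives both the $x_0x_1=0$ structure and the correct component identifications. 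Two smaller points: your labels of $x_0,x_1$ are swapped relative to the statement (cosmetic), and the smoothness of the unramified rank-$(N-2)$ factor should be quoted from Proposition~\ref{pr:minimal_tame} (a $\sG_{N-2}$-valued tame statement), not Proposition~\ref{pr:deformation_min}(3).
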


\begin{proof}
We fix an isomorphism $\rT_v\simeq\rT_q=t^{\dZ_\ell}\rtimes\phi_q^{\widehat\dZ}$ such that $\phi_w=\phi_q^2$. We write $k^{\oplus N}=\bar{M}_0\oplus\bar{M}_1$ such that $\bar{r}^\natural(\phi_q^2)$ has eigenvalues $q^{-N}$ and $q^{-N+2}$ on $\bar{M}_0$. Without loss of generality, we may assume that $\bar{M}_0$ is spanned by the first two factors and $\bar{M}_0$ is spanned by the last $N-2$ factors. Thus, we obtain two unramified homomorphisms $\bar{r}_0\colon\rT_q\to\sG_2(k)$ and $\bar{r}_1\colon\rT_q\to\sG_{N-2}(k)$. Let $\sD_0$ be the local deformation problem of $\bar{r}_0$ classifying liftings of $\bar{r}_0$. Let $\sD_1$ be the local deformation problem of $\bar{r}_1$ classifying unramified liftings.

Suppose that $N\geq 3$. We say that lifting $r$ of $\bar{r}$ to an object $R$ of $\sC_\sO$ is \emph{standard} if
\[
r^\natural(t)=
\begin{pmatrix}
A_0 & 0 \\
0 & 1_{N-2}
\end{pmatrix},\quad
r(\phi_q)=\(
\begin{pmatrix}
B_0 & 0 \\
0 & B_1
\end{pmatrix},(-1)^{\mu+1}q^{1-N}\)\fc
\]
for some $A_0,B_0\in\GL_2(R)$ and $B_1\in\GL_{N-2}(R)$. Let $\sD^\mix_{0,1}\subseteq\sD^\mix$ be the locus of standard liftings. Then we have a natural isomorphism
\[
\sD^\mix_{0,1}\simeq\sD_0\times_{\Spf\sO}\sD_1
\]
of formal schemes over $\Spf\sO$.

For $n\geq 1$, denote by $\sL_n$ the formal completion of $\GL_{n,\sO}$ along the unit section. Then $\sL_N$ acts on $\sD^\mix$ by conjugation. We claim that $\sD^\mix_{0,1}$ generates $\sD^\mix$ under the action of $\sL_N$. For this, it suffices to show that for every lifting $r$ of $\bar{r}$ to an object $R$ of $\sC_\sO$, the maps
\[
B\colon M_0\to R^{\oplus N}\to M_1,\quad
B\colon M_1\to R^{\oplus N}\to M_0
\]
induced by $B$ from Lemma \ref{le:representation_selfdual}(1) for $\gamma=\phi_q$, are both zero. Since the two maps intertwine the actions $r^\natural$ and $r^{\natural,\vee}\otimes\epsilon_\ell^{1-N}$ of $\rT_{q^2}$, it suffices to show that the generalized eigenvalues of $r^{\natural,\vee}_0\otimes\epsilon_\ell^{1-N}(\phi_q^2)$ and the generalized eigenvalues of $r^\natural_1(\phi_q^2)$ are disjoint. However, this follows from the condition that the generalized eigenvalues of $\bar{r}^\natural(\phi_w)$ in $\ol\dF_\ell$ contain the pair $\{q^{-N},q^{-N+2}\}$ exactly once.

The above claim induces a canonical isomorphism
\[
\sD^\mix_{0,1}\times_{\Spf\sO}(\sL_2\times_{\Spf\sO}\sL_{N-2}\backslash\sL_N)\xrightarrow{\sim}\sD^\mix.
\]
By Proposition \ref{pr:minimal_tame}, $\sD_1$ is formally smooth over $\Spf\sO$ of pure relative dimensions $(N-2)^2$. Since $\sL_2\times_{\Spf\sO}\sL_{N-2}\backslash\sL_N$ is formally smooth over $\Spf\sO$ of pure relative dimension $N^2-(N-2)^2-4$, it suffices to prove the proposition for $N=2$.

Now we assume $N=2$. After changing a basis, we may assume
\[
\bar{r}(\phi_q)=(\bar{B},(-1)^{\mu+1}q^{1-N})\fc,\quad
\bar{B}=
\begin{pmatrix}
0 & (-1)^{\mu+1} \\
q & 0
\end{pmatrix}.
\]
Then we have
\[
\bar{r}^\natural(\phi_q^2)=(-1)^{\mu+1}q^{1-N}\bar{B}\tp{\bar{B}}^{-1}=
\begin{pmatrix}
q^{-N} &  0 \\
0 & q^{-N+2}
\end{pmatrix}.
\]
For every object $R$ of $\sC_\sO$, the set $\sD^\mix(R)$ is bijective to the set of pairs $(B,X)$ where $B\in\GL_2(R)$ and $X\in\rM_2(\fm_R)$ satisfying $B\equiv\bar{B}\modulo\fm_R$, that the characteristic polynomial of $X$ is $T^2$, and the relation
\begin{align}\label{eq:deformation_level_raising}
B\tp{X}B^{-1}=-qX.
\end{align}
Indeed, the bijection is given by $r(\phi_q)=(B,(-1)^{\mu+1}q^{1-N})\fc$ and $r^\natural(t)=1_2+X$. We let $\sD^\mix_0$ be the subscheme of $\sD^\mix$ defined by the condition that $r^\natural(\phi_q^2)=(-1)^{\mu+1}q^{1-N}B\tp{B}^{-1}$ is a diagonal matrix. Take a lifting $r\in\sD^\mix_0(R)$ corresponding to the pair $(B,X)$; we must have
\[
B=
\begin{pmatrix}
0 &  (-1)^{\mu+1}(1+x) \\
q(1+y) & 0
\end{pmatrix},\quad
r^\natural(\phi_q^2)=
\begin{pmatrix}
q^{-N}(1+x)(1+y)^{-1} & 0 \\
0 & q^{-N+2}(1+y)(1+x)^{-1}
\end{pmatrix}
\]
for some $x,y\in\fm_R$. Then by \eqref{eq:deformation_level_raising}, $X=(\begin{smallmatrix}0&0\\x_0&0\end{smallmatrix})$ for some $x_0\in\fm_R$ satisfying $(x-y)x_0=0$. Put $x_1\coloneqq x-y$. Then we obtain an isomorphism
\[
\sD^\mix_0\simeq\Spf\sO[[x_0,x_1,y]]/(x_0x_1)
\]
such that
\begin{itemize}
  \item $x_0=0$ if and only if $r$ is unramified;

  \item $x_1=0$ if and only if $P_0(T)=(T-q^{-N})(T-q^{-N+2})$, where $P_0$ is the characteristic polynomial of $r^\natural(\phi_w)=r^\natural(\phi_q^2)$.
\end{itemize}
Finally, not that $\sL_2$ acts on $\sD^\mix$ by conjugation, which induces a canonical isomorphism
\[
\sD^\mix_0\times_{\Spf\sO}(\sL_1\times_{\Spf\sO}\sL_1\backslash\sL_2)\xrightarrow{\sim}\sD^\mix.
\]
The proposition (for $N=2$) follows as $\sL_1\times_{\Spf\sO}\sL_1\backslash\sL_2$ is formally smooth over $\Spf\sO$ of pure relative dimension $2$. The entire proposition is now proved.
\end{proof}

\subsection{An almost minimal R=T theorem}
\label{ss:global_deformation}

In this subsection, we prove a version of the R=T theorem for a global Galois representation. Assume $N\geq 2$.

We take an element $\xi=(\xi_\tau)_\tau\in(\dZ^N_\leq)^{\Sigma_\infty}$ satisfying
$\xi_{\tau,i}=-\xi_{\tau^\tc,N+1-i}$ for every $\tau$ and $i$. Assume $\ell\geq(b_\xi-a_\xi)+2$ (Notation \ref{no:weight}) and that $\ell$ is unramified in $F$. Fix an isomorphism $\iota_\ell\colon\dC\xrightarrow{\sim}\ol\dQ_\ell$ and assume that the complex algebraic representation of $\Res_{F/\dQ}\GL_N$ determined by $\xi$ can be defined over $\iota_\ell^{-1}E$.

We consider a pair $(\bar{r},\chi)$ from Notation \ref{no:deformation_pair} with $\tilde\Gamma=\Gamma_{F^+}$ and $\Gamma=\Gamma_F$, in which $\chi=\eta^\mu\epsilon_\ell^{1-N}$ for some $\mu\in\dZ/2\dZ$. We take two finite sets $\Sigma^+_\mnm$ and $\Sigma^+_\lr$ of nonarchimedean places of $F^+$ such that
\begin{itemize}
  \item $\Sigma^+_\mnm$, $\Sigma^+_\lr$, and $\Sigma^+_\ell$ are mutually disjoint;

  \item $\Sigma^+_\mnm$ contains $\Sigma^+_\bad$;

  \item every place $v\in\Sigma^+_\lr$ is inert in $F$ and satisfies $\ell\nmid(\|v\|^2-1)$.
\end{itemize}

\begin{definition}\label{de:rigid}
We say that $\bar{r}$ is \emph{rigid for $(\Sigma^+_\mnm,\Sigma^+_\lr)$} if the following are satisfied:
\begin{enumerate}
  \item For $v$ in $\Sigma^+_\mnm$, every lifting of $\bar{r}_v$ is minimally ramified (Definition \ref{de:minimal_deformation}).

  \item For $v$ in $\Sigma^+_\lr$, the generalized eigenvalues of $\bar{r}^\natural_v(\phi_w)$ in $\ol\dF_\ell$ contain the pair $\{\|v\|^{-N},\|v\|^{-N+2}\}$ exactly once, where $w$ is the unique place of $F$ above $v$.

  \item For $v$ in $\Sigma^+_\ell$, $\bar{r}_v^\natural$ is regular Fontaine--Laffaille crystalline (Definition \ref{de:crystalline_fl}).

  \item For a nonarchimedean place $v$ of $F^+$ not in $\Sigma^+_\mnm\cup\Sigma^+_\lr\cup\Sigma^+_\ell$, the homomorphism $\ol{r}_v$ is unramified.
\end{enumerate}
\end{definition}

Suppose now that $\bar{r}$ is rigid for $(\Sigma^+_\mnm,\Sigma^+_\lr)$. Consider a global deformation problem (Definition \ref{de:global_deformation_problem})
\[
\sS\coloneqq(\bar{r},\eta^\mu\epsilon_\ell^{1-N},\Sigma^+_\mnm\cup\Sigma^+_\lr\cup\Sigma^+_\ell,
\{\sD_v\}_{v\in\Sigma^+_\mnm\cup\Sigma^+_\lr\cup\Sigma^+_\ell})
\]
where
\begin{itemize}
  \item for $v\in\Sigma^+_\mnm$, $\sD_v$ is the local deformation problem classifying all liftings of $\bar{r}_v$;

  \item for $v\in\Sigma^+_\lr$, $\sD_v$ is the local deformation problem $\sD^\ram$ of $\bar{r}_v$ from Definition \ref{de:deformation_level_raising};

  \item for $v\in\Sigma^+_\ell$, $\sD_v$ is the local deformation problem $\sD^\FL$ of $\bar{r}_v$ from Definition \ref{de:deformation_fl}.
\end{itemize}
Then we have the \emph{global universal deformation ring} $\sfR^\univ_\sS$ from Proposition \ref{pr:deformation_global}.

\begin{remark}
It is possible that $\bar{r}$ is rigid for two pairs $(\Sigma^+_\mnm,\Sigma^+_\lr)$ and $(\Sigma^{+\prime}_\mnm,\Sigma^{+\prime}_\lr)$. Then $\sfR^\univ_\sS$ and $\sfR^\univ_{\sS'}$ are different in general, where $\sS'$ denotes the corresponding global deformation problem for $(\Sigma^{+\prime}_\mnm,\Sigma^{+\prime}_\lr)$.
\end{remark}

Now we state an R=T theorem. Let $\rV$ be a hermitian space over $F$ of rank $N$ such that $\rV_v$ is not split for $v\in\Sigma^+_\lr$. Let $(p_{\ul\tau},q_{\ul\tau})_{\ul\tau\in\Sigma^+_\infty}$ be the signature of $\rV$, and put $d(\rV)\coloneqq\sum_{\ul\tau\in\Sigma^+_\infty}p_{\ul{\tau}}q_{\ul{\tau}}$.

Take a self-dual $\prod_{v\not\in\Sigma^+_\infty\cup\Sigma^+_\mnm\cup\Sigma^+_\lr}O_{F_v}$-lattice $\Lambda$ in $\rV\otimes_F\dA_F^{\Sigma^+_\infty\cup\Sigma^+_\mnm\cup\Sigma^+_\lr}$ and a neat open compact subgroup $\rK$ of $\rU(\rV)(\dA_{F^+}^\infty)$ of the form
\[
\rK=\prod_{v\in\Sigma^+_\mnm\cup\Sigma^+_\lr}\rK_v
\times\prod_{v\not\in\Sigma^+_\infty\cup\Sigma^+_\mnm\cup\Sigma^+_\lr}\rU(\Lambda)(O_{F^+_v})
\]
in which $\rK_v$ is special maximal for $v\in\Sigma^+_\lr$. We have the system of (complex) Shimura varieties $\{\Sh(\rV,\rK')\res\rK'\subseteq\rK\}$ associated to $\Res_{F^+/\dQ}\rU(\rV)$ indexed by open compact subgroups $\rK'\subseteq\rK$, which are quasi-projective smooth complex schemes of dimension $d(\rV)$.\footnote{Strictly, we need to choose a CM type $\Phi$ of $F$ to define the Deligne homomorphism for the Shimura varieties. More precisely, the Deligne homomorphism $\rh\colon\Res_{\dC/\dR}\bG_m\to(\Res_{F^+/\dQ}\rU(\rV))\otimes_\dQ\dR
=\prod_{\ul\tau\in\Sigma^+_\infty}\rU(\rV_{\ul\tau})$ is the one such that for $z\in(\Res_{\dC/\dR}\bG_m)(\dR)=\dC^\times$, the $\ul\tau$-component of $\rh(z)$ equals $1_{p_{\ul\tau}}\oplus(\ol{z}/z)1_{q_{\ul\tau}}$ after we identify $\rU(\rV_{\ul\tau})(\dR)$ as a subgroup of $\GL_N(\dC)$ via the unique place of $F$ above $\ul\tau$ in $\Phi$.} The element $\xi$ gives rise to a continuous homomorphism
\[
\prod_{v\in\Sigma^+_\ell}\rU(\Lambda)(O_{F^+_v})\to\GL_\sO(L_\xi)
\]
where $L_\xi$ is a finite free $\sO$-module, hence induces a $\sO$-linear (\'{e}tale) local system $\sL_\xi$ on $\Sh(\rV,\rK')$ for every $\rK'\subseteq\rK$, compatible under restriction.

Let $\Sigma^+$ be a finite set of nonarchimedean places of $F^+$ containing $\Sigma^+_\mnm\cup\Sigma^+_\lr\cup\Sigma^+_\ell$. In particular, we have the abstract unitary Hecke algebra $\dT^{\Sigma^+}_N$ (Definition \ref{de:abstract_hecke}). Let $\phi\colon\dT^{\Sigma^+}_N\to k$ be the homomorphism such that
\begin{itemize}
  \item for every nonarchimedean place $v$ of $F^+$ not in $\Sigma^+$ that induces one place $w$ of $F$, we have $\phi\res_{\dT_{N,v}}=\phi_{\balpha}$ (Construction \ref{cs:satake_hecke_pre}) where $\balpha=(\alpha_1,\dots,\alpha_N)$ is the unitary abstract Hecke parameter at $v$ (Definition \ref{de:satake_parameter}) satisfying that $\{\alpha_1\|v\|^{N-1},\dots,\alpha_N\|v\|^{N-1}\}$ are the generalized eigenvalues of $\bar{r}_v^\natural(\phi_w^{-1})$ in $\ol\dF_\ell$;

  \item for every nonarchimedean place $v$ of $F^+$ not in $\Sigma^+$ that splits into two places $w_1$ and $w_2$ of $F$, we have $\phi\res_{\dT_{N,v}}=\phi_{\balpha}$ (Construction \ref{cs:satake_hecke_pre}) where $\balpha=((\alpha_{1,1},\dots,\alpha_{1,N});(\alpha_{2,1},\dots,\alpha_{2,N}))$ is the unitary abstract Hecke parameter at $v$ (Definition \ref{de:satake_parameter}) satisfying that for $i=1,2$, $\{\alpha_{i,1}\|v\|^{\frac{N-1}{2}},\dots,\alpha_{i,N}\|v\|^{\frac{N-1}{2}}\}$ are the generalized eigenvalues of $\bar{r}_v^\natural(\phi_{w_i}^{-1})$ in $\ol\dF_\ell$.
\end{itemize}
We write $\fm$ for the kernel of $\phi$.

\begin{theorem}\label{th:deformation}
Suppose that $\Sigma^+_\lr=\emptyset$ if $N$ is odd. Under the above setup, we assume
\begin{description}
  \item[(D0)] (already assumed) $\ell$ is odd, $\ell\geq(b_\xi-a_\xi)+2$, and $\ell$ is unramified in $F$;

  \item[(D1)] $\ell\geq 2(N+1)$;

  \item[(D2)] $\bar{r}^\natural\res_{\Gal(\ol{F}/F(\zeta_\ell))}$ is absolutely irreducible;

  \item[(D3)] $\bar{r}$ is rigid for $(\Sigma^+_\mnm,\Sigma^+_\lr)$ (Definition \ref{de:rigid});

  \item[(D4)] for every finite set $\Sigma^{+\prime}$ of nonarchimedean places of $F^+$ containing $\Sigma^+$, and every open compact subgroup $\rK'\subseteq\rK$ satisfying $\rK'_v=\rK_v$ for $v\not\in\Sigma^{+\prime}$, we have
      \[
      \rH^d_\et(\Sh(\rV,\rK'),\sL_\xi\otimes_\sO k)_{\dT_N^{\Sigma^{+\prime}}\cap\fm}=0
      \]
      when $d\neq d(\rV)$.\footnote{This is automatic when $d(\rV)=0$, and follows from (D2) when $d(\rV)=1$.}
\end{description}
Let $\sfT$ be the image of $\dT^{\Sigma^+}_N$ in $\End_\sO\(\rH^{d(\rV)}_\et(\Sh(\rV,\rK),\sL_\xi)\)$. If $\sfT_\fm\neq0$, then
\begin{enumerate}
  \item There is a canonical isomorphism $\sfR^\univ_\sS\xrightarrow{\sim}\sfT_\fm$ of local complete intersection rings over $\sO$.

  \item The $\sfT_\fm$-module $\rH^{d(\rV)}_\et(\Sh(\rV,\rK),\sL_\xi)_\fm$ is finite and free.

  \item We have $\mu\equiv N\modulo 2$.
\end{enumerate}
\end{theorem}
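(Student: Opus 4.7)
The plan is to apply the Taylor--Wiles--Kisin patching method, in the refined form developed in the conjugate self-dual setting by Clozel--Harris--Taylor and Thorne, adapted here to accommodate ramification at inert places of $F^+$.

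First, I would construct the canonical surjection $\sfR^\univ_\sS \twoheadrightarrow \sfT_\fm$. Using (D4) to restrict attention to middle-degree cohomology, each closed point of $\Spec \sfT_\fm$ corresponds via base change (Proposition \ref{pr:arthur}) to an RACSDC representation $\Pi$ of $\GL_N(\dA_F)$, to which Proposition \ref{pr:galois} attaches a Galois representation $\rho_{\Pi,\iota_\ell}$ lifting (a conjugate of) $\bar r^\natural$. The local deformation conditions imposed by $\sS$ are then verified: at $v \in \Sigma^+_\mnm$ the deformation is automatically minimally ramified by (D3)(1); at $v \in \Sigma^+_\lr$, Corollary \ref{co:ram} combined with (D3)(2) pins down the monodromy so that the deformation lies in $\sD^\ram_v$; at $v \in \Sigma^+_\ell$ the crystalline Fontaine--Laffaille condition follows from (D0)--(D1) and the standard crystallinity of étale cohomology of unitary Shimura varieties with Fontaine--Laffaille coefficients. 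Assembling these pieces, together with (D2) and Lemma \ref{le:representation_selfdual}(3), produces a $\sG_N$-valued lift, whence the map $\sfR^\univ_\sS \to \sfT_\fm$, surjective by Chebotarev.

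For the converse direction, I would carry out Taylor--Wiles--Kisin patching. Using (D2), Chebotarev produces, for each integer $n \geq 1$, a finite set $Q_n$ of nonarchimedean places of $F^+$, split in $F$ and disjoint from $\Sigma^+_\mnm \cup \Sigma^+_\lr \cup \Sigma^+_\ell$, of common cardinality $q = \dim_k \rH^1_{\sS^\perp, \emptyset}(\Gamma_{F^+, \Sigma^+}, \ad \bar r(1))$, such that each $v \in Q_n$ satisfies $\|v\| \equiv 1 \pmod{\ell^n}$ and the characteristic polynomial of $\bar r^\natural(\phi_w)$ admits a simple eigenvalue. This choice makes the augmented global deformation ring a quotient of a power series ring in $g$ variables over the framed local ring $\sfR^\loc_{\sS, \Sigma^+}$, with $g$ controlled by Proposition \ref{pr:deformation_global}(1). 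On the automorphic side, enlarging the level by the appropriate Taylor--Wiles subgroup at $Q_n$ yields Hecke modules $M_{Q_n}$ equipped with a free action of the auxiliary diamond group $\Delta_{Q_n} = (\dZ/\ell^n)^q$. Patching over $n$ produces a module $M_\infty$ over $\sfR_\infty = \sfR^\loc_{\sS, \Sigma^+}[[x_1, \ldots, x_g]]$, free over $\sO_\infty = \sO[[y_1, \ldots, y_q]]$. The key numerical input is that the sum of the relative dimensions furnished by Proposition \ref{pr:deformation_fl} (at $\Sigma^+_\ell$), Proposition \ref{pr:deformation_min} (at $\Sigma^+_\mnm$), and Proposition \ref{pr:deformation_level_raising} (at $\Sigma^+_\lr$) makes $\sfR_\infty$ and $\sO_\infty$ of the same Krull dimension; since each local ring is a local complete intersection, so is $\sfR_\infty$. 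Miracle flatness then forces $M_\infty$ to be free over $\sfR_\infty$, and quotienting by $(y_1, \ldots, y_q)$ yields (1) and (2).

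For (3): the assumption $\sfT_\fm \neq 0$ produces an RACSDC $\Pi$ whose residual Galois representation is a twist of $\bar r^\natural$, and whose $\sG_N$-valued packaging via Lemma \ref{le:representation_selfdual} has similitude character $\eta^N \epsilon_\ell^{1-N}$, as dictated by Proposition \ref{pr:galois}(2c) together with the sign $(-1)^N$ in the relation $\bar\Psi \tp{\bar\Psi}^{-1} = -\chi(\gamma)^{-1} \bar r^\natural(\gamma^2)$ of Notation \ref{no:sg}; comparing with $\chi = \eta^\mu \epsilon_\ell^{1-N}$ gives $\mu \equiv N \pmod 2$. The main obstacle will be executing the patching cleanly at the level-raising and minimally ramified places, where the local deformation rings are no longer formally smooth: one must verify that at Taylor--Wiles level the localized cohomology is free over the diamond group (for which (D4) is crucial), and control the interaction of the auxiliary Hecke operators with the Bernstein-decomposition-type structure at nonsplit primes in $\Sigma^+_\mnm$.
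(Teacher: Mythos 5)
Your overall strategy is the paper's: construct the surjection $\sfR^\univ_\sS\twoheadrightarrow\sfT_\fm$ from the automorphic Galois representations (with Corollary \ref{co:ram} handling $\Sigma^+_\lr$), then patch \`a la Clozel--Harris--Taylor/Thorne using the local results of \S\ref{ss:fontaine_laffaille}--\S\ref{ss:level_raising}. But there are genuine gaps. The most serious one concerns part (3) and your ``key numerical input''. You assert that the local dimension counts make $R_\infty$ and the patched base ring have the same Krull dimension, and you prove (3) separately by claiming that Proposition \ref{pr:galois}(2c) together with ``the sign in Notation \ref{no:sg}'' forces the similitude character to be $\eta^N\epsilon_\ell^{1-N}$. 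Neither step works as stated: Proposition \ref{pr:galois}(2c) only says $\rho_{\Pi,\iota_\ell}^\tc$ and $\rho_{\Pi,\iota_\ell}^\vee(1-N)$ are conjugate, which determines the extension to $\sG_N$ only up to the sign $\mu_B$ of Lemma \ref{le:representation_selfdual}(2), i.e.\ up to twisting the similitude character by $\eta$; pinning down that sign is a nontrivial theorem (Bella\"{i}che--Chenevier) that is neither cited nor needed in the paper. Moreover, the equality of dimensions you invoke is exactly equivalent to $\mu\equiv N\pmod 2$: in Lemma \ref{le:taylor_wiles} the number of generators of $\sfR^{\Box_\rT}_{\sS(\rQ_n)}$ over $\sfR^\loc_{\sS,\rT}$ carries the archimedean term $-N[F^+:\dQ]\tfrac{1+(-1)^{\mu+1-N}}{2}$ coming from the global Euler characteristic, a term your count omits entirely. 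The paper's route is the reverse of yours: one runs the patching with the $\mu$-dependent deficit in place, and the inequality $\dim R_\infty\geq\r{depth}_{R_\infty}(M_\infty)\geq\dim S_\infty$ \emph{forces} the deficit to vanish, which is how (3) is proved; assuming the dimensions match from the outset is circular.

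Two further points would make your argument break down as written. First, freeness: you conclude that $R_\infty$ is a local complete intersection and invoke ``miracle flatness'' to get $M_\infty$ free over $R_\infty$. Over a merely lci (non-regular) base this does not follow; the patched module would only be maximal Cohen--Macaulay, which is not enough to get $\sfR^\univ_\sS\xrightarrow{\sim}\sfT_\fm$, the lci statement, and freeness in (2). What the paper actually uses is that every local deformation problem occurring in $\sS$ is \emph{formally smooth} over $\sO$ (Propositions \ref{pr:deformation_fl}, \ref{pr:deformation_min}(3) via rigidity (D3) at $\Sigma^+_\mnm$, and \ref{pr:deformation_level_raising} at $\Sigma^+_\lr$), so that $\sfR^\loc_{\sS,\rT}$ is a power series ring, $R_\infty$ is regular, and Auslander--Buchsbaum applies. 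Second, your Taylor--Wiles primes are required to have Frobenius with a \emph{simple} eigenvalue; under (D1)--(D2) one only has adequacy of the image (Remark \ref{re:adequate}), which does not guarantee such primes exist. The paper uses Thorne's weaker condition (non-scalar Frobenius acting semisimply on some generalized eigenspace) together with the projectors of \cite{Tho12}*{Propositions~5.9~\&~5.12}, and this choice is what makes the argument go through under the stated hypotheses. Finally, note that the surjection onto $\sfT_\fm$ needs a descent step (Carayol's theorem) to realize the family of representations $\{\rho_x\}$ over the local ring $\sfT_\fm$ itself; your ``assembling these pieces'' elides this, though it is a standard repair.
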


The rest of this subsection is devoted to the proof of the theorem. We will use the Taylor--Wiles patching argument following \cite{CHT08} and \cite{Tho12}. Put $\rS\coloneqq\Sigma^+_\mnm\cup\Sigma^+_\lr\cup\Sigma^+_\ell$. To prove the theorem, we may replace $E$ by a finite unramified extension in $\ol\dQ_\ell$. Thus, we may assume that $k$ contains all eigenvalues of matrices in $\bar{r}^\natural(\Gamma_F)$.

\begin{remark}\label{re:adequate}
By (D0), we know that $F$ is not contained in $F^+(\zeta_\ell)$. Thus, by \cite{Tho12}*{Theorem~A.9}, (D1) and (D2) imply that $\bar{r}(\Gal(\ol{F}/F^+(\zeta_\ell)))$ is adequate in the sense of \cite{Tho12}*{Definition~2.3}.
\end{remark}

Recall that a prime $v$ of $F^+$ is called a \emph{Taylor--Wiles prime} for the global deformation problem $\sS$ if
\begin{itemize}
  \item $v\notin \rS$; $v$ splits in $F$; and $\|v\|\equiv 1\modulo\ell$;

  \item $\bar{r}_v$ is unramified;
		
  \item $\bar{r}_v^\natural(\phi_w)$ is not a scalar and admits an eigenvalue $\bar\alpha_v\in k$, called \emph{special eigenvalue}, such that $\bar{r}_v^\natural(\phi_w)$ acts semisimply on the generalized eigenspace for $\bar\alpha_v$, where $w$ is the place of $F$ above $v$ induced by the inclusion $F\subseteq\ol{F}^+_v$.
\end{itemize}

A \emph{Taylor--Wiles system} is a tuple $(\rQ,\{\bar\alpha_v\}_{v\in\rQ})$ where $\rQ$ is a (possibly empty) finite set of Taylor--Wiles primes, and $\bar\alpha_v$ is a special eigenvalue for every $v\in\rQ$. For such a system, we write $r^\natural_v=r^\bullet_v\oplus r^\circ_v$ for every $v\in\rQ$, where $r^\bullet_v$ (resp.\ $r^\circ_v$) is the generalized eigenspace for $\bar\alpha_v$ (resp.\ for generalized eigenvalues other than $\alpha_v$). Then we have another global deformation problem (see \cite{Tho12}*{Definition~4.1})
\[
\sS(\rQ)\coloneqq(\bar{r},\eta^\mu\epsilon_\ell^{1-N},\rS\cup\rQ,\{\sD_v\}_{v\in\rS\cup\rQ})
\]
where $\sD_v$ is the same as in $\sS$ for $v\in\rS$; and for $v\in\rQ$, $\sD_v$ is the local deformation problem of $\bar{r}_v$ that classifies liftings $r_v$ such that $r_v^\natural$ is of the form $r^\bullet_v\oplus r^\circ_v$ in which $r^\bullet_v$ is a lifting of $\bar{r}^\bullet_v$ on which $\rI_{F_w}$ acts by scalars, and $r^\circ_v$ is an unramified lifting of $\bar{r}^\circ_v$.

For each $v\in\rQ$, we
\begin{itemize}
  \item put $d_v\coloneqq\dim_k\bar{r}^\bullet_v$;

  \item let $\rP_{d_v}\subseteq\GL_N$ be the standard upper-triangular parabolic subgroup corresponding to the partition $(N-d_v,d_v)$;

  \item let $\kappa_v$ be the residue field of $F^+_v$, and $\Delta_v$ the maximal quotient of $\kappa_v^\times$ of $\ell$-power order;

  \item fix an isomorphism $\rK_v\simeq\GL_N(O_{F^+_v})$ and denote by $\rK_{v,0}\subseteq\rK_v$ the parahoric subgroup corresponding to $\rP_{d_v}$;

  \item let $\rK_{v,1}$ be the kernel of the canonical map
     \[
     \rK_{v,0}\to\rP_{d_v}(\kappa_v)\to\GL_{d_v}(\kappa_v)\xrightarrow{\det}\kappa_v^\times\to\Delta_v.
     \]
\end{itemize}
We then
\begin{itemize}
  \item put $\Delta_\rQ\coloneqq\prod_{v\in\rQ}\Delta_v$; and let $\fa_\rQ$ be the augmentation ideal of $\sO[\Delta_\rQ]$;

  \item put $\fm_\rQ\coloneqq\fm\cap\dT^{\Sigma^+\cup\rQ}_N$;

  \item put
     \[
     \rK_i(\rQ)=\prod_{v\not\in\rQ}\rK_v\times\prod_{v\in\rQ}\rK_{v,i}
     \]
     for $i=0,1$, which are open compact subgroups of $\rK$.
\end{itemize}
In particular, $\rK_1(\rQ)$ is a normal subgroup of $\rK_0(\rQ)$; and we have a canonical isomorphism
\begin{align}\label{eq:diamond}
\rK_0(\rQ)/\rK_1(\rQ)\xrightarrow{\sim}\Delta_\rQ.
\end{align}
Note that when $\rQ=\emptyset$, $\rK_0(\rQ)=\rK_1(\rQ)=\rK$.

For $i=0,1$, we put
\[
H_{\rK_i(\rQ)}\coloneqq\Hom_\sO\(\rH^{d(\rV)}_{\et}(\Sh(\rV,\rK_i(\rQ)),\sL_\xi),\sO\).
\]
By \eqref{eq:diamond}, $H_{\rK_1(\rQ)}$ is canonically a module over $\sO[\Delta_\rQ]$.

\begin{lem}\label{le:taylor_wiles_delta}
Let the situation be as in Theorem \ref{th:deformation}. The $\sO[\Delta_\rQ]$-module $H_{\rK_1(\rQ),\fm_\rQ}$ is a finite and free. Moreover, the canonical map
\[
H_{\rK_1(\rQ),\fm_\rQ}/\fa_\rQ\to H_{\rK_0(\rQ),\fm_\rQ}
\]
is an isomorphism.
\end{lem}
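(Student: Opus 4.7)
The strategy is the standard Taylor--Wiles degeneration argument: transfer the question to the middle-degree cohomology via Hochschild--Serre for the étale cover, use the hypothesis (D4) to collapse the spectral sequence after localization, deduce projectivity/freeness over $\sO[\Delta_\rQ]$ from cohomological triviality, and then dualize.

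First, I would observe that since $\rK_1(\rQ)$ is normal in $\rK_0(\rQ)$ with quotient $\Delta_\rQ$ by \eqref{eq:diamond}, and since $\rK$ (hence $\rK_0(\rQ)$) is neat, the natural map $\pi\colon\Sh(\rV,\rK_1(\rQ))\to\Sh(\rV,\rK_0(\rQ))$ is a finite étale Galois cover with Galois group $\Delta_\rQ$. This gives a Hochschild--Serre spectral sequence
\[
E_2^{p,q}=\rH^p\(\Delta_\rQ,\rH^q_\et(\Sh(\rV,\rK_1(\rQ)),\sL_\xi)\)\Longrightarrow\rH^{p+q}_\et(\Sh(\rV,\rK_0(\rQ)),\sL_\xi),
\]
which localizes at $\fm_\rQ$ since the $\dT^{\Sigma^+\cup\rQ}_N$-action is compatible with pullback.

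Next, I would apply (D4) with $\Sigma^{+\prime}=\Sigma^+\cup\rQ$ and $\rK'\in\{\rK_0(\rQ),\rK_1(\rQ)\}$, which is allowed because $\rK_i(\rQ)_v=\rK_v$ for $v\notin\Sigma^{+\prime}$ and $\dT^{\Sigma^{+\prime}}_N\cap\fm=\fm_\rQ$. This yields vanishing of $\rH^d_\et(\Sh(\rV,\rK_i(\rQ)),\sL_\xi\otimes_\sO k)_{\fm_\rQ}$ in degrees $d\neq d(\rV)$. Invoking the universal coefficient sequence together with Nakayama's lemma (the relevant modules are finitely generated over $\sO$ after localization at $\fm_\rQ$, since the Hecke algebra acts through a finite $\sO$-algebra), this upgrades to vanishing of the integral cohomology $\rH^d_\et(\Sh(\rV,\rK_i(\rQ)),\sL_\xi)_{\fm_\rQ}$ for $d\neq d(\rV)$, and shows that $\rH^{d(\rV)}_\et(\Sh(\rV,\rK_i(\rQ)),\sL_\xi)_{\fm_\rQ}$ is torsion-free, hence finite free over $\sO$.

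The localized spectral sequence now collapses. Running it with coefficients in $\sL_\xi\otimes_\sO k$ (which still has cohomology concentrated in middle degree after localization by the same argument) yields
\[
\rH^p\(\Delta_\rQ,\rH^{d(\rV)}_\et(\Sh(\rV,\rK_1(\rQ)),\sL_\xi\otimes_\sO k)_{\fm_\rQ}\)=0
\]
for all $p>0$. Since $\Delta_\rQ$ is a finite abelian $\ell$-group and $k$ has characteristic $\ell$, the ring $k[\Delta_\rQ]$ is a local Artinian ring whose residue field is $k$, and a finitely generated $k[\Delta_\rQ]$-module with vanishing group cohomology in positive degrees is cohomologically trivial, hence free (Rim's theorem, or directly: the norm map is an isomorphism of invariants and coinvariants, forcing projectivity, hence freeness over the local ring). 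Lifting from $k$ to $\sO$ by Nakayama's lemma applied to the local ring $\sO[\Delta_\rQ]$ (which is local because $\Delta_\rQ$ is an $\ell$-group and $\sO$ has residue characteristic $\ell$), we conclude that $\rH^{d(\rV)}_\et(\Sh(\rV,\rK_1(\rQ)),\sL_\xi)_{\fm_\rQ}$ is finite free over $\sO[\Delta_\rQ]$. Since $\sO[\Delta_\rQ]$ is a Frobenius $\sO$-algebra, the $\sO$-dual of a finite free $\sO[\Delta_\rQ]$-module is again finite free; this gives the first assertion for $H_{\rK_1(\rQ),\fm_\rQ}$.

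For the quotient statement, the $p=0$ edge of the collapsed spectral sequence is exactly the statement that $\pi^\ast$ identifies $\rH^{d(\rV)}_\et(\Sh(\rV,\rK_0(\rQ)),\sL_\xi)_{\fm_\rQ}$ with $\rH^{d(\rV)}_\et(\Sh(\rV,\rK_1(\rQ)),\sL_\xi)_{\fm_\rQ}^{\Delta_\rQ}$. Dualizing and using that for a finite free $\sO[\Delta_\rQ]$-module $V$ the norm map induces a natural identification $(V^{\Delta_\rQ})^\ast\cong V^\ast/\fa_\rQ V^\ast$, we obtain the claimed isomorphism. The main obstacle, and the step where care is required, is the passage from mod-$\lambda$ freeness to integral freeness: one needs the cohomology to be $\sO$-torsion-free in middle degree so that Nakayama applies, and this is precisely what the universal-coefficient argument in the previous step delivers from (D4).
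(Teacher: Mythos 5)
Your proof is correct, and it takes a genuinely different route from the paper's. The paper argues at the level of complexes: it introduces the auxiliary deeper levels $\rK_i^m(\rQ)$ so that $\sL_\xi\otimes_\sO\sO/\ell^m$ becomes constant, invokes (the proof of) \cite{KT17}*{Lemma~6.9} --- with neatness supplying the torsion-freeness of the groups $t^{-1}\rU(\rV)(F^+)t\cap\rK_i^m(\rQ)$ --- to realize the singular chains as a perfect complex of free $(\sO/\ell^m)[\Delta_\rQ]$-modules whose $\Delta_\rQ$-coinvariants compute level $\rK_0^m(\rQ)$, uses (D4) to see that after localization at $\fm_\rQ$ everything is concentrated in degree $d(\rV)$ and $\sO$-torsion-free, and then passes to the limit over $m$ after taking $\ru_m$-invariants. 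You instead stay with \'{e}tale cohomology of the finite \'{e}tale $\Delta_\rQ$-cover $\Sh(\rV,\rK_1(\rQ))\to\Sh(\rV,\rK_0(\rQ))$ (freeness of the $\Delta_\rQ$-action again comes from neatness, which in particular forces $Z(F^+)\cap\rK$ to be trivial), collapse the Hochschild--Serre spectral sequence using (D4) at both levels, and then combine the characteristic-$\ell$ cohomological-triviality criterion for the $\ell$-group $\Delta_\rQ$ with torsion-freeness and Nakayama over the local ring $\sO[\Delta_\rQ]$ to get freeness, dualizing at the end via the norm-map identification of $(V^{\Delta_\rQ})^\vee$ with $V^\vee/\fa_\rQ V^\vee$. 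Both arguments consume exactly the same inputs ((D4) applied at the two levels, plus neatness); yours avoids the auxiliary levels $\rK_i^m(\rQ)$ and the limit over $m$ because Hochschild--Serre requires no constancy of the coefficients, at the price of the module-theoretic step (vanishing of $\rH^{>0}(\Delta_\rQ,-)$ over $k[\Delta_\rQ]$ forces freeness) and the reduction-and-lift argument, whose crux --- torsion-freeness of the middle-degree integral cohomology extracted from (D4) --- you correctly identify; the paper's chain-level formulation is the one that generalizes more readily (it yields a perfect complex of free $\sO[\Delta_\rQ]$-modules even absent concentration, the shape needed for patching complexes) and lets the authors quote \cite{KT17} essentially wholesale.
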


\begin{proof}
For a smooth complex scheme $\rX$ and an abelian group $A$, we denote by $\rX^\an$ the underlying complex manifold and $\rC_\bullet(\rX^\an,A)$ the complex of singular chains for $\rX^\an$ with coefficients in $A$.

For $i=0,1$ and every positive integer $m$, we denote by $\rK_i^m(\rQ)$ the kernel of the composite map
\[
\rK_i(\rQ)\to\rK\to\ru_m\coloneqq\prod_{v\in\Sigma^+_\ell}\rU(\Lambda)(O_{F^+_v}/\ell^m).
\]
Then $\rK_i^m(\rQ)$ acts trivially on $L_\xi\otimes_\sO\sO/\ell^m$, hence $\sL_\xi\otimes_\sO\sO/\ell^m$ becomes constant on $\Sh(\rV,\rK_i^m(\rQ))$. By (D4), we have
\[
\rH^{d(\rV)}_\et(\Sh(\rV,\rK_i(\rQ)),\sL_\xi)_{\fm_\rQ}=
\varprojlim_m\rH^{d(\rV)}_\et(\Sh(\rV,\rK_i^m(\rQ)),L_\xi\otimes_\sO\sO/\ell^m)_{\fm_\rQ}^{\ru_m}
\]
and is $\sO$-torsion free. By Artin's comparison theorem between the singular cohomology and the \'etale cohomology, the dual complex
\[
\Hom_\sO(\rC_\bullet(\Sh(\rV,\rK_i^m(\rQ))^\an,L_\xi\otimes_\sO\sO/\ell^m),\sO/\ell^m)
\]
calculates $\rH^\bullet_\et(\Sh(\rV,\rK_i^m(\rQ)),L_\xi\otimes_\sO\sO/\ell^m)$. By (D4), we have
\[
\rH_d(\Sh(\rV,\rK_i^m(\rQ))^\an,L_\xi\otimes_\sO\sO/\ell^m)_{\fm_\rQ}=0
\]
for $d\neq d(\rV)$.

On the other hand, $\rK_i^m(\rQ)$ is neat, which implies that $t^{-1}\rU(\rV)(F^+)t\cap\rK_i^m(\rQ)$ has no torsion elements for every $t\in\rU(\rV)(\dA_{F^+}^\infty)$. By (the same proof of) \cite{KT17}*{Lemma~6.9}, for every $m\geq 1$, $\rC_\bullet(\Sh(\rV,\rK_1^m(\rQ))^\an,L_\xi\otimes_\sO\sO/\ell^m)$ is a perfect complex of free $(\sO/\ell^m)[\Delta_\rQ]$-modules; and there is a canonical isomorphism
\[
\rC_\bullet(\Sh(\rV,\rK_1^m(\rQ))^\an,L_\xi\otimes_\sO\sO/\ell^m)\otimes_{\sO[\Delta_\rQ]}\sO[\Delta_\rQ]/\fa_\rQ
\simeq\rC_\bullet(\Sh(\rV,\rK_0^m(\rQ))^\an,L_\xi\otimes_\sO\sO/\ell^m)
\]
of complexes of $(\sO/\ell^m)[\dT^{\Sigma^+\cup\rQ}_N]$-modules. It follows easily that the canonical map
\[
\(H_{\rK_1(\rQ),\fm_\rQ}\otimes_\sO\sO/\ell^m\)/\fa_\rQ\to H_{\rK_0(\rQ),\fm_\rQ}\otimes_\sO\sO/\ell^m
\]
is an isomorphism, after localizing at $\fm_\rQ$ and taking $\ru_m$-invariants.

Then the lemma follows by passing to the limit for $m$.
\end{proof}

We now discuss the existence of Taylor--Wiles systems. For each $v\in\rS$, we have the tangent space $\rL(\sD_v)\subseteq\rH^1(F^+_v,\ad\bar{r})$ from Definition \ref{de:tangent_space_deformation}. Let $\rL(\sD_v)^\perp\subseteq\rH^1(F^+_v,\ad\bar{r}(1))$ be the annihilator of $\rL(\sD_v)$ under the local Tate duality induced by the perfect pairing $\ad\bar{r}\times\ad\bar{r}(1)\to k(1)$ sending $(x,y)$ to $\tr(xy)$. Recall that $\Gamma_{F^+,\rS}$ is the Galois group of the maximal subextension of $\ol{F}/F^+$ that is unramified outside $\rS$. For every subset $\rT\subseteq\rS$, we define $\rH^1_{\sL^\perp,\rT}(\Gamma_{F^+,\rS},\ad\bar{r}(1))$ to be the kernel of the natural map
\[
\rH^1(\Gamma_{F^+,\rS},\ad\bar{r}(1))\to\bigoplus_{v\in\rS\setminus\rT}\rH^1(F^+_v,\ad\bar{r}(1))/\rL(\sD_v)^\perp.
\]
Recall the rings $\sfR^\loc_{\sS,\rT}$ \eqref{eq:global_deformation_problem} and $\sfR^{\Box_\rT}_{\sS(\rQ)}$ from Proposition \ref{pr:deformation_global}. Moreover, $\sfR^{\Box_\rT}_{\sS(\rQ)}$ is naturally an algebra over $\sfR^\loc_{\sS,\rT}$.

\begin{lem}\label{le:taylor_wiles}
Let the situation be as in Theorem \ref{th:deformation}. Let $\rT$ be a subset of $\rS$. For every integer $b\geq\dim_k\rH^1_{\sL^\perp,\rT}(\Gamma_{F^+,\rS},\ad\bar{r}(1))$ and every integer $n\geq 1$, there is a Taylor--Wiles system $(\rQ_n,\{\bar\alpha_v\}_{v\in\rQ_n})$ satisfying
\begin{enumerate}
  \item $|\rQ_n|=b$;

  \item $\|v\|\equiv 1\modulo\ell^n$;

  \item $\sfR^{\Box_\rT}_{\sS(\rQ_n)}$ can be topologically generated over $\sfR^\loc_{\sS,\rT}$ by
     \[
     g_{b,\rT}\coloneqq b-\sum_{v\in\rT\cap\Sigma^+_\ell}[F^+_v:\dQ_\ell]\frac{N(N-1)}{2}-N[F^+:\dQ]\frac{1+(-1)^{\mu+1-N}}{2}
     \]
     elements.
\end{enumerate}
\end{lem}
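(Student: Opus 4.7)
The strategy is the standard Taylor--Wiles method adapted from \cite{CHT08} and \cite{Tho12}, with the only new bookkeeping coming from the Fontaine--Laffaille contribution and the archimedean term tied to the similitude character.

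First, by Remark \ref{re:adequate}, the subgroup $\bar{r}(\Gal(\ol{F}/F^+(\zeta_\ell)))$ is adequate. The Chebotarev/adequacy input (essentially \cite{Tho12}*{Proposition~4.4}) then tells me that for each nonzero class $\psi \in \rH^1_{\sL^\perp,\rT}(\Gamma_{F^+,\rS},\ad\bar{r}(1))$ and each $n$, I can find a nonarchimedean place $v$ of $F^+$ which is a Taylor--Wiles prime with $\|v\|\equiv 1\pmod{\ell^n}$ and such that the restriction of $\psi$ at a place of $F$ above $v$ is nontrivial. Iterating this, I build an auxiliary set of Taylor--Wiles primes of size $\dim_k\rH^1_{\sL^\perp,\rT}(\Gamma_{F^+,\rS},\ad\bar{r}(1))$ that already kills the dual Selmer group, and then augment it by additional Taylor--Wiles primes (whose defining conditions are open Chebotarev conditions and hence can be imposed in the presence of an additional congruence $\|v\|\equiv 1\pmod{\ell^n}$) up to the prescribed cardinality $b$. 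This yields $\rQ_n$ satisfying (1), (2), and
\[
\rH^1_{\sL(\rQ_n)^\perp,\rT}(\Gamma_{F^+,\rS\cup\rQ_n},\ad\bar{r}(1))=0.
\]

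Next I apply Proposition \ref{pr:deformation_global}(1) to $\sS(\rQ_n)$ and $\rT$. The vanishing of the dual Selmer group above, combined with the Poitou--Tate exact sequence and the triviality of $\rH^0(\Gamma_{F^+,\rS\cup\rQ_n},\ad\bar{r}(1))$ (which follows from (D2) and the presence of $\zeta_\ell\notin F$), shows that $\rH^2_{\sS(\rQ_n),\rT}(\Gamma_{F^+,\rS\cup\rQ_n},\ad\bar{r})=0$. At each $v\in(\rS\cup\rQ_n)\setminus\rT$ the local deformation problem $\sD_v$ is formally smooth (this uses Propositions \ref{pr:deformation_fl}, \ref{pr:deformation_min}, \ref{pr:deformation_level_raising}, and the standard smoothness at Taylor--Wiles primes). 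Hence Proposition \ref{pr:deformation_global}(1) gives a presentation
\[
\sfR^{\Box_\rT}_{\sS(\rQ_n)} \simeq \sfR^\loc_{\sS(\rQ_n),\rT}[[X_1,\dots,X_g]]
\]
with $g=\dim_k\rH^1_{\sS(\rQ_n),\rT}(\Gamma_{F^+,\rS\cup\rQ_n},\ad\bar{r})$, so it suffices to verify $g\leq g_{b,\rT}$.

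Finally I compute $g$ via the Wiles/Greenberg--Euler characteristic formula of \cite{CHT08}*{Lemma~2.3.4}: since the dual Selmer group is zero,
\[
g=\sum_{v\in\rS\cup\rQ_n}\bigl(\dim_k\rL(\sD_v)-\dim_k\rH^0(F^+_v,\ad\bar{r})\bigr)
-\sum_{\ul\tau\in\Sigma^+_\infty}\dim_k\rH^0(F^+_{\ul\tau},\ad\bar{r})
+\dim_k\rH^0(\Gamma_{F^+},\ad\bar{r}).
\]
At each $v\in\rQ_n$, standard Taylor--Wiles bookkeeping gives a contribution of $1$ per prime, totalling $b$. At each $v\in\rS\setminus(\rT\cap\Sigma^+_\ell)$, by Propositions \ref{pr:deformation_min} and \ref{pr:deformation_level_raising} the contribution vanishes. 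At $v\in\rT\cap\Sigma^+_\ell$, Lemma \ref{le:tangent_space_fl} gives the contribution $[F^+_v:\dQ_\ell]\cdot N(N-1)/2$, which enters with a minus sign in $g_{b,\rT}$ because this factor is absorbed in $\sfR^\loc_{\sS(\rQ_n),\rT}$ rather than in the free variables. The archimedean contribution is computed from the action of complex conjugation on $\ad\bar{r}=\r{Lie}(\GL_N)$ in $\sG_N$: the fixed subspace at each $\ul\tau$ is a symmetric/antisymmetric piece of $\rM_N$ whose dimension equals $N(N-1)/2$ or $N(N+1)/2$ according to the sign $(-1)^{\mu+1-N}$, producing the term $N[F^+:\dQ]\tfrac{1+(-1)^{\mu+1-N}}{2}$ after subtracting $\dim\rH^0(\Gamma_{F^+},\ad\bar{r})$ (which is zero by absolute irreducibility).

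The most delicate step is the exact accounting at archimedean places and the verification that the local terms at $v\in\rQ_n$ collapse to $b$; both are standard but require carefully tracking the definition of $\ad\bar{r}$ as a $\Gamma_{F^+}$-representation via the embedding $\GL_N\subset\sG_N$. Everything else is routine bookkeeping combining the local dimension formulas proved in \S\ref{ss:fontaine_laffaille}--\S\ref{ss:level_raising}.
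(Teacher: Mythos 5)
Your overall route coincides with the paper's: the paper computes $\dim_k\rL(\sD_v)-\dim_k\rH^0(F^+_v,\ad\bar{r})$ for $v\in\rS$ (equal to $[F^+_v:\dQ_\ell]\tfrac{N(N-1)}{2}$ at $\ell$-adic places and $0$ elsewhere, by Lemma \ref{le:tangent_space_fl} and Propositions \ref{pr:deformation_min}, \ref{pr:deformation_level_raising}) and then invokes \cite{Tho12}*{Proposition~4.4} via adequacy (Remark \ref{re:adequate}), noting that Thorne's split-place hypothesis is harmless. You unfold that citation, which is legitimate, but the unfolded bookkeeping is precisely the content of the lemma and it does not hold up as written. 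First, the Euler-characteristic formula you display is the one for the unframed problem ($\rT=\emptyset$): for the $\rT$-framed ring relative to $\sfR^\loc_{\sS,\rT}$, the Greenberg--Wiles computation makes the local terms at $v\in\rT$ cancel against the framing contribution $\bigoplus_{v\in\rT}\rH^0(F^+_v,\ad\bar{r})$, so the sum runs over $(\rS\cup\rQ_n)\setminus\rT$ only; there is no step in which the $\rT\cap\Sigma^+_\ell$ terms ``enter with a minus sign because they are absorbed in $\sfR^\loc$''. Second, your claim that the contribution vanishes at every $v\in\rS\setminus(\rT\cap\Sigma^+_\ell)$ fails at $\ell$-adic places outside $\rT$: there Lemma \ref{le:tangent_space_fl} (not Propositions \ref{pr:deformation_min} or \ref{pr:deformation_level_raising}, which do not apply to $v\mid\ell$) gives the nonzero contribution $[F^+_v:\dQ_\ell]\tfrac{N(N-1)}{2}$. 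Third, the archimedean term is $\dim_k\rH^0(F^+_{\ul\tau},\ad\bar{r})=\tfrac{N(N-1)}{2}$ or $\tfrac{N(N+1)}{2}$ per place of $\Sigma^+_\infty$, determined by whether $B_{\ul\tau}$ is symmetric or antisymmetric, i.e.\ by $\chi(c_{\ul\tau})=(-1)^{\mu+1-N}$ where $c_{\ul\tau}$ is the complex conjugation; it is not $N\tfrac{1+(-1)^{\mu+1-N}}{2}$.

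The correct count, once the dual Selmer group is killed, is $\dim_k\rH^1_{\sS(\rQ_n),\rT}=b+\sum_{v\in\Sigma^+_\ell\setminus\rT}[F^+_v:\dQ_\ell]\tfrac{N(N-1)}{2}-\sum_{\ul\tau\in\Sigma^+_\infty}\dim_k\rH^0(F^+_{\ul\tau},\ad\bar{r})$ (each Taylor--Wiles prime contributing exactly $1$); writing the middle sum as $\sum_{\Sigma^+_\ell}-\sum_{\rT\cap\Sigma^+_\ell}$ and using $\sum_{v\mid\ell}[F^+_v:\dQ_\ell]=[F^+:\dQ]$, the full $\Sigma^+_\ell$ piece cancels the $\tfrac{N(N-1)}{2}$ part of the archimedean term and what remains is exactly $g_{b,\rT}$. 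In your write-up the three mistakes above compensate to produce the right final number, but no individual step is valid, so the derivation must be redone along these lines. A smaller point: the power-series presentation, hence the vanishing of $\rH^2_{\sS(\rQ_n),\rT}$ and the formal smoothness of $\sD_v$ for $v\in(\rS\cup\rQ_n)\setminus\rT$, is not needed; the lemma only asserts topological generation by $g_{b,\rT}$ elements, which follows from the cotangent-space identification in the first part of Proposition \ref{pr:deformation_global}(1) together with the bound on $\dim_k\rH^1_{\sS(\rQ_n),\rT}$.
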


\begin{proof}
By \eqref{eq:local_deformation_dimension}, Proposition \ref{pr:deformation_fl} for $v\in\Sigma^+_\ell$, Proposition \ref{pr:deformation_min} for $v\in\Sigma^+_\mnm$ (which is applicable by (D3)), and Proposition \ref{pr:deformation_level_raising} for $v\in\Sigma^+_\lr$, we have for every $v\in\tS$ that
\[
\dim_k\rL(\sD_v)-\dim_k\rH^0(F^+_v,\ad\bar{r}(1))=
\begin{dcases}
[F^+_v:\dQ_\ell]\tfrac{N(N-1)}{2} & \text{if $v\mid\ell$;} \\
0 & \text{if $v\nmid\ell$.}
\end{dcases}
\]
Then the lemma follows from \cite{Tho12}*{Proposition~4.4} in view of Remark \ref{re:adequate}.\footnote{Strictly speaking, the set $\rS$ in \cite{Tho12}*{Proposition~4.4} consists of only places split in $F$. But the same argument works in our case as well.}
\end{proof}

\begin{proof}[Proof of Theorem \ref{th:deformation}]
By definition, we have
\[
H_{\rK,\fm}=\Hom_\sO\(\rH^{d(\rV)}_\et(\Sh(\rV,\rK),\sL_\xi)_\fm,\sO\),
\]
under which $\sfT_\fm$ is identified with the image of $\dT^{\Sigma^+}_N$ in $\End_\sO(H_{\rK,\fm})$ since $\rH^{d(\rV)}_\et(\Sh(\rV,\rK),\sL_\xi)_\fm$ is $\sO$-torsion free by (D4).

First, we need to construct a canonical homomorphism $\sfR^\univ_\sS\to\sfT_\fm$. It is well-known that $\sfT_\fm[1/\ell]$ is a reduced ring finite over $E$. As $H_{\rK,\fm}$ is a finite free $\sO$-module, $\sfT_\fm$ is a reduced ring finite flat over $\sO$. Via $\iota_\ell$, every (closed) point $x$ of $\Spec\sfT_\fm[1/\ell]$ gives rise to an RACSDC representation $\Pi_x$ of $\GL_N(\dA_F)$ (Definition \ref{de:relevant}), satisfying that
\begin{enumerate}[label=(\alph*)]
  \item the associated Galois representation $\rho_{\Pi_x,\iota_\ell}$ from Proposition \ref{pr:galois}(2) is residually isomorphic to $\bar{r}^\natural\otimes_k\ol\dF_\ell$ (hence residually absolutely irreducible by (D2));

  \item $\Pi=\BC(\pi)$ for a cuspidal automorphic representation $\pi$ of $\rU(\rV)(\dA_{F^+})$ satisfying that $(\pi^\infty)^\rK$ appears (nontrivially) in $\rH^{d(\rV)}_\et(\Sh(\rV,\rK),\sL_\xi\otimes_{\sO,\iota_\ell^{-1}}\dC)$;

  \item the archimedean weights of $\Pi$ equals $\xi$, which follows from (b) and Proposition \ref{pr:arthur}.
\end{enumerate}
The representation $\rho_{\Pi_x,\iota_\ell}$ induces a continuous homomorphism
\[
\rho_x\colon\Gamma_F\to\GL_N(\sfT_x),
\]
where $\sfT_x$ denotes the localization of $\sfT_\fm[1/\ell]$ at $x$. By a theorem of Carayol \cite{Car94}*{Th\'{e}or\`{e}me~2}, the product homomorphism
\[
\prod_{x\in\Spec\sfT_\fm[1/\ell]}\rho_x\colon\Gamma_F\to\GL_N\(\prod_x\sfT_x\)
\]
is conjugate to some continuous homomorphism $\rho_\fm\colon\Gamma_F\to\GL_N(\sfT_\fm)$ that is a lifting of $\bar{r}^\natural$. Moreover, by Proposition \ref{pr:galois}(2c) and Lemma \ref{le:representation_selfdual}, we obtain a continuous homomorphism
\[
r_\fm\colon\Gamma_{F^+}\to\sG_N(\sfT_\fm)
\]
satisfying $r_\fm^\natural=\rho_\fm$ that is a lifting of $\bar{r}$. We claim that $r_\fm$ satisfies the global deformation problem $\sS$. Indeed, by (b) and Proposition \ref{pr:arthur}, $\Pi_{x,w}$ is unramified for nonarchimedean places $w$ of $F$ not above $\Sigma^+_\mnm\cup\Sigma^+_\lr$. Thus, by (c) and Proposition \ref{pr:galois}(2b), $\bar{r}_{\fm,v}$ belongs to $\sD^\FL_v$ for $v\in\Sigma^+_\ell$; and by Proposition \ref{pr:galois}(2a), $\bar{r}_{\fm,v}$ is unramified for $v\not\in\rS$. By (b), Corollary \ref{co:ram}, and Proposition \ref{pr:galois}(2a), $\bar{r}_{\fm,v}$ belongs to $\sD^\ram_v$ for $v\in\Sigma^+_\lr$.\footnote{This is not correct if $N$ is odd, which is the only reason that we suppose that $\Sigma^+_\lr=\emptyset$ if $N$ is odd in the statement of the theorem.}

Therefore, by the universal property of $\sfR^\univ_\sS$, we obtain a canonical homomorphism
\begin{align}\label{eq:deformation}
\varphi\colon\sfR^\univ_\sS\to\sfT_\fm
\end{align}
of rings over $\sO$. Moreover, it is clear that our homomorphism $r_\fm$ satisfies \cite{CHT08}*{Proposition~3.4.4(2,3)} as well, which implies that $\varphi$ is surjective. Thus, it remains to show that $\varphi$ is injective.

We follow the strategy for \cite{Tho12}*{Theorem~6.8}. Fix a subset $\rT\subseteq\rS$. We take an integer $n\geq 1$, and a Taylor--Wiles system $(\rQ_n,\{\bar\alpha_v\}_{v\in\rQ_n})$ from Lemma \ref{le:taylor_wiles}. For each $v\in\rQ_n$, we
\begin{itemize}
  \item let $\Art_v\colon F_v^\times\to\Gamma_{F^+_v}^\ab$ be the local Artin map;

  \item let $\varpi_v\in F^+_v$ be the uniformizer such that $\Art_v(\varpi_v)$ coincides with the image of $\phi_v^{-1}$ in $\Gamma_{F^+_v}^\ab$;

  \item let $\pr_{\varpi_v}$ be the commuting projection defined in \cite{Tho12}*{Propositions~5.9~\&~5.12};

  \item for every $\alpha\in O_{F^+_v}^\times$, let $\tV_v^\alpha\in\dZ[\rK_{v,1}\backslash\rK_{v,0}/\rK_{v,1}]$ be the characteristic function of the double coset
      \[
      \rK_{v,1}
      \begin{pmatrix}
      1_{N-1} & 0 \\
      0 & \alpha
      \end{pmatrix}
      \rK_{v,1}
      \].
\end{itemize}
For $i=0,1$, we put
\[
M_{i,\rQ_n}\coloneqq\(\prod_{v\in\rQ_n}\pr_{\varpi_v}\)H_{\rK_i(\rQ_n),\fm_{\rQ_n}},
\]
and let $\sfT_{i,\rQ_n}$ be the image of $\dT^{\Sigma^+\cup\rQ_n}_N$ in $\End_\sO(M_{i,\rQ_n})$. We also put
\[
M\coloneqq H_{\rK,\fm}.
\]
Then the canonical map $M\to H_{\rK,\fm_{\rQ_n}}$ is an isomorphism, hence we obtain canonical surjective homomorphisms
\[
\sfT_{1,\rQ_n}\twoheadrightarrow\sfT_{0,\rQ_n}\twoheadrightarrow\sfT_\fm
\]
of rings over $\sO$. Similar to $\sfT_\fm$, we obtain a continuous homomorphism
\[
r_{i,\rQ_n}\colon\Gamma_{F^+}\to\sG_N(\sfT_{i,\rQ_n}),
\]
which is a lifting of $\bar{r}$, for $i=0,1$. We have the following two claims:
\begin{enumerate}
  \item For every $v\in\rQ_n$, there is a continuous character $\tv_v\colon O^\times_{F^+_v}\to\sfT_{1,\rQ_n}^\times$ that factors through $\Delta_v$ such that
    \begin{itemize}
      \item for every $\alpha\in O_{F^+_v}^\times$, the actions of $\tV_v^\alpha$ and $\tv_v(\alpha)$ on $M_{1,\rQ_n}$ coincide;

      \item $r_{1,\rQ_n,v}^\natural$ has a (unique) decomposition $r_{1,\rQ_n,v}^\bullet\oplus r_{1,\rQ_n,v}^\circ$ such that $r_{1,\rQ_n,v}^\bullet$ is a lifting of $\bar{r}_v^\bullet$ on which $\rI_{F^+_v}$ acts via the character $\tv_v\circ\Art_v^{-1}$, and $r_{1,\rQ_n,v}^\circ$ is an unramified lifting of $\bar{r}_v^\circ$.
    \end{itemize}

  \item The composite map
     \[
     M=H_{\rK,\fm_{\rQ_n}}\to H_{\rK_0(\rQ_n),\fm_{\rQ_n}}\xrightarrow{\prod_{v\in\rQ_n}\pr_{\varpi_v}} M_{0,\rQ_n}
     \]
     is an isomorphism. In particular, the canonical homomorphism $\sfT_{0,\rQ_n}\to\sfT_\fm$ is an isomorphism; and $r_{0,\rQ_n}$ and $r_\fm$ are equivalent liftings of $\bar{r}$.
\end{enumerate}
Indeed, these claims follow easily from \cite{Tho12}*{Propositions~5.9~\&~5.12}.

It follows from (1) that $r_{1,\rQ_n}$ satisfies the global deformation problem $\sS(\rQ_n)$, which induces a canonical surjective homomorphism
\[
\varphi_n\colon\sfR^\univ_{\sS(\rQ_n)}\twoheadrightarrow\sfT_{1,\rQ_n}
\]
of rings over $\sO$. We regard $\sfT_{1,\rQ_n}$ as an $\sO[\Delta_{\rQ_n}]$-module by (1). Now we claim that $\sfR^\univ_{\sS(\rQ_n)}$ is naturally an $\sO[\Delta_{\rQ_n}]$-module as well, and that $\varphi_n$ is $\sO[\Delta_{\rQ_n}]$-linear. Indeed, take a universal lifting $r^\univ_{\sS(\rQ_n)}$ for $\bar{r}$ over $\sfR^\univ_{\sS(\rQ_n)}$. Then for each $v\in\rQ_n$, there is a unique character $\rv^\univ_v\colon\Delta_v\to(\sfR^\univ_{\sS(\rQ_n)})^\times$ such that $\rI_{F^+_v}$ acts on $r^{\univ,\bullet}_{\sS(\rQ_n),v}$ via the character
\[
\rI_{F^+_v}\xrightarrow{\Art_v^{-1}}O_{F^+_v}^\times\to\kappa_v^\times\to\Delta_v\xrightarrow{\tv^\univ_v}(\sfR^\univ_{\sS(\rQ_n)})^\times.
\]
Then $\sfR^\univ_{\sS(\rQ_n)}$ becomes a ring over $\sO[\Delta_{\rQ_n}]$ via the character $\prod_{v\in\rQ_n}\rv^\univ_v\colon\Delta_{\rQ_n}\to(\sfR^\univ_{\sS(\rQ_n)})^\times$. Moreover, $\varphi_n$ is a homomorphism of rings over $\sO[\Delta_{\rQ_n}]$ by the local-global compatibility. By (2) and Lemma \ref{le:taylor_wiles_delta}, we obtain a canonical commutative diagram
\[
\xymatrix{
\sfR^\univ_{\sS(\rQ_n)}/\fa_{\rQ_n} \ar[rr]^-{\sim}\ar[d]_-{\varphi_n/\fa_{\rQ_n}} && \sfR^\univ_\sS  \ar[d]^-{\varphi}  \\
\sfT_{1,\rQ_n}/\fa_{\rQ_n} \ar[r]^-{\sim} & \sfT_{0,\rQ_n} \ar[r]^-{\sim} & \sfT_\fm
}
\]
of rings over $\sO$, where all horizontal arrows are isomorphisms.

Choose universal liftings
\[
r^\univ_\sS\colon\Gamma_{F^+}\to\sG_N(\sfR^\univ_\sS),\quad
r^\univ_{\sS(\rQ_n)}\colon\Gamma_{F^+}\to\sG_N(\sfR^\univ_{\sS(\rQ_n)})
\]
for $\bar{r}$ over $\sfR^\univ_\sS$ and $\sfR^\univ_{\sS(\rQ_n)}$, respectively, such that $r^\univ_\sS=r^\univ_{\sS(\rQ_n)}\modulo\fa_{\rQ_n}$. By Proposition \ref{pr:deformation_global}(2), we obtain isomorphisms
\[
\sfR^\univ_\sS[[X_{v;i,j}]]_{v\in\rT;1\leq i,j\leq N}\xrightarrow{\sim}\sfR^{\Box_\rT}_{\sS},\quad
\sfR^\univ_{\sS(\rQ_n)}[[X_{v;i,j}]]_{v\in\rT;1\leq i,j\leq N}\xrightarrow{\sim}\sfR^{\Box_\rT}_{\sS(\rQ_n)}
\]
of rings over $\sO$. In particular, we have a surjective homomorphism $\sfR^{\Box_\rT}_{\sS}\to\sfR^\univ_\sS$, which makes $\sfR^\univ_\sS$ an algebra over $\sfR^\loc_{\sS,\rT}$.

We put
\[
S_\infty\coloneqq\sO[[X_{v;i,j}]]_{v\in\rT;1\leq i,j\leq N}[[Y_1,\dots,Y_b]];
\]
and let $\fa_\infty\subseteq S_\infty$ be the augmentation ideal. Put
\[
R_\infty\coloneqq\sfR^\loc_{\sS,\rT}[[Z_1,\dots,Z_{g_{b,\rT}}]]
\]
where $g_{b,\rT}$ is the number appearing in Lemma \ref{le:taylor_wiles}. Applying the usual patching lemma (see the proof of \cite{BLGG11}*{Theorem~3.6.1}, or \cite{Tho12}*{Lemma~6.10}), we have the following:
\begin{itemize}
  \item There exists a homomorphism $S_\infty\to R_\infty$ of rings over $\sO$ such that we have an isomorphism $R_\infty/\fa_\infty R_\infty\simeq\sfR^\univ_\sS$ of rings over $\sfR^\loc_{\sS,\rT}$.

  \item There exist an $R_\infty$-module $M_\infty$ and an isomorphism $M_\infty/\fa_\infty M_\infty\simeq M$ of $\sfR^\univ_\sS$-modules.

  \item As an $S_\infty$-module, $M_\infty$ is finite and free.
\end{itemize}
In particular, we have
\[
\r{depth}_{R_\infty}(M_\infty)\geq\dim S_\infty=1+|\rT|N^2+b.
\]
On the other hand, by Proposition \ref{pr:deformation_fl} for $v\in\rT\cap\Sigma^+_\ell$, Proposition \ref{pr:deformation_min} for $v\in\rT\cap\Sigma^+_\mnm$ (which is applicable by (D3)), and Proposition \ref{pr:deformation_level_raising} for $v\in\rT\cap\Sigma^+_\lr$, we know that $\sfR^\loc_{\sS,\rT}$ is a formal power series ring over $\sO$ in
\[
|\rT| N^2+\sum_{v\in\rT\cap\Sigma^+_\ell}[F^+_v:\dQ_\ell]\frac{N(N-1)}{2}
\]
variables. It follows that $R_\infty$ is a regular local ring of dimension
\[
1+|\rT|N^2+\sum_{v\in\rT\cap\Sigma^+_\ell}[F^+_v:\dQ_\ell]\frac{N(N-1)}{2}+g_{b,\rT}
=1+|\rT|N^2+b-N[F^+:\dQ]\frac{1+(-1)^{\mu+1-N}}{2}.
\]
As $\dim R_\infty\geq\r{depth}_{R_\infty}(M_\infty)$, we obtain Theorem \ref{th:deformation}(3). By the Auslander--Buchsbaum theorem, $M_\infty$ is a finite free $R_\infty$-module. Thus, $M$ is a finite free $\sfR^\univ_\sS$-module. In particular, the surjective homomorphism $\varphi$ \eqref{eq:deformation} is injective, hence an isomorphism. Theorem \ref{th:deformation}(1,2) are proved.
\end{proof}

\section{Rigidity}
\label{ss:4}

\subsection{Rigidity of symmetric powers of elliptic curves}

In this subsection, we study rigidity of symmetric powers of elliptic curves.

Let $A$ be an elliptic curve over $F^+$. For every rational prime $\ell$, we fix an isomorphism $\rH^1_\et({A_\alpha}_{\ol{F}},\dZ_\ell)\simeq\dZ_\ell^{\oplus 2}$, hence obtain a continuous homomorphism $\rho_{A,\ell}\colon\Gamma_{F^+}\to\GL_2(\dZ_\ell)$. Suppose that $N\geq 2$. We obtain a continuous homomorphism
\[
r_{A,\ell}\colon\Gamma_{F^+}\to\sG_N(\dZ_\ell)=(\GL_N(\dZ_\ell)\times\dZ_\ell^\times)\rtimes\{1,\fc\}
\]
by the formula $r_{A,\ell}(\gamma)=(\Sym^{N-1}\rho_{A,\ell}(\gamma),\eta_v^{N-1}\epsilon_{\ell,v}^{1-N}(\gamma))\fc(\gamma)$, where $\fc(\gamma)=\fc$ if and only if $\gamma\in\Gamma_{F^+}\setminus\Gamma_F$. Denote by $\bar{r}_{A,\ell}$ the composition of $r_{A,\ell}$ and the projection $\sG_N(\dZ_\ell)\to\sG_N(\dF_\ell)$.

\begin{proposition}\label{pr:minimal_elliptic}
Let $v$ be a nonarchimedean place of $F$. For all but finitely many rational primes $\ell\geq N$, every lifting of $\bar{r}_{A,\ell,v}$ to an object $R$ of $\sC_{\dZ_\ell}$ (with respect to the similitude character $\eta_v^N\epsilon_{\ell_v}^{1-N}$) is minimally ramified in the sense of Definition \ref{de:minimal_deformation}.
\end{proposition}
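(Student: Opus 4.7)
The plan is to show that, for $\ell$ outside a finite set, the natural surjection $\sfR^\loc_{\bar r_{A,\ell,v}} \twoheadrightarrow \sfR^\mnm$ is an isomorphism, i.e., every lifting of $\bar r_{A,\ell,v}$ lies in $\sD^\mnm$. By Proposition \ref{pr:deformation_min} both rings are reduced flat local complete intersections of relative dimension $N^2$ over $\dZ_\ell$, so it is enough to pin down the wild and tame inertia structure of every lifting. I would first exclude the finitely many primes lying in any of the following sets: (a) the residue characteristic $p$ of $v$; (b) divisors of $\|v\|^2-1$; (c) divisors of the order of the (finite) wild inertia image $r_{A,\ell_0,v}(\rP_{F^+_v})$ for a fixed auxiliary prime $\ell_0\neq p$ (this order is independent of the choice of $\ell_0\neq p$, since the wild part of the inertia action on the Tate module of $A/F^+_v$ factors through a finite quotient whose isomorphism class is an invariant of the N\'eron model); and (d) primes below which two of the $N$ algebraic Frobenius eigenvalues of $\Sym^{N-1}\rho_{A,\ell,v}$ become congruent, or a ratio of two such becomes congruent to $\|v\|$. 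Each of (a)--(d) is finite.

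For such $\ell$, the wild inertia acts through a quotient of order coprime to $\ell$, and by Lemma \ref{le:wild_type}(1) every lifting of $\bar r_{A,\ell,v}$ agrees with the reduction of $r_{A,\ell,v}$ on $\rP_{F^+_v}$. By Proposition \ref{pr:minimal_structure} (for $v$ nonsplit in $F$) or \cite{CHT08}*{Corollary~2.4.13} (for $v$ split), it then suffices to show that for each irreducible wild type $\tau$ appearing in $\bar r^\natural_{A,\ell,v}|_{\rP_{F^+_v}}$, every lifting of the tame block $\bar\varrho_\tau$ is minimally ramified in the sense of Definition \ref{de:minimal_deformation}. I would case-split on the reduction type of $A$ at $v$. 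If $A$ has good reduction, then $r_{A,\ell,v}$ is unramified so $\bar\varrho_\tau(t)=1$; Weil's bound combined with exclusion (d) makes the residual Frobenius eigenvalues pairwise distinct with no ratio equal to $\bar q$, so Lemma \ref{le:tame_decomposition}(2) forces any lifting to decompose into Frobenius eigenlines preserved by the full tame group, after which the tame relation $\rho(t)^{q-1}=1$ combined with (b) forces $\rho(t)=1$. If $A$ has multiplicative reduction, $\Sym^{N-1}\rho_{A,\ell,v}$ has regular unipotent monodromy and Frobenius eigenvalues $\bar\alpha\bar q^i$ in geometric progression (distinct by (b,d)); the regularity of $\bar X=\log\bar\varrho(t)$ forces $\log\rho(t)$ into the regular nilpotent orbit $\Nilp_{\bar X}$, which is exactly the condition of Definition \ref{de:minimal_tame}. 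If $A$ has additive reduction, I would pass to the finite extension of $F^+_v$ over which $A$ acquires semistable reduction (existing by Grothendieck's theorem), apply the previous two cases after base change, and descend using the rigidity of the wild inertia type together with, when $\tau\in\fT_2\sqcup\fT_3$, the minimal ramification results of Proposition \ref{pr:minimal_tame} and \cite{Boo19}*{Theorem~1.1} for symplectic and orthogonal deformation rings.

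The main obstacle I expect is the additive reduction case: one must match the partition $\fT=\fT_1\sqcup\fT_2\sqcup\fT_3$ attached to $\bar r_{A,\ell,v}$ with the wild inertia structure of $\Sym^{N-1}\rho_{A,\ell,v}$ coming from $A$, and verify that the induced similitude character on each $\tilde\rT_\tau$ is $\eta_v^{\mu_\tau}$ with the correct parity $\mu_\tau\in\dZ/2\dZ$ predicted by Lemma \ref{le:representation_selfdual}(1). This bookkeeping is most delicate when $v$ is inert in $F$ and the place of $F$ above $v$ is ramified so that $\fT_3$ can be nonempty; the explicit classification of wild inertia representations for an elliptic curve, in terms of its Kodaira--N\'eron type (or equivalently its Weil--Deligne parameter over the tame closure), is then used to identify each $\bar\varrho_\tau$ and invoke the classical-group rigidity of \cite{Boo19} to conclude.
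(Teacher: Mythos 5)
Your overall shape (semistable base change, then a good/multiplicative dichotomy analyzed through the tame deformation theory of \S\ref{ss:tame_group}) is the same as the paper's, but two of your key steps do not work as stated. The central problem is your exclusion set (d) and the good-reduction argument that depends on it. If $A$ has good reduction at $v$ with $\alpha/\beta$ a root of unity (e.g.\ supersingular reduction with $a_v=0$, where $\alpha=-\beta$), then two of the $N$ eigenvalues $\alpha^{N-1-j}\beta^{j}$ of $\Sym^{N-1}$-Frobenius are \emph{equal as algebraic numbers}, hence congruent modulo every prime; likewise, if $A$ has multiplicative reduction at $v$ the eigenvalue ratios are literally powers of $\|v\|$, so the clause ``a ratio becomes congruent to $\|v\|$'' fails for every $\ell$. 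So the claim that (d) is finite is false precisely in cases the proposition must cover, and your decomposition into Frobenius \emph{eigenlines} (which genuinely needs pairwise distinct residual eigenvalues) is unavailable. The paper never requires distinctness: in the good-reduction case it only demands that $q$ is not congruent to any ratio $(\alpha/\beta)^{j}$ (a finite exclusion, since these ratios have archimedean absolute value $1$ while $q>1$), applies Lemma \ref{le:tame_decomposition}(2) to the coarser generalized-eigenspace decomposition, and kills inertia on the diagonal blocks using the tame relation together with $\ell\nmid\prod_{i=1}^{N}(q^{i}-1)$, $q=\|w\|$.

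There are further gaps. In the multiplicative case you assert regular unipotent \emph{residual} monodromy, but this needs $\ell\nmid v(j(A))$ (otherwise $\bar{X}=0$ and $r_{A,\ell,v}$ itself is a non-minimally-ramified lifting, so this exclusion is unavoidable and is exactly the condition ``$\ell$ coprime to $u$'' in the paper); and before placing $\log\rho(t)$ in $\Nilp_{\bar{X}}$ you must first prove $\rho(t)$ is unipotent, which requires excluding $\ell\mid\prod_{i=1}^{N}(\|w\|^{i}-1)$ rather than only $\|v\|^{2}-1$: when $\ell$ divides a higher $\|w\|^{i}-1$, liftings with $\rho(t)$ of order $\ell$ (level-raising type) are not ruled out, and the relation on a block of size $m>1$ is $\rho(t)^{q^{m}-1}=1$, not $\rho(t)^{q-1}=1$. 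Finally, the additive-reduction case is not actually closed: Proposition \ref{pr:minimal_tame} and \cite{Boo19}*{Theorem~1.1} prove formal smoothness of the \emph{minimally ramified} locus, not that \emph{every} lifting is minimally ramified, so they cannot carry the descent you sketch, and no Kodaira--N\'eron bookkeeping is needed anyway: the paper base-changes to a totally ramified $F'_w$ of degree $<\ell$, observes that the $\ell$-tame quotient is unchanged ($\rT'_w=\rT_w$), and thereby reduces the analysis of $\rho(t)$ directly to the two semistable cases.
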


\begin{proof}
For simplicity, we only prove the proposition for $v$ nonsplit in $F$. The split case is similar and easier, which we leave to the readers. Thus, let $w$ be the unique place of $F$ above $v$. Fix a finite totally ramified extension $F'_w$ of $F_w$ in $\ol{F}^+_v$ such that $A'\coloneqq A\otimes_{F^+_v}F'_w$ has either good or multiplicative reduction. Let $\rT'_w$ be the image of the subgroup $\Gal(\ol{F}_v^+/F'_w)$ of $\Gamma_{F_w}$ in $\rT_w=\Gamma_{F_w}/\rP_{F_w}$. We fix an isomorphism $\rT_w\simeq\rT_q=t^{\dZ_\ell}\rtimes\phi_q^{\widehat\dZ}$ with the $q$-tame group, where $q=\|w\|$. We now assume $\ell>[F'_w:F_w]$ and $\ell\nmid\prod_{i=1}^N(q^i-1)$. Then $\rT'_w=\rT_w$. Let $\fT=\fT(\bar{r}_{A,\ell,v})$ be the set of isomorphism classes of absolutely irreducible representations of $\rP_{F_w}$ appearing in $\bar{r}_{A,\ell,v}^\natural$ as before.

We first consider the case where $A'$ has split multiplicative reduction. Let $u$ be the valuation of the $j$-invariant $j(A)$ in $F'_w$, which is a negative integer. Assume further that $\ell$ is coprime to $u$. Then $\rho_{A',\ell}(t)$ is conjugate to $1+J_2=(\begin{smallmatrix}1&1\\0&1\end{smallmatrix})$ in $\GL_2(\dZ_\ell)$, which implies that $\Sym^{N-1}\rho_{A',\ell}(t)$ is conjugate to $1+J_N$ in $\GL_N(\dZ_\ell)$. It follows that $\fT$ is a singleton, say $\{\tau\}$; and every lifting $\varrho_\tau$ of $\bar\varrho_\tau$ is minimally ramified since $\ell\nmid\prod_{i=1}^N(q^i-1)$. Thus, every lifting $r$ of $\bar{r}_{A,\ell,v}$ is minimally ramified.

We then consider the case where $A'$ has good reduction. Let $\alpha,\beta\in\ol\dQ_\ell$ be the two eigenvalues of $\rho_{A',\ell}(\phi_q)$. Then $\alpha,\beta$ are Weil $q^{-1/2}$-numbers in $\ol\dQ$, which depend only on $A'$, not on $\ell$. We further assume that $\ell$ satisfies that $\alpha,\beta\in\ol\dZ_\ell^\times$, and that the image of the set
\[
\{(\alpha/\beta)^{N-1},(\alpha/\beta)^{N-3},\dots,(\alpha/\beta)^{3-N},(\alpha/\beta)^{1-N}\}
\]
in $\ol\dF_\ell^\times$ does not contain $q$. It follows that for every $\tau\in\fT$, every lifting $\varrho_\tau$ of $\bar\varrho_\tau$ is actually unramified by Lemma \ref{le:tame_decomposition}(2) as $\ell\nmid\prod_{i=1}^N(q^i-1)$, hence minimally ramified. Thus, every lifting $r$ of $\bar{r}_{A,\ell,v}$ is minimally ramified.

Since in both cases, we only exclude finitely many rational primes $\ell$, the proposition follows.
\end{proof}

The proposition has the following immediate corollary.

\begin{corollary}\label{co:minimal_elliptic}
Let $\Sigma^+$ be a finite set of nonarchimedean places of $F^+$ containing $\Sigma^+_\bad$ such that $A$ has good reduction outside $\Sigma^+$. Then for all but finitely many rational primes $\ell$, $\bar{r}_{A,\ell}$ is rigid for $(\Sigma^+,\emptyset)$ (Definition \ref{de:rigid} with $\sO=\dZ_\ell$).
\end{corollary}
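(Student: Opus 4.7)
The plan is to verify each of the four conditions in Definition \ref{de:rigid} with $(\Sigma^+_\mnm,\Sigma^+_\lr)=(\Sigma^+,\emptyset)$, showing that each fails for only finitely many primes $\ell$. Since $\Sigma^+$ is finite, a union of finitely many finite sets of excluded primes is still finite.

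First I would dispose of the disjointness hypothesis on $(\Sigma^+_\mnm,\Sigma^+_\lr,\Sigma^+_\ell)$ together with conditions (2) and (4) of Definition \ref{de:rigid}. Condition (2) is vacuous since $\Sigma^+_\lr=\emptyset$. For disjointness and (4), note that $\Sigma^+_\ell\cap\Sigma^+=\emptyset$ for all but finitely many $\ell$ (namely those $\ell$ not underlying $\Sigma^+$), and for any place $v$ of $F^+$ not in $\Sigma^+\cup\Sigma^+_\ell$, the elliptic curve $A$ has good reduction at $v$ and $v\nmid\ell$, so the N\'eron--Ogg--Shafarevich criterion implies $\rho_{A,\ell,v}$ is unramified, hence so is $r_{A,\ell,v}$.

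Condition (1) is the core content: for each $v\in\Sigma^+$, Proposition \ref{pr:minimal_elliptic} tells us that for all but finitely many $\ell\geq N$, every lifting of $\bar{r}_{A,\ell,v}$ is minimally ramified. Since $\Sigma^+$ is finite, only finitely many primes $\ell$ need be excluded to obtain the condition simultaneously at every $v\in\Sigma^+$. This is the step that actually depends on nontrivial input from \S\ref{ss:minimally_ramified}--\S\ref{ss:tame_group}, and would be the main obstacle had it not already been established as Proposition \ref{pr:minimal_elliptic}; here it is a direct application.

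Finally, condition (3) requires $\bar{r}_{A,\ell,v}^\natural$ to be regular Fontaine--Laffaille crystalline for every $v\in\Sigma^+_\ell$. Assume $\ell$ is odd, unramified in $F$, does not underlie $\Sigma^+$, and satisfies $\ell\geq N+1$; these exclude only finitely many primes. For any such $\ell$ and any $v\in\Sigma^+_\ell$, the place $v$ lies outside $\Sigma^+$, so $A$ has good reduction at $v$ and the Tate module $\rho_{A,\ell,v}$ is crystalline with Hodge--Tate weights $\{0,1\}$ at every embedding. Then $\Sym^{N-1}\rho_{A,\ell,v}$ is crystalline with regular Hodge--Tate weights $\{0,1,\dots,N-1\}$, contained in the Fontaine--Laffaille range $[0,\ell-2]$ since $N-1\leq\ell-2$. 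After passing to a finite unramified extension of $E=\dQ_\ell$ with ring of integers $\sO_\dag$ satisfying Assumption \ref{as:fl_large}, the associated lattice lies in the essential image of $\bG_w$, verifying Definition \ref{de:crystalline_fl}. Combining these finitely many exclusions completes the proof.
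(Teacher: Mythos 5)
Your proposal is correct and follows essentially the same route as the paper: check each of the four conditions of Definition \ref{de:rigid} in turn, using Proposition \ref{pr:minimal_elliptic} at each of the finitely many $v\in\Sigma^+$ for condition (1), vacuousness for (2), good reduction plus $\ell\geq N+1$ (so the regular Hodge--Tate weights $\{0,\dots,N-1\}$ of $\Sym^{N-1}\rho_{A,\ell}$ fit in the Fontaine--Laffaille range) for (3), and unramifiedness outside $\Sigma^+\cup\Sigma^+_\ell$ for (4). The extra details you supply (N\'eron--Ogg--Shafarevich for (4), the unramified base change to meet Assumption \ref{as:fl_large} for (3)) are exactly what the paper leaves implicit.
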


\begin{proof}
We need show that each of the four conditions in Definition \ref{de:rigid} excludes only finitely many rational primes $\ell$. By Proposition \ref{pr:minimal_elliptic}, condition (1) excludes only finitely many $\ell$. Condition (2) is empty. Condition (3) holds if $\ell$ satisfies $\ell\geq N+1$ and $\Sigma^+_\ell\cap\Sigma^+=\emptyset$. Condition (4) is automatic.
\end{proof}

\subsection{Rigidity of automorphic Galois representations}
\label{ss:rigidity}

In this subsection, we study rigidity of reduction of automorphic Galois representations.

Let $\Pi$ be an RACSDC representation of $\GL_N(\dA_F)$ (Definition \ref{de:relevant}) for $N\geq 2$, and denote by $\Sigma^+_\Pi$ the smallest (finite) set of nonarchimedean places of $F^+$ containing $\Sigma^+_\bad$ such that $\Pi_w$ is unramified for every nonarchimedean place $w$ of $F$ not above $\Sigma^+_\Pi$. Let $E\subseteq\dC$ a strong coefficient field of $\Pi$ (Definition \ref{de:weak_field}). Then for every prime $\lambda$ of $E$, we have a continuous homomorphism $\rho_{\Pi,\lambda}\colon\Gamma_F\to\GL_N(E_\lambda)$.

When $\rho_{\Pi,\lambda}$ is residually absolutely irreducible, by Proposition \ref{pr:galois}(2c) and Lemma \ref{le:representation_selfdual}(3), we have an extension
\begin{align*}
\bar\rho_{\Pi,\lambda,+}\colon\Gamma_{F^+}\to\sG_N(O_E/\lambda)
\end{align*}
of $\bar\rho_{\Pi,\lambda}$, with the similitude character $\chi_\lambda\coloneqq\eta^N\epsilon_\ell^{1-N}$.

\begin{conjecture}\label{co:rigid}
Let $\Pi$ and $E$ be as above. Fix a finite set $\Sigma^+$ of nonarchimedean place of $F^+$ containing $\Sigma^+_\Pi$. Then for all but finitely many primes $\lambda$ of $E$, we have
\begin{enumerate}
  \item $\rho_{\Pi,\lambda}$ is residually absolutely irreducible;

  \item $\bar\rho_{\Pi,\lambda}\res_{\Gal(\ol{F}/F(\zeta_\ell))}$ is absolutely irreducible, where $\ell$ is the underlying rational prime of $\lambda$;

  \item $\bar{r}_{\Pi,\lambda}\coloneqq\bar\rho_{\Pi,\lambda,+}$ is rigid for $(\Sigma^+,\emptyset)$ (Definition \ref{de:rigid} with $\sO$ the ring of integers of $E_\lambda$).
\end{enumerate}
\end{conjecture}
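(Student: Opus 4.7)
Parts (1) and (2) are standard consequences of now well-known ``big image'' theorems for automorphic Galois representations (e.g., Barnet--Lamb--Gee--Geraghty--Taylor and refinements by Dieulefait et al.): for an RACSDC $\Pi$, the restriction $\bar\rho_{\Pi,\lambda}|_{\Gal(\ol F/F(\zeta_\ell))}$ is absolutely irreducible for all but finitely many primes $\lambda$ of $E$, and I would simply invoke these. For part (3), I would verify the three non-vacuous conditions of Definition \ref{de:rigid} (with $\Sigma^+_\mnm=\Sigma^+$ and $\Sigma^+_\lr=\emptyset$) in turn. The ``unramified outside $\Sigma^+\cup\Sigma^+_\ell$'' condition is immediate from the local--global compatibility in Proposition \ref{pr:galois}(2a), since $\Pi_w$ is unramified for every $w$ not above $\Sigma^+_\Pi\subseteq\Sigma^+$. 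The ``regular Fontaine--Laffaille crystalline at $\Sigma^+_\ell$'' condition follows from Proposition \ref{pr:galois}(2b) once we impose that $\ell$ is unramified in $F$ and $\ell-N-1>b_\xi-a_\xi$, both of which hold outside a finite set of $\lambda$; regularity of the Fontaine--Laffaille weights comes from regularity of $\xi_\Pi$.

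\textbf{Main obstacle.} The serious difficulty is the minimally ramified condition: showing that for each $v\in\Sigma^+$ and all but finitely many $\lambda$, \emph{every} lifting of $\bar r_{\Pi,\lambda,v}$ is minimally ramified. By Proposition \ref{pr:deformation_min}(1,3), $\sfR^\loc_{\bar r_v}$ is an equidimensional reduced local complete intersection of relative dimension $N^2$ and $\sD^\mnm$ is always an irreducible component; the task is to show that it is the \emph{only} one for large $\lambda$.

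\textbf{Approach to the obstacle.} Using Proposition \ref{pr:minimal_structure}, the question decomposes into the analogous ``rigidity'' statement for the tame representations $\bar\varrho_\tau$ attached to each Bernstein type $\tau\in\fT^\heartsuit$, i.e., we must show that the local deformation ring of each $\bar\varrho_\tau$ has its minimally ramified locus as unique component. For large $\lambda$, Corollary \ref{co:bernstein} guarantees that every lifting of $\bar r_v$ corresponds (via local Langlands) to a representation in the same Bernstein component as $\Pi_w$, so the problem becomes one of controlling the monodromy filtration within a fixed Bernstein block. I would then invoke the structural results on local deformation rings of tame Galois representations from Shotton (split case) and Booher--Gee (symplectic and orthogonal cases), together with the inductive description in Proposition \ref{pr:minimal_structure}, to reduce the problem to comparing the nilpotent orbit of the monodromy of $\WD(\iota_\ell\Pi_w)$ with any other nilpotent orbit that could appear under lifting.

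\textbf{The hard part.} The final comparison is the genuine obstruction: one must rule out the existence of ``extra'' irreducible components of $\sfR^\loc_{\bar r_v}$ for generic $\ell$. This is precisely where Theorem \ref{th:rigid} of the paper invokes the additional hypothesis that $\Pi$ has a supercuspidal local component, which at the supercuspidal place forces trivial monodromy and then propagates rigidity through the global argument. For a general RACSDC $\Pi$ (say with all local components unramified principal series or tamely ramified discrete series with nontrivial monodromy), ruling out such extra components uniformly in $\ell$ is unknown and appears to require new ideas beyond what is currently available; this is why the statement is offered as a conjecture rather than a theorem.
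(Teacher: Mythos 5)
You are trying to prove a statement that the paper itself only formulates as a conjecture: the paper gives no proof of Conjecture \ref{co:rigid} in this generality, and establishes it only under the extra hypothesis that $\Pi$ has a supercuspidal component (Proposition \ref{pr:rigid} for parts (1),(2) and Theorem \ref{th:rigid} for part (3)). Your discussion of part (3) is essentially consistent with this: you correctly isolate the condition ``every lifting of $\bar r_{\Pi,\lambda,v}$ is minimally ramified'' as the genuine obstruction, note that $\sD^\mnm$ is always one irreducible component of $\Spf\sfR^\loc_{\bar r_v}$ but need not be the only one, and concede that ruling out extra components uniformly in $\ell$ is open. (Your description of how the supercuspidal hypothesis is used in Theorem \ref{th:rigid} is slightly off: it does not act by ``forcing trivial monodromy at that place''; rather it supplies residual absolute irreducibility and adequacy, which make the automorphy lifting machinery applicable, and then a global finiteness/counting argument over RACSDC representations of bounded level, together with strong multiplicity one and smoothness of the generic fiber of $\sfR^\loc_{\bar r_v}[1/\ell]$ at automorphic points, shows that distinct local components would produce distinct automorphic representations, which is impossible for $\ell$ large.)

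The genuine gap is in your treatment of parts (1) and (2). You assert that residual absolute irreducibility of $\rho_{\Pi,\lambda}$, and absolute irreducibility of $\bar\rho_{\Pi,\lambda}\res_{\Gal(\ol F/F(\zeta_\ell))}$, for all but finitely many $\lambda$ are ``standard consequences of now well-known big image theorems'' for an arbitrary RACSDC $\Pi$. They are not: for general $N$ and general RACSDC $\Pi$, even irreducibility of the $\lambda$-adic representations for all $\lambda$ is open, and the known results (irreducibility for a density-one set of primes, or residual irreducibility for a positive-density set, or unconditional statements only for small $N$) fall well short of ``all but finitely many $\lambda$.'' Indeed, (1) and (2) are part of what makes the statement a conjecture. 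In the paper these parts are proved only under the supercuspidal hypothesis: one spreads out the supercuspidal Langlands parameter $\Ind_{\rW^b_{F_w}}^{\rW_{F_w}}\tau\otimes\chi$ over a ring of $S$-integers, checks that its reduction stays irreducible away from finitely many primes, and then handles the restriction to $\Gal(\ol F/F(\zeta_\ell))$ by a separate argument (Lemmas \ref{le:rigid_2} and \ref{le:rigid_1}) exploiting regular Fontaine--Laffaille weights in a short range to exclude that $\bar\rho_{\Pi,\lambda}$ is induced from a nontrivial subfield of $F(\zeta_\ell)$. Without some such hypothesis your first step has no available reference, so the proposal does not reduce the conjecture to the single obstruction you name; it has two open ingredients, not one.
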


\begin{remark}
When $N=2$, Conjecture \ref{co:rigid} is not hard to verify. Part (3) of Conjecture \ref{co:rigid} was also studied in \cite{Gui} under several simplifying restrictions.
\end{remark}

Concerning Conjecture \ref{co:rigid}(1,2), we have the following proposition.

\begin{proposition}\label{pr:rigid}
Let $\Pi$ and $E$ be as above. Suppose that there exists a nonarchimedean place $w$ of $F$ such that $\Pi_w$ is supercuspidal. Then
\begin{enumerate}
  \item There exists a finite set $\Lambda_1$ of primes of $E$ depending on $\Pi_w$ only, such that for every $\lambda\not\in\Lambda_1$, $\rho_{\Pi,\lambda}$ is residually absolutely irreducible.

  \item There exists a finite set $\Lambda_2$ containing $\Lambda_1$ from (1) such that for every $\lambda\not\in\Lambda_2$, the restriction $\bar\rho_{\Pi,\lambda}\res_{\Gal(\ol{F}/F(\zeta_\ell))}$ remains absolutely irreducible, where $\bar\rho_{\Pi,\lambda}$ denotes the residual representation of $\rho_{\Pi,\lambda}$ and $\ell$ is the underlying rational prime of $\lambda$.
\end{enumerate}
\end{proposition}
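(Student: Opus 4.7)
The plan is to first reduce both assertions to statements about the restriction of $\bar\rho_{\Pi,\lambda}$ to a decomposition group $\Gamma_{F_w}$, then exploit the rigidity of the local supercuspidal datum. By Proposition \ref{pr:galois}(2a) together with supercuspidality of $\Pi_w$, the local restriction $\rho_{\Pi,\lambda}\res_{\Gamma_{F_w}}$ has irreducible Weil--Deligne representation with trivial monodromy for every prime $\lambda$ whose residue characteristic $\ell_\lambda$ differs from the residue characteristic $p$ of $w$; hence $\rho_{\Pi,\lambda}\res_{\Gamma_{F_w}}$ is absolutely irreducible for all such $\lambda$. Moreover, all these local representations are $\ell$-adic realizations of a single irreducible Weil group representation $\rho_0\colon W_{F_w}\to\GL_N(\ol{\dQ})$ attached to $\Pi_w$ under local Langlands.

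For part (1), I would show that $\bar\rho_{0,\lambda}$ remains absolutely irreducible for all but finitely many $\lambda$, which then forces $\bar\rho_{\Pi,\lambda}$ itself to be absolutely irreducible. The key observation is that $\rho_0(\rP_{F_w})$ is a finite group: $\rP_{F_w}$ is pro-$p$, and any compact totally-disconnected subgroup of a real Lie group (such as $\GL_N(\dC)$) is necessarily finite. Applying Clifford theory to the normal chain $\rP_{F_w}\trianglelefteq \rI_{F_w}\trianglelefteq W_{F_w}$, irreducibility of $\rho_0$ forces $\rho_0\res_{\rP_{F_w}}$ to be isotypic under the transitive $W_{F_w}/\rP_{F_w}$-permutation action on isotypic components; for $\lambda$ avoiding both $p$ and the finitely many primes dividing $|\rho_0(\rP_{F_w})|$, Maschke's theorem ensures this decomposition lifts to $\bar\rho_{0,\lambda}$. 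The remaining question concerns a restriction $\bar\rho_{0,\lambda}\res_{\Gamma_\tau}$ to the stabilizer of a single isotypic piece, which is controlled by trace data of a fixed finite collection of Weil group elements; residual reducibility imposes algebraic congruences on the coefficient field that hold for only finitely many $\lambda$. This yields the finite set $\Lambda_1$.

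For part (2), suppose $\lambda\notin\Lambda_1$ has residue characteristic $\ell\neq p$ and that $\bar\rho_{\Pi,\lambda}\res_{\Gal(\ol{F}/F(\zeta_\ell))}$ is reducible. Since $\Gal(\ol{F}/F(\zeta_\ell))\trianglelefteq\Gamma_F$ has cyclic quotient of order dividing $\ell-1$, Clifford theory gives $\bar\rho_{\Pi,\lambda}\simeq\Ind_{\Gamma_{F'}}^{\Gamma_F}\sigma$ for an intermediate cyclic extension $F\subsetneq F'\subseteq F(\zeta_\ell)$ of degree $d>1$ dividing $\gcd(N,\ell-1)$; the extension $F'/F$ is unramified outside $\ell$, hence at $w$. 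Mackey decomposition combined with absolute irreducibility of $\bar\rho_{\Pi,\lambda}\res_{\Gamma_{F_w}}$ from part (1) forces $w$ to be inert in $F'/F$ and $\bar\rho_{0,\lambda}\simeq\Ind_{\Gamma_{F'_{w'}}}^{\Gamma_{F_w}}(\sigma\res)$, exhibiting the reduction of $\rho_0$ as monomial from the unique unramified degree-$d$ extension of $F_w$. If $\rho_0$ is not itself of this induced form, the monomial reduction can occur for only finitely many $\lambda$ by a Brauer--Nesbitt argument on characteristic polynomials. If $\rho_0$ is inherently monomial from the degree-$d$ unramified extension, one invokes the global cuspidality of $\Pi$, which prevents $\rho_{\Pi,\iota_\ell}$ from being globally induced from $F'$, together with a trace-vanishing argument at auxiliary Frobenius elements to rule out the induced decomposition for all but finitely many $\ell$. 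Collecting these exceptional primes into $\Lambda_2\supseteq\Lambda_1$ completes the proof.

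The main obstacle lies in the trace-rigidity step in both parts. In part (1), the tame inertia can act via characters of infinite order, so one must carefully exclude the finitely many congruence primes at which two distinct Frobenius-conjugate tame characters of $\rI_{F_w}/\rP_{F_w}$ coincide modulo $\lambda$; this amounts to an effective Brauer--Nesbitt-type argument for $\rho_0$, made finite by the fact that all relevant character values lie in a common number field. In part (2), handling the sub-case in which $\rho_0$ is inherently monomial from an unramified extension requires an effective Chebotarev-style argument on the global Frobenius traces of $\rho_{\Pi,\iota_\ell}$, controlling the primes at which the globally non-induced Galois representation $\rho_{\Pi,\iota_\ell}$ can develop an induced congruence modulo $\lambda$ with $F'$ depending on $\ell$.
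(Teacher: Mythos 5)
Your part (1) is essentially the paper's argument: the paper also fixes the supercuspidal parameter $\rho_{\Pi_w}\simeq\Ind_{\rW^b_{F_w}}^{\rW_{F_w}}\tau\otimes\chi$ (with $\tau$ irreducible on inertia, hence of finite image there), descends it to a ring of $S$-integers, and excludes the finitely many residue characteristics where $\bar\tau$ becomes reducible or its Frobenius-conjugates collide; local-global compatibility (Proposition \ref{pr:galois}(2a)) then gives residual absolute irreducibility, with $\Lambda_1$ depending only on $\Pi_w$. Your sketch of this is less precise but follows the same route and would go through.

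Part (2), however, has a genuine gap. Your strategy tries to dispose of the induced residual structure $\bar\rho_{\Pi,\lambda}\simeq\Ind_{\Gamma_{F'}}^{\Gamma_F}\sigma$ purely by arguments at $w$ and by global trace considerations, but in the case where $\rho_{\Pi_w}$ is itself induced from the unramified degree-$d$ extension of $F_w$ (equivalently, invariant under the unramified twist of order $d$ — a perfectly common situation, e.g.\ depth-zero supercuspidals), the Mackey argument at $w$ yields no contradiction, and your two proposed rescues both fail. First, cuspidality of $\Pi$ does \emph{not} prevent $\rho_{\Pi,\iota_\ell}$ from being induced from a cyclic extension: automorphic induction of a cuspidal representation with distinct conjugates is again cuspidal. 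Second, the ``effective Chebotarev / trace-vanishing'' step cannot be carried out as stated: the field $F'\subseteq F(\zeta_\ell)$, hence the character $\eta_{F'/F}$ under whose twist $\bar\rho_{\Pi,\lambda}$ would be invariant, varies with $\ell$, is ramified at $\ell$ (every nontrivial subextension of $F(\zeta_\ell)/F$ is ramified above $\ell$ when $\ell$ is unramified in $F$), and its Frobenius values modulo $\lambda$ are roots of unity of order dividing $\ell-1$ with no uniform constraint; so there is no fixed nonzero algebraic number whose prime divisors bound the exceptional $\lambda$, and a Brauer--Nesbitt congruence argument does not close the loop. The missing idea — and the one the paper uses — is $p$-adic Hodge theory at $\ell$: since $\lambda\notin\Lambda_2$ forces $\bar\rho_{\Pi,\lambda}$ to be crystalline with regular Fontaine--Laffaille weights in $[a_\xi,b_\xi]$ and $\ell> N(b_\xi-a_\xi)+1$, Lemma \ref{le:rigid_1} (via Lemma \ref{le:rigid_2}) shows a representation with such weights cannot be induced from an extension ramified at $\ell$; combined with Clifford theory in the form of \cite{CG13}*{Lemma~4.3} — whose hypothesis that the summands over $F(\zeta_\ell)$ are pairwise non-isomorphic is verified using the inertia structure at $w$ coming from part (1) — this forces $F'=F$ and hence absolute irreducibility over $F(\zeta_\ell)$. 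Without this input at $\ell$ (or some substitute controlling the weights), your argument cannot exclude infinitely many $\lambda$ in the twist-invariant case.
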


The proof of part (2) was suggested to us by Toby Gee. Originally, our alternative argument needs to further assume for (2) that $\Pi$ is a twist of the Steinberg representation at some nonarchimedean place of $F$ not above $\Sigma^+_\bad$.

\begin{proof}
Let $\rW_{F_w}$ be the Weil group of $F_w$. Since $\Pi_w$ is supercuspidal, we have the induced continuous representation $\rho_{\Pi_w}\colon\rW_{F_w}\to\GL_N(\dC)$ via the local Langlands correspondence, which is irreducible. Fix an arithmetic Frobenius element $\phi_w$ in $\rW_{F_w}$, which determines a natural quotient map $\rW_{F_w}\to\dZ$ sending $\phi_w$ to $1$. For every integer $b\geq 1$, let $\rW^b_{F_w}$ be the inverse image of $b\dZ$. Then there exist an absolutely irreducible representation $\tau$ of $\rI_{F_w}$ and a character $\chi$ of $\rW^b_{F_w}$, such that $\rho_{\Pi_w}$ is isomorphic to $\Ind_{\rW^b_{F_w}}^{\rW_{F_w}}\tau\otimes\chi$, where $b$ is the smallest positive integer satisfying $\tau^{\phi_w^b}\simeq\tau$. We may choose a finite extension $E'$ of $E$ inside $\dC$, and a finite set $\Lambda'$ of primes of $E'$, such that both $\tau$ and $\chi$ are defined over $O_{E',(\Lambda')}$. In particular, the image of $\rho_{\Pi_w}$ is contained in $\GL_N(O_{E',(\Lambda')})$, up to conjugation.

Now let $\Lambda'_1$ be the smallest set of primes of $E'$ containing $\Lambda'$ such that every $\lambda'\not\in\Lambda'_1$ satisfies
\begin{itemize}
  \item $w$ does not divide $\ell$;

  \item the underlying rational prime does not divide $b|\rI_{F_w}/\Ker\rho_{\Pi_w}|$;

  \item $\bar\tau_{\lambda'}\coloneqq\tau\otimes_{O_{E',(\Lambda')}}\ol{O_{E'}/\lambda'}$ remains irreducible;

  \item $b$ remains the smallest positive integer that satisfies $\bar\tau_{\lambda'}^{\phi_w^b}\simeq\bar\tau_{\lambda'}$.
\end{itemize}
Then $\Lambda'_1$ is a finite set, satisfying that the composite map
\[
\bar\rho_{\Pi_w,\lambda'}\colon\rW_{F_w}\to\GL_N(O_{E',(\Lambda')})\to\GL_N(\ol{O_{E'}/\lambda'})
\]
is irreducible for $\lambda'\not\in\Lambda'_1$. Let $\xi=\xi_\Pi$ be the archimedean weights of $\Pi$ (Definition \ref{de:relevant}).

For (1), we let $\Lambda_1$ be the set of primes of $E$ underlying $\Lambda'_1$; and then (1) follows by Proposition \ref{pr:galois}(2a).

For (2), let $\Lambda_2$ be the union of $\Lambda_1$ constructed in (1) above and all primes $\lambda$ of $F$ whose underlying rational prime $\ell$ satisfies either $\ell\leq N(b_\xi-a_\xi)+1$ or $\Sigma^+_\ell\cap\Sigma^+_\Pi\neq\emptyset$. Take a prime $\lambda\not\in\Lambda_2$. By (1), $\bar\rho_{\Pi,\lambda}$ is absolutely irreducible, whose coefficients we may assume to be just $\ol{O_E/\lambda}$. Since the degree of the extension $F(\zeta_\ell)/F$ is coprime to $\ell$, the representation $\bar\rho_{\Pi,\lambda}\res_{\Gal(\ol{F}/F(\zeta_\ell))}$ is semisimple. We claim that $\bar\rho_{\Pi,\lambda}$ is an induction of an irreducible representation $\rho'$ of $\Gal(\ol{F}/F')$ for some field extensions $F\subseteq F'\subseteq F(\zeta_\ell)$ such that $[F':F]$ equals the number of irreducible summands of $\bar\rho_{\Pi,\lambda}\res_{\Gal(\ol{F}/F(\zeta_\ell))}$. By \cite{CG13}*{Lemma~4.3}, it suffices to show that the irreducible summands of $\bar\rho_{\Pi,\lambda}\res_{\Gal(\ol{F}/F(\zeta_\ell))}$ are pairwise non-isomorphic. Since $w$ is unramified in $F(\zeta_\ell)$, it suffices to check that the irreducible summands of $\bar\rho_{\Pi,\lambda}\res_{\rI_{F_w}}$ are pairwise non-isomorphic, which is already known by our choice of $\Lambda'_1$ above.

By our definition of $\Lambda_2$ and Proposition \ref{pr:galois}(2b), $\bar\rho_{\Pi,\lambda}$ is crystalline with regular Fontaine--Laffaille weights in $[a_\xi,b_\xi]$ and $\ell>N(b_\xi-a_\xi)+1\geq (b_\xi-a_\xi)+2$. Thus, we must have $F'=F$ by Lemma \ref{le:rigid_1} below. Therefore, $\bar\rho_{\Pi,\lambda}\res_{\Gal(\ol{F}/F(\zeta_\ell))}$ remains absolutely irreducible, hence (2) follows.

The proposition is proved.
\end{proof}

To finish the proof of Proposition \ref{pr:rigid}(2), we need two lemmas, both of which are suggested to us by Toby Gee. We start with some notation. For every finite extension $L$ of $\ol\dQ_\ell$ contained in $\ol\dQ_\ell$, we put $\Gamma_L\coloneqq\Gal(\ol\dQ_\ell/L)$, denote by $\rI_L\subseteq\Gamma_L$ the inertia subgroup and by $\rP_L\subseteq\rI_L$ the wild inertia subgroup, and put $\rT_L\coloneqq\Gamma_L/\rP_L$.

\begin{lem}\label{le:rigid_2}
Let $\bar\rho\colon\Gamma_L\to\GL_N(\ol\dF_\ell)$ be a continuous homomorphism such that $\bar\rho(\rP_L)=\{1\}$. Then there exists a finite unramified extension $L'$ of $L$ inside $\ol\dQ_\ell$ such that $\bar\rho\res_{\Gamma_{L'}}$ is a direct sum of characters.
\end{lem}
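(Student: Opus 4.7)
The plan is to exhibit an explicit unramified extension $L'/L$ of degree $m\ell^k$ over which $\bar\rho$ becomes diagonal, by diagonalizing the tame inertia action first and then killing the wild part of Frobenius using the characteristic~$\ell$. Since $\bar\rho(\rP_L)=\{1\}$, the homomorphism $\bar\rho$ factors through the tame quotient $\rT_L$. Fix a lift $\phi\in\rT_L$ of the arithmetic Frobenius and a topological generator $t$ of the image of $\rI_L$ in $\rT_L$; they satisfy the relation $\phi t\phi^{-1}=t^q$, where $q$ is the residue cardinality of $L$. Because $t$ has pro-order coprime to $\ell$, the element $\bar\rho(t)\in\GL_N(\ol\dF_\ell)$ has finite order coprime to $\ell$, hence is semisimple. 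Write the resulting eigenspace decomposition
\[
V=\bigoplus_{\alpha}V_\alpha,\qquad V=\ol\dF_\ell^{\oplus N},
\]
where $\alpha$ ranges over the finitely many eigenvalues of $\bar\rho(t)$.

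The key observation is that the relation $\phi t\phi^{-1}=t^q$ forces $\bar\rho(\phi)$ to send $V_\alpha$ to $V_{\alpha'}$ with ${\alpha'}^q=\alpha$. Thus $\bar\rho(\phi)$ permutes the eigenspaces $\{V_\alpha\}$ according to the permutation induced by the $q$-th power map on the (finite) set of eigenvalues. Choose $m\geq 1$ so that this permutation becomes trivial upon taking the $m$-th iterate; equivalently, $\alpha^{q^m}=\alpha$ for every eigenvalue $\alpha$. Then $\bar\rho(\phi^m)$ preserves each $V_\alpha$ individually.

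On each block $V_\alpha$, the element $\bar\rho(t)$ acts by the scalar $\alpha$, while $\bar\rho(\phi^m)$ acts by some (not necessarily semisimple) matrix $B_\alpha\in\GL(V_\alpha)$. Write its Jordan decomposition $B_\alpha=S_\alpha U_\alpha$ with $S_\alpha$ semisimple, $U_\alpha$ unipotent, and $S_\alpha U_\alpha=U_\alpha S_\alpha$. Because $\ol\dF_\ell$ has characteristic $\ell$, every unipotent element in $\GL_N(\ol\dF_\ell)$ has order a power of $\ell$. Choose $k\geq 0$ large enough that $U_\alpha^{\ell^k}=1$ for every $\alpha$; then
\[
B_\alpha^{\ell^k}=S_\alpha^{\ell^k}\cdot U_\alpha^{\ell^k}=S_\alpha^{\ell^k}
\]
is semisimple on $V_\alpha$. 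Now let $L'$ be the unramified extension of $L$ inside $\ol\dQ_\ell$ of degree $m\ell^k$. Then $\Gamma_{L'}$ has topological generators $t$ and $\phi^{m\ell^k}$, and by construction $\bar\rho(\phi^{m\ell^k})$ preserves the decomposition $V=\bigoplus_\alpha V_\alpha$ with semisimple restriction to each $V_\alpha$. Choosing a basis that simultaneously diagonalizes $\bar\rho(\phi^{m\ell^k})\res_{V_\alpha}$ (and noting that $\bar\rho(t)$ is already scalar on each $V_\alpha$), the whole image $\bar\rho(\Gamma_{L'})$ lies in the diagonal torus, proving that $\bar\rho\res_{\Gamma_{L'}}$ is a direct sum of characters.

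The main (and really only) conceptual hurdle is handling the potential non-semisimplicity of $\bar\rho(\phi)$; this is precisely where one exploits the characteristic $\ell$ nature of the coefficients by replacing $\phi$ with a sufficiently high $\ell$-power iterate, which is not available in characteristic zero. Everything else reduces to elementary linear algebra over $\ol\dF_\ell$ and the structure of the tame quotient.
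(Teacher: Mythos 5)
Your proposal is correct and follows essentially the same route as the paper's proof: use semisimplicity of $\bar\rho(t)$ (tame inertia has pro-order prime to $\ell$), pass to a power $\phi^m$ of Frobenius that commutes with $\bar\rho(t)$ (equivalently, the eigenvalues of $\bar\rho(t)$ lie in $\dF_{q^m}$), then take a further power making the Frobenius image semisimple, and conclude by simultaneous diagonalization over the unramified extension of the corresponding degree. Your use of the multiplicative Jordan decomposition to see that an $\ell^k$-th power kills the unipotent part is just a more explicit version of the paper's choice of the exponent $c$, so the two arguments coincide in substance.
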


\begin{proof}
Let $t\in\rI_L/\rP_L$ be a topological generator and $\phi\in\rT_L$ a lift of the arithmetic Frobenius. Then we have $\rT_L=t^{\prod_{p\neq\ell}\dZ_p}\rtimes\phi^{\widehat\dZ}$ subject to the relation $\phi t\phi^{-1}=t^{\ell^a}$, where $\ell^a$ is the residue cardinality of $L$. We regard $\bar\rho$ as a representation of $\rT_L$. As $\rI_L/\rP_L$ has pro-order prime to $\ell$, the element $\bar\rho(t)$ is semisimple. Let $b\geq 1$ be an integer such that the eigenvalues of $\bar\rho(t)$ are contained in $\dF_{\ell^{ab}}^\times\subseteq\ol\dF_\ell^\times$. Then $\bar\rho(\phi^b)$ commutes with $\bar\rho(t)$. Let $c\geq 1$ be an integer such that $\bar\rho(\phi^{bc})$ is semisimple. Then the unique unramified extension of $L$ inside $\ol\dQ_\ell$ of degree $bc$ satisfies the requirement of the lemma.
\end{proof}

\begin{lem}\label{le:rigid_1}
Consider a field tower $\dQ_\ell\subseteq L\subseteq L'\subseteq\ol\dQ_\ell$ in which $L/\dQ_\ell$ is finite unramified and $L'/L$ is finite Galois. Let $\bar\psi\colon\Gamma_{L'}\to\GL_m(\ol\dF_\ell)$ be a continuous homomorphism for some integer $m\geq 1$, and put $\bar\rho\coloneqq\Ind_{\Gamma_{L'}}^{\Gamma_L}\bar\psi$. Suppose that
\begin{enumerate}[label=(\alph*)]
  \item $\bar\rho$ is crystalline with regular Fontaine--Laffaille weights in $[a,b]$ (Definition \ref{de:crystalline_fl}) for some integers $a$ and $b$ satisfying $0\leq b-a\leq \ell-2$; and

  \item $(b-a)[L':L]< \ell-1$.
\end{enumerate}
Then $L'$ is unramified over $L$.
\end{lem}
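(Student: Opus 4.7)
The plan is to argue by contradiction: I suppose $L'/L$ is ramified and derive a contradiction by comparing the constraints that Fontaine--Laffaille theory places on $\bar\rho\res_{\rI_L}$ with the structure forced by $\bar\rho=\Ind_{\Gamma_{L'}}^{\Gamma_L}\bar\psi$.

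First, after replacing $L$ by the maximal subextension of $L'/L$ that is unramified over $L$ (which preserves both hypotheses, since $L/\dQ_\ell$ remains unramified and $[L':L]$ only decreases), I may assume that $L'/L$ is totally ramified of degree $e>1$. Wild ramification is then handled immediately: if $\ell\mid e$, then $e\geq\ell$, so (b) forces $b-a=0$; regularity of the Fontaine--Laffaille weights then gives $\dim\bar\rho\leq 1$, contradicting $\dim\bar\rho=m[L':L]\geq 2$. Thus $\gcd(e,\ell)=1$ and $L'/L$ is totally tamely ramified; together with Galoisness and $L$ unramified over $\dQ_\ell$, this forces $e\mid \ell^d-1$, where $d\coloneqq[L:\dQ_\ell]$.

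Next, I analyze the semisimplification of $\bar\rho\res_{\rI_L}$ in two different ways. On one hand, Fontaine--Laffaille theory implies that $(\bar\rho\res_{\rI_L})^{\mathrm{ss}}$ is a direct sum of characters of the form $\prod_\tau\omega_\tau^{h_\tau}$, where $\tau$ runs over the embeddings $L\hookrightarrow\ol\dQ_\ell$, $\omega_\tau$ is the fundamental character of level $d$ indexed by $\tau$, and $h_\tau\in[a,b]$ is one of the Fontaine--Laffaille weights of $\bar\rho$ at $\tau$. On the other hand, since $e$ is invertible in $\ol\dF_\ell$, induction commutes with semisimplification, so $\bar\rho^{\mathrm{ss}}=\Ind_{\Gamma_{L'}}^{\Gamma_L}\bar\psi^{\mathrm{ss}}$; using the standard fact that $\rP_L$ acts trivially on $\bar\rho^{\mathrm{ss}}$ together with the identity $\rP_L=\rP_{L'}\subseteq\Gamma_{L'}$ in the tame case, I conclude that $\bar\psi^{\mathrm{ss}}$ is tame. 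Writing $\bar\psi^{\mathrm{ss}}\res_{\rI_{L'}}=\bigoplus_k\psi_k$ with $\psi_k$ characters of $\rT_{L'}$, induction into the abelian group $\rT_L$ decomposes each $\Ind_{\rT_{L'}}^{\rT_L}\psi_k$ as a sum of $e$ characters extending $\psi_k$, which differ pairwise by characters of $\rT_L/\rT_{L'}\cong\mu_e$.

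Consequently, a generator $\alpha$ of the character group of $\rT_L/\rT_{L'}$ appears as a ratio of two characters in $(\bar\rho\res_{\rI_L})^{\mathrm{ss}}$, and therefore must itself be expressible as $\prod_\tau\omega_\tau^{n_\tau}$ with $|n_\tau|\leq b-a$. Explicitly, $\alpha=\omega^{(\ell^d-1)/e}$ for a chosen fundamental character $\omega=\omega_{\tau_0}$ of level $d$, and the Frobenius relation $\omega_\tau=\omega^{\ell^{j_\tau}}$ recasts the identity as the congruence
\[
\sum_\tau n_\tau\,\ell^{j_\tau}\equiv\frac{\ell^d-1}{e}\pmod{\ell^d-1}.
\]
Combining $|n_\tau|\leq b-a$ with hypothesis (b) yields the estimate
\[
\left|\sum_\tau n_\tau\,\ell^{j_\tau}\right|\leq(b-a)\cdot\frac{\ell^d-1}{\ell-1}<\frac{\ell^d-1}{e},
\]
whereas every nonzero integer congruent to $(\ell^d-1)/e$ modulo $\ell^d-1$ has absolute value at least $(\ell^d-1)/e$; this is the desired contradiction. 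The main obstacle I anticipate is articulating precisely the Fontaine--Laffaille description of the tame inertia characters of $\bar\rho$ and its compatibility with induction in the tame case; once these ingredients are in place, the numerical estimate above closes the argument cleanly.
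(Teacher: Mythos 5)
Your overall architecture (reduce to totally ramified, dispatch wild ramification by the degree bound plus regularity, compare the inertia characters produced by induction with the constraint from Fontaine--Laffaille theory, and close with a digit estimate) is viable, and your endgame is even slightly cleaner than the paper's in that the congruence class of $(\ell^d-1)/e$ automatically excludes $0$, so you never need the nonvanishing of the exponents. But there is a genuine gap in the Fontaine--Laffaille input as you state it: it is \emph{not} true in general that every character occurring in $(\bar\rho\res_{\rI_L})^{\r{ss}}$ is of the form $\prod_\tau\omega_\tau^{h_\tau}$ with $\omega_\tau$ the level-$d$ fundamental characters ($d=[L:\dQ_\ell]$) and $h_\tau\in[a,b]$. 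That description is only valid for one-dimensional constituents of $\bar\rho$ as a $\Gamma_L$-representation. If $\bar\rho$ has an irreducible (tame) constituent of dimension $n>1$, its inertia characters are powers of a fundamental character of level $dn$, which do not factor through $\dF_{\ell^d}^\times$ and hence are not products of the $\omega_\tau$'s at all. This is exactly the difficulty the paper removes first: using $\bar\rho(\rP_L)=\{1\}$ and Lemma \ref{le:rigid_2}, it replaces $L$ by a finite unramified extension (harmless for both hypotheses) so that $\bar\rho$ becomes a direct sum of characters of $\Gamma_L$, and only then invokes the level-$d$ description of crystalline characters. As written, your step ``therefore $\alpha$ must be expressible as $\prod_\tau\omega_\tau^{n_\tau}$ with $|n_\tau|\leq b-a$'' is unjustified.

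The gap is repairable in two ways. Either insert the paper's reduction (an unramified base change making $\bar\rho$ a sum of characters of $\Gamma_L$; note this is compatible with your inertia-theoretic count of extensions, since unramified base change does not affect $\rI_L$), or keep your setup but use the correct statement of Fontaine--Laffaille theory at higher niveau: an $n$-dimensional irreducible constituent contributes characters $\omega_{dn}^{c}$ whose base-$\ell$ digits lie in $[a,b]$, and comparing two such characters after passing to a common level $D$ (the digits of $c\cdot\tfrac{\ell^D-1}{\ell^{dn}-1}$ are the same digits repeated) still yields an integer congruent to $(\ell^D-1)/e$ modulo $\ell^D-1$ of absolute value at most $(b-a)\tfrac{\ell^D-1}{\ell-1}<\tfrac{\ell^D-1}{e}$, so your estimate survives. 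Two smaller points to fix in either version: $\rT_L=\Gamma_L/\rP_L$ is not abelian, so the extension-counting must be carried out over the abelian tame inertia quotient $\rI_L/\rP_L$; and the identification $\bar\rho\res_{\rI_L}\simeq\Ind_{\rI_{L'}}^{\rI_L}(\bar\psi\res_{\rI_{L'}})$, which your count of the $e$ extensions of each $\psi_k$ silently uses, should be justified by Mackey's formula together with $\Gamma_L=\Gamma_{L'}\rI_L$ (valid because $L'/L$ is totally ramified).
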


\begin{proof}
We may assume $a=0$. We assume on the contrary that $L'/L$ is ramified. Up to replacing $L$ by the maximal subfield of $L'$ that is unramified over $L$, we may assume that $L'/L$ is totally ramified. Up to replacing $\bar\psi$ by its semisimplification, we may assume that $\bar\psi$ is semisimple. Since $[L':L]$ is coprime to $\ell$, $\bar\rho=\Ind_{\Gamma_{L'}}^{\Gamma_L}(\bar\psi)$ is also semisimple. As $\rP_L$ is a pro-$\ell$ normal subgroup of $\Gamma_L$, we have $\bar\rho(\rP_L)=\{1\}$ by \cite{Ser72}*{Proposition~4}. By Lemma \ref{le:rigid_2}, there exists a finite unramified extension $L''/L$ inside $\ol\dQ_\ell$ such that $\bar\rho\res_{\Gamma_{L''}}$ is a direct sum of characters. Up to replacing $L$ by $L''$ and $L'$ by $L'L''$, we may assume that $\bar\rho$ itself is a direct sum of characters of $\Gamma_L$, say $\chi_1,\dots,\chi_N\colon\Gamma_L\to\ol\dF_\ell^\times$. On the other hand, since $\bar\rho=\Ind_{\Gamma_{L'}}^{\Gamma_L}(\bar\psi)$, there exist two distinct characters $\chi_i$ and $\chi_j$ such that $\chi_i\chi_j^{-1}$ is trivial on $\rI_{L'}$, which is the unique subgroup of  $\rI_L$ of index $[L':L]$. However, condition (a) implies that $\chi_i\chi_j^{-1}$ is a crystalline character of Fontaine--Laffaille weights contained in $[-b,b]$. By the Fontaine--Laffaille theory, we have
\[
\chi_i\chi_j^{-1}\res_{\rI_L}=\bigotimes_{\tau\colon L\hookrightarrow\ol\dQ_\ell}\omega_\tau^{b_\tau},
\]
where $\omega_\tau\colon\rI_L\to\ol\dF_\ell^\times$ is the fundamental character of level $1$ corresponding to $\tau$, and $b_\tau$ is an integer in $[-b,b]$. Since the Fontaine--Laffaille weights of $\bar\rho$ are regular, we have $b_\tau\neq0$ for all $\tau$. Now condition (b) implies that $\chi_i\chi_j^{-1}$ can not be trivial on $\rI_{L'}$, which is a contradiction. The lemma is proved.
\end{proof}

Concerning the entire Conjecture \ref{co:rigid}, we have the following theorem.

\begin{theorem}\label{th:rigid}
Let $\Pi$ and $E$ be as above. Suppose that there exists a nonarchimedean place of $F$ at which $\Pi$ is supercuspidal. Then Conjecture \ref{co:rigid} holds for $\Pi$ and $E$.
\end{theorem}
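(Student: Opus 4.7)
Parts (1) and (2) of Conjecture \ref{co:rigid} are already established by Proposition \ref{pr:rigid} under the supercuspidal hypothesis; call the finite exceptional set $\Lambda_2$. It remains to treat part (3): to exhibit a finite set $\Lambda_3 \supseteq \Lambda_2$ such that, for every $\lambda \notin \Lambda_3$, the representation $\bar{r}_{\Pi,\lambda}$ satisfies the four conditions of Definition \ref{de:rigid} for $(\Sigma^+_\mnm, \Sigma^+_\lr) = (\Sigma^+,\emptyset)$. Condition (2) of Definition \ref{de:rigid} is vacuous. Conditions (3) and (4) are readily handled: it suffices to enlarge $\Lambda_2$ by excluding all $\lambda$ whose residual characteristic $\ell$ lies in $\Sigma^+_\bad$, or underlies a place in $\Sigma^+$, or satisfies $\ell < (b_{\xi_\Pi} - a_{\xi_\Pi}) + 2$; then (3) is immediate from Proposition \ref{pr:galois}(2b) and (4) follows from Proposition \ref{pr:galois}(2a) together with the defining property of $\Sigma^+_\Pi \subseteq \Sigma^+$.

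The entire content therefore lies in verifying condition (1): for each $v \in \Sigma^+$, every lifting of $\bar{r}_{\Pi,\lambda,v}$ is minimally ramified in the sense of Definition \ref{de:minimal_deformation}. The plan is to fix $v$ and a place $w$ of $F$ above $v$, and use Proposition \ref{pr:galois}(2a) to identify the Frobenius-semisimplified Weil--Deligne representation of $\rho_{\Pi,\lambda}\res_{\Gamma_{F_w}}$ with $\r{WD}(\iota_\ell \Pi_w |\det|_w^{(1-N)/2})$. Via the Zelevinsky classification, $\Pi_w$ (up to a character twist) has the form $\Pi_w = \Ind(\Delta_1 \otimes \cdots \otimes \Delta_r)$ where each segment $\Delta_i$ is based on a supercuspidal $\sigma_i$ of $\GL_{n_i}(F_w)$ of length $e_i$; the corresponding Weil--Deligne representation decomposes, after restriction to inertia, into isotypic blocks for the irreducible representations $\tau_i \coloneqq \rho(\sigma_i)\res_{\rP_{F_w}}$, each occurring with multiplicity $e_i$, and the monodromy operator acts as a single Jordan block of size $e_i$ on each such isotypic block.

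For $\lambda$ with sufficiently large residue characteristic $\ell$, I would use the following three facts, each excluding only finitely many primes: (a) each $\tau_i$ remains absolutely irreducible upon reduction modulo $\lambda$; (b) the reductions $\bar\tau_i$ remain pairwise nonisomorphic after restriction to $\rP_{F_w}$; (c) certain Weil-number eigenvalues of Frobenius on $\sigma_i$ (which lie in a fixed number field independent of $\ell$) satisfy nondegeneracy conditions modulo $\ell$, precisely the coprimality-type conditions $\ell \nmid \prod_{i=1}^{N}(\|v\|^i - 1)$ and the non-equality of relevant eigenvalue ratios with powers of $\|v\|$ that appear in Lemma \ref{le:tame_decomposition}(2) and in the proof of Proposition \ref{pr:minimal_elliptic}. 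Under these assumptions, the set $\fT(\bar{r}_{\Pi,\lambda,v})$ and the induced decomposition from Proposition \ref{pr:minimal_structure} faithfully reflect the Zelevinsky decomposition, and each representation $\bar\varrho_{\tau_i}$ of the appropriate tame or enlarged tame group records a unipotent generator with a single Jordan block of size $e_i$ (on each of its two-by-two or generalized blocks for $\fT_2^\heartsuit$, $\fT_3^\heartsuit$).

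The final step is to show that, under the coprimality hypotheses above, every lifting $\varrho_{\tau_i}$ of $\bar\varrho_{\tau_i}$ to any object of $\sC_\sO$ remains of the same nilpotent orbit, and hence minimally ramified. This is essentially a rigidity statement about liftings of single-Jordan-block unipotent elements subject to the $q$-tame relation $B \tp{A}^{-1} B^{-1} = A^q$ (or its analogues for $\fT_2, \fT_3$); the argument follows the pattern of the multiplicative-reduction and good-reduction cases of Proposition \ref{pr:minimal_elliptic}, combined with \cite{CHT08}*{Lemma~2.4.19} and \cite{Boo19}*{Theorem~1.1} for the symplectic/orthogonal refinements. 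The main obstacle I anticipate is to carry out this analysis uniformly across all possible Zelevinsky types of $\Pi_w$ and across the trichotomy $\fT = \fT_1 \sqcup \fT_2 \sqcup \fT_3$ of Construction \ref{cs:minimal_extension}; in particular, when $v$ is nonsplit in $F$ and some $\tau_i$ falls in $\fT_3$, one must check that the self-dual structure forced by the global conjugate self-duality matches the one entering the definition of minimally ramified lifts at symplectic or orthogonal tame blocks, an issue that is not present in the elliptic curve case.
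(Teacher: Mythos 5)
Your reduction to condition (1) of Definition \ref{de:rigid}, and your treatment of parts (1), (2) of Conjecture \ref{co:rigid} and of conditions (2)--(4) of Definition \ref{de:rigid}, agree with the paper. But the purely local strategy you propose for condition (1) has a genuine gap. The heart of the problem is that the dangerous congruences are not of the form you can exclude by discarding finitely many $\lambda$: inside a single Zelevinsky segment (already for an unramified twist of Steinberg), the Frobenius eigenvalue ratios of $\r{WD}(\iota_\ell\Pi_w|\det|_w^{(1-N)/2})$ are \emph{exactly} powers of $\|v\|$ in characteristic zero, so the level-raising configuration is present for every $\ell$. Whether $\bar{r}_{\Pi,\lambda,v}$ then admits non-minimal liftings is governed by whether the monodromy of the chosen global lattice degenerates modulo $\lambda$ --- the analogue of the condition $\ell\mid u=v(j(A))$ in Proposition \ref{pr:minimal_elliptic}. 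For an elliptic curve this is controlled by the single integer $v(j)$, fixed independently of $\ell$; for a general RACSDC $\Pi$ there is no such explicit invariant, and nothing in the local data over a fixed number field bounds the set of $\ell$ for which the residual inertia action degenerates. Consequently your final step ("every lifting of $\bar\varrho_{\tau_i}$ remains in the same nilpotent orbit") is not available: \cite{CHT08}*{Lemma~2.4.19} and \cite{Boo19}*{Theorem~1.1} are statements about the minimally ramified deformation problem, not about the full lifting ring $\sfR^\loc_{\bar{r}_{\Pi,\lambda,v}}$ having no other components, and no finite list of coprimality/eigenvalue-ratio conditions can force this.

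The paper's proof of condition (1) is instead global, and this is precisely how it sidesteps the issue above. For each collection $\sD_{\Sigma^+}$ of irreducible components of the local lifting rings at $v\in\Sigma^+$ (these are local deformation problems by Proposition \ref{pr:deformation_min}(2)), one forms the global deformation problem $\sS(\sD_{\Sigma^+})$ with Fontaine--Laffaille conditions at $\Sigma^+_\ell$. Adequacy (Remark \ref{re:adequate}), \cite{Tho12}*{Theorem~10.1} and \cite{Gee11}*{Lemma~5.1.3} show $\sfR^\univ_{\sS(\sD_{\Sigma^+})}$ is finite over $\sO$ of Krull dimension at least one, hence has a $\ol\dQ_\ell$-point, producing an RACSDC representation $\Pi(\sD_{\Sigma^+})$ unramified outside $\Sigma^+$, of bounded level at $\Sigma^+$ (this is where Corollary \ref{co:bernstein} and the hypothesis $\ell\geq q_w^N$ enter), of bounded archimedean weights, and residually isomorphic to $\rho_{\Pi,\lambda}$. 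Since only finitely many RACSDC representations satisfy the first three properties, strong multiplicity one forces, for $\ell$ large, every $\Pi(\sD_{\Sigma^+})$ to equal $\Pi$; on the other hand, distinct collections give non-isomorphic representations, because genericity of $\Pi(\sD_{\Sigma^+})_w$ gives $\rH^0(F_w,(\ad r^\natural)(1))=0$ by \cite{BLGGT}*{Lemma~1.3.2(1)}, so by Proposition \ref{pr:deformation_min}(1) the corresponding point of $\Spec\sfR^\loc_{\bar{r}_{\Pi,\lambda,v}}[1/\ell]$ is smooth and cannot lie on two components. Hence for $\ell$ large each local lifting ring has a unique irreducible component, which must be $\sD^\mnm$, and every lifting is minimally ramified. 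If you want to salvage a local argument, you would have to prove directly that the residual monodromy degenerates for at most finitely many $\lambda$ and that no other components survive the eigenvalue constraints; the paper deliberately avoids this by the R=T-style finiteness argument.
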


\begin{proof}
Let $\xi=\xi_\Pi$ be the archimedean weights of $\Pi$ (Definition \ref{de:relevant}). Let $\Lambda_2$ be the set in Proposition \ref{pr:rigid}(2). It suffices to prove (3) of Conjecture \ref{co:rigid}. We need show that each of the four conditions in Definition \ref{de:rigid} excludes only finitely many primes $\lambda$. Condition (2) is empty. Condition (3) holds if the underlying rational prime $\ell$ of $\lambda$ satisfies $\ell\geq(b_\xi-a_\xi)+2$ and $\Sigma^+_\ell\cap\Sigma^+=\emptyset$ by Proposition \ref{pr:galois}(2b). Condition (4) is automatic.

It remains to consider condition (1). Let $\lambda$ be a prime of $E$ not in $\Lambda_2$ whose underlying rational prime $\ell$ satisfies
\[
\Sigma^+_\ell\cap\Sigma^+=\emptyset,\qquad\ell\geq 2(N+1), \qquad \ell\geq (b_\xi-a_\xi)+2,\qquad \ell\geq q_w^N
\]
for every place $w$ of $F$ above $\Sigma^+$. In particular, we have
\begin{enumerate}[label=(\alph*)]
  \item $\ell$ is unramified in $F$;

  \item $\Pi_w$ is unramified for every place $w$ of $F$ above $\ell$;

  \item $\bar\rho_{\Pi,\lambda}\res_{\Gal(\ol{F}/F(\zeta_\ell))}$ is absolutely irreducible, which implies that $\bar{r}_{\Pi,\lambda}(\Gal(\ol{F}/F^+(\zeta_\ell)))$ is adequate by Remark \ref{re:adequate};

  \item Proposition \ref{pr:deformation_fl} holds for the local deformation problem $\sD^\FL$ of $\bar{r}_{\Pi,\lambda,v}$ for every $v\in\Sigma^+_\ell$;

  \item Proposition \ref{pr:deformation_min} holds for $\bar{r}_{\Pi,\lambda,v}$ for every $v\in\Sigma^+$.
\end{enumerate}

For a collection $\sD_{\Sigma^+}=\{\sD_v\res v\in\Sigma^+\}$ in which $\sD_v$ is an irreducible component of $\Spf\sfR^\loc_{\bar{r}_{\Pi,\lambda,v}}$ for $v\in\Sigma^+$, we define a global deformation problem (Definition \ref{de:global_deformation_problem})
\[
\sS(\sD_{\Sigma^+})\coloneqq(\bar{r}_{\Pi,\lambda},\eta^N\epsilon_\ell^{1-N},\Sigma^+\cup\Sigma^+_\ell,
\{\sD_v\}_{v\in\Sigma^+\cup\Sigma^+_\ell})
\]
where for $v\in\Sigma^+$, $\sD_v$ is the prescribed irreducible component (which is a local deformation problem by Proposition \ref{pr:deformation_min}(2)) in $\sD_{\Sigma^+}$; and for $v\in\Sigma^+_\ell$, $\sD_v$ is the local deformation problem $\sD^\FL$ of $\bar{r}_{\Pi,\lambda,v}$ from Definition \ref{de:deformation_fl}. Now by (a--e), and the same proof of \cite{Tho12}*{Theorem~10.1} (which assumes that $\Sigma^+\cup\Sigma^+_\ell$ consists only of places split in $F$), we know that the global universal deformation ring $\sfR^\univ_{\sS(\sD_{\Sigma^+})}$ is a finite $\sO$-module. Moreover, we have $\mu\equiv N\modulo 2$. By (d,e) and the same proof of \cite{Gee11}*{Lemma~5.1.3} (which assumes that $\Sigma^+\cup\Sigma^+_\ell$ consists only of places split in $F$), we know that the Krull dimension of $\sfR^\univ_{\sS(\sD_{\Sigma^+})}$ is at least one. Thus, $\sfR^\univ_{\sS(\sD_{\Sigma^+})}[1/\ell]$ is nonzero. Fix an isomorphism $\iota_\ell\colon\dC\xrightarrow{\sim}\ol\dQ_\ell$. By choosing a $\ol\dQ_\ell$-point of $\Spec\sfR^\univ_{\sS(\sD_{\Sigma^+})}[1/\ell]$, we obtain an RACSDC representation $\Pi(\sD_{\Sigma^+})$ of $\GL_N(\dA_F)$ satisfying
\begin{itemize}
  \item $\Pi(\sD_{\Sigma^+})$ is unramified outside $\Sigma^+$;

  \item for every place $w$ of $F$ above $\Sigma^+$, there is an open compact subgroup $U_w$ of $\GL_N(F_w)$ depending only on $\Pi_w$, such that $\Pi(\sD_{\Sigma^+})_w^{U_w}\neq\{0\}$;

  \item the archimedean weights of $\Pi(\sD_{\Sigma^+})$ are contained in $[a_\xi,b_\xi-N+1]$;

  \item $\rho_{\Pi(\sD_{\Sigma^+}),\iota_\ell}$ and $\rho_{\Pi,\lambda}\otimes_{E_\lambda}\ol\dQ_\ell$ are residually isomorphic.
\end{itemize}
In fact, the second property is a consequence of Proposition \ref{pr:galois}(2a), Corollary \ref{co:bernstein} (which is applicable since $\ell\geq q_w^N$), and the fact that irreducible admissible representations lying on a given Bernstein component have a common level. Note that there are only finitely many RACSDC representations of $\GL_N(\dA_F)$ up to isomorphism, satisfying the first three properties. By the strong multiplicity one property for $\GL_N$ \cite{PS79}, we know that for $\ell$ large enough, $\Pi$ is the only RACSDC representation of $\GL_N(\dA_F)$ up to isomorphism, satisfying all the four properties.

Now we claim that for two different collections $\sD_{\Sigma^+}$ and $\sD'_{\Sigma^+}$, the RACSDC representations $\Pi(\sD_{\Sigma^+})$ and $\Pi(\sD'_{\Sigma^+})$ are not isomorphic. Assuming this claim, then for $\ell$ large enough, we have only one collection, which is $\{\sD_v^\mnm\res v\in\Sigma^+\}$, that is, Definition \ref{de:rigid}(1) is satisfied. The theorem is proved.

For the claim itself, we take a place $v\in\Sigma^+$. Then the local components of $\Pi(\sD_{\Sigma^+})$ above $v$ give rise to a continuous homomorphism $r\colon\Gamma_{F^+_v}\to\sG_N(\ol\dQ_\ell)$, which corresponds to a $\ol\dQ_\ell$-point $x_r$ in $\Spec\sfR^\loc_{\bar{r}_{\Pi,\lambda,v}}[1/\ell]$. Now the dimension of the tangent space of $\Spec\sfR^\loc_{\bar{r}_{\Pi,\lambda,v}}[1/\ell]$ at $x_r$ is equal to
\begin{align*}
&\quad N^2+\dim_{\ol\dQ_\ell}\rH^1(F^+_v,\ad r)-\dim_{\ol\dQ_\ell}\rH^0(F^+_v,\ad r)=N^2+\dim_{\ol\dQ_\ell}\rH^2(F^+_v,\ad r) \\
&=N^2+\dim_{\ol\dQ_\ell}\rH^0(F^+_v,(\ad r)(1))
\leq N^2+\dim_{\ol\dQ_\ell}\rH^0(F_w,(\ad r^\natural)(1)),
\end{align*}
where $w$ is the place of $F$ induced by the embedding $F\hookrightarrow\ol{F}^+_v$. However, since $\Pi(\sD_{\Sigma^+})_w$ is generic, we have $\dim_{\ol\dQ_\ell}\rH^0(F_w,(\ad r^\natural)(1))=0$ by \cite{BLGGT}*{Lemma~1.3.2(1)}. Thus, by Proposition \ref{pr:deformation_min}(1), $\Spec\sfR^\loc_{\bar{r}_{\Pi,\lambda,v}}[1/\ell]$ is smooth at $x_r$, which implies that $x_r$ can not lie on two irreducible components. The claim then follows.
\end{proof}

\begin{bibdiv}
\begin{biblist}

\bib{BLGG11}{article}{
   author={Barnet-Lamb, Thomas},
   author={Gee, Toby},
   author={Geraghty, David},
   title={The Sato-Tate conjecture for Hilbert modular forms},
   journal={J. Amer. Math. Soc.},
   volume={24},
   date={2011},
   number={2},
   pages={411--469},
   issn={0894-0347},
   review={\MR{2748398}},
   doi={10.1090/S0894-0347-2010-00689-3},
}

\bib{BLGGT}{article}{
   author={Barnet-Lamb, Thomas},
   author={Gee, Toby},
   author={Geraghty, David},
   author={Taylor, Richard},
   title={Potential automorphy and change of weight},
   journal={Ann. of Math. (2)},
   volume={179},
   date={2014},
   number={2},
   pages={501--609},
   issn={0003-486X},
   review={\MR{3152941}},
   doi={10.4007/annals.2014.179.2.3},
}

\bib{BG19}{article}{
   author={Bellovin, Rebecca},
   author={Gee, Toby},
   title={$G$-valued local deformation rings and global lifts},
   journal={Algebra Number Theory},
   volume={13},
   date={2019},
   number={2},
   pages={333--378},
   issn={1937-0652},
   review={\MR{3927049}},
   doi={10.2140/ant.2019.13.333},
}

\bib{Boo18}{article}{
   author={Booher, Jeremy},
   title={Producing geometric deformations of orthogonal and symplectic
   Galois representations},
   journal={J. Number Theory},
   volume={195},
   date={2019},
   pages={115--158},
   issn={0022-314X},
   review={\MR{3867437}},
   doi={10.1016/j.jnt.2018.05.022},
}

\bib{Boo19}{article}{
   author={Booher, Jeremy},
   title={Minimally ramified deformations when $\ell \neq p$},
   journal={Compos. Math.},
   volume={155},
   date={2019},
   number={1},
   pages={1--37},
   issn={0010-437X},
   review={\MR{3875451}},
   doi={10.1112/S0010437X18007546},
}

\bib{BP19}{article}{
   author={Booher, Jeremy},
   author={Patrikis, Stefan},
   title={$G$-valued Galois deformation rings when $\ell\neq p$},
   journal={Math. Res. Lett.},
   volume={26},
   date={2019},
   number={4},
   pages={973--990},
   issn={1073-2780},
   review={\MR{4028108}},
   doi={10.4310/MRL.2019.v26.n4.a2},
}

\bib{BG14}{article}{
   author={Buzzard, Kevin},
   author={Gee, Toby},
   title={The conjectural connections between automorphic representations
   and Galois representations},
   conference={
      title={Automorphic forms and Galois representations. Vol. 1},
   },
   book={
      series={London Math. Soc. Lecture Note Ser.},
      volume={414},
      publisher={Cambridge Univ. Press, Cambridge},
   },
   date={2014},
   pages={135--187},
   review={\MR{3444225}},
   doi={10.1017/CBO9781107446335.006},
}

\bib{CG13}{article}{
   author={Calegari, Frank},
   author={Gee, Toby},
   title={Irreducibility of automorphic Galois representations of $GL(n)$,
   $n$ at most 5},
   language={English, with English and French summaries},
   journal={Ann. Inst. Fourier (Grenoble)},
   volume={63},
   date={2013},
   number={5},
   pages={1881--1912},
   issn={0373-0956},
   review={\MR{3186511}},
   doi={10.5802/aif.2817},
}

\bib{Car12}{article}{
   author={Caraiani, Ana},
   title={Local-global compatibility and the action of monodromy on nearby
   cycles},
   journal={Duke Math. J.},
   volume={161},
   date={2012},
   number={12},
   pages={2311--2413},
   issn={0012-7094},
   review={\MR{2972460}},
   doi={10.1215/00127094-1723706},
}

\bib{Car14}{article}{
   author={Caraiani, Ana},
   title={Monodromy and local-global compatibility for $l=p$},
   journal={Algebra Number Theory},
   volume={8},
   date={2014},
   number={7},
   pages={1597--1646},
   issn={1937-0652},
   review={\MR{3272276}},
   doi={10.2140/ant.2014.8.1597},
}

\bib{Car94}{article}{
   author={Carayol, Henri},
   title={Formes modulaires et repr\'{e}sentations galoisiennes \`a valeurs dans un anneau local complet},
   language={French},
   conference={
      title={$p$-adic monodromy and the Birch and Swinnerton-Dyer conjecture},
      address={Boston, MA},
      date={1991},
   },
   book={
      series={Contemp. Math.},
      volume={165},
      publisher={Amer. Math. Soc., Providence, RI},
   },
   date={1994},
   pages={213--237},
   review={\MR{1279611}},
   doi={10.1090/conm/165/01601},
}

\bib{CH13}{article}{
   author={Chenevier, Ga\"{e}tan},
   author={Harris, Michael},
   title={Construction of automorphic Galois representations, II},
   journal={Camb. J. Math.},
   volume={1},
   date={2013},
   number={1},
   pages={53--73},
   issn={2168-0930},
   review={\MR{3272052}},
   doi={10.4310/CJM.2013.v1.n1.a2},
}

\bib{Clo90}{article}{
   author={Clozel, Laurent},
   title={Motifs et formes automorphes: applications du principe de
   fonctorialit\'{e}},
   language={French},
   conference={
      title={Automorphic forms, Shimura varieties, and $L$-functions, Vol.
      I},
      address={Ann Arbor, MI},
      date={1988},
   },
   book={
      series={Perspect. Math.},
      volume={10},
      publisher={Academic Press, Boston, MA},
   },
   date={1990},
   pages={77--159},
   review={\MR{1044819}},
}

\bib{CHT08}{article}{
   author={Clozel, Laurent},
   author={Harris, Michael},
   author={Taylor, Richard},
   title={Automorphy for some $l$-adic lifts of automorphic mod $l$ Galois
   representations},
   note={With Appendix A, summarizing unpublished work of Russ Mann, and
   Appendix B by Marie-France Vign\'{e}ras},
   journal={Publ. Math. Inst. Hautes \'{E}tudes Sci.},
   number={108},
   date={2008},
   pages={1--181},
   issn={0073-8301},
   review={\MR{2470687}},
   doi={10.1007/s10240-008-0016-1},
}

\bib{FL82}{article}{
   author={Fontaine, Jean-Marc},
   author={Laffaille, Guy},
   title={Construction de repr\'{e}sentations $p$-adiques},
   language={French},
   journal={Ann. Sci. \'{E}cole Norm. Sup. (4)},
   volume={15},
   date={1982},
   number={4},
   pages={547--608 (1983)},
   issn={0012-9593},
   review={\MR{707328}},
}

\bib{Gee11}{article}{
   author={Gee, Toby},
   title={Automorphic lifts of prescribed types},
   journal={Math. Ann.},
   volume={350},
   date={2011},
   number={1},
   pages={107--144},
   issn={0025-5831},
   review={\MR{2785764}},
   doi={10.1007/s00208-010-0545-z},
}

\bib{Gui}{article}{
   author={Guiraud, David-Alexandre},
   title={Unobstructedness of Galois deformation rings associated to regular
   algebraic conjugate self-dual cuspidal automorphic representations},
   journal={Algebra Number Theory},
   volume={14},
   date={2020},
   number={6},
   pages={1331--1380},
   issn={1937-0652},
   review={\MR{4149054}},
   doi={10.2140/ant.2020.14.1331},
}

\bib{HT01}{book}{
   author={Harris, Michael},
   author={Taylor, Richard},
   title={The geometry and cohomology of some simple Shimura varieties},
   series={Annals of Mathematics Studies},
   volume={151},
   note={With an appendix by Vladimir G. Berkovich},
   publisher={Princeton University Press, Princeton, NJ},
   date={2001},
   pages={viii+276},
   isbn={0-691-09090-4},
   review={\MR{1876802}},
}

\bib{KMSW}{article}{
   author={Kaletha, Tasho},
   author={Minguez, Alberto},
   author={Shin, Sug Woo},
   author={White, Paul-James},
   title={Endoscopic Classification of Representations: Inner Forms of Unitary Groups},
   note={\href{https://arxiv.org/abs/1409.3731}{arXiv:1409.3731}},
}

\bib{KT17}{article}{
   author={Khare, Chandrashekhar B.},
   author={Thorne, Jack A.},
   title={Potential automorphy and the Leopoldt conjecture},
   journal={Amer. J. Math.},
   volume={139},
   date={2017},
   number={5},
   pages={1205--1273},
   issn={0002-9327},
   review={\MR{3702498}},
   doi={10.1353/ajm.2017.0030},
}

\bib{LTXZZ}{article}{
   label={LTXZZ},
   author={Liu, Yifeng},
   author={Tian, Yichao},
   author={Xiao, Liang},
   author={Zhang, Wei},
   author={Zhu, Xinwen},
   title={On the Beilinson--Bloch--Kato conjecture for Rankin--Selberg motives},
   note={\href{https://arxiv.org/abs/1912.11942}{arXiv:1912.11942}},
}

\bib{Mok15}{article}{
   author={Mok, Chung Pang},
   title={Endoscopic classification of representations of quasi-split
   unitary groups},
   journal={Mem. Amer. Math. Soc.},
   volume={235},
   date={2015},
   number={1108},
   pages={vi+248},
   issn={0065-9266},
   isbn={978-1-4704-1041-4},
   isbn={978-1-4704-2226-4},
   review={\MR{3338302}},
   doi={10.1090/memo/1108},
}

\bib{PS79}{article}{
   author={Piatetski-Shapiro, I. I.},
   title={Multiplicity one theorems},
   conference={
      title={Automorphic forms, representations and $L$-functions},
      address={Proc. Sympos. Pure Math., Oregon State Univ., Corvallis,
      Ore.},
      date={1977},
   },
   book={
      series={Proc. Sympos. Pure Math., XXXIII},
      publisher={Amer. Math. Soc., Providence, R.I.},
   },
   date={1979},
   pages={209--212},
   review={\MR{546599}},
}

\bib{Rog90}{book}{
   author={Rogawski, Jonathan D.},
   title={Automorphic representations of unitary groups in three variables},
   series={Annals of Mathematics Studies},
   volume={123},
   publisher={Princeton University Press, Princeton, NJ},
   date={1990},
   pages={xii+259},
   isbn={0-691-08586-2},
   isbn={0-691-08587-0},
   review={\MR{1081540}},
   doi={10.1515/9781400882441},
}

\bib{Ser72}{article}{
   author={Serre, Jean-Pierre},
   title={Propri\'et\'es galoisiennes des points d'ordre fini des courbes elliptiques},
   language={French},
   journal={Invent. Math.},
   volume={15},
   date={1972},
   number={4},
   pages={259--331},
   issn={0020-9910},
   review={\MR{0387283 (52 \#8126)}},
}

\bib{Sho18}{article}{
   author={Shotton, Jack},
   title={The Breuil-M\'{e}zard conjecture when $l\neq p$},
   journal={Duke Math. J.},
   volume={167},
   date={2018},
   number={4},
   pages={603--678},
   issn={0012-7094},
   review={\MR{3769675}},
   doi={10.1215/00127094-2017-0039},
}

\bib{Spa82}{book}{
   author={Spaltenstein, Nicolas},
   title={Classes unipotentes et sous-groupes de Borel},
   language={French},
   series={Lecture Notes in Mathematics},
   volume={946},
   publisher={Springer-Verlag, Berlin-New York},
   date={1982},
   pages={ix+259},
   isbn={3-540-11585-4},
   review={\MR{672610}},
}

\bib{Tho12}{article}{
   author={Thorne, Jack},
   title={On the automorphy of $l$-adic Galois representations with small
   residual image},
   note={With an appendix by Robert Guralnick, Florian Herzig, Richard
   Taylor and Thorne},
   journal={J. Inst. Math. Jussieu},
   volume={11},
   date={2012},
   number={4},
   pages={855--920},
   issn={1474-7480},
   review={\MR{2979825}},
   doi={10.1017/S1474748012000023},
}

\bib{Yao}{article}{
   author={Yao, Zijian},
   title={The Breuil--M\'{e}zard conjecture for function fields},
   note={\href{https://arxiv.org/abs/1808.09433}{arXiv:1808.09433}},
}

\end{biblist}
\end{bibdiv}

\end{document}